\title{Orbifold points on Teichm\"uller curves and \\ Jacobians with complex multiplication}
\date{August 28, 2013}
\author{Ronen E. Mukamel\footnote{The author is partially supported by award DMS-1103654 from the National Science Foundation.}} 
\newtheorem{thm}{Theorem}[section]
\newtheorem{prop}[thm]{Proposition}
\newtheorem{pro}{Problem}
\newtheorem{cor}[thm]{Corollary}
\theoremstyle{remark}
\newtheorem*{rmk}{Remark}
\numberwithin{equation}{section}
\def\co{\colon\thinspace}
\def \als {\addlinespace[0.75em]}
\newcommand{\appwdtop}{B}
\newcommand{\apporbptmodels}{C}
\newcommand{\spmat}[1]{\left(\begin{smallmatrix} #1 \end{smallmatrix}\right)}
\newcommand{\Out}{\operatorname{Out}}
\newcommand{\PSO}{\operatorname{PSO}}
\newcommand{\PSL}{\operatorname{PSL}}
\newcommand{\Arf}{\operatorname{Arf}}
\newcommand{\Stab}{\operatorname{Stab}}
\newcommand{\chat}{\widehat \cc}
\newcommand{\hr}{\widetilde h}
\newcommand{\lsymb}[2]{\left(\frac{#1}{#2} \right)}
\newcommand{\Fix}{\operatorname{Fix}}
\newcommand{\nm}{\operatorname{Nm}}
\newcommand{\Sym}{\operatorname{Sym}}
\newcommand{\End}{\operatorname{End}}
\newcommand{\Aut}{\operatorname{Aut}}
\newcommand{\Tr}{\operatorname{Tr}}
\newcommand{\teich}{Teichm\"uller}
\newcommand{\wt}{\widetilde}
\newcommand{\order}{\mathcal O}
\newcommand{\ord}{\mathcal O}
\newcommand{\jac}{\operatorname{Jac}}
\newcommand{\abs}[1]{\left|#1\right|}
\newcommand{\vect}[2]{\left( \begin{smallmatrix} #1 \\ #2 \end{smallmatrix} \right)}
\newcommand{\cc}{\mathbb C}
\newcommand{\hh}{\mathbb H}
\newcommand{\rr}{\mathbb R}
\newcommand{\qq}{\mathbb Q}
\newcommand{\zz}{\mathbb Z}
\newcommand{\Pic}{\operatorname{Pic}}
\newcommand{\CM}{\operatorname{CM}}
\newcommand{\mosp}{\mathcal M}
\newcommand{\M}{\mathcal M}
\newcommand{\SL}{\operatorname{SL}}
\newcommand{\strfam}{\ensuremath{\mathcal Z_4}}
\renewcommand{\Im}{\operatorname{Im}}
\renewcommand{\Re}{\operatorname{Re}}
\begin{document}
\maketitle
%
%
%
\begin{abstract}
For each integer $D \geq 5$ with $D \equiv 0$ or $1 \bmod 4$, the Weierstrass curve $W_D$ is an algebraic curve and a finite volume hyperbolic orbifold which admits an algebraic and isometric immersion into the moduli space of genus two Riemann surfaces.  The Weierstrass curves are the main examples of Teichm\"uller curves in genus two.  The primary goal of this paper is to determine the number and type of orbifold points on each component of $W_D$.   Our enumeration of the orbifold points, together with \cite{bainbridge:eulerchar} and \cite{mcmullen:spin}, completes the determination of the homeomorphism type of $W_D$ and gives a formula for the genus of its components.  We use our formula to give bounds on the genus of $W_D$ and determine the Weierstrass curves of genus zero.  We will also give several explicit descriptions of each surface labeled by an orbifold point on $W_D$.
\end{abstract}

\section{Introduction}
\label{sec:introduction}
Let $\M_g$ be the moduli space of genus $g$ Riemann surfaces.  The space $\M_g$ can be viewed as both a complex orbifold and an algebraic variety and carries a complete Finsler \teich\ metric.  A {\em \teich\ curve} is an algebraic and isometric immersion of a finite volume hyperbolic Riemann surface into moduli space:
\[ f\co C=\hh/\Gamma \rightarrow \M_g. \]
The modular curve $\hh/\SL_2(\zz) \rightarrow \M_1$ is the first example of a \teich\ curve.  Other examples emerge from the study of polygonal billiards \cite{veech:ngons,masurtabachnikov:billiards} and square-tiled surfaces.  While the \teich\ curves in $\M_2$ have been classified \cite{mcmullen:torsion}, much less is known about \teich\ curves in $\M_g$ for $g > 2$ \cite{bainbridgemoeller:genus3,bouwmoeller:triangleveechgps}.

The main source of Teichm\"uller curves in $\M_2$ are the Weierstrass curves.  For each integer $D \geq 5$ with $D \equiv 0$ or $1 \bmod 4$, the {\em Weierstrass curve} $W_D$ is the moduli space of Riemann surfaces whose Jacobians have real multiplication by the quadratic order $\ord_D=\zz\left[ \frac{D+\sqrt{D}}{2} \right]$ stabilizing a holomorphic one form with double zero up to scale.  The curve $W_D$ is a finite volume hyperbolic orbifold and the natural immersion
\[ W_D \rightarrow \M_2 \]
is algebraic and isometric and has degree one onto its image \cite{calta:periodicity,mcmullen:billiards}.  The curve $W_D$ is a \teich\ curve unless $D > 9$ with $D \equiv 1 \bmod 8$ in which case $W_D = W_D^0 \sqcup W_D^1$ is a disjoint union of two \teich\ curves distinguished by a spin invariant in $\zz/2\zz$ \cite{mcmullen:spin}.  A major challenge is to describe $W_D$ as an algebraic curve and as a hyperbolic orbifold.  To date, this has been accomplished only for certain small $D$ \cite{bouwmoeller:nonarithmetic,mcmullen:billiards,lochak:arithmetic}.  

The purpose of this paper is to study the orbifold points on $W_D$.  Such points label surfaces with automorphisms commuting with $\ord_D$.  The first two Weierstrass curves $W_5$ and $W_8$ were studied by Veech \cite{veech:ngons} and are isomorphic to the $(2,5,\infty)$- and $(4,\infty,\infty)$-orbifolds.  The surfaces with automorphisms labeled by the three orbifold points are drawn in Figure \ref{fig:ngonorbifoldpts}. 

Our primary goal is to give a formula for the number and type of orbifold points on $W_D$ (Theorem \ref{thm:orbformula}).  Together with \cite{mcmullen:spin} and \cite{bainbridge:eulerchar}, our formula completes the determination of the homeomorphism type of $W_D$ and gives a formula for the genus of $W_D$.  We will use our formula to give bounds for the genera of $W_D$ and $W_D^\epsilon$ (Corollary \ref{cor:genus}) and list the components of $\bigcup_D W_D$ of genus zero (Corollary \ref{cor:genuszerocomponents}).  We will also give several explicit descriptions of the surfaces labeled by orbifold points on $W_D$ (Theorems \ref{thm:dsurfaces}), giving the first examples of algebraic curves labeled by points of $W_D$ for most $D$ (Theorem \ref{thm:algebraicmodels}). 

\paragraph{Main results.} Our main theorem determines the number and type of orbifold points on $W_D$:
\begin{thm}
  \label{thm:orbformula}
  For $D > 8$, the orbifold points on $W_D$ all have order two, and the number of such points $e_2(W_D)$ is the weighted sum of class numbers of imaginary quadratic orders shown in Table \ref{tab:orbformula}.
\end{thm}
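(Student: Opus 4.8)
The plan is to translate the orbifold points of $W_D = \hh/\Gamma$ into genus two surfaces carrying an extra automorphism, and then to count those surfaces using the complex multiplication they force on the Jacobian. A point of $W_D$ is an orbifold point precisely when its stabilizer in the projective Veech group is nontrivial. Every genus two surface carries its hyperelliptic involution, whose derivative is $-I$ and which therefore acts trivially on $\hh$; so a nontrivial stabilizer must come from an affine automorphism $\rho$ of the underlying $(X,\omega)$ whose derivative $D\rho$ is a nontrivial elliptic element of $\PSLR$. Since $\rho$ centralizes the $\ord_D$-action and fixes the line spanned by the eigenform $\omega$, I would record the data $(X,\omega,\ord_D,\rho)$ and identify the order of the orbifold point with the order of $D\rho$ in $\PSLR$.

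To see that this order is two for $D>8$, note that $D\rho$ elliptic of finite order forces $\rho^*\omega = \zeta\omega$ for a root of unity $\zeta$, so $\zeta \in \End^0(\jac X)$ commutes with $\qq(\sqrt D)$ and generates a field $\qq(\sqrt D,\zeta)$ of degree at most four over $\qq$. This already restricts the admissible rotation numbers to a short list, and I would combine it with the classification of automorphism groups of genus two surfaces and the double-zero condition on $\omega$ to eliminate every possibility except rotation by a quarter turn once $D>8$. The surviving higher-order cases occur only when $\qq(\sqrt D)$ sits inside a cyclotomic field of small conductor, namely the order five point of $W_5$ and the order four point of $W_8$ found by Veech, which is exactly why the hypothesis $D>8$ appears.

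For an order two point, $D\rho$ has order four in $\SLR$, so $\rho^2$ is the hyperelliptic involution and $\rho$ induces an endomorphism of $\jac X$ squaring to $-1$; together with $\ord_D$ this embeds an order of the quartic CM field $K=\qq(\sqrt D,i)$ into $\End(\jac X)$, giving $\jac X$ complex multiplication by $K$. Conversely, every principally polarized abelian surface with CM by an order of $K$ that carries an $\ord_D$-eigenform with a double zero produces such a point. Because $K$ is biquadratic, containing the imaginary quadratic fields $\qq(i)$ and $\qq(\sqrt{-D})$, I would parametrize these CM surfaces by ideal classes and thereby reduce the quartic count to class numbers of imaginary quadratic orders, with weights recording the admissible orders sandwiched between $\ord_D[i]$ and $\ord_K$ together with congruence conditions at the prime $2$; these same conditions at $2$ are what split the count across the spin components $W_D^\epsilon$ when $D\equiv 1 \bmod 8$.

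The hard part will be this final enumeration. Translating the geometric constraints, the principal polarization, the requirement that $\omega$ have a double zero rather than being a product eigenform, and the prescribed $\ord_D$-multiplication, into the precise set of ideal classes being counted, and then pinning down the exact weights and mod $2$ congruence conditions recorded in Table \ref{tab:orbformula}, is where the bookkeeping is delicate; correctly distributing the resulting class numbers over the two spin components will require the most care.
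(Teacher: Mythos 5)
Your first half is sound and matches the paper: the reduction of orbifold points to automorphisms $\phi$ stabilizing $[\omega]$, the observation that such a $\phi$ must fix the double zero of $\omega$ (a Weierstrass point) so that the classification of genus two automorphisms plus the double-zero condition leaves only $[1,5]$, $[1,1,4]$, $[1,1,2,2]$, hence order two for $D>8$; and the identification of order-two points with surfaces whose Jacobians carry complex multiplication by an order in $\qq(\sqrt{D},i)$ extending $\ord_D$ and the order four automorphism. This is exactly Corollary \ref{cor:wdorbpts} and Propositions \ref{prop:extendJ}, \ref{prop:wdorbpts}, \ref{prop:d8cmareorbpts}.

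The gap is in the enumeration, which is the actual content of the formula in Table \ref{tab:orbformula}, and which you defer to ``delicate bookkeeping.'' Your plan is to parametrize abelian surfaces with CM by orders in the biquadratic field $K=\qq(\sqrt{D},i)$ directly by ideal classes and then ``reduce the quartic count to class numbers of imaginary quadratic orders.'' That reduction is not routine: relating class numbers of (generally non-maximal) orders in a quartic CM field to those of its imaginary quadratic subfields, while tracking the polarization, is substantial extra work, and nothing in your sketch produces the specific weights $\tfrac12$, $1$, $\tfrac32$ or the case division by $D \bmod 16$. The paper's mechanism, which you do not supply, is to use the full $D_8$-action (an order four automorphism automatically extends to $D_8$ by Proposition \ref{prop:extendJ}) and pass to the genus-one quotient $E=X/\rho(r)$: the map $(X,\rho)\mapsto (E_\rho,Z_\rho,T_\rho)$ identifies $\M_2(D_8)$ with an open subset of $Y_0(2)$, so each elliptic curve $E$ with $\End(E)\cong\ord_C$ is covered by exactly three $D_8$-surfaces; the degree-four isogeny $\jac(X)\to E\times E$ together with Proposition \ref{prop:isogend} computes the discriminant of the real multiplication commuting with $J$ on each of the three as $-4C$, $-C$ or $-C/4$ according to $C \bmod 16$ (Table \ref{tab:orders}), with Proposition \ref{prop:quadpic} guaranteeing independence of the ideal class; and the overall factor $\tfrac12$ comes from the outer automorphism of $D_8$ identifying pairs of $D_8$-structures with the same $J$-eigenform. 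Without this (or an equivalent device), the count in terms of $\hr(-D)$, $\hr(-4D)$, $\hr(-D/4)$ cannot be extracted from your setup, so the proof of the stated formula is incomplete.
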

\begin{table}
  \[
  \begin{array}{|c|c|}
    \hline
    D \bmod 16 & e_2(W_D) \\
    \hline 
    \rule{0pt}{15pt}
    1,5,9, \mbox{ or }13 & \frac{1}{2}\hr(-4D)\\
    0 & \frac{1}{2} (\hr(-D)+2\hr(-D/4)) \\
    4 & 0 \\
    8 & \frac{1}{2}\hr(-D) \\
    12 & \frac{1}{2} (\hr(-D)+3\hr(-D/4)) \\
    \hline
  \end{array}
  \] 
  \caption{\label{tab:orbformula} {\sf For $D>8$, the number of orbifold points of order two on $W_D$ is given by a weighted sum of class numbers.  The function $\hr(-D)$ is defined below. }}
\end{table}
We also give a formula for the number of orbifold points on each spin component:
\begin{thm} \label{thm:spin}
  Fix $D\geq 9$ with $D \equiv 1\bmod 8$.  If $D = f^2$ is a perfect square, then all of the orbifold points on $W_D$ lie on the spin $(f+1)/2 \bmod 2$ component:
  \[ e_2\left( W_D^{(f+1)/2} \right)=\frac{1}{2}\hr(-4D) \mbox{ and } e_2\left( W_D^{(f-1)/2} \right)=0.\]
  Otherwise, $e_2(W_D^0)=e_2(W_D^1)=\frac{1}{4}\hr(-4D)$.
\end{thm}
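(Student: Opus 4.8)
The plan is to compute McMullen's spin invariant on each of the $\tfrac{1}{2}\hr(-4D)$ orbifold points supplied by Theorem~\ref{thm:orbformula} and to show that, away from the perfect-square case, the two possible values occur equally often. I would begin from the explicit description of the order-two orbifold surfaces established earlier (the eigenform models underlying Theorem~\ref{thm:dsurfaces}): each such point is a genus two surface $X$ carrying an $\ord_D$-eigenform $\omega$ with a double zero together with an order-two automorphism commuting with the real multiplication, and these points are indexed by an arithmetic set $S$ of ideal classes in the imaginary quadratic order of discriminant $-4D$, with $|S|=\tfrac{1}{2}\hr(-4D)$. Since the spin is an $\Arf$ invariant of the $\zz/2\zz$-valued quadratic form attached to $(X,\omega)$ and the $\ord_D$-action, it is constant on each component $W_D^\epsilon$ and may be read off from the explicit prototype of each point of $S$.

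The core step is a \emph{spin-reversing involution}. For $D$ not a perfect square, $\ord_D$ embeds in a real quadratic field with a nontrivial Galois automorphism, and conjugating the real-multiplication structure (equivalently, interchanging the roles of $\sqrt{D}$ and $-\sqrt{D}$ on the eigenform) produces a second point $\iota(P)$. I would verify two things: first, that $\iota$ carries $S$ to itself and is an involution, using that the extra order-two symmetry of an orbifold surface forces the conjugate eigenform again to have a double zero; and second, that the $\Arf$ invariant, hence the spin, of $\iota(P)$ differs from that of $P$ by $1$. The latter is the heart of the matter, and I expect it to follow from McMullen's prototype formula for $\epsilon$, the prototypes of $P$ and $\iota(P)$ differing precisely in the term whose parity records the chosen eigenform. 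Granting this, $\iota$ has no fixed points, since a fixed point would have equal and opposite spin, so it pairs the elements of $S$ into $\tfrac{1}{4}\hr(-4D)$ pairs of opposite spin, yielding $e_2(W_D^0)=e_2(W_D^1)=\tfrac{1}{4}\hr(-4D)$.

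The remaining, and genuinely different, case is $D=f^2$. Here $\ord_D$ is a non-maximal order in the split algebra $\qq\oplus\qq$, the two embeddings into $\rr$ are not Galois-conjugate, and the construction above degenerates: $\iota$ becomes the identity on $S$, so no pairing is available. I would instead compute the spin directly on the explicit models of the orbifold surfaces for square $D$; the expectation is that the prototype invariant evaluates to the constant $(f+1)/2 \bmod 2$ at every point of $S$, so that all $\tfrac{1}{2}\hr(-4D)$ orbifold points lie on the single component $W_D^{(f+1)/2}$ while the other component carries none. The main obstacles are therefore (i) proving the one-unit change of spin under Galois conjugation, which requires pinning down exactly how $\Arf$ detects the choice of eigenform, and (ii) the explicit constant-spin computation in the split case, where the absence of a spin-reversing symmetry must be matched by the uniform value $(f+1)/2 \bmod 2$.
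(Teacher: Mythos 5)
Your reduction to a set of ideal classes in $\ord_{-4D}$ and your expectation that the spin equals $\frac{f+1}{2}\bmod 2$ plus an ideal-theoretic correction are both consistent with the paper. But the core step of your argument --- the spin-reversing involution obtained by ``interchanging the roles of $\sqrt{D}$ and $-\sqrt{D}$ on the eigenform'' --- does not exist, and this is a genuine gap. Every orbifold point of order two on $W_D$ is a $D_8$-surface $(X,\rho)$ with $\omega$ a $\rho(J)$-eigenform, and the element $r\in D_8$ satisfies $rJr^{-1}=J^{-1}$, so $\rho(r)^*$ interchanges the two $J$-eigenlines; that is, it carries the $\sigma_+$-eigenform of the commuting real multiplication to the $\sigma_-$-eigenform. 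Hence $(X,[\omega_+])$ and $(X,[\omega_-])$ are the \emph{same} point of $W_D$: your $\iota$ is the identity on the set of orbifold points for every $D$, it cannot reverse spin, and the fixed-point-free pairing collapses. (Note also that the spin is computed from $q$ restricted to the image of $(f+\sqrt{D})/2$ on $H_1(X,\zz/2\zz)$, a subspace visibly unchanged by $\sqrt{D}\mapsto -\sqrt{D}$.)

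The paper's mechanism for equidistribution is entirely different and requires genuine arithmetic input that your proposal does not supply. The orbifold point attached to the class $[I]\in H(-4D)$ has spin $\frac{f+1}{2}+\epsilon_0([I])\bmod 2$, where $\epsilon_0(I)=\frac{n-1}{2}\bmod 2$ with $n$ the odd part of $\nm(I)$; one checks $\epsilon_0$ is a group homomorphism $H(-4D)\to\zz/2\zz$. For square $D=f^2$ every odd prime dividing the norm of a proper ideal is $\equiv 1\bmod 4$, so $\epsilon_0\equiv 0$ and all points lie on the $(f+1)/2$ component --- this matches your expectation, though the constant $\frac{f+1}{2}$ still has to be extracted from an explicit Arf-invariant computation in the pinwheel homology basis. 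For non-square $D$, surjectivity of $\epsilon_0$ is proved by producing, via Dirichlet's theorem and quadratic reciprocity, a prime $p\equiv 3\bmod 4$ with $-D$ a square mod $p$, hence a proper ideal of norm $p$ with $\epsilon_0=1$; equidistribution then follows because a surjective homomorphism onto $\zz/2\zz$ has fibers of equal size (equivalently, translation by such a class is the spin-reversing involution you were looking for --- but it comes from this norm computation, not from Galois conjugation of the real multiplication).
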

When $D$ is not a square and $W_D$ is reducible, the spin components of $W_D$ have algebraic models defined over $\qq(\sqrt{D})$ and are Galois conjugate \cite{bouwmoeller:nonarithmetic}.  Theorem \ref{thm:spin} confirms that the spin components have the same number and type of orbifold points.

The class number $h(-D)$ is the order of the ideal class group $H(-D)$ for $\order_{-D}$ and counts the number of elliptic curves with complex multiplication by $\order_{-D}$ up to isomorphism.  The weighted class number
\[ \hr(-D) = 2 h(-D)/\abs{\order_{-D}^\times} \]
appearing in Table \ref{tab:orbformula} is the number of elliptic curves with complex multiplication weighted by their orbifold order in $\mosp_1$.  Note that $\hr(-D) = h(-D)$ unless $D=3$ or $4$.  The class number $h(-D)$ can be computed by enumerating integer points on a conic.  We will give a similar method for computing $e_2(W_D)$ in Theorem \ref{thm:enumerateorbpts}.  When $D$ is odd, the orbifold points on $W_D$ are labeled by elements of the group $H(-4D)/[P]$ where $[P]$ is the ideal class in $\ord_{-4D}$ representing the prime ideal with norm two.   

The orbifold Euler characteristics of $W_D$ and $W_D^\epsilon$ were computed in \cite{bainbridge:eulerchar} and the cusps on $W_D$ were enumerated and sorted by component in \cite{mcmullen:spin}.  Theorems \ref{thm:orbformula} and \ref{thm:spin} complete the determination of the homeomorphism type of $W_D$ and give a formula for the genera of $W_D$ and its components.  
\begin{cor} \label{cor:genus}
  For any $\epsilon > 0$, there are constants $C_\epsilon$ and $N_\epsilon$ such that: 
  \[ C_\epsilon D^{3/2+\epsilon} > g(V) > D^{3/2}/650, \]
  whenever $V$ is a component of $W_D$ and $D \geq N_\epsilon$.
\end{cor}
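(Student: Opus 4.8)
The plan is to read off $g(V)$ from the orbifold Gauss--Bonnet formula and then estimate its three ingredients. Let $C(V)$ denote the number of cusps of a component $V$ of $W_D$, and recall from Theorem~\ref{thm:orbformula} that for $D > 8$ every orbifold point of $V$ has order two; write $e_2(V)$ for their number. A finite volume hyperbolic orbifold supported on a genus $g$ surface with $C$ cusps and $e_2$ cone points of order two has orbifold Euler characteristic $\chi^{\mathrm{orb}}(V) = 2 - 2g(V) - C(V) - \tfrac12 e_2(V)$, so solving for the genus and using $\chi^{\mathrm{orb}}(V) < 0$ gives
\[ g(V) = 1 + \tfrac12 \abs{\chi^{\mathrm{orb}}(V)} - \tfrac12 C(V) - \tfrac14 e_2(V). \]
Thus the genus is controlled by the Euler characteristic, corrected by two manifestly nonnegative terms, and the whole problem reduces to estimating the sizes of these three quantities.

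First I would record the asymptotic size of each ingredient. The Euler characteristic is computed in \cite{bainbridge:eulerchar} as an explicit multiple of the special value $\zeta_{\qq(\sqrt D)}(-1)$ (with an elementary local correction when $D$ is not fundamental, and split across the two spin components when $W_D$ is reducible). The functional equation writes this value as $\tfrac{D^{3/2}}{c}\,L(2,\chi_D)$ for an explicit constant $c$, where the Euler product $L(2,\chi_D)$ is pinned between the absolute positive constants $\zeta(4)/\zeta(2)$ and $\zeta(2)$; hence $\abs{\chi^{\mathrm{orb}}(V)} \asymp D^{3/2}$ with effective constants. By contrast, the cusp count $C(V)$, sorted by component in \cite{mcmullen:spin}, is $O(D^{1+\epsilon})$, and by Theorems~\ref{thm:orbformula} and \ref{thm:spin} the quantity $e_2(V)$ is at most the weighted class number $\hr(-4D)$, which is $\ll \sqrt D \log D$ by Dirichlet's class number formula. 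Both correction terms are therefore $o(D^{3/2})$.

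With these estimates in hand both inequalities drop out. For the upper bound I would simply discard the two nonnegative correction terms, giving $g(V) \le 1 + \tfrac12 \abs{\chi^{\mathrm{orb}}(V)}$; since $L(2,\chi_D) \le \zeta(2)$ this is an absolute multiple of $D^{3/2}$, so in particular $g(V) < C_\epsilon D^{3/2+\epsilon}$ once $D \ge N_\epsilon$ (the exponent $\epsilon$ here is pure slack). For the lower bound I would use $L(2,\chi_D) \ge \zeta(4)/\zeta(2)$ to bound $\tfrac12\abs{\chi^{\mathrm{orb}}(V)}$ below by a definite constant times $D^{3/2}$, and then absorb the $o(D^{3/2})$ cusp and orbifold contributions: for $D$ large these remove an arbitrarily small proportion of the leading term, leaving $g(V) > D^{3/2}/650$.

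The hard part will be the explicit analytic bookkeeping behind the lower bound, where the constant $650$ must survive three estimates simultaneously --- the effective lower bound for $L(2,\chi_D)$ feeding Bainbridge's Euler characteristic, Dirichlet's bound controlling $e_2(V)$, and the cusp count of \cite{mcmullen:spin} --- and must do so uniformly in $D$, including when $D$ has a large square factor, since there the non-fundamental correction to $\chi^{\mathrm{orb}}$ is a local Euler product that one must check stays bounded below. The most delicate point is the reducible case: one must confirm that each spin component $W_D^\epsilon$ inherits a definite proportion of $\abs{\chi^{\mathrm{orb}}(W_D)}$ while its cusps and, by Theorem~\ref{thm:spin}, its orbifold points remain of lower order. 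The comfortable gap between the true leading constant and $1/650$ is exactly what makes the single threshold $N_\epsilon$ work for both inequalities.
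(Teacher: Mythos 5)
Your proposal is correct and its architecture is the same as the paper's: express $g(V)$ via the orbifold Gauss--Bonnet formula and control the three ingredients $\chi$, $C$, $e_2$, with the Euler characteristic supplying the $D^{3/2}$ main term and the other two being lower order. Two of your ingredient estimates, however, take genuinely different routes. First, you bound $\zeta_{D_0}(-1)$ through the functional equation and the Euler-product bounds $\zeta(4)/\zeta(2)\le L(2,\chi_{D_0})\le\zeta(2)$, whereas the paper uses Siegel's elementary formula $\zeta_{D_0}(-1)=\frac{1}{60}\sum\sigma\left(\frac{D_0-e^2}{4}\right)$ together with $\sigma(n)\ge n+1$ (lower bound, giving $\chi(X_D)>D^{3/2}/300$) and $\sigma(n)=o(n^{1+\epsilon})$ (upper bound); your version actually yields the slightly cleaner upper bound $O(D^{3/2})$ with no $\epsilon$. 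Second, and more substantively, you propose to bound the cusp count $C(V)$ directly by $O(D^{1+\epsilon})$, but no such bound appears in \cite{mcmullen:spin} or in this paper, so you would have to prove it yourself by summing the cusp counts of the modular curves $Y_0(m)$ over the components of $P_D$ (it is true, but it is an extra lemma). The paper sidesteps this entirely: since $C(W_D)=C(P_D)$ for non-square $D$, the cusp term of $W_D$ cancels exactly against the cusp term inside $\chi(P_D)$ in Bainbridge's relation $\chi(W_D)=\chi(P_D)-2\chi(X_D)$, so one only needs the count of \emph{components} of $P_D$, which is bounded by $D^{3/4}+150$; for square $D$ a crude bound $C_1(W_{f^2})<f^2/3$ on one-cylinder cusps suffices. (Your use of $\hr(-4D)\ll\sqrt{D}\log D$ for $e_2$ versus the paper's prototype count $e_2(W_D)\le D/2$ is an immaterial difference, and your treatment of the reducible case via $\chi(W_D^0)=\chi(W_D^1)$ matches the paper's.) In short: the proof goes through as you describe, at the cost of one additional cusp-counting estimate that the paper's cancellation trick renders unnecessary.
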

Modular curves of genus zero play an important role in number theory \cite{tits:monster}.  We also determine the components of Weierstrass curves of genus zero. 
\begin{cor} 
  \label{cor:genuszerocomponents}
  The genus zero components of $\bigcup_D W_D$ are the $23$ components of $\bigcup_{D \leq 41} W_D$ and the curves $W_{49}^0$, $W_{49}^1$ and $W_{81}^1$. 
\end{cor}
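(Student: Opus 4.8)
The plan is to reduce the statement to a finite computation by writing the genus of each component as an explicit arithmetic function of $D$ and then using an effective lower bound to exclude genus zero for all large $D$.

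First I would record the genus formula. Each component $V$ is a finite-volume hyperbolic orbifold whose underlying surface has a genus $g(V)$, a number of cusps $C(V)$, and---by Theorem \ref{thm:orbformula}, valid for $D > 8$---only cone points of order two, of which there are $e_2(V)$. Gauss--Bonnet then reads
\[ \chi(V) = 2 - 2g(V) - C(V) - \tfrac{1}{2}e_2(V), \]
so that
\[ g(V) = 1 - \tfrac{1}{2}\chi(V) - \tfrac{1}{2}C(V) - \tfrac{1}{4}e_2(V). \]
Every quantity on the right is known explicitly: $\chi(V)$ from \cite{bainbridge:eulerchar}, $C(V)$ from \cite{mcmullen:spin}, and $e_2(V)$ from Theorems \ref{thm:orbformula} and \ref{thm:spin}. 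The refinement in Theorem \ref{thm:spin} is exactly what is needed to treat the perfect-square discriminants $D = 49, 81$, where the two spin components carry different numbers of orbifold points. Thus $g(V)$ becomes a completely explicit function of $D$, and the finitely many exceptional curves with $D \leq 8$ (namely $W_5$ and $W_8$, which have cone points of order $> 2$) are handled directly from their known orbifold structure.

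Second, I would isolate the relevant growth rates. The Euler characteristic dominates, $-\chi(V) \asymp D^{3/2}$, arising from the special value $\zeta_{\qq(\sqrt{D})}(-1)$ in Bainbridge's formula, whereas the subtracted terms grow strictly more slowly: the cusp count satisfies $C(V) = o(D^{3/2})$, and the orbifold contribution obeys $e_2(V) \ll \sqrt{D}\log D$ by the elementary upper bound on class numbers. Substituting these into the genus formula recovers the lower bound of Corollary \ref{cor:genus}; the point for the present statement is that, made effective, these estimates produce an \emph{explicit} threshold $D_0$ beyond which $-\chi(V)$ exceeds $C(V) + \tfrac{1}{2}e_2(V) + 2$ for every component, forcing $g(V) \geq 1$. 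For $D < D_0$ I would then evaluate the genus formula for each $D$ and each component, reading off precisely those with $g(V) = 0$: this yields the $23$ components with $D \leq 41$ together with $W_{49}^0$, $W_{49}^1$, and $W_{81}^1$, and confirms that no component in the range $41 < D < D_0$ has genus zero.

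The main obstacle is making the threshold $D_0$ genuinely explicit and small enough that the concluding enumeration is a short, checkable list. This demands an effective lower bound for $-\chi(V) \asymp \zeta_{\qq(\sqrt{D})}(-1)$, which is harmless since these special values are positive rationals computable from divisor sums, together with effective upper bounds for the cusp and class-number terms that are being subtracted. Because only \emph{upper} bounds on class numbers enter, the ineffectivity of Siegel's theorem never intervenes; the real care lies in keeping all constants sharp enough that $D_0$ stays manageable, and in sorting the few remaining perfect-square discriminants correctly by spin via Theorem \ref{thm:spin}.
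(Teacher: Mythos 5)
Your proposal is correct and follows essentially the same route as the paper: the paper combines the Gauss--Bonnet identity with the Euler characteristic formulas of \cite{bainbridge:eulerchar}, the cusp counts of \cite{mcmullen:spin}, and the bound $e_2(W_D)\leq D/2$ to obtain the effective lower bounds of Proposition \ref{prop:effectivebounds}, which force $g(V)>4$ once $D>7000$ (non-square) or $D>200^2$ (square), and then checks the remaining discriminants by computer and reads the genus zero components off the table in Appendix \appwdtop. The only cosmetic difference is that the paper eliminates the cusp term by bounding $\chi(P_D)$ through the number of components $h_0(P_D)\leq D^{3/4}+150$ (so that $C(W_D)=C(P_D)$ cancels), rather than bounding $C(V)$ directly as you propose.
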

We include a table listing the homeomorphism type of $W_D$ for $D\leq 225$ in \S \appwdtop.

\begin{figure}
  \begin{center}
    \includegraphics[scale=0.4]{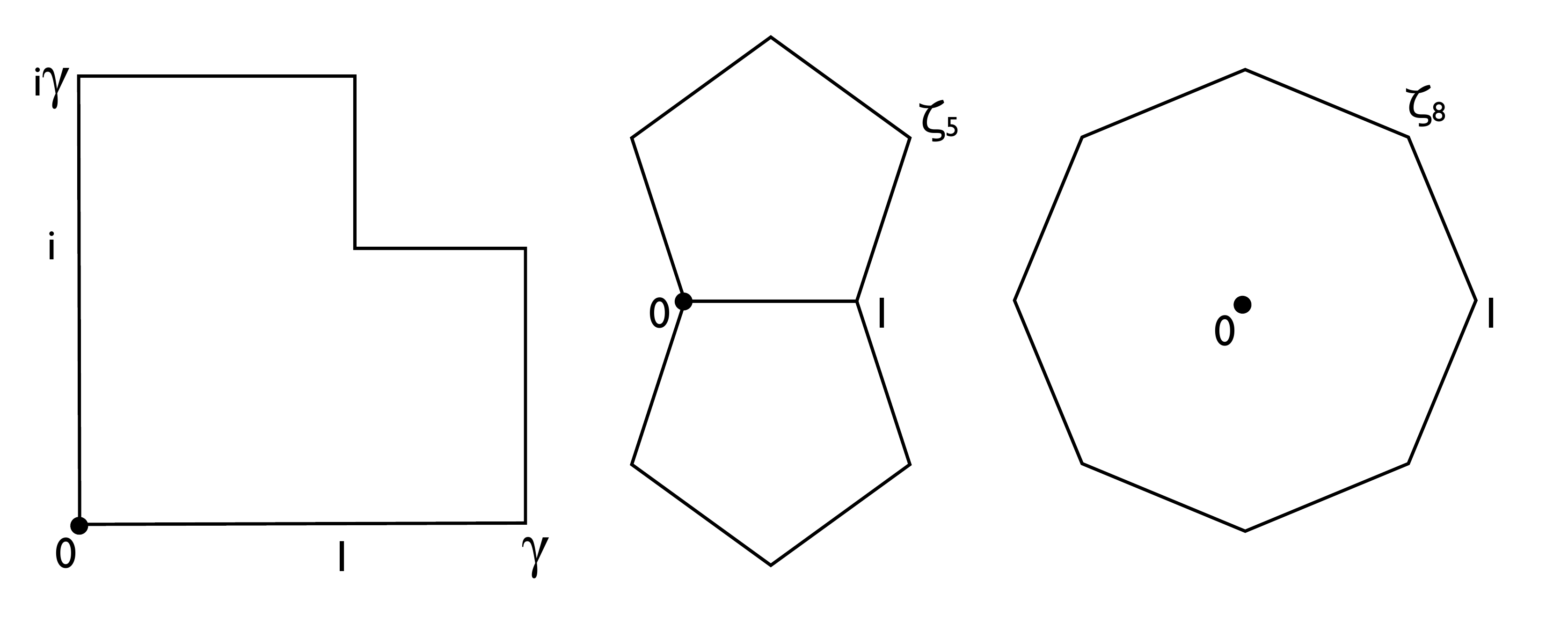}
\end{center}
\caption{\label{fig:ngonorbifoldpts} {\sf The first two Weierstrass curves $W_5$ and $W_8$ are isomorphic to the $(2,5,\infty)$ and $(4,\infty,\infty)$-orbifolds.  The point of orbifold order two is related to billiards on the $L$-shaped table (left) corresponding to the golden mean $\gamma = \frac{1+\sqrt{5}}{2}$.  The points of orbifold order five (center) and four (right) are related to billiards on the regular pentagon and octagon.}
  } 
\end{figure}

\paragraph{Orbifold points on Hilbert modular surfaces.}  Theorem \ref{thm:orbformula} is closely related to the classification of orbifold points on Hilbert modular surfaces we prove in \S \ref{sec:hmsorbpts}.  The {\em Hilbert modular surface} $X_D$ is the moduli space of principally polarized Abelian varieties with real multiplication by $\ord_D$.  The period map sending a Riemann surface to its Jacobian embeds $W_D$ in $X_D$.

Central to the story of the orbifold points on $X_D$ and $W_D$ are the moduli spaces $\M_2(D_8)$ and $\M_2(D_{12})$ of genus two surfaces with actions of the dihedral groups of orders 8 and 12:
\[ D_8 = \left< r, J : r^2 = (Jr)^2 = J^4 = 1 \right> \mbox{ and } D_{12} = \left< r, Z: r^2 = (Zr)^2 = Z^6 = 1 \right>. \]
The surfaces in $\M_2(D_8)$ (respectively $\M_2(D_{12})$) whose Jacobians have complex multiplication have real multiplication commuting with $J$ (respectively $Z$).  The complex multiplication points on $\M_2(D_8)$ and $\M_2(D_{12})$ give most of the orbifold points on $\bigcup_D X_D$:
\begin{thm}  \label{thm:hmsorbpts} 
  The orbifold points on $\bigcup_D X_D$ which are not products of elliptic curves are the two points of order five on $X_5$ and the complex multiplication points on $\M_2(D_8)$ and $\M_2(D_{12})$.
\end{thm}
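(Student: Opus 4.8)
The plan is to convert the orbifold structure of $X_D$ into endomorphism data and then read off the symmetry of the associated genus two curve. An orbifold point of $X_D = \hh^2/\Gamma$ is the image of a point with nontrivial stabilizer, hence corresponds to a principally polarized abelian surface $(A,\Theta)$ with real multiplication $\ord_D \hookrightarrow \End(A)$ carrying a polarized automorphism $\phi$ that commutes with $\ord_D$ and satisfies $\phi \neq \pm 1$. As $\Aut(A,\Theta)$ is finite, $\phi$ has finite order $n$, so $\zeta_n \in \End^0(A)$ and the commutative subfield $K(\zeta_n)$, $K = \qq(\sqrt D)$, lies in $\End^0(A)$. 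A maximal subfield of $\End^0(A)$ has degree at most four, so $[\qq(\zeta_n):\qq] \le 4$ and $n \in \{3,4,5,6,8,10,12\}$; note that the presence of this quartic field already rules out quaternionic multiplication, where $K$ is self-centralizing. Using $-1 \in \Aut(A,\Theta)$ and replacing $\phi$ by a power, the primitive cases are $n = 4,5,6$, giving $i$, $\zeta_5$ and $\zeta_6$, while the orders $3,8,10,12$ refine these.

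First I would separate the simple surfaces via the arithmetic of the quartic CM field $L = K(\zeta_n)$. For $n = 4$ or $6$ this field is $K(i)$ or $K(\sqrt{-3})$, which is biquadratic and contains two imaginary quadratic subfields; since every CM type on a biquadratic field is induced from a subfield, $A$ is isogenous to $E^2$ and $\End^0(A)$ is noncommutative, hence equal to $M_2(F)$ for an imaginary quadratic $F \subset L$. For $n = 5$ the field $L = \qq(\zeta_5)$ is cyclic, and a cyclic quartic CM field has a unique quadratic subfield, namely its totally real subfield; it thus admits no induced CM type, so $A$ is simple with $\End^0(A) = \qq(\zeta_5)$. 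The decisive observation is that $\qq(\zeta_5)$ is the \emph{only} cyclic quartic CM field containing a root of unity of order exceeding two, so the simple orbifold points occur precisely for $D = 5$; as $\mu(\qq(\zeta_5)) = \langle \zeta_{10}\rangle$ has order ten, these give the two points of order five on $X_5$.

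It remains to identify the non-simple points with the dihedral families. Since $i^2 = -1$ and $\zeta_6^3 = -1$ while the hyperelliptic involution acts as $-1$ on $A$, we have $\phi^2 = \iota$ when $n = 4$ and $\phi^3 = \iota$ when $n = 6$; by Torelli $\phi$ is then an automorphism of the genus two curve $X$ with $\jac X = A$, generating a cyclic group of order four or six containing $\iota$. Writing $\End^0(A) = M_2(F)$, the field $L = K(\zeta_n)$ embeds as a maximal subfield, and I would produce a polarized involution $r$ inducing the nontrivial element of $\Gal(L/K)$, so that $r\phi r^{-1} = \phi^{-1}$; geometrically $r$ arises from the symmetry interchanging the two elliptic factors of $A \sim E^2$. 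Torelli promotes $r$ to a curve involution, so that $\langle \phi, r\rangle \cong D_8$ when $n = 4$ and $\cong D_{12}$ when $n = 6$, exhibiting $X$ as a complex multiplication point of $\M_2(D_8)$ or $\M_2(D_{12})$ on which the real multiplication commutes with $\phi$. Conversely the complex multiplication points of these families carry exactly this structure, and the orders $8$ and $12$ reappear as the special points $D = 8$ and $D = 12$, where $L = \qq(\zeta_8)$ or $\qq(\zeta_{12})$ and $\mu(L)$ properly contains $\langle \phi\rangle$.

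The principal obstacle is the construction of the reflection $r$ together with the control of the polarization. I must check that the nontrivial automorphism of $L/K$ is induced by a genuine polarized automorphism of $(A,\Theta)$ rather than merely an isogeny, that $r$ can be chosen to be an involution instead of an order four element (which would yield a quaternion group in place of $D_8$), and above all that the resulting dihedral action preserves a principal polarization realizing $A$ as an \emph{indecomposable} Jacobian. This indecomposability is exactly what separates these points from the excluded products $E_1 \times E_2$: although $A$ is isogenous to $E^2$, the polarization coming from $\M_2(D_8)$ or $\M_2(D_{12})$ must be indecomposable, so that $X$ is a smooth genus two curve and not a union of elliptic curves. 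Pinning down which principal polarizations on $E^2$ are compatible with the dihedral symmetry, and verifying their indecomposability, is the delicate heart of the argument.
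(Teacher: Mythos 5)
Your reduction to a primitive automorphism of order $n\in\{4,5,6\}$ and the CM--field analysis are sound, and your identification of the simple case with $\qq(\zeta_5)$ (hence $D=5$) is a nice, genuinely different route from the paper, which never argues through CM types at all. But the argument has a real gap at exactly the point you flag as ``the delicate heart'': you never actually produce the involution $r$ with $r\phi r^{-1}=\phi^{-1}$, and without it you have only shown that $X$ carries a cyclic group of order $4$ or $6$, not that it is a point of $\M_2(D_8)$ or $\M_2(D_{12})$. Your plan to build $r$ inside $\End^0(A)\cong M_2(F)$ as a \emph{polarized} element is the hard way around: a unit of $M_2(F)$ normalizing $L$ need not preserve the lattice $H_1(A,\zz)$ or the polarization, and even when it does you must still rule out the quaternion group $Q_8$ in place of $D_8$, as you note. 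The paper sidesteps all of this by working on the curve: since $B_\tau$ is assumed not to be a product, it is the Jacobian of a smooth genus two curve $X$ with $\Aut(\jac X)=\Aut(X)$, and the Bolza classification (Table \ref{tab:g2auts}) shows that an order four automorphism forces the model $y^2=x(x^4-tx^2+1)$ with $\phi(x,y)=(-x,iy)$, on which $r(x,y)=(1/x,y/x^3)$ is an explicit honest involution generating $D_8$ with $\phi$ (Proposition \ref{prop:extendJ}); the $[3,3]$ and $[6]$ cases give $y^2=(x^3-t^3)(x^3-t^{-3})$ and $D_{12}$ in the same way. That one observation discharges everything in your final paragraph.

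Relatedly, the indecomposability issue you raise is a red herring for this theorem. The statement only classifies orbifold points that are \emph{not} products of elliptic curves, so by the standard dichotomy $(A,\Theta)$ is already the Jacobian of a smooth curve; you do not need to determine which principal polarizations on $E\times E$ are indecomposable. (That finer question does arise later in the paper, when the $D_8$-surfaces are counted via the degree four isogeny $\jac(X)\to E_\rho\times E_\rho$ of Proposition \ref{prop:d4jacobians}, but it is not needed here.) If you replace your construction of $r$ by the appeal to the explicit hyperelliptic models, the rest of your argument goes through and in fact recovers, by a different mechanism, the paper's auxiliary fact that these Jacobians are isogenous to $E^2$.
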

Since the $Z$-eigenforms on $D_{12}$-surfaces have simple zeros and the $J$-eigenforms on $D_8$-surfaces have double zeros (cf. Proposition \ref{prop:JZeigenforms}), we have:
\begin{cor}
  \label{cor:wdorbpts}
  The orbifold points on $\bigcup_D W_D$ are the point of order five on $W_5$ and the complex multiplication points on $\M_2(D_8)$.  
\end{cor}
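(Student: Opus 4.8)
The plan is to transport the classification of orbifold points from the Hilbert modular surfaces $X_D$ to the Weierstrass curves $W_D$ along the period map, using the defining property that $W_D$ is the locus of Jacobians of genus two curves carrying an $\ord_D$-eigenform with a double zero. First I would show that the orbifold locus of $W_D$ is exactly the intersection of $W_D$ with the orbifold locus of $X_D$. In one direction, an orbifold point of $W_D$ is a surface $X$ carrying an automorphism $\phi$ that commutes with the real multiplication and fixes the distinguished eigenform line; beyond the hyperelliptic involution (which acts as $-1$ and is present at every point), such a $\phi$ induces an automorphism of $\jac X$ commuting with $\ord_D$ and different from $\pm 1$, so its image is an orbifold point of $X_D$. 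Conversely, an orbifold point of $X_D$ lying on $W_D$ is the Jacobian of some $X \in W_D$, and by the Torelli theorem every automorphism of $\jac X$ commuting with $\ord_D$ is realized up to sign by an automorphism of $X$; since this automorphism commutes with the real multiplication it preserves each of the two one-dimensional $\ord_D$-eigenspaces of holomorphic forms, in particular the eigenline with a double zero, and hence $X$ is an orbifold point of $W_D$.

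With this identification in hand, I would simply filter the list of orbifold points of $\bigcup_D X_D$ provided by Theorem \ref{thm:hmsorbpts}, keeping only those lying on $\bigcup_D W_D$; combined with the first step this keeps the filter exhaustive. The points that are products of elliptic curves are discarded at once, since these are never Jacobians of smooth genus two curves and so lie on no $W_D$. For the complex multiplication points on $\M_2(D_8)$ and $\M_2(D_{12})$ the decision is dictated by Proposition \ref{prop:JZeigenforms}: because the real multiplication commutes with $J$ (respectively $Z$), the $J$-eigenforms (respectively $Z$-eigenforms) are precisely the $\ord_D$-eigenforms. The $J$-eigenforms have double zeros, so every complex multiplication point on $\M_2(D_8)$ carries an eigenform with a double zero and therefore lies on the appropriate $W_D$; the $Z$-eigenforms have simple zeros, so no complex multiplication point on $\M_2(D_{12})$ possesses an eigenform with a double zero, and none of these points lies on any $W_D$.

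It remains to account for the two points of order five on $X_5$. Both correspond to the decagon surface $y^2 = x^5 - 1$ together with one of the two real multiplication structures arising from the real embeddings of $\ord_5$. On this surface the two $\ord_5$-eigenforms are $dx/y$, with a double zero at the point at infinity, and $x\,dx/y$, with two simple zeros; since no automorphism can interchange a double zero with a pair of simple zeros these two structures are genuinely distinct points of $X_5$, and exactly one of them places the double-zero eigenform in the eigenspace for the real embedding cut out by $W_5$. Thus exactly one of the two order-five points of $X_5$ lies on $W_5$, recovering the unique orbifold point of order five and matching the isomorphism of $W_5$ with the $(2,5,\infty)$-orbifold. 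Assembling these observations with the first step gives the corollary.

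The hard part will be the first step and the order-five analysis rather than the bookkeeping. Verifying that an automorphism commuting with the real multiplication genuinely fixes the double-zero eigenline, and correctly tracking the $\pm 1$ ambiguity from Torelli, is where the argument must be made precise; and pinning down which of the two order-five points of $X_5$ lands on $W_5$ relies on the explicit eigenform computation above rather than on any formal principle. By contrast, once Proposition \ref{prop:JZeigenforms} is granted, the $\M_2(D_8)$ versus $\M_2(D_{12})$ dichotomy and the exclusion of products are immediate.
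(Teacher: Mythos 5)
Your argument is correct, and it is essentially the derivation the paper sketches in its introduction (Theorem \ref{thm:hmsorbpts} plus Proposition \ref{prop:JZeigenforms}); but it is not the route the paper's formal proof in \S\ref{sec:wdorbpts} takes, so a comparison is worthwhile. You first identify the orbifold locus of $W_D$ with $W_D \cap (\mbox{orbifold locus of } X_D)$ and then filter the classification of Theorem \ref{thm:hmsorbpts}: products of elliptic curves are not Jacobians, $D_{12}$ points are excluded because their $\ord_D$-eigenforms are $Z$-eigenforms with simple zeros, $D_8$ points are kept because their eigenforms are $J$-eigenforms with double zeros, and exactly one of the two order-five points of $X_5$ carries the double-zero eigenform $dx/y$ as its $\sigma_+$-eigenform. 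The paper instead proves Proposition \ref{prop:wdorbpts} directly on $W_D$: an orbifold point carries an automorphism $\phi$ with $\phi^*\omega = \lambda\omega$, $\lambda \neq \pm 1$, which therefore fixes the double zero of $\omega$ --- a Weierstrass point --- so Table \ref{tab:g2auts} immediately forces $[\phi|_{X^W}]$ to be $[1,5]$, $[1,1,4]$ or $[1,1,2,2]$; this one observation disposes of products of elliptic curves, the $D_{12}$ family, and the second order-five point of $X_5$ all at once, without ever invoking the $X_D$ classification (the converse inclusion is Proposition \ref{prop:d8cmareorbpts}, matching your use of Proposition \ref{prop:JZeigenforms}). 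Your approach buys a clean ``restriction of a known classification'' structure at the cost of three separate exclusion arguments and the explicit eigenform computation on $y^2 = x^5-1$, and it leans on the step the paper only asserts is ``straightforward'' --- that $(X,[\omega])$ is an orbifold point of $W_D$ iff $(\jac(X),\iota_+^{[\omega]})$ is one of $X_D$ --- where you should be slightly careful that the two orbifold orders are computed modulo different subgroups ($\Aut(X,\{\pm\omega\})$ versus $\ker(h)$), even though this does not affect which points are orbifold points; the paper's argument is shorter and self-contained precisely because fixing a Weierstrass point is such a strong constraint.
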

Corollary \ref{cor:wdorbpts} explains the appearance of class numbers in the formula for $e_2(W_D)$.  As we will see \S \ref{sec:d8family}, the involution $r$ on a $D_8$-surface $X$ has a genus one quotient $E$ with a distinguished base point and point of order two and the family $\M_2(D_8)$ is birational to the modular curve $Y_0(2)$.  The Jacobian $\jac(X)$ has complex multiplication by an order in $\qq(\sqrt{D},i)$ if and only if $E$ has complex multiplication by an order in $\qq(\sqrt{-D})$.  The formula for $e_2(W_D)$ follows by sorting the $3\hr(-D)$ surfaces with $D_8$-action covering elliptic curves with complex multiplication by $\ord_{-D}$ by their orders for real multiplication.

\paragraph{The product locus $P_D$.}  A recurring theme in the study of the Weierstrass curves is the close relationship between $W_D$ and the product locus $P_D \subset X_D$.  The {\em product locus} $P_D$ consists of products of elliptic curves with real multiplication by $\ord_D$ and is isomorphic to a disjoint union of modular curves.

The cusps on $W_D$ were first enumerated and sorted by spin in \cite{mcmullen:spin} and, for non-square $D$, are in bijection with the cusps on $P_D$.  The Hilbert modular surface $X_D$ has a meromorphic modular form with a simple pole along $P_D$ and a simple zero along $W_D$.  This modular form can be used to give a formula for the Euler characteristic of $W_D$ and, for non-square $D$, the Euler characteristics of $W_D$, $X_D$ and $P_D$ satisfy (\cite{bainbridge:eulerchar}, Cor. 10.4):
\[ \chi(W_D) = \chi(P_D) - 2\chi(X_D). \]
Our classification of the orbifold points on $X_D$ and $W_D$ in Theorem \ref{thm:hmsorbpts} and Corollary \ref{cor:wdorbpts} show that all of the orbifold points of order two on $X_D$ lie on $W_D$ or $P_D$, giving:
\begin{thm}
  For non-square $D$, the homeomorphism type of $W_D$ is determined by the homeomorphism types of $X_D$ and $P_D$ and $D \bmod 8$.
\end{thm}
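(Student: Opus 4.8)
The plan is to express the homeomorphism type of the one-dimensional orbifold $W_D$ through a short list of numerical invariants and to compute each invariant from the homeomorphism types of $X_D$ and $P_D$ together with $D \bmod 8$. The two small non-square discriminants $D = 5$ and $D = 8$ give the explicitly known orbifolds of Veech and may be treated directly, so assume $D > 8$; then by Theorem \ref{thm:orbformula} every orbifold point of $W_D$ has order two. Consequently the homeomorphism type of each connected component $V$ of $W_D$ is recorded by the triple consisting of its genus $g(V)$, its number of cusps $c(V)$, and its number of order-two cone points $e_2(V)$, and these are linked by the Gauss--Bonnet relation
\[ \chi(V) = 2 - 2g(V) - c(V) - \tfrac12 e_2(V). \]
It therefore suffices to recover $c(V)$, $e_2(V)$ and $\chi(V)$ from the admissible data; the genus $g(V) = 1 - \tfrac12 c(V) - \tfrac14 e_2(V) - \tfrac12 \chi(V)$ is then forced.

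First I would settle the component structure: for non-square $D$ the curve $W_D$ is connected unless $D \equiv 1 \bmod 8$, in which case it splits into the two spin components $W_D^0$ and $W_D^1$, and this dichotomy is read from $D \bmod 8$. In the reducible case the two components are Galois conjugate over $\qq(\sqrt D)$, hence share the same genus, the same number of cusps and the same number of orbifold points; so it is enough to compute the totals $c(W_D)$, $e_2(W_D)$ and $\chi(W_D)$ and halve them, a division consistent with the even splitting of orbifold points in Theorem \ref{thm:spin}.

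Next I would compute the three totals. The number of cusps is handled by \cite{mcmullen:spin}: for non-square $D$ the cusps of $W_D$ are in bijection with those of $P_D$, so $c(W_D)$ is visible in the homeomorphism type of $P_D$. The Euler characteristic is handled by Bainbridge's identity $\chi(W_D) = \chi(P_D) - 2\chi(X_D)$ (\cite{bainbridge:eulerchar}, Cor.~10.4), whose two inputs are the orbifold Euler characteristics appearing in the homeomorphism types of $P_D$ and $X_D$. Finally, the orbifold points are handled by the classification preceding this theorem: a smooth genus two Jacobian is never a polarized product, so $W_D$ and $P_D$ are disjoint in $X_D$, and every order-two orbifold point of $X_D$ lies on exactly one of them and restricts to an order-two cone point there. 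This yields the partition $e_2(X_D) = e_2(W_D) + e_2(P_D)$, so that $e_2(W_D) = e_2(X_D) - e_2(P_D)$ is determined by the homeomorphism types of $X_D$ and $P_D$. Substituting the three totals into the Gauss--Bonnet relation then produces the genus and completes the determination.

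The step I expect to demand the most care is the orbifold-point partition $e_2(X_D) = e_2(W_D) + e_2(P_D)$. Here I must check that the isolated order-two isotropy of the surface $X_D$ really descends to a genuine order-two cone point on whichever curve passes through it, rather than fixing that curve pointwise, and that no order-two point escapes onto the complex multiplication locus $\M_2(D_{12})$; since the relevant $Z$-symmetry there has order six, its fixed points carry isotropy of order three or six and do not contribute, which is exactly what lets the order-two count split cleanly between $W_D$ and $P_D$. This also explains why $D \bmod 8$, rather than the finer $D \bmod 16$ of Theorem \ref{thm:orbformula}, suffices: the residue modulo $16$ enters only through the class-number value of $e_2(W_D)$, which here is recovered instead as a difference of orbifold-point counts on $X_D$ and $P_D$.
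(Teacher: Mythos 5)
Your proposal is correct and follows the same route the paper intends: the paper derives this theorem in one line from the fact that all order-two orbifold points of $X_D$ lie on $W_D$ or $P_D$ (Theorem \ref{thm:hmsorbpts} and Corollary \ref{cor:wdorbpts}), combined with $C(W_D)=C(P_D)$ from \cite{mcmullen:spin}, $\chi(W_D)=\chi(P_D)-2\chi(X_D)$ from \cite{bainbridge:eulerchar}, the spin dichotomy read from $D \bmod 8$, and Gauss--Bonnet, which is exactly your computation. Your added care about the $D_{12}$ locus contributing only isotropy of order three or six, and about the identification of isotropy on $X_D$ with cone points on the curves passing through it, correctly isolates the only points the paper leaves implicit.
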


\paragraph{The $D_8$-family.}  A secondary goal of our analysis is to give several explicit descriptions of $D_8$-surfaces and to characterize those with complex multiplication.  We now outline the facts about $\M_2(D_8)$ which we prove in Section \ref{sec:d8family}; we will outline a similar discussion for $\M_2(D_{12})$ in \S \ref{sec:d12family}.  For a genus two surface $X \in \M_2$, the following are equivalent:
\begin{enumerate}
  \item {\em Automorphisms.}  The automorphism group $\Aut(X)$ admits an injective homomorphism $\rho \co D_8 \rightarrow \Aut(X)$.
  \item {\em Algebraic curves.} The field of meromorphic functions $\cc(X)$ is isomorphic to:
    \[ K_a = \cc(z,x) \mbox{ with } z^2=(x^2-1)(x^4-ax^2+1), \]
    for some $a \in \cc \setminus \left\{ \pm 2 \right\}$.  
  \item {\em Jacobians.}  There is a number $\tau \in \hh$ such that the Jacobian $\jac(X)$ is isomorphic to the principally polarized Abelian variety:
    \[ A_\tau = \cc^2 / \Lambda_\tau, \]
    where $\Lambda_\tau = \zz \left< \vect{\tau}{\tau+1},\vect{\tau}{-\tau-1}, \vect{\tau+1}{\tau},\vect{\tau+1}{-\tau}\right>$ and $A_\tau$ is polarized by the symplectic form $\left< \vect{a}{b},\vect{c}{d}\right>=\frac{-\Im(a \overline c+b \overline d)}{2 \Im(\tau)}$.  
  \item {\em Pinwheels.}  The surface $X$ is isomorphic to the surface $X_\tau$ obtained from the polygonal {\em pinwheel} $P_\tau$ (Figure \ref{fig:pinwheels}) for some $\tau$ in the domain:
    \[ U = \left\{ \tau \in \hh : \tau \neq \frac{\pm 1+i}{2}, \abs{\tau}^2 \geq \frac{1}{2} \mbox{ and } \abs{\Re{\tau}} \leq \frac{1}{2} \right\}. \]
\end{enumerate}
\begin{figure}
  \begin{center}
    \includegraphics[scale=0.4]{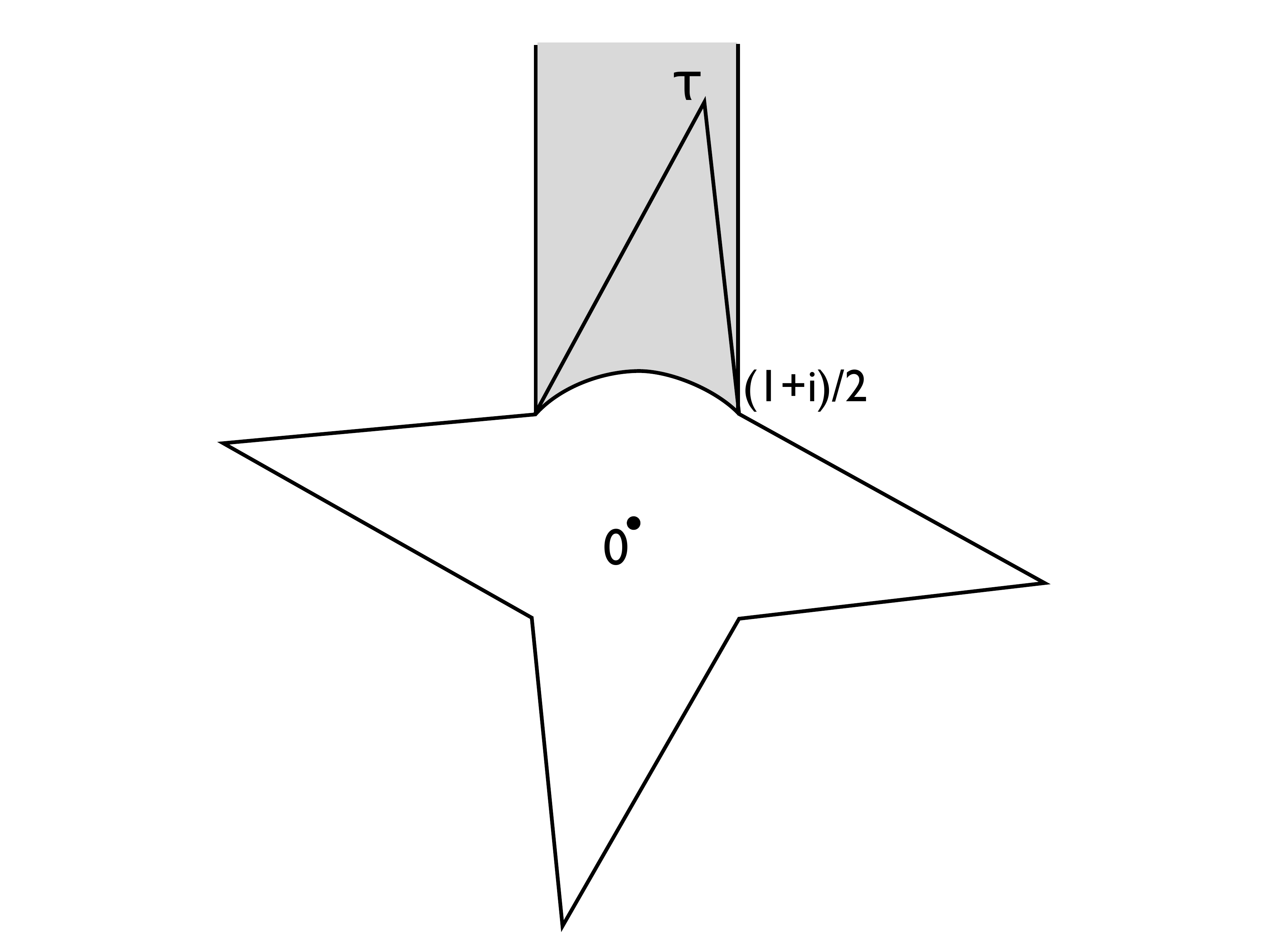}
  \end{center}
  \caption{\label{fig:pinwheels} {\sf For $\tau$ in the shaded domain $U$, the pinwheel $P_\tau$ is the polygon with vertices at $z=\frac{1\pm i}{2}$, $\frac{-1\pm i}{2}$, $\pm \tau$, and $\pm i\tau$.  Gluing together opposite sides on $P_\tau$ by translation gives a genus two surface admitting an action of $D_8$.  The one form $\omega_\tau$ induced by $dz$ is a $J$-eigenform and has a double zero.}}
\end{figure}

It is straightforward to identify the action of $D_8$ in most of the descriptions above.  The field $K_a$ has automorphisms $r(z,x)=(z,-x)$ and $J(z,x)=(iz/x^3,1/x)$.  Multiplication by the matrices $\spmat{ 1 & 0 \\ 0 & -1}$ and $\spmat{ 0 & 1 \\ -1 & 0 }$ preserve the polarized lattice $\Lambda_\tau$ giving automorphisms $r$ and $J$ of $A_\tau$.  The surface $X_\tau$ obtained from $P_\tau$ has an obvious order four automorphism $J_\tau$ and a genus two surface with an order four automorphism automatically admits a faithful action of $D_8$ (cf. Proposition \ref{prop:extendJ}).

The function relating the number $\tau$ determining the polygon $P_\tau$ and Abelian variety $A_\tau$ to the number $a$ determining the field $K_a$ is the modular function:
\[ a(\tau) = -2+\frac{1}{\lambda(\tau) \lambda(\tau+1)}. \]
The function $\lambda(\tau)$ is the function modular for $\Gamma(2) = \ker(\SL_2(\zz) \rightarrow \SL_2(\zz/2\zz))$ which covers the isomorphism $\lambda\co \hh/\Gamma(2) \xrightarrow{\sim} \cc\setminus \left\{ 0,1 \right\}$ sending the cusps $\Gamma(2) \cdot 0$, $\Gamma(2) \cdot 1$ and $\Gamma \cdot \infty$ to $0$, $1$ and $\infty$ respectively.  In Sections \ref{sec:d8family} and \ref{sec:hmsorbpts} we will prove:
\begin{thm} \label{thm:dsurfaces}
  Fix $\tau \in U$.  The surface $X_\tau$ obtained from the polygon $P_\tau$ admits a faithful $D_8$-action and satisfies:
  \[ \jac(X_\tau) \cong A_\tau \mbox{ and } \cc(X_\tau) \cong K_{a(\tau)}. \]
  The Jacobian $\jac(X_\tau)$ has complex multiplication if and only if $\tau$ is imaginary quadratic.
\end{thm}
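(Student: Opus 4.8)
The plan is to verify the four assertions in turn: deduce the $D_8$-action formally, pin down the isomorphism types of $\jac(X_\tau)$ and $\cc(X_\tau)$ by an explicit period computation combined with the equivalences already stated, and finally read off the complex multiplication criterion from an isogeny decomposition of $A_\tau$. For the \emph{$D_8$-action}, note that the pinwheel $P_\tau$ is invariant under $z\mapsto iz$, and this rotation carries each edge of $P_\tau$ to the edge glued to it, so it descends to an order four automorphism $J_\tau$ of $X_\tau$ fixing the cone point. The induced map satisfies $J_\tau^*\omega_\tau=i\,\omega_\tau$, so $\omega_\tau$ is a $J_\tau$-eigenform, and the cone angle $6\pi$ forces $\omega_\tau$ to have a double zero there. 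By Proposition~\ref{prop:extendJ} every order four automorphism of a genus two surface extends to a faithful action of $D_8$, giving the $D_8$-action on $X_\tau$ and identifying $J_\tau$ with $J$.

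The main step is to produce an isomorphism $\jac(X_\tau)\cong A_\tau$ of principally polarized abelian varieties carrying $J_\tau$ to $J$ and $r$ to $r$. I would choose a basis $\gamma_1,\dots,\gamma_4$ of $H_1(X_\tau,\zz)$ given by core curves crossing the four pairs of glued edges of $P_\tau$, ordered so that $J_\tau$ permutes them as multiplication by $\spmat{0 & 1 \\ -1 & 0}$ permutes the generators of $\Lambda_\tau$. Because $X_\tau$ is a translation surface with $\omega_\tau$ induced by $dz$, the period $\int_{\gamma_j}\omega_\tau$ is exactly the translation vector gluing the corresponding pair of edges, which I would evaluate directly from the vertices $\tfrac{\pm1\pm i}{2}$, $\pm\tau$, $\pm i\tau$. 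Writing $A_\tau=\cc^2/\Lambda_\tau$ with coordinate forms $dw_1,dw_2$, the combination $dw_1-i\,dw_2$ is a $J$-eigenform for $J=\spmat{0 & 1 \\ -1 & 0}$; so, after possibly replacing $J$ by $J^{-1}$, I would match $\omega_\tau$ with $dw_1-i\,dw_2$ and check that under the $\zz[i]$-module map $\gamma_j\mapsto\lambda_j$ the computed periods of $\omega_\tau$ agree with the corresponding eigen-coordinate of $\lambda_j$. Finally I would verify that the intersection form on the $\gamma_j$ reproduces the symplectic form $\langle\vect{a}{b},\vect{c}{d}\rangle=\frac{-\Im(a\overline c+b\overline d)}{2\Im(\tau)}$. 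This period computation is the main obstacle: one must simultaneously recover the lattice $\Lambda_\tau$, the eigen-coordinate matching, and the polarization, all from the combinatorics of the pinwheel.

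For the \emph{function field}, given $\jac(X_\tau)\cong A_\tau$ and the equivalence $\cc(X_\tau)\cong K_a$ for some $a\in\cc\setminus\{\pm2\}$, it remains to identify $a$. I would pass to the genus one quotient $E=X_\tau/\langle r\rangle$: in the model $K_a$ the involution $r(z,x)=(z,-x)$ has quotient the elliptic curve $z^2=(u-1)(u^2-au+1)$ with $u=x^2$, whose three finite branch points $1$ and the reciprocal pair $\tfrac{a\pm\sqrt{a^2-4}}{2}$ have a cross ratio that is a rational function of $a$. On the analytic side, the isogeny $A_\tau\sim E_\tau\times E_\tau$ established below identifies $E$ with a curve whose modulus is read off from $\tau$, hence whose $\lambda$-invariant is a value of the modular $\lambda$-function at $\tau$. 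Equating the two expressions for the cross ratio and simplifying with the standard identity $\lambda(\tau+1)=\lambda(\tau)/(\lambda(\tau)-1)$ should yield $a=-2+1/(\lambda(\tau)\lambda(\tau+1))=a(\tau)$; this step is essentially bookkeeping once the previous step is in place.

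For \emph{complex multiplication}, projecting $\Lambda_\tau$ to the two coordinate axes of $\cc^2$ gives the lattice $\zz+\zz\tau$ in each case, and the inclusion $\Lambda_\tau\subset(\zz+\zz\tau)^2$ has index $4$; hence $A_\tau$ is isogenous to $E_\tau\times E_\tau$ with $E_\tau=\cc/(\zz+\zz\tau)$. Since $\End^0$ is an isogeny invariant and $\End^0(E_\tau\times E_\tau)=\Mat_2(\End^0(E_\tau))$, the algebra $\End^0(A_\tau)$ contains a quartic CM field precisely when $\End^0(E_\tau)$ is imaginary quadratic, that is, when $E_\tau$ has complex multiplication; and $E_\tau$ has complex multiplication exactly when $\tau$ is imaginary quadratic. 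Thus $\jac(X_\tau)\cong A_\tau$ has complex multiplication if and only if $\tau$ is imaginary quadratic, in agreement with the stated correspondence between CM of $\jac(X)$ by an order in $\qq(\sqrt{D},i)$ and CM of the quotient elliptic curve.
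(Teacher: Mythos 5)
Your treatment of the $D_8$-action (rotation of $P_\tau$, cone angle $6\pi$, Proposition \ref{prop:extendJ}) and of the complex multiplication criterion (degree four isogeny $A_\tau \rightarrow E_\tau \times E_\tau$, isogeny-invariance of the rational endomorphism ring, and the matrices $\spmat{0 & 1 \\ -1 & 0}$, $\spmat{\tau & 0 \\ 0 & \tau}$ when $\tau$ is imaginary quadratic) agrees with the paper's Propositions \ref{prop:pinwheels} and \ref{prop:d8jaccm}. The gap is in your central step, where you propose to prove $\jac(X_\tau) \cong A_\tau$ by matching the periods of $\omega_\tau$ against the $dw_1 - i\,dw_2$ coordinate of the generators of $\Lambda_\tau$. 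The flat structure of $P_\tau$ exhibits only the periods of the single eigenform $\omega_\tau$ (with $J_\tau^*\omega_\tau = i\omega_\tau$); the second eigenform $\omega'$, with $J_\tau^*\omega' = -i\omega'$, is not induced by $dz$ and its periods are not translation vectors of the polygon. A symplectic $\zz[J]$-module isomorphism $H_1(X_\tau,\zz) \rightarrow \Lambda_\tau$ that matches the $\omega_\tau$-periods does not determine the complex torus: to conclude it is an isomorphism of abelian varieties you must also know that the $dw_1 + i\,dw_2$ coordinate is computed by the \emph{holomorphic} form $\omega'$ rather than by some other element of the $(-i)$-eigenspace of $J_\tau^*$ on $H^1(X_\tau,\cc)$, which is spanned by $\omega'$ and $\overline{\omega_\tau}$. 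There is a positive-dimensional family of polarized $\zz[i]$-Hodge structures with the prescribed $\omega_\tau$-periods, so the data you check is strictly insufficient. The natural repair, $\omega' = \rho(r)^*\omega_\tau$, does not help directly because $\rho(r)$ is not an affine automorphism of $(X_\tau,\omega_\tau)$ and is not visible on the polygon.

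This is precisely the difficulty the paper is engineered to avoid: it never computes periods of $P_\tau$. Instead it shows (Propositions \ref{prop:d4jacobians} and \ref{prop:d8jacobians}) that for any $D_8$-surface $\jac(X) \cong E_\tau \times E_\tau/\Gamma^W \cong A_\tau$ where $E_\tau$ is the quotient elliptic curve recorded by $g(X,\rho) \in Y_0(2)$, and then identifies $g(X_\tau,\rho_\tau)$ with $(E_\tau,Z_\tau,T_\tau)$ by a rigidity argument (Proposition \ref{prop:pinwheels}): the cut-and-paste relation $P_\tau \sim P_{\tau+1}$ and the similarity $P_\tau \sim P_{-1/2\tau}$ show that $\tau \mapsto (X_\tau,\rho_\tau)$ descends to a biholomorphism $U/\sim\ \rightarrow \M_2(D_8)/\sigma$ between orbifolds with coarse space $\cc^*$, which is pinned down by its value at the fixed point of $\sigma$ (the regular octagon, $\tau = \sqrt{-2}/2$) and its cusp behaviour, and hence coincides with $\tau \mapsto g^{-1}(E_\tau,Z_\tau,T_\tau)$. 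Your function-field step (cross-ratio of the branch points of $X_\tau/\rho(r)$ versus the $\lambda$-function) is reasonable and close in spirit to Proposition \ref{prop:d8algebraicmodels}, but it presupposes the identification of that quotient with $E_\tau$ and so inherits the same gap.
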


\paragraph{Enumerating orbifold points on $W_D$.}  In addition to the formula in terms of class numbers for $e_2(W_D)$ appearing in Theorem \ref{thm:orbformula}, in Section \ref{sec:wdorbpts} we give a simple method for enumerating the orbifold points on $W_D$.  We define a finite set of {\em proper pinwheel prototypes} $E(D)$ consisting of triples of integers $(e,c,b)$ satisfying $D=-e^2+2bc$ along with certain additional conditions (cf. Equation \ref{eqn:prototypes}) and show:
\begin{thm}\label{thm:enumerateorbpts}
Fix a discriminant $D \geq 5$.  For any $(e,c,b) \in E(D)$, the surface:
\[ X_\tau \mbox{ with } \tau = (e+\sqrt{-D})/2c \]
is labeled by an orbifold point on $W_D$.  For discriminants $D>8$, the set $E(D)$ is in bijection with the points of orbifold order two on $W_D$.
\end{thm}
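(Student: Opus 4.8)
The plan is to exhibit the assignment $(e,c,b)\mapsto [X_\tau]$, with $\tau=(e+\sqrt{-D})/2c$, as a bijection onto the order-two orbifold points by combining the pinwheel parametrization of $\M_2(D_8)$ with the classification in Corollary~\ref{cor:wdorbpts}. First I would set up the dictionary between a triple $(e,c,b)$ satisfying $D=-e^2+2bc$ and the point $\tau$. Clearing denominators in $2c\tau=e+\sqrt{-D}$ and using $e^2+D=2bc$ shows that $\tau$ is the root in $\hh$ of the integral binary quadratic form $[2c,-2e,b]$, i.e. $2c\tau^2-2e\tau+b=0$, whose discriminant is $(-2e)^2-4(2c)b=-4(2bc-e^2)=-4D$. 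Thus imaginary quadratic $\tau$ with a prescribed $D$ correspond to forms of discriminant $-4D$, matching the appearance of $H(-4D)$. Since $\tau$ is imaginary quadratic, Theorem~\ref{thm:dsurfaces} shows $X_\tau$ is a $D_8$-surface whose Jacobian $A_\tau$ has complex multiplication and whose $J$-eigenform has a double zero; so $X_\tau$ lands on $W_{D'}$ for the real multiplication order $\order_{D'}\subseteq\End(A_\tau)$.

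The crux is to compute this real multiplication order and show $D'=D$. I would extract it from the lattice $\Lambda_\tau$: projecting $\Lambda_\tau$ onto the two coordinate axes, which are the $\pm1$-eigenspaces of $r=\spmat{1&0\\0&-1}$, exhibits $A_\tau$ as isogenous to $E_\tau\times E_\tau$ with $E_\tau=\cc/(\zz+\zz\tau)$. Using $2c\tau^2-2e\tau+b=0$ one checks that $\sqrt{-D}=2c\tau-e$ preserves $\zz+\zz\tau$, so it is the complex multiplication $\phi$ on each factor, generating $\order_{-4D}$ precisely when $[2c,-2e,b]$ is primitive. Because $J=\spmat{0&1\\-1&0}$ acts as $i$, the element $\sqrt{D}=J^{-1}\phi$ is a self-adjoint endomorphism commuting with $J$, and it generates the real order $\order_D$ exactly when $\phi$ generates $\order_{-4D}$; if the form has content $f>1$ the complex multiplication order is the larger $\order_{-4D/f^2}$, the surface has proper real multiplication by $\order_{D/f^2}\supsetneq\order_D$, and the point lies on $W_{D/f^2}$ rather than $W_D$. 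Hence primitivity of $[2c,-2e,b]$ is equivalent to the point lying on $W_D$, and this is exactly the properness condition built into the definition of $E(D)$ in Equation~\ref{eqn:prototypes}. Together with Corollary~\ref{cor:wdorbpts} this proves, for every $D\geq5$, that each proper prototype yields an orbifold point of $W_D$, which is the first assertion.

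For the bijection when $D>8$ I would argue both directions. Well-definedness and injectivity reduce to showing that distinct proper prototypes give distinct $\tau\in U$ and that $\tau\mapsto X_\tau$ is injective on $U$ (Theorem~\ref{thm:dsurfaces}); here the inequalities in~\ref{eqn:prototypes} are read off from $\Re\tau=e/2c$ and $\abs{\tau}^2=b/2c$ as the conditions $\abs{e}\leq c\leq b$ placing $\tau$ in the fundamental domain $U$, with the boundary conventions excluding $\tau=(\pm1+i)/2$. For surjectivity, Corollary~\ref{cor:wdorbpts} (valid since $D>8$ forces all orbifold points to have order two, and $D_{12}$-surfaces have simple-zero eigenforms and so do not lie on $W_D$) identifies every orbifold point with a complex multiplication $D_8$-surface $X_\tau$; moving $\tau$ into $U$ and reading its minimal polynomial $[2c,-2e,b]$ returns a triple with $D=-e^2+2bc$ that the computation above forces to be proper. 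The main obstacle is precisely the real-multiplication computation of the second paragraph together with the reduction theory of the third: because each pinwheel surface carries the order-two point coming from the quotient $E$, so that $\M_2(D_8)$ is birational to $Y_0(2)$, the domain $U$ is twice the size of the reduced-form domain and the enumeration is by $H(-4D)/[P]$ rather than $H(-4D)$. Keeping track of this $\Gamma_0(2)$-level structure and of the identifications along $\partial U$ is the delicate step; once the bijection is in hand, counting $\abs{E(D)}$ in terms of class numbers yields Theorem~\ref{thm:orbformula}.
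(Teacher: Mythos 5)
Your overall architecture matches the paper's: reduce via Corollary \ref{cor:wdorbpts} to $D_8$-surfaces with complex multiplication, read off $\Re\tau = e/2c$ and $\abs{\tau}^2 = b/2c$ to match the inequalities in Equation \ref{eqn:prototypes} with the fundamental domain $U$, and handle the boundary identifications. But the central step --- deciding when the real multiplication order on $\jac(X_\tau)$ is exactly $\ord_D$ --- is wrong as you state it. You claim that $\sqrt{D}=J^{-1}\phi$ ``generates the real order $\ord_D$ exactly when $\phi$ generates $\ord_{-4D}$,'' and hence that primitivity of the form $[2c,-2e,b]$ is equivalent to the point lying on $W_D$. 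First, $\zz[\sqrt{D}]=\ord_{4D}$, not $\ord_D$; the entire content of the integrality computation is deciding when the half-integral element $(D+\phi T)/2$ preserves $H_1(X_\tau,\zz)$, and that is exactly where the congruences $D\equiv e\equiv c\equiv b \bmod 2$ in Equation \ref{eqn:prototypes} come from --- your criterion never produces them. Second, the equivalence with primitivity of $[2c,-2e,b]$ is false: for $D\equiv 0 \bmod 4$ the prototype conditions force $e,c,b$ all even, so that form always has content at least $2$, yet $W_{12}$, $W_{16}$, $W_{24},\dots$ all carry orbifold points. Concretely, $E(12)=\{(-2,2,4)\}$ gives the form $[4,4,4]$ of content $4$, $\tau=(-1+\sqrt{-3})/2$, and $E_\tau$ with complex multiplication by $\ord_{-3}$ rather than $\ord_{-48}$, while $(X_\tau,[\omega_\tau])$ does lie on $W_{12}$; your conclusion ``the point lies on $W_{D/f^2}$'' would read $W_{12/16}$. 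The relation between the CM discriminant of $E_\tau$ and the RM discriminant of $\jac(X_\tau)$ involves factors of $2$ depending on the CM discriminant mod $16$ (Table \ref{tab:orders}) and cannot be collapsed to dividing by the content squared; likewise the properness condition on the triple $(e,c,b)$ (non-divisibility by $k$ with $D/k^2$ a discriminant) is not the same as primitivity of the associated binary quadratic form.

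The paper avoids all of this by working directly on $\Lambda_\tau$: it writes the matrix of $S=(D+\phi T)/2$ in the basis $v_1,\dots,v_4$ and reads off that $S$ is integral if and only if $k$ divides $e$, $c$, $b$ and $D\equiv e/k\equiv c/k\equiv b/k \bmod 2$; properness of the triple then pins the order down to $\ord_D$ exactly, and the inequalities place $\tau$ in $U$ with the stated boundary convention. If you want to keep your route through $E_\tau$ and binary quadratic forms, you would have to re-derive Table \ref{tab:orders} via Proposition \ref{prop:isogend} and the degree-four isogeny $\jac(X_\tau)\rightarrow E_\tau\times E_\tau$, and then verify that the resulting case analysis reproduces the parity-plus-properness conditions of Equation \ref{eqn:prototypes}; as written, your argument proves a different (and incorrect) criterion.
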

Since the field automorphisms of $\cc$ permute the set of $D_8$-surfaces with real multiplication by $\ord_D$ and the modular function $a \co Y_0(2) \rightarrow \cc$ is defined over $\qq$, the following is a corollary of Theorem \ref{thm:enumerateorbpts}:
\begin{thm}
  \label{thm:algebraicmodels}
  For $D \geq 5$, the polynomial:
  \[ f_D(t) = \prod_{(e,c,b) \in E(D)} \left( t-a\left( \frac{e+\sqrt{-D}}{2c} \right) \right)\left( t-a\left( \frac{-e+\sqrt{-D}}{2b}  \right) \right), \]
  has rational coefficients.  If $a$ is a root of $f_D(t)$, then the algebraic curve with $\cc(X) \cong K_a$ is labeled by an orbifold point on $W_D$.
\end{thm}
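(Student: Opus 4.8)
The plan is to deduce the result from Theorem~\ref{thm:enumerateorbpts} by a Galois descent argument, using the two facts highlighted above: that $\Aut(\cc)$ permutes the $D_8$-surfaces whose Jacobians have real multiplication by $\ord_D$, and that the modular function $a\co Y_0(2) \to \cc$ is defined over $\qq$. First I would record what the roots of $f_D$ are. By Theorem~\ref{thm:enumerateorbpts}, for each prototype $(e,c,b) \in E(D)$ the surface $X_{\tau}$ with $\tau = (e+\sqrt{-D})/2c$ is labeled by an orbifold point on $W_D$, and by Theorem~\ref{thm:dsurfaces} its function field is $\cc(X_\tau) \cong K_{a(\tau)}$. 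The dual triple $(-e,b,c)$ satisfies the same relation $D = -e^2 + 2bc$, and the second factor records the companion value $a((-e+\sqrt{-D})/2b)$; geometrically these two values are exchanged by the Atkin--Lehner involution of $Y_0(2)$, which corresponds to the ambiguity in the $D_8$-marking on a single underlying surface (the quotient elliptic curve $E$ by the involution $r$ versus its $2$-isogenous partner). Thus every root of $f_D$ is the $a$-invariant of a $D_8$-surface labeled by an orbifold point on $W_D$, which already gives the second assertion of the theorem.

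For the rationality of the coefficients I would argue that the multiset of roots of $f_D$ is invariant under $\Aut(\cc/\qq)$. Let $R$ denote the set of $D_8$-surfaces whose Jacobian has real multiplication by $\ord_D$ and whose $J$-eigenform has a double zero --- equivalently, by Corollary~\ref{cor:wdorbpts}, the $D_8$-surfaces with complex multiplication accounting for the orbifold points of $W_D$. Given $\sigma \in \Aut(\cc)$, conjugating the defining equation $z^2 = (x^2-1)(x^4 - ax^2 + 1)$ of $K_a$ shows that the $\sigma$-conjugate surface has function field $K_{\sigma(a)}$, since the equation has coefficients in $\qq(a)$. The faithful $D_8$-action transports under $\sigma$, and the properties ``$\jac$ has real multiplication by $\ord_D$'' and ``$\jac$ has complex multiplication'' are preserved because the endomorphism algebra of an Abelian variety and its action on the polarized Hodge structure are carried isomorphically to those of the conjugate variety; the condition that the $J$-eigenform have a double zero is likewise $\sigma$-invariant. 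Hence $\sigma$ permutes $R$, and because $a$ is defined over $\qq$ on $Y_0(2) \cong \M_2(D_8)$ the induced action on $a$-invariants is $a \mapsto \sigma(a)$. Therefore $\sigma$ permutes the multiset $\{\, a(\tau) : \tau \text{ as in } f_D \,\}$ of roots of $f_D$.

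Each root $a(\tau)$ is a singular value of the level-two modular function $a$ at an imaginary quadratic argument $\tau$, hence an algebraic number by the theory of complex multiplication. Since the roots of $f_D$ form a multiset stable under every $\sigma \in \Aut(\cc/\qq)$, each elementary symmetric function of the roots --- i.e.\ each coefficient of $f_D$ --- is an algebraic number fixed by all of $\Aut(\cc/\qq)$. The fixed field of $\Aut(\cc/\qq)$ is $\qq$, so the coefficients of $f_D$ lie in $\qq$, as claimed.

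The main obstacle is establishing that the root set is genuinely closed under Galois, i.e.\ that applying $\sigma$ to some $a(\tau)$ returns another value of the form $a((\pm e'+\sqrt{-D})/2c')$ with $(e',c',b') \in E(D)$, rather than a value outside the prototype list. This is precisely the assertion that $R$ is an $\Aut(\cc)$-stable finite set whose members are exactly those enumerated by $E(D)$ together with the Atkin--Lehner flip; the delicate points are the $\sigma$-invariance of the \emph{specific} real multiplication order $\ord_D$ (as opposed to merely having real multiplication by some order) and the correct bookkeeping of the two factors per prototype, so that the degree-$2|E(D)|$ product neither omits a Galois conjugate nor introduces a spurious one. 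Once the $\Aut(\cc)$-stability of $R$ is in hand, the symmetric-function argument is routine.
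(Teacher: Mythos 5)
Your proposal is correct and follows essentially the same route as the paper, which derives Theorem \ref{thm:algebraicmodels} from Theorem \ref{thm:enumerateorbpts} precisely by observing that $\Aut(\cc)$ permutes the $D_8$-surfaces with real multiplication by $\ord_D$ and that $a\co Y_0(2)\rightarrow\cc$ is defined over $\qq$, so the multiset of roots is Galois-stable and the coefficients lie in $\qq$. The points you flag as delicate (stability of the specific order $\ord_D$ under conjugation and the pairing of the two factors per prototype via the Atkin--Lehner flip $\tau\mapsto -1/2\tau$) are exactly the content the paper leaves implicit, and your treatment of them is sound.
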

For example, when $D=76$ we have:
\[ E(76) = \left\{ (-2,2,20), (-2,4,10), (2,4,10) \right\}, \]
and the orbifold points on $W_{76}$ label the surfaces $X_{(-1+\sqrt{-19})/2}$, $X_{(-1+\sqrt{-19})/4}$ and $X_{(1+\sqrt{-19})/4}$.  Setting $q=e^{2\pi i \tau}$ and using the $q$-expansion:
  \[ a(\tau) = -2 - 256 q - 6144 q^2 - 76800 q^3 - 671744 q^4 + \dots\]
we can approximate the coefficients for $f_{76}(t)$ to high precision to show that:
\[ f_{76}(t) = t^3+3t^2+3459t+6913. \]
Table \ref{tab:minpoly} in Appendix \apporbptmodels\ lists the polynomials $f_D(t)$ for $D \leq 56$ computed by similar means.

\paragraph{Outline.} We conclude this Introduction with an outline of the proofs of our main results.
\begin{enumerate}
\item In \S\ref{sec:d8family}, we define and study the family $\M_2(D_8)$. The moduli space $\M_2(D_8)$ parametrizes pairs $(X,\rho)$ where $X \in \M_2$ and $\rho: D_8 \rightarrow \Aut(X)$ is injective.   There, our main goal is to prove the precise relationship between the surface $X_\tau$, the field $K_{a(\tau)}$ and the Abelian variety $A_\tau$ stated in Theorem \ref{thm:dsurfaces}.  We do so by showing, for $(X,\rho) \in \M_2(D_8)$, the quotient $X/\rho(r)$ has genus one, a distinguished base point and a distinguished point of order two (Proposition \ref{prop:d8involutionquotient}), giving rise to a holomorphic map $g\co \M_2(D_8) \rightarrow Y_0(2)$.  We then compute $\cc(X)$ and $\jac(X)$ in terms of $g(X,\rho)$ and show that the surface $X_\tau$ admits a $D_8$-action $\rho_\tau$ with $g(X_\tau,\rho_\tau)$ equal to the genus one surface $E_\tau = \cc/\zz\oplus \tau\zz$ with distinguished base point $Z_\tau = 0 +\zz\oplus\tau\zz$ and point of order two $T_\tau = 1/2+\zz\oplus \tau\zz$.
\item In \S\ref{sec:hmsorbpts}, we define and study the Hilbert modular surface $X_D$ and its orbifold points.  Orbifold points on $X_D$ correspond to Abelian varieties with automorphisms commuting with real multiplication.  It is well known that there are only a few possibilities for the automorphism group of a genus two surface (cf. Table \ref{tab:g2auts} in \S\ref{sec:d8family}), that the automorphism group of $X \in \M_2$ equals the automorphism group $\jac(X)$ and that every principally polarized Abelian surface is either a product of elliptic curves or a Jacobian.  Our classification of orbifold points on $\bigcup_D X_D$ in Theorem \ref{thm:hmsorbpts} is obtained by analyzing these possibilities.
\item In \S\ref{sec:wdorbpts}, we turn to the Weierstrass curve $W_D$.  Our classification of the orbifold points on $W_D$ in Corollary \ref{cor:wdorbpts} follows by analyzing which automorphism groups of genus two surfaces contain automorphisms which fix a Weierstrass point.  
\item We then prove the formula in Theorem \ref{thm:orbformula} by sorting the $D_8$-surfaces with complex multiplication by their orders for real multiplication commuting with $J$.  To do so, we embed the endomorphism ring of $\jac(X)$ in the rational endomorphism ring $\End(X/\rho(r) \times X/\rho(r)) \otimes \qq$, allowing us to relate the order for real multiplication on $\jac(X)$ to the order for complex multiplication on $X/\rho(r)$.
\item We conclude \S\ref{sec:wdorbpts} by giving a simple method for enumerating the $\tau \in U$ for which $X_\tau$ is labeled by an orbifold point on $W_D$.  For $\tau \in \qq(\sqrt{-D})$, we choose integers $e$, $k$, and $c$ so $\tau = (e+k\sqrt{-D})/(2c)$.  There is a rational endomorphism $T \in \End(\jac(X_\tau))\otimes \qq$ commuting the order four automorphism $J_\tau$ and generated real multiplication by $\ord_D$.  By writing down how $T$ acts on $H_1(X_\tau,\qq)$, we determine the conditions on $e$, $k$, and $c$ which ensure that $T$ preserves the lattice $H_1(X_\tau,\zz)$.
\item In \S\ref{sec:spin}, we sort the orbifold points on $W_D$ by spin component when $D \equiv 1 \bmod 8$.  For such discriminants, the orbifold points on $W_D$ are labeled by elements of the ideal class group $H(-4D)$.  We define a spin homomorphism:
\[ \epsilon_0 \co H(-4D) \rightarrow \zz/2\zz \]
which is the zero map if and only if $D$ is a perfect square.  We then relate the spin invariant of the orbifold point corresponding to the ideal class $[I]$ to the value $\epsilon_0([I])$ to give the formula in Theorem \ref{thm:spin}.
\item Finally, in \S\ref{sec:genus}, we collect the various formulas for topological invariants of $W_D$ and bound them to give bounds on the genus of $W_D$ and its components.
\end{enumerate}

\paragraph{Open problems.}
While the homeomorphism type of $W_D$ is now understood, describing the components of $W_D$ as Riemann surfaces remains a challenge.
\begin{pro}
  Describe $W_D$ as a hyperbolic orbifold and as an algebraic curve.
\end{pro}
Our analysis of the orbifold points on $W_D$ have given explicit descriptions of some complex multiplication points on $W_D$.  By the Andr\'e-Oort conjecture \cite{klingleryafaev:andreoort}, there are only finitely many complex multiplication points on $W_D$ and it would be interesting to find them.
\begin{pro}
  Describe the complex multiplication points on $W_D$.
\end{pro}
The complex multiplication points on $\M_2(D_8)$ lie on \teich\ curves and the complex multiplication points on $\M_2(D_{12})$ lie on complex geodesics in $\M_2$ with infinite fundamental group.  It would be interesting to find other examples of Shimura varieties whose complex multiplication points lie on interesting complex geodesics.
\begin{pro}
  Find other Shimura varieties whose complex multiplication lie on \teich\ curves.
\end{pro}
The divisors supported at cusps on modular curves generate a finite subgroup of the associated Jacobian \cite{manin:torsion}.  It would be interesting to know if the same is true for \teich\ curves.  The first Weierstrass curve with genus one is $W_{44}$.
\begin{pro}
  Compute the subgroup of $\jac(W_{44})$ generated by divisors supported at the cusps and points of order two.
\end{pro}
Algebraic geometers and number theorists have been interested in exhibiting explicit examples of algebraic curves whose Jacobians have endomorphisms.  A parallel goal is to exhibit Riemann surfaces whose Jacobians have endomorphisms as polygons in the plane glued together by translations as we did for the complex multiplication points on $\M_2(D_8)$ and $\M_2(D_{12})$.
\begin{pro}
  Exhibit surfaces whose Jacobians have complex multiplication as polygons in $\cc$ glued together by translation.
\end{pro}
There are very few Teichm\"uller curves $C \rightarrow \M_g$ whose images under the period mapping sending a surface to its Jacobian parametrize Shimura curves and they are classified in \cite{moeller:teichshimura}.  The families $\M_2(D_8)$ and $\M_2(D_{12})$ are examples of Teichm\"uller curves whose images under the period mapping are dense in Shimura curves.  Jacobians of $D_8$- and $D_{12}$-surfaces are dense in Shimura curves because they are characterized by their endomorphism ring.  The family $\M_2(D_8)$ (resp. $\M_2(D_{12})$) is a Teichm\"uller curves because the map $X \rightarrow X/\rho(J)$ (resp. $X \rightarrow X/\rho(Z)$) is branched over exactly four points.  It would be interesting to have a classification of such curves.
\begin{pro}
	Classify the Teichm\"uller curves whose images under the period mapping are dense in Shimura curves.
\end{pro}
\paragraph{Notes and references.}  For a survey of results related to the \teich\ geodesic flow, Teichm\"uller curves and relations to billiards see \cite{kms:ergodicity,masurtabachnikov:billiards,kontsevichzorich:components,zorich:flatsurfaces}.  Background about Abelian varieties, Hilbert modular surfaces and Shimura varieties can be found in \cite{vdgeer:hms}, \cite{shimura:cm}, \cite{birkenhakelange:cxabelianvarieties}, and \cite{shimura:endomorphisms}.  The orbifold points on $X_D$ are studied in \cite{prestel:elliptics} and the family $\M_2(D_8)$ has been studied in various settings, e.g. \cite{silhol:order4}.

\paragraph{Acknowledgments} The author would thank C. McMullen for many helpful conversations throughout this project as well as A. Preygel and V. Gadre for several useful conversations.  The author was partially supported in part by the National Science Foundation as a postdoctoral fellow.

\section{The $D_8$-family} \label{sec:d8family}
In this section we define and study the moduli space $\M_2(D_8)$ parametrizing pairs $(X,\rho)$ where $X \in \M_2$ and $\rho: D_8 \rightarrow \Aut(X)$ is injective.  In Section \ref{sec:introduction}, we defined a domain $U$, a polygon $P_\tau$ and a surface $X_\tau$ with order four automorphism $J_\tau$ for each $\tau \in U$, a field $K_a$ for $a \in \cc \setminus \left\{ \pm 2 \right\}$, an Abelian variety $A_\tau$ for $\tau \in \hh$ and a modular function $a : \hh \rightarrow \cc$.  Our main goal for this section is to prove the following proposition, establishing the claims in Theorem \ref{thm:dsurfaces} relating these different descriptions of $D_8$-surfaces:
\begin{prop}\label{prop:d8descriptions}
Fix $\tau \in U$.  There is an injective homomorphism $\rho: D_8 \rightarrow \Aut(X_\tau)$ with $\rho(J) = J_\tau$ and $X_\tau$ satisfies:
\[ \cc(X_\tau) \cong K_{a(\tau)} \mbox{ and } \jac(X_\tau) \cong A_\tau. \]
For any $(X,\rho) \in \M_2(D_8)$, there is a $\tau \in U$ so that there is an isomorphism $X \rightarrow X_\tau$ intertwining $\rho(J)$ and $J_\tau$.
\end{prop}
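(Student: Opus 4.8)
The plan is to route the entire comparison through the genus one quotient $E = X/\rho(r)$ and the modular curve $Y_0(2)$, as sketched in the Introduction. First I would produce the $D_8$-action on $X_\tau$. The pinwheel $P_\tau$ carries the obvious order four symmetry $z \mapsto iz$, which permutes its vertices cyclically and descends to an order four automorphism $J_\tau$ of $X_\tau$; since a genus two surface carrying an order four automorphism automatically admits a faithful $D_8$-action (Proposition \ref{prop:extendJ}), this yields an injective $\rho \co D_8 \to \Aut(X_\tau)$ with $\rho(J) = J_\tau$. Here $\rho(J)^2 \co z \mapsto -z$ is the hyperelliptic involution (the central element of $D_8$), while $\rho(r)$ is a non-central involution with $\rho(r)\rho(J)\rho(r)^{-1} = \rho(J)^{-1}$.

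Next I would analyze the quotient for a general $(X,\rho) \in \M_2(D_8)$. Because $\rho(r)$ is not the hyperelliptic involution, Riemann--Hurwitz forces $E = X/\rho(r)$ to have genus one with exactly two branch points, and the ambient $D_8$-symmetry singles out a base point and a point of order two on $E$ (Proposition \ref{prop:d8involutionquotient}); this defines a holomorphic map $g \co \M_2(D_8) \to Y_0(2)$. On the algebraic model $K_a$, where $\rho(r)(z,x) = (z,-x)$, the invariant subfield is generated by $z$ and $u = x^2$, and the defining relation becomes $z^2 = (u-1)(u^2-au+1)$; this exhibits $E$ explicitly as an elliptic curve whose branch locus $\{1, \alpha, \beta, \infty\}$ (with $\alpha\beta = 1$ and $\alpha+\beta = a$) is determined by $a$, and conversely lets me recover $a$, hence $\cc(X) \cong K_a$, from the marked quotient.

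Dually, on the analytic side I would compute periods. The involution $\rho(r)$ swaps the two $J$-eigenforms on $X_\tau$: if $\rho(J)^*\omega = i\omega$ then $\rho(r)^*\omega$ has eigenvalue $-i$, since $\rho(r)\rho(J) = \rho(J)^{-1}\rho(r)$. Diagonalizing $\rho(r)$ therefore gives a basis in which $\rho(J) = \spmat{0 & 1 \\ -1 & 0}$ and $\rho(r) = \spmat{1 & 0 \\ 0 & -1}$; integrating this eigenbasis against an explicit symplectic basis of $H_1(X_\tau,\zz)$ read off from the edge identifications of $P_\tau$ produces the lattice $\Lambda_\tau$ together with its polarization, giving $\jac(X_\tau) \cong A_\tau$. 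The same computation identifies $g(X_\tau,\rho_\tau)$ with $E_\tau = \cc/(\zz \oplus \tau\zz)$ marked by base point $Z_\tau = 0$ and order two point $T_\tau = 1/2$. Matching the two descriptions of this quotient---its branch data as a function of $a$ from the $K_a$ side, and its period $\tau$ from the $A_\tau$ side---yields the modular identity $a = a(\tau) = -2 + 1/(\lambda(\tau)\lambda(\tau+1))$, since $\lambda(\tau)$ and $\lambda(\tau+1)$ are precisely the cross-ratios recording the two unmarked pairings of the four $2$-torsion points of $E_\tau$ relative to the marking $T_\tau$.

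Finally, for the converse I would argue that $g(X,\rho)$ determines $(X,\rho)$ up to isomorphism intertwining the marked copies of $J$: the double cover $X \to E$ is branched over a canonical pair of points attached to the marking, so $X$ is reconstructed from the $Y_0(2)$-datum, and an isomorphism of marked quotients lifts to an isomorphism of surfaces carrying $\rho(J)$ to $J_\tau$. It then remains to check that as $\tau$ ranges over $U$ the marked curves $E_\tau$ sweep out $Y_0(2)$ exactly once, i.e.\ that $U$ is a fundamental domain for the group generated by $\Gamma_0(2)$ and the Atkin--Lehner involution; choosing the representative $\tau \in U$ with $g(X_\tau,\rho_\tau) = g(X,\rho)$ then gives the desired isomorphism $X \to X_\tau$. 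The step I expect to be the main obstacle is the explicit derivation of the modular relation $a(\tau) = -2 + 1/(\lambda(\tau)\lambda(\tau+1))$: it hinges on correctly matching the $D_8$-canonical marking of the $2$-torsion of $E_\tau$ with the specific cross-ratios $\lambda(\tau)$ and $\lambda(\tau+1)$, and on verifying that their product is the natural $\Gamma_0(2)$-invariant encoding $a$.
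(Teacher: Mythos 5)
Your overall architecture matches the paper's: get the $D_8$-action from Proposition \ref{prop:extendJ}, route everything through the marked quotient $E = X/\rho(r)$ and the map $g\co \M_2(D_8)\to Y_0(2)$, and finish with the fact that $U$ is a fundamental domain for the group generated by $\Gamma_0(2)$ and $\tau\mapsto -1/2\tau$. The gap is at the central step, where you propose to identify $\jac(X_\tau)\cong A_\tau$ and $g(X_\tau,\rho_\tau)=(E_\tau,Z_\tau,T_\tau)$ by integrating an eigenbasis against a symplectic basis of $H_1(X_\tau,\zz)$ ``read off from the edge identifications of $P_\tau$.'' The polygon only determines the periods of the single form $\omega_\tau=dz$. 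The second eigenform is $r^*\omega_\tau$, and since $rJr^{-1}=\eta J$ forces $J^*(r^*\omega_\tau)=-i\,r^*\omega_\tau$, the involution $r$ does not preserve $\omega_\tau$ up to scale and so is not an isometry of the flat structure of $P_\tau$; it exists only abstractly via Proposition \ref{prop:extendJ}. Its action on $H_1(X_\tau,\zz)$ --- equivalently the periods of the second eigenform, i.e.\ the second coordinate of the lattice $\Lambda_\tau\subset\cc^2$ --- is precisely the unknown you are trying to compute, so the proposed calculation is circular as stated.

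The paper circumvents this in two moves. First, for any $(X,\rho)$ with $g(X,\rho)=(E_\tau,Z_\tau,T_\tau)$ it computes $\jac(X)\cong E_\tau\times E_\tau/\Gamma^W=A_\tau$ purely from the algebraic model of a genus two surface with a non-hyperelliptic involution (Propositions \ref{prop:d4jacobians} and \ref{prop:d8jacobians}); no pinwheel periods are needed. Second --- and this is the step your plan is missing --- it identifies $g(X_\tau,\rho_\tau)$ with $(E_\tau,Z_\tau,T_\tau)$ by a rigidity argument rather than a computation: both $\tau\mapsto(X_\tau,\rho_\tau)$ and $\tau\mapsto g^{-1}(E_\tau,Z_\tau,T_\tau)$ descend to biholomorphisms $U/{\sim}\to\M_2(D_8)/\sigma$ between copies of $\cc^*$; there are exactly two such maps fixing the orbifold point $\tau=\sqrt{-2}/2$ (the regular octagon), and they are told apart by the geometric genus of the stable limit as $\Im\tau\to\infty$, which is zero for the pinwheels. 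If you insist on a period-based argument you must first exhibit $r$ explicitly on $X_\tau$, or equivalently compute the quotient elliptic curve $X_\tau/\rho(r)$ from the flat data, which is essentially as hard as the statement being proved. By contrast, the cross-ratio derivation of $a(\tau)=-2+1/(\lambda(\tau)\lambda(\tau+1))$, which you flag as the main obstacle, is the easier half: the paper gets it by characterizing $a$ as the Hauptmodul of $Y_0(2)$ normalized to send the orbifold point to $2$, and only needs the answer up to the Atkin--Lehner involution since $K_{a(\tau)}$ and $K_{a(-1/2\tau)}$ define the same surface.
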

We prove Proposition \ref{prop:d8descriptions} by studying the quotients $E_\rho=X/\rho(r)$ as $(X,\rho)$ ranges in $\M_2(D_8)$.  We show that $E_\rho$ has genus one, a distinguished base point $Z_\rho$ and point of order two $T_\rho$ (cf. Proposition \ref{prop:d8involutionquotient}), allowing us to define a holomorphic map $g : \M_2(D_8) \rightarrow Y_0(2)$ by $g(X,\rho) = (E_\rho,Z_\rho,T_\rho)$.  We compute $\cc(X)$ and $\jac(X)$ in terms of $g(X,\rho)$ (Propositions \ref{prop:d8algebraicmodels} and \ref{prop:d8jacobians}) and then show that $X_\tau$ admits a $D_8$-action $\rho_\tau$ with $g(X_\tau,\rho_\tau) = (\cc/\zz\oplus \tau \zz,0+\zz\oplus \tau\zz,1/2+\zz\oplus \tau\zz)$ (Proposition \ref{prop:pinwheels}).

\paragraph{Surfaces with automorphisms.}  Let $G$ be a finite group.  We define the {\em moduli space of $G$-surfaces of genus $g$} to be the space:
\[ \M_g(G) = \left\{ (X,\rho) : X \in \M_g \mbox{ and } \rho\co G \rightarrow \Aut(X) \mbox{ is injective.} \right\} / \sim. \]
We will call two $G$-surfaces $(X_1,\rho_1)$ and $(X_2,\rho_2)$ equivalent and write $(X_1,\rho_1) \sim (X_2,\rho_2)$ if there is an isomorphism $f \co X_1 \rightarrow X_2$ satisfying $\rho_1(x) = f^{-1} \circ \rho_2(x) \circ f$ for each $x \in G$.  The set $\M_g(G)$ has a natural topology and a unique holomorphic structure so that the natural map $\M_g(G) \rightarrow \M_g$ is holomorphic.

\paragraph{Group homomorphisms and automorphisms.}  Any injective group homomorphism $h \co G_1 \rightarrow G_2$ gives rise to a holomorphic map $\M_g(G_2) \rightarrow \M_g(G_1)$.  In particular, the automorphism group $\Aut(G)$ acts on $\M_g(G)$. The inner automorphisms of $G$ fix every point on $\M_g(G)$ so the $\Aut(G)$-action factors through the outer automorphism group $\Out(G) = \Aut(G)/\operatorname{Inn}(G)$.  Note that $\Out(D_8)$ is isomorphic to $\zz/2\zz$ with the automorphism $\sigma(J) = J$ and $\sigma(r) = Jr$ representing the non-trivial outer automorphism.

\paragraph{Hyperelliptic involution and Weierstrass points.}  Now let $X$ be a genus two Riemann surface and $\Omega(X)$ be the space of holomorphic one forms on $X$.  The canonical map $X \rightarrow \mathbb P \Omega(X)^*$ is a degree two branched cover of the sphere branched over six points.  The {\em hyperelliptic involution} $\eta$ on $X$ is the Deck transformation of the canonical map and the Weierstrass points $X^W$ are the points fixed by  $\eta$.  Any holomorphic one form $\omega \in \Omega(X)$ has either two simple zeros at points $P$ and $Q \in X$ with $P =\eta(Q)$ or has a double zero at a point $P \in X^W$.

\paragraph{Automorphisms and permutations of $X^W$.}
Since it is canonically defined, $\eta$ is in the center of $\Aut(X)$ and any $\phi\in \Aut(X)$ induces an automorphism $\phi^\eta$ of the sphere $X/\eta$ and restricts to a permutation $\phi|_{X^W}$ of $X^W$.  The conjugacy classes in the permutation group $\Sym(X^W)$ are naturally labeled by partitions of six corresponding to orbit sizes in $X^W$, and we will write $[n_1,\dots,n_k]$ for the conjugacy class corresponding to the partition $n_1+\dots+n_k=6$.  We will denote the conjugacy class of $\phi|_{X^W}$ in the permutation of group of $X^W$ by $[\phi|_{X^W}]$.

In Table \ref{tab:g2auts}, we list the possibilities for $[\phi|_{X^W}]$ and, for each possibility, we determine the possibilities for the order of $\phi$, the number of points in $X$ fixed by $\phi$ and give the possible algebraic models for the pair $(X,\phi)$.  The claims are elementary to prove and well-known (cf. \cite{birkenhakelange:cxabelianvarieties} \S11.7 or \cite{bolza:sextics}).  Most can be proved by choosing an appropriately scaled coordinate $x \co X/\eta \rightarrow \chat$ so the action $\phi^\eta$ fixes $x^{-1}(0)$ and $x^{-1}(\infty)$.  From Table \ref{tab:g2auts}, we can show:
\begin{prop}
\label{prop:extendJ}
Suppose $X \in \M_2$ has an order four automorphism $\phi$.  The conjugacy class of $\phi|_{X^W}$ is $[1,1,2,2]$, the eigenforms for $\phi$ have double zeros, $\phi^2 = \eta$ and there is an injective group homomorphism $\rho \co D_8 \rightarrow \Aut(X)$ with $\rho(J) = \phi$.
\end{prop}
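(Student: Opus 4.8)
The plan is to first determine the permutation $\phi|_{X^W}$ by passing to the hyperelliptic quotient, and then to read the remaining assertions off the algebraic models recorded in Table \ref{tab:g2auts}. I would begin by identifying the kernel of $\Aut(X) \to \Sym(X^W)$: the involution $\eta$ fixes every Weierstrass point, while any automorphism acting trivially on $X^W$ descends to a M\"obius transformation of $X/\eta \cong \chat$ fixing all six branch points and is therefore trivial downstairs, hence lies in $\langle\eta\rangle$. So the kernel is exactly $\langle\eta\rangle$, and the order of $\phi|_{X^W}$ equals that of the image $\bar\phi$ of $\phi$ in $\Aut(X)/\langle\eta\rangle \subset \operatorname{PGL}_2(\cc)$. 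This order is $2$ or $4$ (it is not $1$, since otherwise $\phi \in \langle\eta\rangle$ would have order at most two), and it equals $2$ precisely when $\phi^2 = \eta$.

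Next I would pin down the cycle type. An order-$m$ element of $\operatorname{PGL}_2(\cc)$ is conjugate to $x \mapsto \zeta_m x$ and permutes the six branch points in free orbits of size $m$ together with at most the two points over $0$ and $\infty$. For $m=2$ this allows only $[2,2,2]$ or $[1,1,2,2]$, and for $m=4$ only $[1,1,4]$; these are the rows of Table \ref{tab:g2auts} that could support an order-four automorphism. I would eliminate the two unwanted types using the Riemann--Hurwitz bookkeeping encoded in the table. In the $[2,2,2]$ case $\phi$ acts freely (over each fixed point of $\bar\phi$ the two sheets are swapped by $\phi^2 = \eta$, and no branch point is fixed), which contradicts the free-action relation $2 = 4(2g_Y - 2)$ for the quotient $Y = X/\phi$. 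In the $[1,1,4]$ case $\phi$ fixes exactly the two Weierstrass points over $0$ and $\infty$, each with full ramification, and $\phi^2$ stabilizes no further point, forcing the impossible relation $2 = 4(2g_Y - 2) + 6$. Hence $\phi|_{X^W}$ has type $[1,1,2,2]$ and $\phi^2 = \eta$.

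The eigenform claim then follows quickly. Since $\eta$ acts as $-1$ on the two-dimensional space $\Omega(X)$, the map $\phi$ satisfies $\phi^2 = -\Id$ there, so it has eigenvalues $\pm i$ and a basis of eigenforms $\omega_\pm$ with $\phi^*\omega_\pm = \pm i\,\omega_\pm$. The zero divisor of such an $\omega_\pm$ is $\phi$-invariant; were it a pair of distinct simple zeros $\{P, \eta P\}$ with $P \notin X^W$, then $\phi$ would be forced to interchange $P$ and $\eta P$, since it cannot fix $P$ (the set $\Fix(\phi)$ consists of the two Weierstrass points over $0$ and $\infty$), yielding $\phi^2(P) = P$ and contradicting $\phi^2 = \eta$. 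Thus each eigenform has a double zero, located at one of these two fixed Weierstrass points.

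Finally I would upgrade $\langle\phi\rangle$ to a faithful copy of $D_8$. The most reliable route is to invoke the algebraic model attached to the $[1,1,2,2]$ row of Table \ref{tab:g2auts}, under which $\cc(X) \cong K_a$ with $z^2 = (x^2-1)(x^4-ax^2+1)$ and $\phi$ is carried to $J(z,x) = (iz/x^3, 1/x)$; setting $\rho(J) = \phi$ and $\rho(r) = r$ with $r(z,x) = (z,-x)$, one checks $r^2 = J^4 = \Id$ and $rJr = J^{-1}$ by direct substitution, and injectivity is automatic because $\eta$ is the unique involution of $\langle J\rangle$, so $r \notin \langle J\rangle$ and $\lvert \langle J, r\rangle\rvert = 8$. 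I expect this last step to be the main obstacle: the reduced involution $\bar r$ and $\bar\phi$ commute in $\operatorname{PGL}_2(\cc)$, yet the lift $r$ must \emph{invert} $\phi$, the discrepancy being precisely the central element $\eta$. Distinguishing the dihedral lift from the abelian possibility $C_4 \times C_2$ is exactly the content of the relation $(Jr)^2 = 1$, and verifying it on the nose, rather than merely modulo $\eta$, is cleanest via the explicit model rather than an abstract lifting argument.
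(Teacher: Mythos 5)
Your proof is correct, but it is substantially more self-contained than the one in the paper. The paper's proof of Proposition \ref{prop:extendJ} consists of reading Table \ref{tab:g2auts}: only the $[1,1,2,2]$ row supports an order-four automorphism, and in the normalized model $y^2 = x(x^4 - tx^2+1)$, $\phi(x,y)=(-x,iy)$ one checks $\phi^2=\eta$, reads off the eigenforms $dx/y$ and $x\,dx/y$ (double zeros at $x^{-1}(\infty)$ and $x^{-1}(0)$), and writes down $\rho(r)(x,y)=(1/x,y/x^3)$. You instead re-derive the relevant part of the table's classification from first principles --- identifying $\ker(\Aut(X)\to\Sym(X^W))=\langle\eta\rangle$, using the orbit structure of finite-order M\"obius maps to reduce to $[2,2,2]$, $[1,1,2,2]$, $[1,1,4]$, and killing the first and third by Riemann--Hurwitz --- and you prove the double-zero statement intrinsically from the $\phi$-invariance of the zero divisor rather than from coordinates. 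This buys independence from the table (whose unproved entries the paper leans on, citing \cite{bolza:sextics}), at the cost of more argument; the intrinsic eigenform argument in particular is a nice alternative to the model computation. One small inaccuracy to fix: the model attached to the $[1,1,2,2]$ row of Table \ref{tab:g2auts} is the quintic model $y^2=x(x^4-t_1x^2+1)$ with $\phi(x,y)=(-x,iy)$, not the sextic model $K_a$ that you quote. The two are equivalent, and the $K_a$ normalization is obtainable by the same scaling argument that produces the table entry (place the two $\bar\phi$-fixed branch points at $\pm1$ so $\bar\phi(x)=1/x$, then normalize the remaining four to $\{\pm\beta,\pm1/\beta\}$ by a M\"obius map commuting with $\bar\phi$), but as written you are attributing to the table a normalization it does not contain; either cite $K_a$ as defined in the Introduction and justify the normalization in one line, or run the final verification of $r^2=(Jr)^2=J^4=1$ in the table's quintic model exactly as the paper does. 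Also note that the sign ambiguity in the lift ($\phi\mapsto(iz/x^3,1/x)$ versus $(-iz/x^3,1/x)=J^{-1}$) is harmless since $J\mapsto J^{-1}$, $r\mapsto r$ extends to an automorphism of $D_8$.
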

\begin{proof}
	From Table \ref{tab:g2auts}, we see that only $\phi$ with $[\phi|_{X^W}] = [1,1,2,2]$ have order four and that, for such automorphisms, there is a number $t \in \cc$ so that $\cc(X)$ is isomorphic to $\cc(x,y)$ with:
\[ y^2 = x (x^4 - t x^2 + 1), \]
and $\phi(x,y) = (-x,iy)$.  From this algebraic model we see that $\phi^2(x,y) = (x,-y)$ (i.e. $\phi^2 = \eta$), the eigenforms for $\phi$ are the forms $\frac{dx}{y} \cc^*$ and $x \frac{dx}{y} \cc^*$ which have double zeros, and that there is an injective group homomorphism $\rho\co D_8 \rightarrow \Aut(X)$ satisfying $\rho(J) = \phi$ and $\rho(r)(x,y) = (1/x,y/x^3)$.
\end{proof}

From Table \ref{tab:g2auts}, it also follows easily that:
\begin{prop}
	\label{prop:JZeigenforms}
	For any $(X,\rho) \in \M_2(D_8)$, the $\rho(J)$-eigenforms have double zeros.  For any $(X,\rho) \in \M_2(D_{12})$, the $\rho(Z)$-eigenforms have simple zeros.
\end{prop}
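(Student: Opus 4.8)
The two assertions concern the eigenforms of the order-four automorphism $\rho(J)$ and the order-six automorphism $\rho(Z)$, so my plan is to treat each by determining the cycle type of the induced permutation of $X^W$ from Table~\ref{tab:g2auts}. The first assertion is essentially already done: since $\rho$ is injective and $J$ has order four in $D_8$, the map $\rho(J)$ is an order-four automorphism of $X$, and Proposition~\ref{prop:extendJ} records that the eigenforms of any order-four automorphism of a genus two surface have double zeros. So only the claim about $\rho(Z)$ requires work.

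The key reduction for the $D_{12}$ case is that the zero divisor of an eigenform is invariant. If $\omega \in \Omega(X)$ satisfies $\rho(Z)^*\omega = \zeta\omega$ for some $\zeta \in \cc^*$, then $(\rho(Z)^*\omega) = (\omega)$ because rescaling does not change a divisor, so $(\omega)$ is carried to itself by $\rho(Z)$. By the dichotomy for one-forms recalled above, $\omega$ has either two simple zeros at a pair $\{P,\eta(P)\}$ or a double zero at a single Weierstrass point $W$; in the latter case invariance of $(\omega)=2[W]$ forces $\rho(Z)(W)=W$. Thus an eigenform can acquire a double zero only at a Weierstrass point fixed by $\rho(Z)$, and the proposition reduces to showing that the order-six automorphism $\rho(Z)$ fixes no point of $X^W$.

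To establish this I would pass to the induced automorphism $\rho(Z)^\eta$ of $X/\eta \cong \chat$, which permutes the six branch points exactly as $\rho(Z)$ permutes $X^W$ (since $\eta$ fixes $X^W$ pointwise). As $Z$ has order six and $\rho$ is injective, $\rho(Z)$ has order six, and $\rho(Z)^\eta$ is a M\"obius transformation of order three when $\rho(Z)^3=\eta$ and of order six otherwise. Either way $\rho(Z)^\eta$ is conjugate to a rotation $z \mapsto \zeta_n z$ with exactly two fixed points on $\chat$, and every non-fixed point has an orbit of full size $n$. Counting the six branch points, the number lying off the two fixed points must be divisible by $n\in\{3,6\}$, which forces no branch point to sit at a fixed point: the only possibilities are two free orbits of size three (type $[3,3]$) or one free orbit of size six (type $[6]$). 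This is precisely what Table~\ref{tab:g2auts} records for order-six automorphisms. In both cases $\rho(Z)$ acts on $X^W$ without fixed points, so by the previous paragraph every $\rho(Z)$-eigenform has simple zeros.

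The hard part is the last step, namely ruling out cycle types with fixed points (such as $[3,1,1,1]$ or $[2,2,1,1]$) for an order-six automorphism; this is where the classification of finite subgroups of $\Aut(\chat)$ encoded in Table~\ref{tab:g2auts} is genuinely used. Once the type is pinned down to $[3,3]$ or $[6]$, the invariance argument is purely formal, and the two halves of the proposition follow from the orders of $J$ and $Z$ together with the corresponding rows of the table.
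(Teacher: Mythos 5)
Your proof is correct and follows essentially the same route as the paper: the $D_8$ half is quoted from Proposition~\ref{prop:extendJ}, and the $D_{12}$ half comes down to the fact that $\rho(Z)$ acts on $X^W$ without fixed points (the paper simply reads off $[\rho(Z)|_{X^W}]=[3,3]$ from Table~\ref{tab:g2auts}, while you rederive the admissible cycle types by conjugating $\rho(Z)^\eta$ to a rotation and counting orbits, and you make explicit the step that an invariant double zero would have to sit at a fixed Weierstrass point). The extra detail is sound but does not change the argument in substance.
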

\begin{proof}
For $(X,\rho) \in \M_2(D_8)$, the automorphism $\rho(J)$ has order four so, by Proposition \ref{prop:extendJ}, has eigenforms with double zeros.  Also from Table \ref{tab:g2auts} we see that, for $(X,\rho) \in \M_2(D_{12})$, $[\rho(Z)|_{X^W}] = [3,3]$ so $\rho(Z)$ fixes no Weierstrass point and has eigenforms with simple zeros.
\end{proof}

\begin{table} \small
\begin{center}
\begin{tabular}{cccc}
\toprule
$[\phi|_{X^W}]$ & Algebraic model for $\left( X,\phi \right)$ & Order of $\phi$ & $\Fix(\phi)$ \\
\midrule
\parbox{1cm}{\centering $[1,1,1,$ \\$1,1,1]$} & \parbox{6.5cm}{ \centering $y^2 = x(x-1)(x-t_1)(x-t_2)(x-t_3)$ \\ and $\phi(x,y)=(x,y)$ or $(x,-y)$.} & 1 or 2 & $X$ or $X^W$ \\ \als
$[2,2,2]$ & \parbox{6.5cm}{\centering $y^2 = (x^2-1)(x^2-t_1)(x^2-t_2)$ \\ and $\phi(x,y)=(-x,y)$.} & 2 & $x^{-1}(0)$ \\ \als
$[1,1,2,2]$ & \parbox{6.5cm}{\centering $y^2 = x(x^4-t_1 x^2 +1)$  and\\ $\phi(x,y)=(-x,iy)$.} & 4  & $x^{-1}(\left\{ 0,\infty \right\})$\\ \als
$[1,1,4]$ & \parbox{6.5cm}{\centering $y^2 = x(x^4+1)$  and \\ $\phi(x,y)=(ix,(1+i)y/\sqrt{2})$.} & 8 & $x^{-1}(\left\{ 0,\infty \right\})$\\ \als
$[2,4]$ & \parbox{6.5cm}{\centering $y^2 = x(x^4+1)$ and \\ $\phi(x,y)=(i/x,(1+i)y/\sqrt{2} x^3)$.}& 8& $x^{-1}(\left\{ 0,\infty \right\})$\\ \als
$[3,3]$ & \parbox{6.5cm}{\centering $y^2 = (x^3-t_1^3)(x^3-t_1^{-3})$ and \\ $\phi(x,y)=(e^{2\pi i /3} x, y)$ or $(e^{2 \pi i /3} x, -y)$.}& 3 or 6 & $x^{-1}(\left\{ 0,\infty \right\})$\\ \als
$[1,5]$ & \parbox{6.5cm}{\centering $y^2 = (x^5+1)$ and  \\ $\phi(x,y)=(e^{2 \pi i/5}x,y)$ or $(e^{2\pi i/5}x,-y)$.}  & 5 or 10 & \parbox{1.5cm}{ \centering $x^{-1}(0)$ or\\ $x^{-1}(\infty)$ }\\ \als
$[6]$ & \parbox{6.5cm}{\centering $y^2 = x^6+1$  and  $\phi(x,y)=(e^{2 \pi i/6} x, y)$.} & 6 & $x^{-1}(0)$ \\ \als
\bottomrule
\end{tabular}
\caption{
\label{tab:g2auts} 
An automorphism $\phi$ of a genus two surface $X$ restricts to a bijection $\phi|_{X^W} \in \Sym(X^W)$.  When $\phi|_{X^W}$ is in one of the conjugacy classes above, the pair $(X,\phi)$ has an algebraic model $\cc(X) \cong \cc(x,y)$ with $x$, $y$ and $\phi$ satisfying the equations above for an appropriate choice of parameters $t_i \in \cc$. The omitted conjugacy classes do not occur as restrictions of automorphisms since the $\phi$ induces automorphism of the sphere $X/\eta$.}
\end{center}
\end{table}

\paragraph{Algebraic models for $D_8$-surfaces.}
In Section \ref{sec:introduction} we defined a field $K_a$ for each $a \in \cc \setminus \left\{ \pm 2 \right\}$ with an explicit action of $D_8$ on $K_a$ by field automorphisms.  Let $Y_a$ denote the genus two surface satisfying $\cc(Y_a) \cong K_a$ and let $\rho_a \co D_8 \rightarrow \Aut(Y_a)$ be the corresponding action of $D_8$.  We will eventually show that the map $f \co \cc \setminus \left\{ \pm 2 \right\} \rightarrow \M_2(D_8)$ given by $f(a)= (Y_a,\rho_a)$ is an isomorphism.  To start we will show $f$ is onto:
\begin{prop}  
\label{prop:fonto}
The map $a \mapsto (Y_a,\rho_a)$ defines a surjective holomorphic map:
\[ f \co\cc \setminus \left\{ \pm 2 \right\} \rightarrow \M_2(D_8). \]
In particular, $\dim_\cc(\M_2(D_8)) = 1$ and $\M_2(D_8)$ has one irreducible component.
\end{prop}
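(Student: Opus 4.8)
The plan is to check in turn that $f$ is well defined, that it is holomorphic, and that it is surjective; the dimension and irreducibility statements then follow formally. The order-four generator will be handled by Proposition \ref{prop:extendJ}, and the real work lies in matching the full $D_8$-action when establishing surjectivity.

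\textbf{Well-definedness.} First I would check that $Y_a$ is a smooth genus two surface and that $\rho_a$ is faithful for every $a\in\cc\setminus\{\pm2\}$. The sextic $(x^2-1)(x^4-ax^2+1)$ acquires a repeated root exactly when $a=\pm2$ (the quartic $x^4-ax^2+1$ has a double root iff $a^2-4=0$, and it shares the root $\pm1$ with $x^2-1$ iff $2-a=0$), so for $a\neq\pm2$ the curve $z^2=(x^2-1)(x^4-ax^2+1)$ is smooth hyperelliptic of genus two. A direct computation gives $J^2(z,x)=(-z,x)$, so $\rho_a(J)$ has order four with $\rho_a(J)^2$ the hyperelliptic involution $\eta$; since $\rho_a(r)$ acts on the $x$-line as $x\mapsto -x$ while the powers of $\rho_a(J)$ act as $x\mapsto x$ or $x\mapsto 1/x$, we have $r\notin\langle J\rangle$ and $\rho_a$ embeds $D_8$ into $\Aut(Y_a)$.

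\textbf{Holomorphy.} The assignment $a\mapsto Y_a$ is an algebraic family of curves over $\cc\setminus\{\pm2\}$ carrying a fibrewise $D_8$-action. By the defining property of the holomorphic structure on $\M_2(D_8)$ (that $\M_2(D_8)\to\M_2$ is holomorphic and that such families induce holomorphic classifying maps), the induced map $f$ is holomorphic.

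\textbf{Surjectivity.} Given $(X,\rho)\in\M_2(D_8)$, set $\phi=\rho(J)$, which has order four. By Proposition \ref{prop:extendJ} and Table \ref{tab:g2auts} the class of $\phi|_{X^W}$ is $[1,1,2,2]$, so $\phi$ fixes two Weierstrass points; choosing the coordinate $x$ on $X/\eta\cong\chat$ that carries them to $\pm1$ makes $\overline\phi\co x\mapsto 1/x$. The induced sphere-actions of $J$ and of the reflections of $D_8$ generate the Klein four-group $\{x,-x,1/x,-1/x\}$, and the remaining four Weierstrass points form a single orbit $\{\pm\beta,\pm\beta^{-1}\}$ under it; hence the branch locus is $\{\pm1\}\cup\{\pm\beta,\pm\beta^{-1}\}$ and, after scaling $z$, $\cc(X)\cong K_a$ with $a=\beta^2+\beta^{-2}\neq\pm2$. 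This already matches $\rho(J)$ with the generator $J$ of $K_a$. To match $\rho(r)$, note that $\overline{\rho(r)}$ commutes with $x\mapsto 1/x$ and must swap $\pm1$ (it cannot fix both, else it would equal $\overline\phi$), so $\overline{\rho(r)}$ is either $x\mapsto -x$ or $x\mapsto -1/x$. In the first case $\rho$ agrees with the standard action $\rho_a$ and $f(a)=(X,\rho)$; in the second case a further coordinate change fixing $x\mapsto 1/x$ while carrying $x\mapsto -1/x$ to $x\mapsto -x$ (for instance conjugation by $x\mapsto -i\,\tfrac{x-i}{x+i}$) reduces to the first case at the cost of replacing $a$ by the corresponding value $a'$, and $f(a')=(X,\rho)$. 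Either way $(X,\rho)$ lies in the image.

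\textbf{Consequences and the main obstacle.} Since $\cc\setminus\{\pm2\}$ is connected, irreducible, and one-dimensional and $f$ is surjective and non-constant, $\M_2(D_8)=f(\cc\setminus\{\pm2\})$ is irreducible of dimension one with a single component. The main obstacle is the surjectivity step, and within it the matching of $\rho(r)$ once $\rho(J)$ has been standardized: Proposition \ref{prop:extendJ} pins down the order-four generator and forces the equation into the shape of $K_a$, but the second generator is determined only up to the outer automorphism $\sigma$ of $D_8$ (the dichotomy $x\mapsto -x$ versus $x\mapsto -1/x$). The content is to verify that both resulting $D_8$-structures are realized in the family $\{(Y_a,\rho_a)\}$, equivalently that the image of $f$ is stable under $\sigma$; the explicit conjugating substitution above is what makes this work.
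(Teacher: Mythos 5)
Your overall strategy is the paper's: use Table \ref{tab:g2auts} and Proposition \ref{prop:extendJ} to pin down $[\rho(J)|_{X^W}]=[1,1,2,2]$ and $[\rho(r)|_{X^W}]=[2,2,2]$, normalize a coordinate on $X/\eta\cong\chat$, read off the sextic, and absorb a residual two-fold ambiguity at the end; the well-definedness, holomorphy and irreducibility parts are fine. But there is a genuine gap where you standardize $\overline{\rho(r)}$. Sending the two Weierstrass points fixed by $\rho(J)$ to $\pm1$ uses only two of the three degrees of freedom in choosing $x$, so ``the coordinate'' is not determined: the one-parameter group of M\"obius maps fixing $1$ and $-1$ remains. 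Correspondingly, the implication ``$\overline{\rho(r)}$ commutes with $x\mapsto 1/x$ and swaps $\pm1$, hence equals $x\mapsto -x$ or $x\mapsto -1/x$'' is false: for every $t$ the involution $\mu_t(x)=(tx-1)/(x-t)$ swaps $\pm1$ and commutes with $x\mapsto 1/x$ in $\PSL_2(\cc)$ (the matrices $\spmat{t & -1 \\ 1 & -t}$ and $\spmat{0&1\\1&0}$ anticommute), and only $t=0,\infty$ give your two candidates. With a general $\mu_t$ the orbit of the remaining four Weierstrass images is not of the form $\{\pm\beta,\pm\beta^{-1}\}$, the sextic does not take the shape $(x^2-1)(x^4-ax^2+1)$, and the identification $\cc(X)\cong K_a$ does not follow as written.

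The gap is repairable by spending the leftover normalization on $\rho(r)$, and that is exactly what the paper does, in the opposite order: it sends $\Fix(\rho(r))$ to $0$ (so that $\overline{\rho(r)}$ is $x\mapsto -x$, with fixed points $0,\infty$) and a fixed point of $\rho(J)$ to $1$, which pins down three points of $\chat$ and forces $\overline{\rho(J)}(x)=1/x$ on the nose. The only remaining ambiguity is then the sign in the lift $\rho(J)(x,y)=(1/x,\pm iy/x^3)$ --- a point your write-up also glosses over when it says the coordinate ``already matches $\rho(J)$ with $J$'' --- and this is killed by composing with $\rho(r)$, which realizes the inner automorphism $J\mapsto J^{-1}$. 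In particular the paper never meets your second case $\overline{\rho(r)}=(x\mapsto -1/x)$ or the auxiliary conjugation by $x\mapsto -i(x-i)/(x+i)$, and it lands on $f(a)=(X,\rho)$ exactly rather than on $f(a')=(X,\rho)$ for a modified parameter $a'$.
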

\begin{proof}
Fix $(X,\rho) \in \M_2(D_8)$.  From Table \ref{tab:g2auts} and Proposition \ref{prop:extendJ}, we have that $[\rho(r)|_{X^W}] = [2,2,2]$, $[\rho(J)|_{X^W}] = [1,1,2,2]$, $\rho(J)^2=\eta$ and $\Fix(r)$ is a single $\eta$-orbit consisting of two points.  These observations allow us to choose an isomorphism $x \co X/\eta \rightarrow \chat$ so that: (1) $\rho(r)(x) = -x$, (2) $x(\Fix(\rho(r))) = 0$ and (3) $x(P) = 1$ for some $P \in \Fix(\rho(J))$.  Since $\rho(J r J^{-1}) = \eta \rho(r)$, $J$ permutes the points in $X/\eta$ fixed by $\rho(r)^\eta$, i.e. $x^{-1}(\left\{ 0,\infty \right\})$, and must satisfy $\rho(J)(x)=1/x$.  If $Q$ is any point in $X^W$ not fixed by $\rho(J)$ and $t = x(Q)$, then:
\[ x(X^W) = \left\{ 1,-1,t,-t,1/t,-1/t \right\}. \]
The field  $\cc(X)$ is isomorphic to $\cc(x,y)$ where:
\[ y^2=(x^2-1)(x^4-ax^2+1) \mbox{ and } a = t^2+1/t^2. \]
Note that $a \not \in \cc \setminus\left\{ \pm 2 \right\}$ since the discriminant of $(x^2-1)(x^4-a x^2+1)$ is nonzer.  By conditions (1) and (2) on the coordinate $x$, we have $\rho(r)(x,y) = (-x,y)$. Since $\rho(J)(x)=1/x$, we have $\rho(J)(x,y) = (1/x,iy/x^3)$ or $(1/x,-iy/x^3)$.  In the first case, the obvious isomorphism between $X$ and $Y_a$ intertwines $\rho$ and $\rho_a$.  In the second case, the composition of $\rho(r)$ with the obvious isomorphism between $X$ and $Y_a$ intertwines $\rho$ and $\rho_a$.  In either case, $(X,\rho)$ is in the image of $f$.  
\end{proof}
\paragraph{Genus one surfaces with distinguished points of order two.}  
We now show that the quotient $(X/\rho(r))$ for any $D_8$-surface $(X,\rho)$ has genus one, a distinguished base point and point of order two.
\begin{prop}
\label{prop:d8involutionquotient}
For $(X,\rho) \in \M_2(D_8)$ we define:
\begin{equation} \label{eqn:d8quotient}
E_\rho = X/\rho(r), \mbox{ and } Z_\rho = \Fix(\eta \rho(r)) / \rho(r), T_\rho = \Fix(\rho(J))/\rho(r) \in E_\rho.
\end{equation}
The quotient $E_\rho$ has genus one and, under the group law on $E_\rho$ with identity element $Z_\rho$, the point $T_\rho$ is torsion of order two.
\end{prop}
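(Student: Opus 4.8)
The plan is to push the entire statement through the degree-two quotient $\pi \co X \to E_\rho = X/\rho(r)$ and to recognize the descended hyperelliptic involution as inversion on $(E_\rho, Z_\rho)$. I would begin with the genus. By Table~\ref{tab:g2auts} the involution $\rho(r)$ has $[\rho(r)|_{X^W}]=[2,2,2]$, so it fixes no Weierstrass point and its fixed locus consists of exactly two points forming a single $\eta$-orbit. Riemann--Hurwitz for $\pi$, namely $2 g(X)-2 = 2\bigl(2 g(E_\rho)-2\bigr) + \#\Fix(\rho(r))$ with $g(X)=2$ and $\#\Fix(\rho(r))=2$, then forces $g(E_\rho)=1$.

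Before addressing the group law I would check that $Z_\rho$ and $T_\rho$ are genuinely single points of $E_\rho$. Since $\eta$ is central it commutes with $\rho(r)$, so $\rho(r)$ preserves $\Fix(\eta\rho(r))$ (itself two points, as $\eta\rho(r)$ again restricts to $[2,2,2]$ on $X^W$). A point fixed by both $\rho(r)$ and $\eta\rho(r)$ would be fixed by their product $\eta$ and hence be a Weierstrass point; but no point of $\Fix(\eta\rho(r))$ is a Weierstrass point, because on $X^W$ the central $\eta$ acts trivially and $\rho(r)$ as $[2,2,2]$. Thus $\rho(r)$ swaps the two points of $\Fix(\eta\rho(r))$ and $Z_\rho$ is a single point. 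Likewise the $D_8$-relation $rJr=J^{-1}$ gives $\rho(r)\rho(J)\rho(r)^{-1}=\rho(J)^{-1}$, so $\rho(r)$ preserves $\Fix(\rho(J))$; as $\Fix(\rho(J))\subseteq X^W$ while $\rho(r)$ fixes no Weierstrass point, $\rho(r)$ swaps these two points and $T_\rho$ is a single point.

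The heart of the argument is identifying the negation map. Because $\eta$ commutes with $\rho(r)$, it descends to a holomorphic involution $\bar\eta$ of $E_\rho$, and $\bar\eta\neq\operatorname{id}$ since $\eta\notin\{\operatorname{id},\rho(r)\}$ in $\Aut(X)$. Every $P\in\Fix(\eta\rho(r))$ satisfies $\eta P=\rho(r)P$, so $\bar\eta(\pi(P))=\pi(P)$; that is, $\bar\eta$ fixes $Z_\rho$. A nontrivial holomorphic involution of a genus-one surface fixing a point is inversion $x\mapsto -x$ for the group law with that point as origin, whose fixed points are precisely the four two-torsion points. Now the points of $\Fix(\rho(J))$ lie in $X^W$ and so are fixed by $\eta$; hence $\bar\eta(T_\rho)=T_\rho$, making $T_\rho$ two-torsion. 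Finally $T_\rho\neq Z_\rho$, because their $\pi$-preimages $\Fix(\rho(J))\subseteq X^W$ and $\Fix(\eta\rho(r))$ (disjoint from $X^W$) are disjoint; so $T_\rho$ has order exactly two.

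I expect the one genuinely substantive step to be the recognition of $\bar\eta$ as inversion on $(E_\rho,Z_\rho)$: everything else is a fixed-point count fed into Riemann--Hurwitz together with the bookkeeping that the relevant $\rho(r)$-orbits have size two. The place demanding the most care is keeping the three fixed-point sets $\Fix(\rho(r))$, $\Fix(\eta\rho(r))$ and $\Fix(\rho(J))$ straight and verifying their pairwise disjointness, since both the well-definedness of $Z_\rho$ and $T_\rho$ and the inequality $T_\rho\neq Z_\rho$ rest on it.
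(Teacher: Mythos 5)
Your proof is correct, but it takes a genuinely different route from the paper's. The paper invokes Proposition \ref{prop:fonto} to reduce to the normal form $(Y_a,\rho_a)$, writes the quotient explicitly as $y^2=(w-1)(w^2-aw+1)$ with $w=x^2$, reads off genus one, and deduces $2T_{\rho_a}=Z_{\rho_a}$ from the divisor of the function $w-1$ (a double zero at $T_{\rho_a}$, a double pole at $Z_{\rho_a}$). You instead argue intrinsically: Riemann--Hurwitz applied to $X\to X/\rho(r)$ gives the genus, and you identify the descended hyperelliptic involution $\bar\eta$ as inversion on $(E_\rho,Z_\rho)$ --- it is nontrivial, fixes $Z_\rho$ because points of $\Fix(\eta\rho(r))$ satisfy $\eta P=\rho(r)P$, and therefore has exactly the two-torsion as fixed locus; since $\Fix(\rho(J))\subset X^W$ is pointwise fixed by $\eta$, the point $T_\rho$ is two-torsion, and your disjointness bookkeeping gives $T_\rho\neq Z_\rho$. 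Both arguments ultimately rest on the same input from Table \ref{tab:g2auts}, namely $[\rho(r)|_{X^W}]=[2,2,2]$ and $[\rho(J)|_{X^W}]=[1,1,2,2]$. What the paper's computation buys is the explicit model \eqref{eqn:d8quotientmodel} for $(E_{\rho_a},Z_{\rho_a},T_{\rho_a})$, which is reused later (e.g.\ to compute $j\circ h(a)=256(a+1)^3/(a+2)$ in Proposition \ref{prop:d8tox02extn}); what your argument buys is independence from the choice of algebraic model and a transparent geometric explanation of why $T_\rho$ is two-torsion. Your care in checking that $\rho(r)$ acts freely on each of $\Fix(\eta\rho(r))$ and $\Fix(\rho(J))$, so that $Z_\rho$ and $T_\rho$ are single points, is a detail the paper absorbs silently into the explicit model.
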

\begin{proof}
By Proposition \ref{prop:fonto}, it is enough to establish the claims when $(X,\rho)$ is equivalent to $(Y_a,\rho_a)$.  From the equations defining $(Y_a,\rho_a)$ we see that $(E_{\rho_a},Z_{\rho_a},T_{\rho_a})$ satisfies:
\begin{equation}
\label{eqn:d8quotientmodel}
\begin{array}{c}
 \cc(E_{\rho_a}) = \cc(y,w=x^2) \mbox{ with } y^2 = (w-1)(w^2-aw+1), \\ 
Z_{\rho_a} = w^{-1}(\infty) \mbox{ and } T_{\rho_a} = w^{-1}(1).
\end{array}
\end{equation}
In particular, $E_{\rho_a}$ has genus one.  The function $(w-1)$ vanishes to order two at $T_{\rho_a}$ and has a pole of order two at $Z_{\rho_a}$, giving that $2 T_{\rho_a} = Z_{\rho_a}$ in the group law on $E_{\rho_a}$ with base point $Z_{\rho_a}$.
\end{proof}

Genus one Riemann surfaces with distinguished base point and point of order two are parametrized by the modular curve: 
\[ Y_0(2) = \left\{ (E,Z,T) : E \in \M_1 \mbox{ and } Z, P \in E \mbox{ satisfies } 2 \cdot P = 2 \cdot Z \right\} / \sim.\]
Two pairs $(E_1,Z_1,T_1)$ and $(E_2,Z_2,T_2)$ are equivalent if there is an isomorphism $E_1 \rightarrow E_2$ sending $Z_1$ to $Z_2$ and $T_1$ to $T_2$.  The modular curve can be presented as a complex orbifold as follows.  For $\tau \in \hh$, let 
\begin{equation}
E_\tau = \cc/\zz \oplus \tau\zz, Z_\tau = 0+\zz\oplus\tau\zz\mbox{ and } T_\tau = 1/2+\zz\oplus\tau\zz.
\end{equation}
The triples $(E_{\tau_1},Z_{\tau_1},T_{\tau_1})$ and $(E_{\tau_2},Z_{\tau_2},T_{\tau_2})$ are equivalent if and only if $\tau_1$ and $\tau_2$ are related by a M\"obius transformation in the group
\[\Gamma_0(2) =\left\{ \spmat{a & b \\ c & d} \in \SL_2(\zz) : c \equiv 0 \bmod 2 \right\}. \]
The map $\hh \rightarrow Y_0(2)$ given by $\tau \mapsto (E_\tau,Z_\tau,T_\tau)$ descends to a bijection on $\hh/\Gamma_0(2)$, presenting $Y_0(2)$ as a complex and hyperbolic orbifold.

A fundamental domain for $\Gamma_0(2)$ is the convex hull of $\left\{ 0,(-1+i)/2,(1+i)/2,\infty \right\}$ and $Y_0(2)$ is isomorphic to the $(2,\infty,\infty)$-orbifold.  The point of orbifold order two on $\hh/\Gamma_0(2)$ is $(1+i)/2 \cdot \Gamma_0(2)$ and corresponds to the square torus with the points fixed by an order four automorphism distinguished.
\begin{prop}
\label{prop:d8tox02extn}
Let $g \co \M_2(D_8) \rightarrow Y_0(2)$ be the map defined by:
\[ g(X,\rho) = (E_\rho,Z_\rho,T_\rho). \]
The composition $g \circ f \co \cc \setminus \left\{ \pm 2 \right\} \rightarrow Y_0(2)$ extends to a biholomorphism on $h: \cc \setminus \left\{ -2 \right\} \rightarrow Y_0(2)$ with $h(2) = (1+i)/2\cdot \Gamma_0(2)$.
\end{prop}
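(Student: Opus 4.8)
The plan is to reduce everything to the explicit family produced by the earlier propositions and then pass to a Hauptmodul. By Proposition \ref{prop:fonto} every point of $\M_2(D_8)$ is of the form $f(a)$, and by Proposition \ref{prop:d8involutionquotient} we have $g(f(a)) = (E_a, Z, T)$ where $E_a$ is the genus one curve $y^2 = (w-1)(w^2-aw+1)$, $Z$ is the ramification point over $w=\infty$ and $T$ is the ramification point over $w=1$. This is a holomorphic family of elliptic curves with marked base point and marked two–torsion over $a \in \cc \setminus \{\pm 2\}$, so $g \circ f$ is holomorphic; the four branch points of $w$ are $\infty$ (giving $Z$), $1$ (giving $T$), and the roots $w_\pm$ of $w^2 - aw + 1$, which satisfy $w_+ + w_- = a$, $w_+ w_- = 1$ and give the two unmarked nonzero two–torsion points.

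Next I would coordinatize $Y_0(2)$. The forgetful cover $Y(2) \to Y_0(2)$ that orders the two unmarked two–torsion points is a degree two cover whose deck transformation swaps them. Normalizing the branch points by the M\"obius map $w \mapsto (w-1)/(w_+-1)$, which carries $(\infty,1,w_+,w_-)$ to $(\infty,0,1,\lambda)$ with $\lambda = (w_- - 1)/(w_+ - 1)$, puts $E_a$ in Legendre form $y^2 = x(x-1)(x-\lambda)$ with base at $\infty$ and $T$ at $0$; in this coordinate the deck transformation is exactly $\lambda \mapsto 1/\lambda$. Hence $\mu = \lambda + \lambda^{-1}$ is invariant and defines a Hauptmodul $\mu \co Y_0(2) \xrightarrow{\sim} \cc \setminus \{2\}$ (as $\lambda$ ranges over $\cc \setminus \{0,1\}$ the value $\mu = 2$, forced by $\lambda = 1$, is the only one omitted). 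A symmetric function computation using $w_+ + w_- = a$ and $w_+w_- = 1$ then gives
\[ \mu = \lambda + \frac{1}{\lambda} = \frac{(w_+-1)^2 + (w_--1)^2}{(w_+-1)(w_--1)} = \frac{a^2 - 2a}{2 - a} = -a. \]
Since no square roots enter, $\mu = -a$ is a polynomial in $a$, holomorphic across $a = 2$. As $\mu$ separates points of $Y_0(2)$, the map $g \circ f$ must coincide with $h := \mu^{-1}\circ(a \mapsto -a) \co \cc \setminus \{-2\} \to Y_0(2)$, a biholomorphism extending $g\circ f$, the omitted point $a = -2$ corresponding to $\mu = 2$.

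Finally I would locate the cusps and the orbifold point: $\lambda \in \{0,\infty\}$ and $\lambda = 1$ give $\mu = \infty$ ($a = \infty$) and $\mu = 2$ ($a = -2$), the two cusps, while the elliptic fixed point $\lambda = -1$ gives $\mu = -2$, hence $a = 2$; there $E_a$ degenerates in this model but the abstract curve is the square torus $y^2 = x^3 - x$ with $T$ the two–torsion point $x = 0$ fixed by $(x,y) \mapsto (-x,iy)$, which is precisely the orbifold point $(1+i)/2 \cdot \Gamma_0(2)$, giving $h(2) = (1+i)/2 \cdot \Gamma_0(2)$. The main obstacle is exactly this extension across $a = 2$: the plane model $y^2 = (w-1)^3$ is cuspidal and the lift $\lambda$ acquires a $\sqrt{a-2}$ branch point, so the family of curves itself does not extend. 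What rescues the moduli map is that the required degree two base change is absorbed by the order two orbifold structure of $Y_0(2)$ at its elliptic point, and this is invisible to the invariant coordinate $\mu = -a$. The essential check is that $a = 2$ yields this elliptic point (finite $j = 1728$, potentially good reduction) rather than a genuine cusp, in contrast to $a = -2$, where $y^2 = (w-1)(w+1)^2$ has nodal reduction and $j \to \infty$.
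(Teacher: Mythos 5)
Your proof is correct, and it reaches the conclusion by a different mechanism than the paper. Both arguments start from the same explicit model (Equation \ref{eqn:d8quotientmodel}) for $g(f(a))$, but the paper proceeds in two separate steps: it first extends $g\circ f$ across $a=2$ by a degeneration argument (as $a \to 2$ the triple tends to the square torus with the order-four fixed points marked), and then proves $\deg(h)=1$ by composing with the degree-three function $j\co Y_0(2)\to\cc$ and computing $j\circ h(a) = 256(a+1)^3/(a+2)$. You instead build a degree-one coordinate on $Y_0(2)$ directly --- the Hauptmodul $\mu = \lambda + \lambda^{-1}$ obtained by symmetrizing the Legendre parameter under the deck transformation of $Y(2)\to Y_0(2)$ --- and compute $\mu\circ g\circ f = -a$. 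This single symmetric-function identity simultaneously yields the holomorphic extension across $a=2$ (since $-a$ is polynomial in $a$ even though $\lambda$ itself acquires a $\sqrt{a^2-4}$ branch point there), the fact that $h$ is a bijection of coarse spaces, and the identification $h(2) = (1+i)/2\cdot\Gamma_0(2)$ via $\mu=-2 \Leftrightarrow \lambda=-1$. Your computation is consistent with the paper's: substituting $\mu=-a$ into $j = 256(\mu-1)^3/(\mu-2)$ recovers exactly the paper's formula for $j\circ h$. What your route buys is that the degree-one claim is immediate rather than deduced from a degree count; what it costs is the need to set up the cover $Y(2)\to Y_0(2)$ and verify that its deck transformation acts by $\lambda\mapsto 1/\lambda$. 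Your closing remarks on why the moduli map extends across $a=2$ even though the family of plane models degenerates to the cuspidal cubic $y^2=(w-1)^3$ are also correct, and in fact more explicit than the paper's appeal to ``elementary algebraic geometry.''
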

\begin{proof}
In Equation \ref{eqn:d8quotientmodel}, we gave an explicit model for $g(f(a))$.  By elementary algebraic geometry, as $a$ tends to $2$, $g(f(a))$ tends the square torus with the points fixed by an order four automorphism distinguished allowing us to extend the composition $g \circ f$ to a holomorphic map $h : \cc \setminus \left\{ -2 \right\} \rightarrow Y_0(2)$ with $h(2) = (1+i)/2 \cdot \Gamma_0(2)$.

The coarse spaces associated to $Y_0(2)$ and $\cc \setminus \left\{ -2 \right\}$ are both biholomorphic to $\cc^*$.  To show $h$ is a biholomorphism, it suffices to show $\deg(h)=1$.  To do so, consider the function $j \co Y_0(2) \rightarrow \cc$ sending $(E,Z,T)$ to the $j$-invariant of $E$.  The function $j$ has degree $[ \SL_2(\zz) : \Gamma_0(2)] = 3$.  From Equation \ref{eqn:d8quotientmodel}, it is straightforward to show:
\[ j\circ h(a) = 256(a+1)^3/(a+2). \]
Since $\deg(j \circ h) = \deg (j)$, we have $\deg(h) = 1$.   
\end{proof}
\begin{cor}
	\label{cor:fiso}
	The map $f \co \cc \setminus \left\{ \pm 2 \right\} \rightarrow \M_2(D_8)$ defined by $f(a) = (Y_a,\rho_a)$ is a biholomorphism.   The map $g\co \M_2(D_8) \rightarrow Y_0(2)$ is a biholomorphism onto its image, which is the complement of $(1+i)/2\cdot \Gamma_0(2)$.
\end{cor}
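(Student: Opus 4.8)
The plan is to deduce the corollary formally from Propositions \ref{prop:fonto} and \ref{prop:d8tox02extn}, which already carry all the analytic content. First I would record the key observation that the composition $g \circ f$ coincides with the biholomorphism $h$ of Proposition \ref{prop:d8tox02extn} on $\cc \setminus \left\{ \pm 2 \right\}$; in particular $g \circ f = h$ is injective there. Since $f$ is surjective by Proposition \ref{prop:fonto}, injectivity of $g \circ f$ forces $f$ itself to be injective: if $f(a_1) = f(a_2)$ then $h(a_1) = g(f(a_1)) = g(f(a_2)) = h(a_2)$, hence $a_1 = a_2$. Thus $f$ is a holomorphic bijection onto $\M_2(D_8)$.

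Next I would upgrade this bijection to a biholomorphism by exhibiting its inverse explicitly, which avoids having to inspect the local complex structure of $\M_2(D_8)$. Given $(X,\rho) \in \M_2(D_8)$, surjectivity of $f$ lets me write $(X,\rho) = f(a)$ with $a \in \cc \setminus \left\{ \pm 2 \right\}$, and then $g(X,\rho) = g(f(a)) = h(a)$, so that $h^{-1}\bigl(g(X,\rho)\bigr) = a = f^{-1}(X,\rho)$. This proves the identity $f^{-1} = h^{-1} \circ g$ as maps $\M_2(D_8) \to \cc \setminus \left\{ \pm 2 \right\}$. The right-hand side is holomorphic, being the composite of the holomorphic map $g$ with the biholomorphism $h^{-1}$, so $f^{-1}$ is holomorphic and $f$ is a biholomorphism. (As a byproduct this shows $\M_2(D_8)$ is a smooth curve biholomorphic to $\cc\setminus\left\{\pm 2\right\}$; alternatively one could simply invoke the general fact that a holomorphic bijection of Riemann surfaces is biholomorphic.)

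Finally, for the statement about $g$: once $f$ is known to be a biholomorphism, the relation $g \circ f = h$ on $\cc \setminus \left\{ \pm 2 \right\}$ rearranges to $g = h \circ f^{-1}$, a composite of biholomorphisms, so $g$ is a biholomorphism onto its image. That image is $h\bigl(\cc \setminus \left\{ \pm 2 \right\}\bigr)$, and since $h \co \cc \setminus \left\{ -2 \right\} \to Y_0(2)$ is a biholomorphism with $h(2) = (1+i)/2 \cdot \Gamma_0(2)$, removing the single point $a = 2$ from the domain removes exactly the point $(1+i)/2 \cdot \Gamma_0(2)$ of orbifold order two from $Y_0(2)$. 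Hence $g$ maps $\M_2(D_8)$ biholomorphically onto $Y_0(2) \setminus \left\{ (1+i)/2 \cdot \Gamma_0(2) \right\}$, as claimed.

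The argument is essentially bookkeeping, and the only point requiring any care is the holomorphicity of $g$ itself, which is what makes $f^{-1} = h^{-1} \circ g$ holomorphic. This is the natural moduli-theoretic statement that $(X,\rho) \mapsto (E_\rho, Z_\rho, T_\rho)$ depends holomorphically on $(X,\rho)$, i.e.\ that the quotient construction of Proposition \ref{prop:d8involutionquotient} varies holomorphically in families; no new estimates or computations beyond the two cited propositions are needed.
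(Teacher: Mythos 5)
Your proposal is correct and follows essentially the same route as the paper: the paper's proof is the one-line observation that since $g \circ f$ extends to a biholomorphism and $f$ is onto, both $f$ and $g$ are biholomorphisms onto their images, and your argument is a careful unpacking of exactly that, with the explicit inverse $f^{-1} = h^{-1}\circ g$. No gaps.
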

\begin{proof} Since $g \circ f$ extends to a biholomorphism and $f$ is onto, both $g$ and $f$ are biholomorphisms onto their images.  By Proposition \ref{prop:fonto}, $f$ is onto and, consequently, a biholomorphism.  This also implies that the image of $g$ is equal to the image of $g \circ f$, which we have shown is the complement of the point $(1+i)/2 \cdot \Gamma_0(2)$. \end{proof}

\paragraph{Outer automorphism.}
We have already seen that the non-trivial outer automorphism $\sigma$ of $D_8$ acts on $\M_2(D_8)$.  We will now identify the corresponding automorphisms of $\cc \setminus \left\{ \pm 2 \right\}$ and $Y_0(2)$ intertwining $\sigma$ with $f$ and $g$.

The modular curve $Y_0(2)$ has an {\em Atkin-Lehner involution} which we will denote $\sigma_X$.  The involution $\sigma_X$ is given by the formula $\sigma_X(E,Z_E,T_E) = (F,Z_F,T_F)$ where $F$ is isomorphic to the quotient $E/T_Z$ and $T_F$ is the point of order two in the image of the two torsion on $E$ under the degree two isogeny $E \rightarrow F = E/T_Z$.  In terms of our presentation of $Y_0(2)$ as a complex orbifold, $\sigma_X$ is defined by $\sigma_X(\tau \cdot \Gamma_0(2)) = \sigma_X(-1/2\tau \cdot \Gamma_0(2))$.  

The Riemann surface $\cc\setminus \left\{ \pm 2 \right\}$ parametrizing algebraic models for $D_8$-surfaces also has an involution:
\[ \sigma_A(a) = (-2a+12)/(a+2). \]
It is straightforward to show that $f$ and $g$ intertwine $\sigma$, $\sigma_A$ and $\sigma_X$, i.e.:
\[ \sigma \circ f = f \circ \sigma_A \mbox{ and } \sigma_X \circ g = g \circ \sigma. \]

\begin{prop}
\label{prop:extendequivalence}
Suppose $(X_1,\rho_1)$ and $(X_2,\rho_2)$ are $D_8$-surfaces, and there is an isomorphism $s : X_1 \rightarrow X_2$ intertwining $\rho_i(J)$.  Then $(X_1,\rho_1)$ is equivalent to either $(X_2,\rho_2)$ or $\sigma \cdot (X_2,\rho_2)$.
\end{prop}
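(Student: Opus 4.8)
The plan is to measure the failure of the given isomorphism $s$ to intertwine $\rho_1(r)$ with $\rho_2(r)$, and then to correct it by an automorphism of $X_2$ commuting with $\rho_2(J)$. Set $\phi = s\circ\rho_1(r)\circ s^{-1} \in \Aut(X_2)$. Since $s$ intertwines $\rho_1(J)$ and $\rho_2(J)$, and the dihedral relation $(Jr)^2 = 1$ gives $\rho_1(r)\rho_1(J)\rho_1(r)^{-1} = \rho_1(J)^{-1}$, a direct computation shows that $\phi$ is an involution with $\phi\,\rho_2(J)\,\phi^{-1} = \rho_2(J)^{-1}$; that is, $\phi$ inverts $\rho_2(J)$ exactly as $\rho_2(r)$ does. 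Consequently $\phi\,\rho_2(r)$ lies in the centralizer $C = C_{\Aut(X_2)}(\rho_2(J))$, so $\phi = c\,\rho_2(r)$ for some $c \in C$. The proposition will follow once I produce $\psi \in C$ with $\psi\,\phi\,\psi^{-1} \in \{\rho_2(r),\,\rho_2(Jr)\}$: replacing $s$ by $s' = \psi\circ s$ preserves the intertwining of $J$ (because $\psi \in C$) and arranges that $s'$ intertwines $\rho_1(r)$ with either $\rho_2(r)$ or $\rho_2(Jr) = (\rho_2\circ\sigma)(r)$, giving equivalence with $(X_2,\rho_2)$ or with $\sigma\cdot(X_2,\rho_2)$ respectively.

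The technical core is to pin down $C$. Here I would use that $\rho_2(J)$ has order four with $\rho_2(J)^2 = \eta$, so by Proposition \ref{prop:extendJ} it acts on the two-dimensional space $\Omega(X_2)$ with the two distinct eigenvalues $\pm i$. Because $\eta$ acts as $-\Id$ on $\Omega(X_2)$, the representation $\Aut(X_2)\to\operatorname{GL}(\Omega(X_2))$ is faithful; hence every element of $C$ is simultaneously diagonal with $\rho_2(J)$ in the eigenbasis, $C$ is abelian, and it lies in the diagonal torus. Passing to the quotient $X_2/\eta \cong \chat$ with the coordinate $x$ of the model $y^2 = x(x^4 - t x^2 + 1)$ from Proposition \ref{prop:extendJ}, an element of $C$ descends to an automorphism of $\chat$ commuting with $x\mapsto -x$ and preserving the branch locus; tracking the branch points forces $C = \langle\rho_2(J)\rangle$. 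This centralizer computation is the step I expect to be the main obstacle, precisely because it can fail on the surface with extra automorphisms $y^2 = x(x^4+1)$, where an order eight automorphism squares to $\rho_2(J)$ and enlarges $C$; that surface is the octagon underlying $W_8$ and lies outside the range $D > 8$ of the main theorems, so I would isolate it as an exceptional case.

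With $C = \langle\rho_2(J)\rangle$ in hand, the final step is a short calculation. Writing $\phi = \rho_2(J)^k\,\rho_2(r)$ and conjugating by $\psi = \rho_2(J)^m \in C$, the relation $\rho_2(r)\,\rho_2(J)^{-m} = \rho_2(J)^m\,\rho_2(r)$ (a consequence of $rJ^{-m}r^{-1} = J^m$) gives
\[ \psi\,\phi\,\psi^{-1} = \rho_2(J)^{\,k+2m}\,\rho_2(r). \]
Choosing $m$ with $k + 2m \equiv 0 \bmod 4$ when $k$ is even yields $\rho_2(r)$, and choosing $k + 2m \equiv 1 \bmod 4$ when $k$ is odd yields $\rho_2(J)\rho_2(r) = \rho_2(Jr)$. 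Setting $s' = \psi\circ s$ then intertwines $\rho_1$ with $\rho_2$ in the first case and with $\rho_2\circ\sigma$ in the second, since $\sigma(J) = J$ and $\sigma(r) = Jr$. As $J$ and $r$ generate $D_8$, this proves that $(X_1,\rho_1)$ is equivalent to $(X_2,\rho_2)$ or to $\sigma\cdot(X_2,\rho_2)$.
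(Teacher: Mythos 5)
Your argument follows the same overall strategy as the paper's: transport $r$ across $s$, show that the resulting involution of $X_2$ is $\rho_2(J^k r)$ for some $k$, and then absorb even $k$ by conjugating with a power of $\rho_2(J)$ and odd $k$ into the outer automorphism $\sigma$. Where you differ is in the justification of the key step. The paper notes that $\rho(r)$ interchanges the two points of $\Fix(\rho(J))$ for every $D_8$-surface (visible in the model $(Y_a,\rho_a)$), so that the product of the transported involution with $\rho_1(r)$ fixes both points of $\Fix(\rho_1(J))$ and is therefore a power of $\rho_1(J)$; you instead compute the centralizer $C$ of $\rho_2(J)$ in $\Aut(X_2)$ via the faithful action on $\Omega(X_2)$ and descent to the quotient sphere. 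Both routes ultimately rest on Table \ref{tab:g2auts}, your centralizer computation is correct for a generic $D_8$-surface, and your final bookkeeping $\psi\phi\psi^{-1}=\rho_2(J)^{k+2m}\rho_2(r)$ together with $\sigma(J)=J$, $\sigma(r)=Jr$ is right.

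The one genuine loose end is the exceptional surface $y^2=x(x^4+1)$, which you correctly identify but then set aside on the grounds that it lies outside the range $D>8$. That dismissal is not available to you: Proposition \ref{prop:extendequivalence} is a statement about all of $\M_2(D_8)$, not about $W_D$, and it is exactly what is needed at the exceptional point to identify $U/\!\sim$ with $\M_2(D_8)/\sigma$ in Proposition \ref{prop:pinwheels} (which includes $\tau=\sqrt{-2}/2$). To be fair, the paper's own phrase ``fixes both points in $\Fix(\rho_1(J))$, hence is a power of $\rho_1(J)$'' has the same soft spot, since on this surface the pointwise stabilizer of $\Fix(\rho(J))$ is cyclic of order eight. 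The gap closes with one extra observation that fits your framework: $\phi=s\circ\rho_1(r)\circ s^{-1}$ is an involution distinct from $\eta$, hence of type $[2,2,2]$ on $X_2^W$, and so is $\rho_2(r)$; each therefore descends to an automorphism of $X_2/\eta$ of the form $x\mapsto\lambda/x$ fixing no branch point, and the product of two such maps is $x\mapsto(\lambda/\mu)x$ with $\lambda/\mu=\pm1$. Thus $\phi\rho_2(r)$ lies in $\langle\rho_2(J)\rangle$ even when $C$ is strictly larger, and the rest of your argument goes through unchanged.
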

\begin{proof}
We will show that $s$ intertwines $\rho_1(J^k r)$ with $\rho_2(r)$ for some $k$.  If $k=0$ or $k=2$, then either $s$ or $s \circ \rho_1(J)$ gives an equivalence between $(X_1,\rho_1)$ and $(X_2,\rho_2)$.  If $k=1$ or $k=3$, then either $s$ or $s \circ \rho_1(J)$ gives an equivalence between $(X_1,\rho_1)$ and $\sigma \cdot (X_2,\rho_2)$.

Setting $\phi = s \circ \rho_2(r) \circ s^{-1}$, our goal is to show that $\phi = \rho_1(J^k r)$ for some $k$.  For the $D_8$-surface $f(a)=(Y_a,\rho_a)$, the involution $\rho_a(r)$ interchanges the two points in $\Fix(\rho_a(J))$.  Since $(X_i,\rho_i)$ are equivalent to $(Y_{a_i},\rho_{a_i})$ for some $a_i$ by Proposition \ref{prop:fonto}, the same is true for $(X_i,\rho_i)$.  From $s \circ \rho_2(J) \circ s^{-1} = \rho_1(J)$ we see that $s^{-1}(\Fix(\rho_2(J)))=\Fix(\rho_1(J))$ and the composition $\phi \circ \rho_1(r)$ fixes both points in $\Fix(\rho_1(J))$.   This in turn implies that $\phi \circ \rho_1(r) = \rho_1(J)^k$ for some $k$.
\end{proof}

\paragraph{Fixed point of $\sigma$.}  The following proposition about the unique fixed point of $\sigma$ will be useful in our discussion of surfaces obtained from pinwheels.
\begin{prop}
Fix $(X,\rho) \in \M_2(D_8)$.  The following are equivalent:
\begin{enumerate}
\item There is an automorphism $\phi \in Aut(X)$ satisfying $\phi^2 = \rho(J)$,
\item $\sigma \cdot (X,\rho) \sim (X,\rho)$,
\item $(X,\rho)= f(6)$, and
\item $g(X,\rho)= \sqrt{-2}/2 \cdot \Gamma_0(2)$.
\end{enumerate}
\end{prop}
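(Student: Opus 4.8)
The plan is to use condition (2) as a hub and establish $(2)\Leftrightarrow(3)$, $(2)\Leftrightarrow(4)$, and $(1)\Leftrightarrow(2)$ in turn. The first two are bookkeeping with the intertwining relations $\sigma\circ f = f\circ\sigma_A$ and $\sigma_X\circ g = g\circ\sigma$ together with the fact that $f$ and $g$ are biholomorphisms onto their images (Corollary \ref{cor:fiso}). Writing $(X,\rho)=f(a)$, condition (2) holds if and only if $\sigma$ fixes $f(a)$, i.e.\ if and only if $a$ is a fixed point of $\sigma_A$. Solving $a=(-2a+12)/(a+2)$ gives $a^2+4a-12=0$, with roots $a=-6$ and $a=2$; since $a=2$ is excluded from the domain, the unique fixed point in $\cc\setminus\{\pm2\}$ is $a=-6$, which gives $(2)\Leftrightarrow(3)$. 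Likewise (2) holds if and only if $\sigma_X$ fixes $g(X,\rho)$; the fixed points of $\sigma_X(\tau\cdot\Gamma_0(2))=-1/(2\tau)\cdot\Gamma_0(2)$ are $\sqrt{-2}/2\cdot\Gamma_0(2)$ and the orbifold point $(1+i)/2\cdot\Gamma_0(2)$, and only the former lies in the image of $g$ (Corollary \ref{cor:fiso}), giving $(2)\Leftrightarrow(4)$. As a numerical check, $j(h(-6))=256(-5)^3/(-4)=8000$, which is the $j$-invariant of $\cc/(\zz\oplus(\sqrt{-2}/2)\zz)$, so (3) and (4) name the same point.

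The substantive step is $(1)\Leftrightarrow(2)$, and the crux is the following lemma: the centralizer of $\rho(D_8)$ in $\Aut(X)$ equals $\langle\eta\rangle$. To prove it, I would use that by Proposition \ref{prop:extendJ} the order four automorphism $\rho(J)$ acts on the two dimensional space $\Omega(X)$ with the distinct eigenvalues $i$ and $-i$, while $\rho(r)$ conjugates $\rho(J)$ to $\rho(J)^{-1}$ and hence interchanges the two eigenlines. An automorphism commuting with $\rho(J)$ is diagonal in the eigenbasis, and commuting in addition with $\rho(r)$ forces its two eigenvalues to coincide; such an automorphism acts as a scalar on $\Omega(X)$, hence trivially on the canonical image $\mathbb{P}\Omega(X)^*$, so it lies in $\langle\eta\rangle$.

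Granting the lemma, I would argue $(2)\Rightarrow(1)$ as follows. Condition (2) provides $s\in\Aut(X)$ with $s\,\rho(y)\,s^{-1}=\rho(\sigma(y))$ for all $y$; in particular $s$ commutes with $\rho(J)=\rho(\sigma(J))$ and $s\,\rho(r)\,s^{-1}=\rho(Jr)=\rho(J)\rho(r)$. A short conjugation computation then shows $s^2$ and $\rho(J)$ induce the same automorphism of $\rho(D_8)$, so $\rho(J)^{-1}s^2$ centralizes $\rho(D_8)$ and, by the lemma, equals $1$ or $\eta=\rho(J)^2$. In the first case $s^2=\rho(J)$; in the second $s^2=\rho(J)^3$ and $(s^3)^2=s^6=\rho(J)^9=\rho(J)$. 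Either way $X$ carries an automorphism squaring to $\rho(J)$, so (1) holds. For $(1)\Rightarrow(2)$, given $\phi$ with $\phi^2=\rho(J)$ I would set $s=\phi$; it commutes with $\rho(J)$, and as in the proof of Proposition \ref{prop:extendequivalence} one has $\phi\,\rho(r)\,\phi^{-1}=\rho(J^k r)$ for some $k$. The point is that $k$ is odd: if $k=2m$, then $\rho(J)^{-m}\phi$ centralizes $\rho(D_8)$ and so lies in $\langle\eta\rangle$ by the lemma, yet its square is $\rho(J)^{1-2m}$, an element of order four, a contradiction. With $k$ odd, conjugation by $\phi$ realizes the nontrivial outer automorphism of $D_8$ (represented by both $r\mapsto Jr$ and $r\mapsto J^3r$), and since inner automorphisms act trivially on $\M_2(D_8)$ this yields $\sigma\cdot(X,\rho)\sim(X,\rho)$, i.e.\ (2).

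I expect the main obstacle to be the group-theoretic bookkeeping in $(1)\Leftrightarrow(2)$: pinning down the parity of $k$ (equivalently, whether $s^2$ is $\rho(J)$ or $\rho(J)^3$) and verifying that the induced conjugation is genuinely outer and not inner. The eigenvalue computation underlying the centralizer lemma is precisely what converts these parity questions into contradictions, and once it is in place the remaining implications reduce to the elementary fixed-point computations for $\sigma_A$ and $\sigma_X$ above.
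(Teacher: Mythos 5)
Your argument is correct in substance, and since the paper declares this proof ``straightforward'' and omits it, there is no written proof to compare against line by line; but two things are worth flagging. First, your fixed-point computation is right and the statement as printed is not: $\sigma_A(a)=(-2a+12)/(a+2)$ has fixed points $a=2$ and $a=-6$, so the unique $\sigma$-fixed point of $\M_2(D_8)$ is $f(-6)$, not $f(6)$ as written in item (3). Your cross-check $j\circ h(-6)=256(-5)^3/(-4)=8000$ settles this, and the paper's own Table \ref{tab:minpoly} agrees: $f_8(t)=t+6$, whose root $a=-6$ is the octagon point of $W_8$, which is exactly the surface this proposition describes. Second, your centralizer lemma --- that the centralizer of $\rho(D_8)$ in $\Aut(X)$ is $\{1,\eta\}$, proved by noting that $\rho(J)^*$ has distinct eigenvalues $\pm i$ on $\Omega(X)$ while $\rho(r)^*$ swaps the two eigenlines --- is correct and does the real work in $(1)\Leftrightarrow(2)$; the author's intended shortcut is presumably that $\phi^2=\rho(J)$ forces $\phi$ to have order eight, and Table \ref{tab:g2auts} shows the only genus two curve with an order eight automorphism is $y^2=x(x^4+1)$, a single point of $\M_2$, which yields $(1)\Rightarrow(3)$ at once. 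Your route is longer but self-contained, and the elementary fixed-point computations for $\sigma_A$ and $\sigma_X$ handling $(2)\Leftrightarrow(3)\Leftrightarrow(4)$ are complete.

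The one step you must tighten is the citation of Proposition \ref{prop:extendequivalence} for the claim $\phi\rho(r)\phi^{-1}=\rho(J^kr)$ in $(1)\Rightarrow(2)$. The step borrowed from that proof --- an automorphism commuting with $\rho(J)$ and fixing both points of $\Fix(\rho(J))$ lies in $\langle\rho(J)\rangle$ --- is precisely the statement that fails on the octagon curve, where the order eight automorphism $\alpha_0$ with $\alpha_0^2=\rho(J)$ is a counterexample; and the octagon is exactly the surface at issue in this proposition, so the appeal is circular unless supplemented. The repair is that $\psi=\phi\rho(r)\phi^{-1}$ is an involution: the automorphisms commuting with $\rho(J)$ and fixing $\Fix(\rho(J))$ pointwise form the cyclic group $\langle\alpha_0\rangle$ of order eight, with $\rho(r)\alpha_0\rho(r)^{-1}=\alpha_0^3$, so writing $\psi=\alpha_0^m\rho(r)$ one gets $\psi^2=\alpha_0^{4m}=\eta^m$, and $\psi^2=1$ forces $m$ even, i.e.\ $\psi\in\rho(D_8)$. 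With that patch your parity argument for $k$ goes through, and the rest of the proof --- in particular the $(2)\Rightarrow(1)$ direction via $\rho(J)^{-1}s^2$ centralizing $\rho(D_8)$ --- is sound as written.
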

The proof is straightforward so we omit it.  

\paragraph{Cusps of $\M_2(D_8)/\sigma$.}  The coarse space associated to $\M_2(D_8)/\sigma \cong (\cc \setminus \left\{ \pm 2 \right\}) / \sigma_A$ is isomorphic to $\cc^*$ and has two cusps.  The following proposition gives a geometric characterization of the difference between these two cusps and will be important in our discussion of surfaces obtained from pinwheels.
\begin{prop}
For a sequence of $D_8$-surface $(X_i,\rho_i)$, the following are equivalent:
\begin{enumerate}
\item $X_i$ tend to a stable limit with geometric genus zero as $i \rightarrow \infty$,
\item $\cc(X_i) \cong K_{a_i}$ with $a_i \rightarrow \infty$, and
\item the quotients $X_i/\rho_{i}(r)$ diverge in $\M_1$ as $i \rightarrow \infty$.
\end{enumerate}
\end{prop}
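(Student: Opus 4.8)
The plan is to establish the equivalence of the three conditions by relating degeneration of the surfaces $X_i$ to degeneration of the algebraic parameter $a_i$ and to degeneration of the elliptic quotients $E_{\rho_i} = X_i/\rho_i(r)$, using the explicit models developed in Corollary \ref{cor:fiso} and Proposition \ref{prop:d8tox02extn}. Since every $D_8$-surface is equivalent to some $(Y_{a_i}, \rho_{a_i})$ by Proposition \ref{prop:fonto}, each surface $X_i$ carries a well-defined parameter $a_i \in \cc \setminus \{\pm 2\}$, and the three conditions are all statements about what happens as this parameter leaves every compact subset. The cleanest route is to prove $(2) \Leftrightarrow (3)$ and $(2) \Leftrightarrow (1)$ separately, treating condition $(2)$ as the central hub, since $a_i$ is the coordinate in which all the relevant maps were made explicit.

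First I would prove $(2) \Leftrightarrow (3)$. The quotient $X_i/\rho_i(r)$ is exactly $E_{\rho_i}$, and its class in $Y_0(2)$ is $g(X_i,\rho_i) = h(a_i)$, where $h$ is the biholomorphism $\cc \setminus \{-2\} \to Y_0(2)$ of Proposition \ref{prop:d8tox02extn}. Diverging in $\M_1$ means the $j$-invariant of $E_{\rho_i}$ tends to $\infty$, and from the formula
\[ j \circ h(a) = \frac{256(a+1)^3}{a+2} \]
established in that proposition, one sees directly that $j \circ h(a_i) \to \infty$ if and only if $a_i \to \infty$ (the only other pole of the right-hand side is at $a = -2$, which is excluded from the domain, and the point $a=2$ maps to the orbifold point, not a cusp). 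This gives the equivalence of $(2)$ and $(3)$ with a short computation rather than any new geometric input.

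For $(1) \Leftrightarrow (2)$, I would use the algebraic model $\cc(Y_a) \cong K_a$ given by $z^2 = (x^2-1)(x^4 - a x^2 + 1)$, whose six Weierstrass points sit at $x \in \{\pm 1, \pm t, \pm 1/t\}$ with $a = t^2 + t^{-2}$. As $a_i \to \infty$ the parameter $t_i \to \infty$ (or $0$), so four of the six branch points collide in pairs (the points over $\pm t$ and over $\pm 1/t$ merge), and the stable limit of the family degenerates: the hyperelliptic model acquires nodes and the geometric genus of the stable limit drops to zero. Conversely, if the $X_i$ converge to a stable curve of geometric genus zero, then the branch points of the degree two canonical maps must collide so that no positive-genus component survives, which forces the cross-ratio parameter, hence $a_i$, to leave every compact set. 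I expect the main obstacle to be condition $(1)$: one must argue carefully that geometric genus zero in the stable limit corresponds precisely to the branch-point collision pattern produced by $a_i \to \infty$, and in particular rule out the competing degeneration at $a_i \to 2$ (which yields the square torus with extra automorphism, an orbifold point, and keeps the surface smooth) and at $a_i \to -2$ (excluded from the parameter space). Making the stable-reduction analysis rigorous — identifying which pairs of branch points collide and checking that the resulting nodal curve has all rational components — is the step requiring genuine care, whereas the $j$-invariant computation in $(2) \Leftrightarrow (3)$ is essentially formal.
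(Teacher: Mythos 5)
The paper omits the proof of this proposition as ``straightforward,'' so there is no official argument to compare against; your overall strategy (using $a$ as the hub, the explicit $j$-invariant formula for $(2)\Leftrightarrow(3)$, and branch-point collisions for $(1)\Leftrightarrow(2)$) is the natural one. But there is a genuine gap, concentrated entirely at the end $a=-2$ of the parameter space. The parameter $a$ with $\cc(X)\cong K_a$ is \emph{not} well-defined by the surface: it is well-defined only up to the involution $\sigma_A(a)=(-2a+12)/(a+2)$, because $\sigma\circ f=f\circ\sigma_A$ and the outer automorphism $\sigma$ changes the marking but not the underlying Riemann surface. Crucially, $\sigma_A$ interchanges the two ends $a=-2$ and $a=\infty$. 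A sequence of $D_8$-surfaces can perfectly well have its canonical parameter $a_i=f^{-1}(X_i,\rho_i)$ accumulate at $-2$; the fact that $-2$ is ``excluded from the domain'' does not prevent this. For such a sequence $j\circ h(a_i)=256(a_i+1)^3/(a_i+2)\to\infty$, so $(3)$ holds, and the stable limit again has geometric genus zero: at $a=-2$ one has $t^2=-1$, so the pairs of branch points $\{t,-1/t\}$ and $\{-t,1/t\}$ collide at $\pm i$ while $\pm1$ stay put --- exactly the same combinatorial degeneration (two pairs colliding at two distinct points, giving an irreducible rational stable limit with two nodes) as at $a=\infty$. Thus $(1)$ holds too, yet your chosen $a_i\not\to\infty$. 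As written, both of your implications $(3)\Rightarrow(2)$ and $(1)\Rightarrow(2)$ therefore fail, and the parenthetical ``ruled out because $-2$ is excluded from the parameter space'' is exactly the step that does not work.

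The fix is not to exclude $a\to-2$ but to absorb it: read $(2)$ as the existence of \emph{some} choice of $a_i$ with $\cc(X_i)\cong K_{a_i}$ and $a_i\to\infty$, observe that $K_{a}\cong K_{\sigma_A(a)}$, and note that $a_i\to-2$ forces $\sigma_A(a_i)\to\infty$. All three conditions are then equivalent to the single statement that the canonical parameter tends to the $\sigma_A$-orbit $\{-2,\infty\}$, i.e.\ that $(X_i,\rho_i)$ tends to the non-orbifold cusp of $\M_2(D_8)/\sigma$; your $j$-invariant computation and your stable-reduction analysis both go through once stated this way, since the only remaining end, $a\to2$, gives two \emph{triples} of colliding branch points, hence $j\to1728$ and a stable limit consisting of two elliptic curves joined at a point. (A small additional correction: that limit at $a\to2$ is not a smooth surface, as you assert, but this is harmless since its geometric genus is two, not zero.)
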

The proof of this proposition is also straightforward so we omit it.

\paragraph{A modular function.}  In Section \ref{sec:introduction} we defined modular functions $\lambda(\tau)$ and $a(\tau)$.  We now show:
\begin{prop}
	\label{prop:d8algebraicmodels}
If $(X,\rho) \in \M_2(D_8)$ and $ \tau \in \hh$ satisfies $g(X,\rho) = (E_\tau,Z_\tau,T_\tau)$, then:
\[ \cc(X) \cong K_{a(\tau)} \mbox{ where }  a(\tau) = -2 + \frac{1}{\lambda(\tau)\lambda(\tau+1)}.\]
\end{prop}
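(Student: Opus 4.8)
The plan is to leverage the biholomorphism $h$ and the $j$-invariant computation already established in Proposition \ref{prop:d8tox02extn}, reducing the statement to an identity between two degree-one functions on $Y_0(2)$. By Corollary \ref{cor:fiso}, $(X,\rho)$ is equivalent to $f(a) = (Y_a,\rho_a)$ for a unique $a \in \cc \setminus \{\pm 2\}$, so $\cc(X) \cong K_a$ and it suffices to show $a = a(\tau)$. Writing $\pi \co \hh \to Y_0(2)$ for $\pi(\tau) = (E_\tau,Z_\tau,T_\tau)$, the hypothesis $g(X,\rho) = \pi(\tau)$ together with $g \circ f = h$ on $\cc\setminus\{\pm2\}$ gives $h(a) = \pi(\tau)$, i.e.\ $a = h^{-1}(\pi(\tau))$. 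Thus the proposition is equivalent to showing that $\tilde a(\tau) = -2 + 1/(\lambda(\tau)\lambda(\tau+1))$ descends to a function $\bar a$ on $Y_0(2)$ with $\bar a = h^{-1}$.

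First I would record the transformation law forced by the cusp normalization of $\lambda$: since $\tau \mapsto \tau+1$ lies in $\Gamma_0(2)$ and permutes the cusps of $\Gamma(2)$ by fixing $\Gamma\cdot\infty$ while interchanging $\Gamma(2)\cdot 0$ and $\Gamma(2)\cdot 1$, the convention $\lambda(0)=0$, $\lambda(1)=1$, $\lambda(\infty)=\infty$ forces $\lambda(\tau+1) = 1-\lambda(\tau)$. This shows $\tilde a$ is invariant under $\tau \mapsto \tau+1$, and invariance under $\Gamma(2)$ is immediate, so $\tilde a$ is $\Gamma_0(2)$-invariant and descends to $\bar a \co Y_0(2) \to \cc$. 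Writing $\mu = \lambda(\tau)$, so that $\lambda(\tau)\lambda(\tau+1) = \mu(1-\mu)$, a direct computation using the classical formula for $j(E_\tau)$ (which, being a symmetric function of the six cross-ratios, is insensitive to the choice of normalization) gives
\[ 256\,\frac{(\tilde a(\tau)+1)^3}{\tilde a(\tau)+2} = 256\,\frac{(\mu^2-\mu+1)^3}{\mu^2(\mu-1)^2} = j(E_\tau). \]
Since Proposition \ref{prop:d8tox02extn} identifies the left-hand rational function as $\psi(a) := j(h(a))$, this says $\psi \circ \bar a = j_{Y_0(2)}$ on $Y_0(2)$. As $\psi$ and $j_{Y_0(2)}$ both have degree three, $\bar a$ has degree one.

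It remains to remove the threefold ambiguity inherent in matching $j$-invariants alone, that is, to see that $\bar a$ distinguishes the correct two-torsion point and is genuinely $h^{-1}$ rather than another branch. Both $\bar a$ and $h^{-1}$ are degree-one maps $Y_0(2) \to \mathbb P^1$ with $\psi\circ\bar a = \psi\circ h^{-1}$, so $\Theta := \bar a \circ h$ is a M\"obius automorphism with $\psi\circ\Theta = \psi$. The fiber $\psi^{-1}(\infty) = \{-2,\infty\}$ consists of the two cusps of $Y_0(2)$, the value $-2$ being unramified (width one) and $\infty$ ramified (width two); since the width-one cusp is $\Gamma_0(2)\cdot\infty$ and $\tilde a(\tau) \to -2$ as $\tau \to i\infty$ (there $\mu \to \infty$, whence $\mu(1-\mu)\to\infty$), both $\bar a$ and $h^{-1}$ send $\Gamma_0(2)\cdot\infty \mapsto -2$ and $\Gamma_0(2)\cdot 0 \mapsto \infty$. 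Hence $\Theta$ fixes $-2$ and $\infty$, and a one-line comparison of $\psi(\Theta(a))$ with $\psi(a)$ forces $\Theta = \mathrm{id}$, giving $\bar a = h^{-1}$ and thus $a = a(\tau)$. The main obstacle is conceptual rather than computational: the $j$-invariant forgets the $\Gamma_0(2)$-level structure, so one must genuinely track which branch point of the model (\ref{eqn:d8quotientmodel}) plays the role of $T_\rho$. The product $\lambda(\tau)\lambda(\tau+1)$ arises precisely because $\tau$ and $\tau+1$ index the two full level-two structures refining the datum $(E_\rho,Z_\rho,T_\rho)$, and their product is the symmetric combination invariant under interchanging the two undistinguished two-torsion points of $E_\rho$.
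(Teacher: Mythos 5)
Your proof is correct, and its overall architecture matches the paper's: both arguments reduce the statement to identifying two degree-one maps $Y_0(2) \to \cc\setminus\{-2\}$ — the descent $\bar a$ of $-2+1/(\lambda(\tau)\lambda(\tau+1))$ and the inverse of the biholomorphism $h$ from Proposition \ref{prop:d8tox02extn} — by composing with the degree-three function $j$ and invoking the identity $j\circ h(a)=256(a+1)^3/(a+2)$. The one place the routes genuinely diverge is the normalization used to kill the residual ambiguity that matching $j$-invariants leaves behind. The paper evaluates both maps at the elliptic point $(1+i)/2\cdot\Gamma_0(2)$, where both take the value $2$; since that point is fixed by the Atkin--Lehner involution, this pins the function down only up to precomposition by $\tau\mapsto -1/2\tau$, and the paper must then note that $f(a(-1/2\tau))=\sigma\cdot f(a(\tau))$ has the same underlying surface, so $\cc(X)\cong K_{a(\tau)}$ holds either way. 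You instead match the two maps at the cusps, using the ramification of $\psi(a)=256(a+1)^3/(a+2)$ over $j=\infty$ (simple pole at $a=-2$ versus double pole at $a=\infty$, against cusp widths $1$ and $2$ for $\Gamma_0(2)$); since Atkin--Lehner interchanges the two cusps, this yields the exact identity $\bar a=h^{-1}$ with no residual ambiguity, which is marginally stronger than what the paper's proof establishes and spares you the final appeal to $\sigma$. Your supporting computations — $\lambda(\tau+1)=1-\lambda(\tau)$ under the paper's cusp normalization of $\lambda$, the $\Gamma_0(2)$-invariance of $\tilde a$, the identity $256(\tilde a+1)^3/(\tilde a+2)=j$, and the limit $\tilde a\to -2$ at $\Gamma_0(2)\cdot\infty$ — all check out, and they also supply details (notably that $\tilde a$ descends to $Y_0(2)$ at all) which the paper asserts without proof.
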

\begin{proof}
Up to precomposition by $\tau \mapsto -1/2\tau$, the function $a : \hh \rightarrow \cc$ defined above is the unique holomorphic function satisfying (1) $a$ covers an isomorphism $\bar a :Y_0(2) \xrightarrow{\sim} \cc \setminus \left\{ -2 \right\}$ and (2) $a\left( (1+i)/2 \right) = 2$.  The biholomorphic extension $h : \cc \setminus \left\{ -2 \right\} \rightarrow Y_0(2)$ of $g \circ f$ from Proposition \ref{prop:d8tox02extn} has $h(2) = (1+i)/2 \cdot \Gamma_0(2)$, so $h^{-1}$ also satisfies (1) and (2).  As a consequence, $f(\bar a(E_\tau,Z_\tau,T_\tau))$ is equal to either $f(a(\tau))$ or $f(a(-1/2\tau))=\sigma \cdot f(a(\tau))$.  In either case, $\cc(X) \cong K_{a(\tau)}$.
\end{proof}
As we saw in the proof of Proposition \ref{prop:d8tox02extn}, the function $a(\tau)$ satisfies
\[ a(\tau)^3+3 a(\tau)^2+(3-j(\tau)/256) a(\tau)+1-j(\tau)/128 = 0 \]
where $j(\tau)$ is the function, modular for $\SL_2(\zz)$ and equal to the $j$-invariant of $E_\tau$.

\paragraph{Jacobians of surfaces with involutions.}  Our next goal is to compute the Jacobian of a $D_8$-surface $(X,\rho)$ in terms of $g(X,\rho) \in Y_0(2)$.  We start, more generally, by describing the Jacobian of a surface $X \in \M_2$ with an involution $\phi \in \Aut(X)$, $\phi \neq \eta$ in terms of the quotients $X/\phi$ and $X/\eta \phi$.

From Table \ref{tab:g2auts} we see that there are distinct complex numbers $t_1$ and $t_2$ so that:
\[ \cc(X) \cong \cc(x,y) \mbox{ with } y^2 = (x^2-1)(x^2-t_1)(x^2-t_2) \mbox{ and } \phi(x,y) = (-x,y). \]
The quotients $E = X/\phi$ and $F = X/\eta \phi$ have algebraic models given by
\[ 
\begin{array}{c}
\cc(E) \cong \cc(z_E=y,w_E=x^2) \mbox{ with } z_E^2 = (w_E-1)(w_E-t_1)(w_E-t_2) \mbox{ and } \\
\cc(F) \cong \cc(z_F=xy,w_F=x^2) \mbox{ with } z_F^2 = w_F (w_F-1)(w_F-t_1)(w_F-t_2). 
\end{array}
\]
The genus one surfaces $E$ and $F$ have natural base points $Z_E=\Fix(\eta \phi)/\phi = w_E^{-1}(\infty)$ and $Z_F=\Fix(\phi)/\eta \phi = w_F^{-1}(0)$ respectively.  Also, the image of $X^W$ under the map $X \rightarrow E \times F$ is the set:
\[ \Gamma^W = \left\{ (w_E^{-1}(t),w_F^{-1}(t)) \in E \times F : t = 1,t_1 \mbox{ or } t_2 \right\}. \]
The set $\Gamma^W$ generates a subgroup of order four of the two torsion on $E \times F$ under the group laws with identity element $Z_E \times Z_F$.
\begin{prop}
\label{prop:d4jacobians}
Suppose $X$, $\phi$, $E$, $F$ and $\Gamma^W$ are as above.  The Jacobian of $X$ satisfies:
\[ \jac(X) \cong E \times F / \Gamma^W \]
and the principal polarization on $\jac(X)$ pulls back to twice the product polarization on $E \times F$ under the quotient by $\Gamma^W$ map.
\end{prop}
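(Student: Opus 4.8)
The plan is to compute the Jacobian explicitly from the hyperelliptic model by realizing $\jac(X)$ as a quotient of $\Omega(X)^*$ by the period lattice $H_1(X,\zz)$, and then to exhibit the isogeny $E \times F \to \jac(X)$ directly. The key observation is that the involutions $\phi$ and $\eta\phi$ split the space of holomorphic one-forms $\Omega(X)$ into two one-dimensional eigenspaces, one pulling back from $E$ and one from $F$. Concretely, in the coordinates above, $\frac{dx}{y}$ is $\phi$-invariant and descends to a generator of $\Omega(E)$ (via $z_E=y$, $w_E=x^2$), while $\frac{x\,dx}{y}$ is $\eta\phi$-invariant and descends to a generator of $\Omega(F)$. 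Thus the pullback maps $\Omega(E) \oplus \Omega(F) \to \Omega(X)$ is an isomorphism, which dualizes to give an isogeny of the corresponding complex tori.

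\textbf{First} I would set up the map $\pi = (\pi_E,\pi_F) \co X \to E \times F$ induced by the two quotient maps, and pass to the induced map on Jacobians. On $H_1$, the transfer (Gysin) maps and pullbacks compose to multiplication-by-two type relations because each quotient is degree two; this is what forces the isogeny to have a kernel isomorphic to a group of order four and identifies the polarization discrepancy as a factor of two. \textbf{Next} I would identify the kernel of the induced isogeny $E \times F \to \jac(X)$ with the subgroup generated by $\Gamma^W$. The geometric input is that the six Weierstrass points of $X$ map under $\pi$ exactly to the six listed points $(w_E^{-1}(t),w_F^{-1}(t))$ for $t \in \{1,t_1,t_2,\infty,0\}$ together with the base points, and the relations among these points in $E \times F$ (using that $w_E-t$ and $w_F-t$ have divisors supported at these points) show that $\Gamma^W$ is precisely the group of order four generated inside the $2$-torsion. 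Dividing $E \times F$ by this kernel yields $\jac(X)$.

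\textbf{For the polarization claim}, I would track the principal polarization on $\jac(X)$, which is the theta divisor / the one coming from the intersection form on $H_1(X,\zz)$, and pull it back along the degree-four isogeny $E \times F \to \jac(X)$. The product polarization on $E \times F$ is principal, and a degree-$n^2$ isogeny pulls a principal polarization back to one of degree $n^2$; here the kernel $\Gamma^W \cong (\zz/2)^2$ gives degree four, and a direct computation of intersection numbers (equivalently, evaluating the pulled-back symplectic form on a symplectic basis of $H_1(E \times F,\zz)$) shows the pullback equals twice the product polarization. \textbf{The main obstacle} I anticipate is this last bookkeeping step: correctly normalizing the polarizations and verifying the factor of two rather than some other multiple, since one must be careful about whether the natural quotient map or its dual is being used and about the precise identification of $\Gamma^W$ inside the $2$-torsion with respect to the product group law. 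The cleanest route is probably to compute everything on homology, writing down an explicit symplectic basis of $H_1(X,\zz)$ adapted to $\phi$, pushing it to $H_1(E,\zz) \oplus H_1(F,\zz)$, and comparing the two symplectic forms entry by entry; the degree-two nature of each projection makes the factor of two transparent from the index computation.
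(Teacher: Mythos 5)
Your proposal is correct and follows essentially the same route as the paper: compute $\psi_*$ on explicit homology bases to see the image has index four and the intersection form doubles, identify $\Gamma^W$ as the image of the two-torsion of $\jac(X)$, and deduce both the isomorphism $\jac(X) \cong E \times F/\Gamma^W$ and the factor of two in the polarization. The only cosmetic difference is that you phrase the final step via the kernel of the pullback isogeny $E \times F \rightarrow \jac(X)$, while the paper composes the pushforward with the quotient by $\Gamma^W$ and observes that the resulting degree-$16$ map kills the two-torsion and hence factors through multiplication by two; these are dual formulations of the same argument.
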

\begin{proof}
Let $\psi : X \rightarrow E \times F$ be the obvious map.  By writing down explicit bases for $H_1(X,\zz)$, $H_1(E,\zz)$ and $H_1(F,\zz)$ using the algebraic models defined above, it is straightforward to check that the image of $\psi_* : H_1(X,\zz) \rightarrow H_1(E,\zz) \oplus H_1(F,\zz)$ has index four and that the symplectic form on $\psi_*(H_1(X,\zz))$ induced by the intersection pairing on $X$ extends to twice the ordinary symplectic form on $H_1(E,\zz) \oplus H_1(F,\zz)$.

The holomorphic map $\psi$ factors through a map on Jacobians:
\[ \psi: \jac(X) \rightarrow \jac(E) \times \jac(F) \]
whose degree is equal to $[ H_1(E,\zz) \oplus H_1(F,\zz) : \psi_*(H_1(X,\zz)) ] = 4$.  Under the identification of $\jac(X)$ with the Picard group $\Pic^0(X)$, the two torsion in $\jac(X)$ consists of degree zero divisors of the form $[P_i - P_j]$ with $P_i \neq P_j$ and $P_i, P_j \in X^W$.  The image of the two torsion in $\jac(X)$ under $\psi$ has image $\Gamma^W$ generating a subgroup of order four in $\jac(E) \times \jac(F)$.  The composition of $\psi$ with the quotient by $\Gamma^W$ map has degree 16, vanishes on the two-torsion and factors through multiplication by two on $\jac(X)$ to give an isomorphism.
\end{proof}

\paragraph{Jacobians of $D_8$-surfaces.}  With Proposition \ref{prop:d4jacobians}, it is now easy to compute $\jac(X)$ for a $D_8$-surface $(X,\rho)$ in terms of $g(X,\rho)$.  In Section \ref{sec:introduction}, we defined a principally polarized Abelian variety $A_\tau$ for each $\tau \in \hh$.
\begin{prop}\label{prop:d8jacobians}
If $(X,\rho) \in \M_2(D_8)$ and $ \tau \in \hh$ satisfies $g(X,\rho) = (E_\tau,Z_\tau,T_\tau)$, then $\jac(X) \cong A_\tau$.
\end{prop}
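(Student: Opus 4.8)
The plan is to apply Proposition \ref{prop:d4jacobians} to the involution $\phi = \rho(r)$. This immediately gives $\jac(X) \cong (E \times F)/\Gamma^W$ as a principally polarized Abelian variety, where $E = X/\rho(r) = E_\rho$, $F = X/\eta\rho(r)$, the subgroup $\Gamma^W \subset (E\times F)[2]$ has order four, and the polarization is the descent of twice the product polarization. Since $g(X,\rho) = (E_\tau,Z_\tau,T_\tau)$, we may already identify $E$ with $E_\tau = \cc/(\zz\oplus\tau\zz)$ so that the base point $Z_\rho$ goes to $0$ and $T_\rho$ to $1/2$. Everything then reduces to (i) identifying $F$ with $E_\tau$, (ii) computing $\Gamma^W$ as an explicit order-four subgroup of $E_\tau[2]\times E_\tau[2]$, and (iii) recognizing the resulting glued, doubly-polarized torus as $A_\tau$.

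For step (i), I would use the $D_8$-relation $\rho(J)\rho(r)\rho(J)^{-1} = \eta\rho(r)$, which shows that $\rho(J)$ descends to an isomorphism $\bar J\colon E \to F$ carrying $Z_\rho$ to the base point of $F$; thus $\bar J$ is an isomorphism of elliptic curves, and I identify $F$ with $E_\tau$ via $\bar J^{-1}$ followed by $E \cong E_\tau$. By Corollary \ref{cor:fiso} it suffices to verify the remaining claims on the model $(Y_a,\rho_a)$, where Equation \ref{eqn:d8quotientmodel} together with $\rho(J)(x,y) = (1/x,iy/x^3)$ gives the coordinate relation $w_F\circ\bar J = 1/w_E$. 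The four branch points of $E$ sit at $w_E \in \{\infty,1,t_1,t_2\}$ and those of $F$ at $w_F \in \{0,1,t_1,t_2\}$, and since $t_1,t_2$ are the roots of $w^2-aw+1$ we have $t_1 t_2 = 1$. Hence $\bar J$ matches branch points by $s \leftrightarrow 1/s$, and tracing the three generators $(w_E^{-1}(t),w_F^{-1}(t))$, for $t = 1,t_1,t_2$, of $\Gamma^W$ through the identifications yields
\[ \Gamma^W = \left\{ 0,\ (1/2,1/2),\ (\tau/2,(\tau+1)/2),\ ((\tau+1)/2,\tau/2) \right\} \subset E_\tau[2]\times E_\tau[2], \]
independently of how $t_1,t_2$ are labelled.

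For step (iii), let $L = \zz\oplus\tau\zz$, so $E_\tau\times E_\tau = \cc^2/L^2$ and $(E\times F)/\Gamma^W = \cc^2/L'$ with $L' = L^2 + \Gamma^W$ an index-four overlattice of $L^2$. A direct computation with the generators $v_i$ of $\Lambda_\tau$ shows $L' = \tfrac12\Lambda_\tau$: the gluing vectors $(\tau/2,(\tau+1)/2)$ and $((\tau+1)/2,\tau/2)$ are exactly $\tfrac12 v_1$ and $\tfrac12 v_3$, while $(1,0),(\tau,0),(0,1),(0,\tau)$ all lie in $\tfrac12\Lambda_\tau$ because $\Lambda_\tau$ contains $v_1\pm v_2$ and $v_3\pm v_4$. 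The scaling $z\mapsto 2z$ then gives a $\cc$-linear isomorphism $\cc^2/L' \to \cc^2/\Lambda_\tau = A_\tau$, and a one-line check shows it carries twice the product polarization to the form defining $A_\tau$, up to the sign convention built into that form. I expect the bookkeeping in step (ii)---pinning down $\Gamma^W$ precisely via $\bar J$ and the relation $t_1 t_2 = 1$---to be the main obstacle, since the entire off-diagonal shape of $\Lambda_\tau$ is encoded in this gluing; once $\Gamma^W$ is correct, the lattice identification $L' = \tfrac12\Lambda_\tau$ and the factor-of-two polarization match are routine.
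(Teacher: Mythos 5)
Your proof is correct and follows essentially the same route as the paper's: apply Proposition \ref{prop:d4jacobians} to $\phi=\rho(r)$, use $\rho(J)\rho(r)\rho(J)^{-1}=\eta\rho(r)$ to identify $F$ with $E\cong E_\tau$, compute $\Gamma^W=\left\{(T_\tau,T_\tau),(Q_\tau,R_\tau),(R_\tau,Q_\tau)\right\}$, and match the resulting overlattice with $\tfrac12\Lambda_\tau$. Your write-up is somewhat more explicit than the paper's (which leaves the $t_1t_2=1$ bookkeeping and the lattice check as "easily verified"), but the argument is the same.
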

\begin{proof}
Set $\phi = \rho(r)$, $E = X /\phi$ and $F = X / \eta \phi$.  By our definition of $g$, $E$ is isomorphic to $E_\tau$.  Also, $\eta \phi = \rho(J) \circ \phi \circ \rho(J^{-1})$,  so $\rho(J)$ induces an isomorphism between $F$ and $E$.  In particular, $F$ is also isomorphic to $E_\tau$.  The image of $\Gamma^W \subset E \times F$ under the isomorphism to $E \times F \rightarrow E_\tau \times E_\tau$ is the graph of the induced action of $\rho(J)$ on the two torsion of $E_\tau$. Setting $T_\tau = 1/2 + \zz \oplus \tau \zz$, $Q_\tau = \tau/2 + \zz\oplus \tau \zz$ and $R_\tau = (\tau+1)/2 + \zz\oplus \tau \zz$, we have: 
\[ \Gamma^W = \left\{ (T_\tau,T_\tau),(Q_\tau,R_\tau),(R_\tau,Q_\tau) \right\}. \]
The Abelian variety $E_\tau \times E_\tau/\Gamma^W$, principally polarized by half the product polarization on $E_\tau \times E_\tau$, is easily checked to be equal to $A_\tau$.  (Note that the lattice $\Lambda_\tau$ used to define $A_\tau$ contains the vectors $\spmat{ 2 \\0}$, $\spmat{2 \tau \\ 0}$, $\spmat{ 0 \\ 2}$ and $\spmat{0 \\ 2 \tau}$).
\end{proof}

\paragraph{Pinwheels.}  In Section \ref{sec:introduction}, we defined a domain $U$ and associated to each $\tau \in U$ a polygonal pinwheel $P_\tau$ and a surface $X_\tau$ with order four automorphism $J_\tau$.  
\begin{prop}
	\label{prop:pinwheels}
Fix $\tau \in U$.  There is an injective homomorphism $\rho_\tau : D_8 \rightarrow \Aut(X_\tau)$ with $\rho_\tau(J) = J_\tau$ and $g(X_\tau,\rho_\tau) = (E_\tau,Z_\tau,P_\tau)$.
\end{prop}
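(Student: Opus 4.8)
The plan is to verify the three claims about $X_\tau$ by working directly with the flat structure coming from the pinwheel $P_\tau$, and then invoking the structural results already established for $\M_2(D_8)$. First I would confirm that the identifications of opposite sides of $P_\tau$ by translation produce a genus two surface: since all gluings are by translations, $dz$ descends to a holomorphic one-form $\omega_\tau$ on $X_\tau$, and a Gauss--Bonnet / angle-count at the vertex cycles shows the total cone angle forces genus two with $\omega_\tau$ having a single double zero. The rotational symmetry of the pinwheel by a quarter turn (sending the outer ``blades'' at $\pm\tau,\pm i\tau$ cyclically and the inner vertices at $(\pm1\pm i)/2$ among themselves) induces the order four automorphism $J_\tau$ with $J_\tau^*\omega_\tau = i\,\omega_\tau$, so $\omega_\tau$ is a $J_\tau$-eigenform with a double zero, consistent with Proposition \ref{prop:extendJ}.

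Next I would apply Proposition \ref{prop:extendJ} to the order four automorphism $J_\tau$: it guarantees that $[J_\tau|_{X_\tau^W}] = [1,1,2,2]$, that $J_\tau^2 = \eta$, and, crucially, that $J_\tau$ extends to an injective homomorphism $\rho_\tau \co D_8 \rightarrow \Aut(X_\tau)$ with $\rho_\tau(J) = J_\tau$. This immediately gives the first assertion of the proposition. It remains to identify the point $g(X_\tau,\rho_\tau) = (E_{\rho_\tau}, Z_{\rho_\tau}, T_{\rho_\tau})$ in $Y_0(2)$ with the triple $(E_\tau, Z_\tau, T_\tau)$, where I read $P_\tau$ in the statement as the point of order two $T_\tau$. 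To do this I would compute the quotient $E_{\rho_\tau} = X_\tau / \rho_\tau(r)$ geometrically. Because the involution $r$ is the reflection interchanging the two points of $\Fix(J_\tau)$ (by the description in the proof of Proposition \ref{prop:fonto}, $\rho(r)$ swaps the two fixed points of $\rho(J)$), its fixed locus on $X_\tau$ is a single $\eta$-orbit, and the quotient map folds the pinwheel onto a fundamental parallelogram for the lattice $\zz \oplus \tau\zz$. The cleanest route is to show the form $\omega_\tau$ on $X_\tau$, when pushed to the $\eta\rho(r)$-quotient, has periods spanning $\zz \oplus \tau\zz$, so that $E_{\rho_\tau} \cong \cc/(\zz\oplus\tau\zz) = E_\tau$, with the base point $Z_{\rho_\tau} = \Fix(\eta\rho(r))/\rho(r)$ landing at the origin $Z_\tau$ and the distinguished two-torsion point $T_{\rho_\tau} = \Fix(\rho(J))/\rho(r)$ landing at $1/2 + \zz\oplus\tau\zz = T_\tau$.

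The main obstacle will be the last identification: establishing that the periods of the quotient torus are exactly $\zz \oplus \tau\zz$ with the distinguished point at $1/2$, rather than at some $\Gamma_0(2)$-equivalent triple. I would handle this by explicit period integration on the pinwheel. Choosing a basis of cycles on $X_\tau$ adapted to the flat structure — the two ``short'' inner edges between the vertices $(\pm 1 \pm i)/2$ and the two ``long'' blades to $\pm\tau$ — and tracking their images under the quotient by $\rho(r)$, the two resulting generating cycles on $E_{\rho_\tau}$ have $\omega_\tau$-periods in ratio $\tau$, with the fixed point of $\rho(J)$ sitting at the half-period corresponding to $1/2$; the constraint $\abs{\tau}^2 \geq 1/2$ and $\abs{\Re\tau}\leq 1/2$ defining $U$ is precisely a fundamental domain for $\Gamma_0(2)$ (up to the Atkin--Lehner element $\sigma$), which rules out landing on a $\Gamma_0(2)$-translate and fixes the normalization. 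One mild subtlety is the ambiguity from Proposition \ref{prop:extendequivalence}: a priori $g(X_\tau,\rho_\tau)$ could be $\sigma_X$-related to $(E_\tau,Z_\tau,T_\tau)$, but since $U$ is a fundamental domain for the group $\langle \Gamma_0(2),\sigma\rangle$ and the period computation pins down the actual triple, this ambiguity is resolved by direct inspection of which fixed point of $J_\tau$ is swapped by $r$.
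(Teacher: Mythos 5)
The first half of your argument --- building $X_\tau$ from $P_\tau$, checking that $\omega_\tau$ is a $J_\tau$-eigenform with a double zero, and invoking Proposition \ref{prop:extendJ} to extend $J_\tau$ to an injective $\rho_\tau \co D_8 \rightarrow \Aut(X_\tau)$ --- is correct and is exactly how the paper begins. The gap is in the second half, where you propose to compute $g(X_\tau,\rho_\tau)$ by ``folding the pinwheel'' under $\rho_\tau(r)$ and integrating $\omega_\tau$ on the quotient. The involution $\rho_\tau(r)$ is not a Euclidean symmetry of $P_\tau$: the only automorphisms visible as similarities of the polygon are the powers of $J_\tau$ (rotation by $i$) and the hyperelliptic involution $z \mapsto -z = J_\tau^2$. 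More fundamentally, the relation $rJr^{-1} = J^{-1}$ forces $\rho_\tau(r)^*$ to interchange the two $J_\tau$-eigenlines in $\Omega(X_\tau)$, so $\rho_\tau(r)^*\omega_\tau$ is \emph{not} proportional to $\omega_\tau$, and $\omega_\tau$ descends to neither $X_\tau/\rho_\tau(r)$ nor $X_\tau/\eta\rho_\tau(r)$. The ``periods of $\omega_\tau$ on the quotient torus'' are therefore not defined, and the proposed computation does not get off the ground. Producing $\rho_\tau(r)$ explicitly in flat coordinates (or, equivalently, computing the periods of the second, non-$\omega_\tau$ eigenform from the polygon) is precisely the nontrivial content that your sketch assumes away; calling $r$ a ``reflection'' also suggests an antiholomorphic map, which it is not.

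For contrast, the paper avoids any such computation by a rigidity argument: the assignment $p(\tau) = (X_\tau,\rho_\tau)$ descends, via the cut-and-paste $P_\tau \sim P_{\tau+1}$ and the similarity $P_\tau \sim P_{-1/2\tau}$, to a holomorphic map $\bar p \co U/\!\sim\; \rightarrow \M_2(D_8)/\sigma$ between orbifolds whose coarse spaces are both $\cc^*$; since $\tau = \sqrt{-2}/2$ (the regular octagon) is the unique preimage of $\Fix(\sigma)$, $\bar p$ is a biholomorphism, and it is pinned down among the two candidates by the geometric genus of the stable limit as $\Im(\tau) \to \infty$. The map $p_1(\tau) = g^{-1}(E_\tau,Z_\tau,T_\tau)$ descends to a biholomorphism with the same normalization at $\Fix(\sigma)$ and at the cusp, so $\bar p = \bar p_1$. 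If you want to keep a computational approach, you would need either to exhibit $\rho_\tau(r)$ concretely (e.g.\ through the algebraic model of Proposition \ref{prop:fonto} or the lattice description of $A_\tau$, where $r$ and $J$ act by $\spmat{1 & 0 \\ 0 & -1}$ and $\spmat{0 & 1 \\ -1 & 0}$) or to work with the $r$-invariant form $\omega_\tau + \rho_\tau(r)^*\omega_\tau$; either way the identification of $r$ is the step that must be supplied.
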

\begin{proof}
By Proposition \ref{prop:extendJ}, the surface $X_\tau$ admits a faithful $D_8$-action $\rho_\tau : D_8 \rightarrow \Aut(D_8)$ with $\rho(J) = J_\tau$.  Since $U$ is simply connected, we can choose $\rho_\tau$ so $\tau \mapsto (X_\tau,\rho_\tau)$ gives a holomorphic map $p : U \rightarrow \M_2(D_8)$.

As depicted in Figure \ref{fig:d8cutandpaste}, the polygons $P_\tau$ and $P_{\tau+1}$ differ by a Euclidean cut-and-paste operation, giving an isomorphism between $X_\tau$ and $X_{\tau+1}$ intertwining $J_\tau$ and $J_{\tau+1}$.  
\begin{figure}
  \begin{center}
    \includegraphics[scale=0.35]{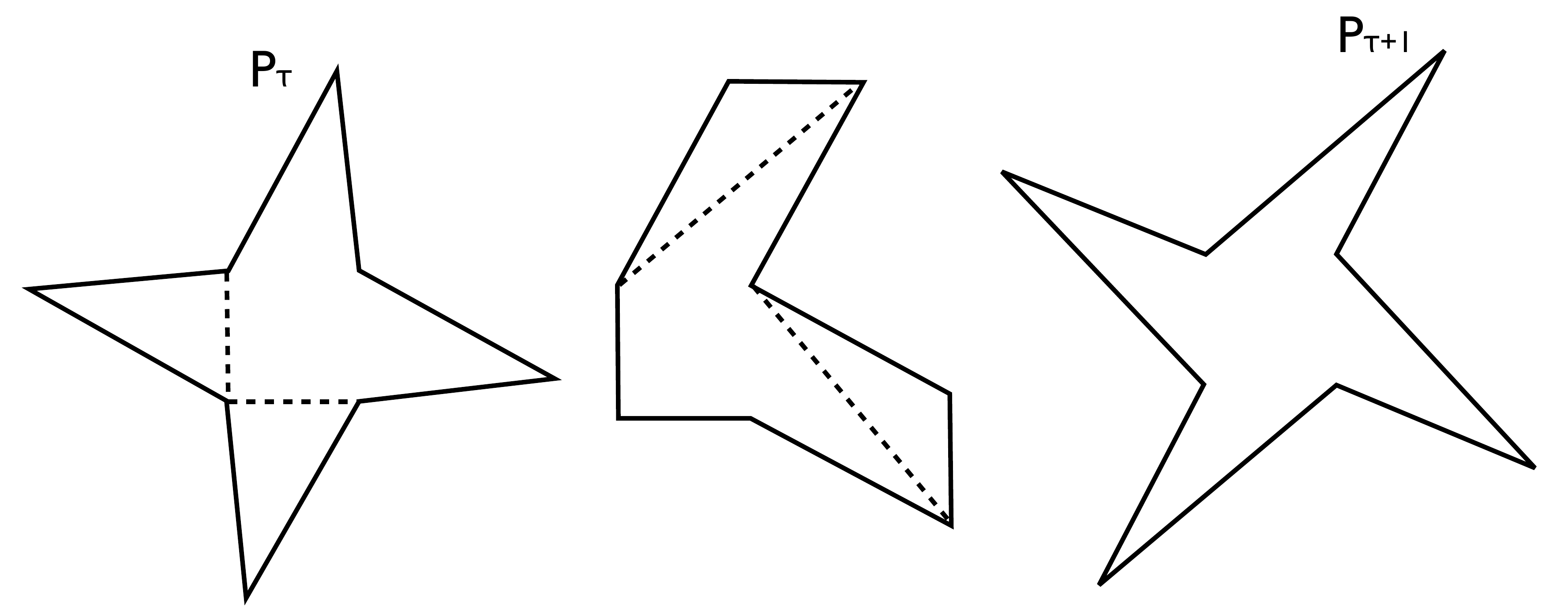}
\end{center}
\caption{\label{fig:d8cutandpaste} {\sf The surfaces $X_\tau$ and $X_{\tau+1}$ are isomorphic since the polygons $P_\tau$ and $P_{\tau+1}$ differ by a cut-and-paste operation.}
  } 
\end{figure}
Also, the polygons $P_\tau$ and $P_{-1/2\tau}$ differ by a Euclidean similarity, giving an isomorphism between $X_\tau$ and $X_{-1/2\tau}$ intertwining $J_\tau$ and $J_{-1/2\tau}$.  By Proposition \ref{prop:extendJ}, $p$ covers a holomorphic map $\bar p: U/\sim \rightarrow \M_2(D_8)/\sigma$ where $\tau \sim \tau+1$ and $\tau \sim -1/2\tau$.

The coarse spaces associated to both $U/\sim$ and $\M_2(D_8)/\sigma \cong (\cc \setminus \left\{ \pm 2 \right\}) / \sigma_A$ are biholomorphic to $\cc^*$.  Also, there is a unique point $\tau = \sqrt{-2}/2$ in $U$ for which $J_\tau$ is the square of an order eight automorphism, so $ p^{-1}(\Fix(\sigma)) = \left\{ \sqrt{-2}/2 \right\}$.  In particular $\bar p$ has degree one and is a biholomorphism.  There are precisely two biholomorphisms $U/\sim \rightarrow \M_2(D_8)/\sigma$ satisfying $\bar p(\sqrt{-2}/2) = \Fix(\sigma)/\sigma$.  They are distinguished by the geometric genus of the stable limit of $p(\tau)$ as $\Im(\tau)$ tends to $\infty$.  The geometric genus of $X_\tau$ as $\Im(\tau)$ tends to infinity is zero.

Another holomorphic map $p_1 : U \rightarrow \M_2(D_8)$ is given by $p_1(\tau) = g^{-1}(E_\tau,Z_\tau,T_\tau)$.  Since $p_1$ intertwines $\tau\mapsto -1/2\tau$ and the outer automorphism $\sigma$, and $(E_\tau,Z_\tau,T_\tau) = (E_{\tau+1},Z_{\tau+1},T_{\tau+1})$, $p_1$ also covers an isomorphism $\bar p_1: U/\sim \rightarrow \M_2(D_8)/\sigma$.  Moreover $p_1(\sqrt{-2}/2) = \Fix(\sigma)/\sigma$ and the limit of $p_1(\tau)$ diverges tends to the cusp of $\M_2(D_8)/\sigma$ with stable limit of genus zero.  So $\bar p_1 = \bar p$ and the proposition follows.
\end{proof}
Combining the results of this section we have:
\begin{proof}[Proof of Proposition \ref{prop:d8descriptions}]
Fix $\tau \in U$.  By Proposition \ref{prop:pinwheels}, the surface $X_\tau$ obtained from $P_\tau$ admits a $D_8$-action $\rho_\tau$ with $g(X_\tau,\rho_\tau) = (E_\tau,Z_\tau,P_\tau)$.  By Proposition \ref{prop:d8jacobians}, $\jac(X_\tau) \cong A_\tau$ and by Proposition \ref{prop:d8algebraicmodels} $\cc(X_\tau) \cong K_{a(\tau)}$.

Now fix any $(X,\rho) \in \M_2(D_8)$.  The domain $U$ is a fundamental domain for the group generated by $\Gamma_0(2)$ and the Atkin-Lehner involution $\tau \mapsto -1/2\tau$.  It follows that there is a $\tau \in U$ so $g(X,\rho) = (E_\tau,Z_\tau,T_\tau)$ or $g(\sigma \cdot(X,\rho)) = (E_\tau,Z_\tau,T_\tau)$.  In either case, by Proposition \ref{prop:pinwheels} and the fact that $g$ is an isomorphism onto its image, there is an isomorphism $X \rightarrow X_\tau$ intertwining $\rho(J)$ and $J_\tau$.
\end{proof}

\section{Orbifold points on Hilbert modular surfaces} \label{sec:hmsorbpts}
In this section, we discuss two dimensional Abelian varieties with real multiplication, Hilbert modular surfaces and their orbifold points.  Our main goals are to show: (1) the Abelian variety $A_\tau \cong \jac(X_\tau)$ has complex multiplication if and only if $\tau$ is imaginary quadratic, completing the proof of Theorem \ref{thm:dsurfaces} (Proposition \ref{prop:d8jaccm}) (2) the Jacobians of $D_8$- and $D_{12}$-surfaces with complex multiplication are labeled by orbifold points in $\bigcup_{D} X_D$ (Proposition \ref{prop:d8d12cmhmsorbpts}) and (3) establish the characterization of orbifold points on $\bigcup_D X_D$ of Theorem \ref{thm:hmsorbpts} (Propositions \ref{prop:hmsorbpts}).

\paragraph{Quadratic orders.}  Each integer $D \equiv 0$ or $1 \bmod 4$ determines a quadratic ring:
\[ \ord_D=\frac{\zz[t]}{(t^2-D t+D(D-1)/4)}.\]
The integer $D$ is called the {\em discriminant} of $\ord_D$.  We will write $\sqrt{D}$ for the element $2t-D \in \ord_D$ whose square is $D$ and define $K_D = \ord_D \otimes \qq$.  

We will typically reserve the letter $D$ for positive discriminants and the letter $C$ for negative discriminants.  For a positive discriminant $D>0$, the ring $\ord_D$ is totally real, i.e. every homomorphism of $\ord_D$ into $\cc$ factors through $\rr$.  For such $D$, we will denote by $\sigma_+$ and $\sigma_-$ the two homomorphisms $K_D \rightarrow \rr$ characterized by $\sigma_+(\sqrt{D}) > 0 > \sigma_-(\sqrt{D})$.  We will also use $\sigma_+$ and $\sigma_-$ for the corresponding homomorphisms $\SL_2(K_D) \rightarrow \SL_2(\rr)$.  The {\em inverse different} is the fractional ideal $\ord_D^\vee = \frac{1}{\sqrt{D}} \ord_D$ and is equal to the trace dual of $\ord_D$.

\paragraph{Unimodular modules.}  Now let $\Lambda_D = \ord_D \oplus \ord_D^\vee \subset K_D^2$.  The $\ord_D$-module $\Lambda_D$ has a unimodular symplectic form induced by trace:
\[ \left<(x_1,y_1),(x_2,y_2) \right> = \Tr^{K_D}_\qq(x_1 y_2 - x_2 y_1). \]
Up to symplectic isomorphism of $\ord_D$-modules, $\Lambda_D$ is the unique unimodular $\ord_D$-module isomorphic to $\zz^4$ as an Abelian group with the property that the action of $\ord_D$ is self-adjoint and proper (cf. \cite{mcmullen:dynamicsingenustwo}, Theorem 4.4).  Here, {\em self-adjoint} means $\left<\lambda v,w \right> = \left<v,\lambda w\right>$ for each $\lambda \in \ord_D$ and {\em proper} means that the $\ord_D$-module structure on $\Lambda_D$ is faithful and does not extend to a larger ring in $K_D$.

\paragraph{Symplectic $\ord_D$-module automorphisms.}  Let $\SL(\Lambda_D)$ denote the group of symplectic $\ord_D$-module automorphisms of $\Lambda_D$.  This group coincides with the $\ord_D$-module automorphisms of $\Lambda_D$ and equals:
\[\left\{ \begin{pmatrix} a & b\\c & d \end{pmatrix} : \begin{array}{c} ad-bc=1, a, d \in \ord_D, \\ b \in \sqrt{D} \ord_D \mbox{ and } c \in \ord_D^\vee \end{array}
\right\} \subset \SL_2(K_D) \]
with $A = \spmat{a & b \\ c & d}$ acting on $\Lambda_D$ by sending $(x,y)$ to $(ax+by,cx+dy)$.  The group $\SL(\Lambda_D)$ embeds in $\SL_2(\rr) \times \SL_2(\rr)$ via $A \mapsto (\sigma_+(A),\sigma_-(A))$ and acts on $\hh \times \hh$ by M\"obius transformations:
\[ (\tau_+,\tau_-)\cdot A = \left(\frac{\sigma_+(d \tau +b)}{\sigma_+(c \tau+a)},\frac{\sigma_-(d \tau+b)}{\sigma_-(c \tau+a)}\right) \]
where $\sigma_+(y \tau+x) = \sigma_+(y) \tau_+ + \sigma_+(x)$ and $\sigma_-(y \tau+x) = \sigma_-(y) \tau_- + \sigma_-(x)$.  

The following proposition characterizes the elements of $\SL(\Lambda_D)$ fixing every point in $\hh \times \hh$ and is elementary to verify:
\begin{prop} \label{prop:modkernel}
Let $h$ be the homomorphism $h\co \SL(\Lambda_D) \rightarrow \PSL_2(\rr) \times \PSL_2(\rr)$ given by $h(A) = (\pm \sigma_+(A),\pm \sigma_-(A))$.  For $A \in \SL(\Lambda_D)$, the following are equivalent:
\begin{itemize}
\item $A$ fixes every point in $\hh \times \hh$,
\item $A^2 = 1$, 
\item $A = \spmat{t & 0 \\ 0 & t}$ where $t \in \ord_D$ satisfies $t^2 =1$, and
\item $A$ is in $\ker(h)$.
\end{itemize}
The group $\ker(h)$ is isomorphic to the Klein-four group when $D = 1$ or $4$ and is cyclic of order two otherwise.
\end{prop}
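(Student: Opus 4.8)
The plan is to prove the equivalence of the four conditions by establishing a cycle of implications, the natural order being third $\Rightarrow$ second $\Rightarrow$ first $\Rightarrow$ fourth, together with the reverse implications that close the loop. The starting observation is that any matrix $A = \spmat{t & 0 \\ 0 & t}$ with $t \in \ord_D$, $t^2 = 1$ does lie in $\SL(\Lambda_D)$: the diagonal entries are in $\ord_D$, the off-diagonal entries are trivially in $\sqrt{D}\ord_D$ and $\ord_D^\vee$, and $\det = t^2 = 1$. Such a matrix acts on $\hh \times \hh$ as $(\tau_+,\tau_-)\cdot A = \left(\frac{\sigma_+(t)\tau}{\sigma_+(t)}, \frac{\sigma_-(t)\tau}{\sigma_-(t)}\right) = (\tau_+,\tau_-)$, so it fixes every point, and clearly $A^2 = \spmat{t^2 & 0 \\ 0 & t^2} = 1$. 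This immediately gives the implications among the third condition and the first two.

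The substantive direction is showing that fixing every point of $\hh\times\hh$ forces $A$ to be diagonal scalar with $t^2 = 1$. First I would argue that if $A = \spmat{a & b \\ c & d}$ acts trivially under $\sigma_+$, then $\sigma_+(A)$ fixes every point of $\hh$, so $\sigma_+(A) = \pm I$ in $\PSL_2(\rr)$; applying the same to $\sigma_-$ gives $\sigma_-(A) = \pm I$. This is precisely the statement that $A \in \ker(h)$, so the first condition and the fourth are manifestly equivalent once $h$ is unwound. Now $\sigma_+(b) = \sigma_+(c) = 0$ forces $b = c = 0$ since $\sigma_+$ is injective on $K_D$ (indeed on the fractional ideals $\sqrt{D}\ord_D$ and $\ord_D^\vee$), and $\sigma_\pm(a) = \sigma_\pm(d) = \pm 1$ with $ad = 1$ then pins down $a = d = t$ with $\sigma_\pm(t) = \pm 1$; squaring gives $t^2 = 1$. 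The one point requiring a little care is that the two signs for $\sigma_+$ and $\sigma_-$ must agree, which follows because $a = d$ is a single element of $\ord_D$ and its two real embeddings are determined by the single condition $t^2 = 1$.

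The final claim about $\ker(h)$ amounts to solving $t^2 = 1$ inside $\ord_D$. For the last sentence, I would note that $t^2 = 1$ means $(t-1)(t+1) = 0$ in $\ord_D$. When $\ord_D$ is an integral domain (which holds precisely when $D$ is not a perfect square of the relevant degenerate type), the only solutions are $t = \pm 1$, giving a cyclic group of order two generated by $-I$. The Klein-four exceptions at $D = 1$ and $D = 4$ arise because $\ord_1$ and $\ord_4$ are not domains: $\ord_1 \cong \zz\times\zz$ and $\ord_4 \cong \zz[t]/(t^2 - 4t + 3) \cong \zz \times \zz$ both have four square roots of unity, namely the idempotent-based elements $(\pm 1, \pm 1)$. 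The main obstacle, though entirely routine, is this case analysis of square roots of $1$ in $\ord_D$, which is why verifying the structure of $\ker(h)$ for the degenerate discriminants $D = 1, 4$ is the only place the argument branches; everything else is a direct consequence of the injectivity of $\sigma_\pm$ and the characterization of $\PSL_2(\rr)$-elements fixing all of $\hh$.
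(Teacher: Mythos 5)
The paper itself gives no proof of this proposition (it is asserted to be ``elementary to verify''), so the only question is whether your argument is complete and correct. It has the right overall shape, but there is one genuine logical gap and a cluster of problems in the square-discriminant case.

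The logical gap: your cycle of implications is not closed. You establish (3) $\Rightarrow$ (1), (3) $\Rightarrow$ (2), (1) $\Leftrightarrow$ (4), and (1) $\Rightarrow$ (3), but condition (2), namely $A^2=1$, is never shown to imply any of the others; as written it is only a consequence of (3), so the four conditions are not proved equivalent. You need the converse: if $A \in \SL(\Lambda_D)$ satisfies $A^2=1$, then $A$ satisfies the separable polynomial $x^2-1$, so each of $\sigma_\pm(A)$ is a diagonalizable real $2\times 2$ matrix with eigenvalues in $\{\pm 1\}$ and determinant $1$, hence equal to $\pm I$; this places $A$ in $\ker(h)$ and closes the loop. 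Some such argument must be added.

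The square-discriminant issues, which matter because the proposition must cover $D=f^2$ (the paper uses $W_9, W_{16}, W_{25}, W_{49}, W_{81},\dots$ throughout): (a) for square $D$ the algebra $K_D\cong \qq\times\qq$ and $\sigma_+$ is a projection, not injective, so ``$\sigma_+(b)=\sigma_+(c)=0$ forces $b=c=0$ since $\sigma_+$ is injective'' is false as stated; you must use that $\sigma_+\times\sigma_-$ is injective, and you do have $\sigma_-(b)=\sigma_-(c)=0$ available, so the fix is one line. (b) Your classification of square roots of unity covers only the cases where $\ord_D$ is a domain together with $D=1,4$; but $\ord_D$ fails to be a domain for \emph{every} perfect square $D$, so $D=f^2$ with $f\geq 3$ is not treated. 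The correct computation is that $\ord_{f^2}\cong\{(a,b)\in\zz^2 : a\equiv b \bmod f\}$ (in particular $\ord_4$ is the index-two suborder of $\zz\times\zz$, not $\zz\times\zz$ itself as you claim), and the mixed-sign roots $(\pm 1,\mp 1)$ lie in this order precisely when $f$ divides $2$; that is what isolates $D=1,4$. Finally, your remark that ``the two signs for $\sigma_+$ and $\sigma_-$ must agree'' is false exactly in the Klein-four cases (take $t=(1,-1)$) and should be deleted; nothing in the proof needs it.
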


\paragraph{Hilbert modular surfaces.}  The group $\PSL(\Lambda_D) = \SL(\Lambda_D)/\ker(h)$ acts faithfully and properly discontinuously on $\hh \times \hh$ and we define $X_D$ to be the quotient:
\[ X_D = \hh \times \hh / \PSL(\Lambda_D). \]
We will denote by $[\tau]$ the point in $X_D$ represented by $\tau \in \hh \times \hh$.  The complex orbifold is a typical example of a {\em Hilbert modular surface}.

\paragraph{Abelian varieties with real multiplication.}  Now let $B = \cc^2 / \Lambda$ be a principally polarized Abelian surface.  The {\em endomorphism ring} $\End(B)$ of $B$ is the ring of holomorphic homomorphisms from $B$ to itself.  We will say that $B$ admits {\em real multiplication by $\ord_D$} if there is a proper and self-adjoint homomorphism:
\[ \iota \co \ord_D \rightarrow \End(B). \]
Self-adjoint and proper here mean that $\iota$ turns the unimodular lattice $H_1(B,\zz) = \Lambda$ into self-adjoint and proper $\ord_D$-module.  

\paragraph{Moduli of Abelian varieties with real multiplication.}  Now let $A_2$ be the moduli space of principally polarized Abelian surfaces and set:
\[ A_2(\ord_D) = \left\{ (B,\iota) : 
\begin{array}{c}
B \in A_2 \mbox{ and } \iota : \ord_D \rightarrow \End(B) \\
 \mbox{ is proper and self-adjoint}
\end{array}
\right\} / \sim . \]
Here,  two pairs $(B_1,\iota_1)$ and $(B_2,\iota_2)$ are equivalent, and we write $(B_1,\iota_1) \sim (B_2,\iota_2)$, if there is a polarization preserving isomorphism $B_1 \rightarrow B_2$ intertwining $\iota_i$.

As we now describe and following \cite{mcmullen:dynamicsingenustwo} \S4 (see also \cite{birkenhakelange:cxabelianvarieties} Chapter 9), the Hilbert modular surface $X_D$ parametrizes $A_2(\ord_D)$ and presents $A_2(\ord_D)$ as a complex orbifold.  For $\tau =(\tau_+,\tau_-) \in \hh \times \hh$, define $\phi_\tau \co \Lambda_D \rightarrow \cc^2$ by:
\[ \phi_\tau(x,y)= (\sigma_+(x+y\tau),\sigma_-(x+y\tau)). \]
The image $\phi_\tau(\Lambda_D)$ is a lattice, and the complex torus $B_\tau = \cc^2/\phi_\tau(\Lambda_D)$ is principally polarized by the symplectic form on $\Lambda_D$.  For each $x \in \ord_D$, the matrix $\spmat{ \sigma_+(x) & 0 \\ 0 &  \sigma_-(x)}$ preserves the lattice $\phi_\tau(\Lambda_D)$ giving real multiplication by $\ord_D$ on $B_\tau$:
\[ \iota_\tau \co \ord_D \rightarrow \End(B_\tau). \]
For $A \in \SL(\Lambda_D)$, the embeddings $\phi_{\tau A }$ and $\phi_\tau$ are related by $\phi_{ \tau A} = C(A) \circ \phi_\tau \circ A$ where $C(A) = \spmat{ \sigma_+(a+c\tau) & 0 \\ 0 & \sigma_-(a+c\tau) }$.  From this we see that there is a polarization preserving isomorphism between $B_\tau$ and $B_{ \tau A}$ that intertwines $\iota_{\tau}$ and $\iota_{\tau A}$ and that the correspondence $\tau \rightarrow (B_\tau,\iota_\tau)$ descends to a map $X_D \rightarrow A_2(\ord_D)$.  This map is in fact a bijection and presents $A_2(\ord_D)$ as complex orbifold.

\paragraph{Complex multiplication.}  Now let $\ord$ be a degree two, totally imaginary extension of $\ord_D$.  We will say that $B \in A_2$ admits {\em complex multiplication by $\ord$} if there is a proper and Hermitian-adjoint homomorphism:
\[ \iota \co \ord \rightarrow \End(B). \]
Here, Hermitian-adjoint means that the symplectic dual of $\iota(x)$ acting on $H_1(B,\zz)$ is $\iota(\bar x)$ where $\bar x$ is the complex conjugate of $x$ and proper, as usual, means that $\iota$ does not extend to a larger ring in $\ord \otimes \qq$. 

For a one dimensional Abelian variety $E = \cc/\Lambda$ in $A_1$, we will say $E$ has {\em complex multiplication by $\ord_C$} if $\End(E)$ is isomorphic to $\ord_C$.  The {\em ideal class group} $H(C)$ is the set of invertible $\ord_C$-ideals modulo principal ideals and is well known to be in bijection with the set of $E \in A_1$ with $\End(A) \cong \ord_C$.  Since $\ord_C$ is quadratic, the invertible $\ord_C$-ideals coincide with the proper $\ord_C$-submodules of $\ord_C$.  The {\em class number} $h(C)$ is the order of the ideal class group $H(C)$.

We are now ready to determine which Jacobians of $D_8$-surfaces have complex multiplication.
\begin{prop}
\label{prop:d8jaccm}
Fix $\tau \in \hh$.  The Abelian variety $A_\tau$ has complex multiplication if and only if $\tau$ is imaginary quadratic.
\end{prop}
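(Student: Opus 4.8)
The plan is to reduce everything to the elliptic curve $E_\tau = \cc/(\zz\oplus\tau\zz)$, using that $A_\tau$ is isogenous to $E_\tau\times E_\tau$. The isogeny can be read directly off the lattice: as noted in the proof of Proposition \ref{prop:d8jacobians}, $\Lambda_\tau$ contains $\vect{2}{0}$, $\vect{2\tau}{0}$, $\vect{0}{2}$, $\vect{0}{2\tau}$, hence contains $2\bigl((\zz\oplus\tau\zz)\times(\zz\oplus\tau\zz)\bigr)$ with finite index. So the identity of $\cc^2$ descends to an isogeny $E_\tau\times E_\tau\to A_\tau$, and since the rational endomorphism algebra is an isogeny invariant,
\[ \End(A_\tau)\otimes\qq \;\cong\; \End(E_\tau\times E_\tau)\otimes\qq \;\cong\; \Mat_2\!\bigl(\End(E_\tau)\otimes\qq\bigr). \]
(One could instead invoke Propositions \ref{prop:d4jacobians} and \ref{prop:d8jacobians}, which present $A_\tau\cong\jac(X_\tau)$ as a quotient of $E_\tau\times E_\tau$ for $\tau\in U$, together with the $\Gamma_0(2)$-invariance of both sides of the desired equivalence.)

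Next I would record the dictionary between the definition of complex multiplication given above and the endomorphism algebra: the principally polarized surface $A_\tau$ has complex multiplication if and only if $\End(A_\tau)\otimes\qq$ contains a quartic CM field $L$. Indeed, the maximal totally real subfield of such an $L$ is a real quadratic field $K_D$, so $L$ is automatically a degree-two totally imaginary extension of some $\ord_D$; the \emph{Hermitian-adjoint} condition is free, because the Rosati involution attached to the principal polarization is a positive involution and must restrict to complex conjugation on any CM subfield of degree $2\dim A_\tau = 4$ (cf. \cite{shimura:cm}, \cite{birkenhakelange:cxabelianvarieties}); and properness is arranged by replacing $L$ with the order $\ord = L\cap\End(A_\tau)$. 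Thus it suffices to decide when $\Mat_2\bigl(\End(E_\tau)\otimes\qq\bigr)$ contains a quartic CM field.

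Now recall the classical dichotomy: $\End(E_\tau)\otimes\qq$ is either $\qq$ or an imaginary quadratic field, the latter occurring exactly when $\tau$ is imaginary quadratic. If $\tau$ is not imaginary quadratic, then $\End(A_\tau)\otimes\qq\cong\Mat_2(\qq)$, a central simple $\qq$-algebra of dimension four; its commutative subfields have degree at most two over $\qq$, so it contains no quartic field and $A_\tau$ has no complex multiplication. If $\tau$ is imaginary quadratic, set $K=\qq(\tau)$, so $\End(A_\tau)\otimes\qq\cong\Mat_2(K)$, and I would exhibit a quartic CM subfield by hand: choose a real quadratic field $\qq(\sqrt m)$ ($m>0$) realized by a matrix $S\in\Mat_2(\qq)$ with $S^2=mI$, which commutes with the scalar copy of $K$; the compositum of the scalars $K$ and $\qq(S)$ is then the biquadratic field $\qq(\sqrt m,\tau)$, which is a genuine field inside $\Mat_2(K)$ and is CM (totally imaginary over the totally real $\qq(\sqrt m)$). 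Since it has degree $4=2\dim A_\tau$ over $\qq$, it equips $A_\tau$ with complex multiplication.

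The step I expect to be the main obstacle is this last construction, i.e. producing an honest quartic CM field inside $\Mat_2(K)$ meeting the precise definition. The abstract existence is classical (a product of mutually isogenous CM elliptic curves is a CM abelian variety), but some care is needed to ensure that the candidate subalgebra is really a field rather than a split \'etale algebra — the naive choice $\qq(\rho(J))\cdot K$ degenerates to $K\times K$ exactly when $K=\qq(i)$, which is why I select the totally real generator $S$ separately — and to confirm that its totally real part is the $K_D$ of an actual real quadratic order and that the Rosati positivity is respected. Once a quartic CM subfield is secured, the Hermitian-adjointness and properness are formal, and combining the three steps yields the equivalence.
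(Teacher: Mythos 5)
Your reduction to $E_\tau\times E_\tau$ and your treatment of the ``only if'' direction coincide with the paper's proof: the paper likewise uses the degree four map $A_\tau\to E_\tau\times E_\tau$ to identify $\End(A_\tau)\otimes\qq$ with $\Mat_2(\End(E_\tau)\otimes\qq)$ and notes that $\Mat_2(\qq)$ contains no commutative subring of rank four. The genuine gap is in the ``if'' direction, in the claim that the Hermitian-adjoint condition is free because the Rosati involution ``must restrict to complex conjugation on any CM subfield of degree $2\dim A_\tau$.'' That principle holds for \emph{simple} abelian varieties, where the Rosati involution preserves all of $\End^0$ and positivity forces it to act as conjugation; but $A_\tau$ is isogenous to $E_\tau^2$, and the Rosati involution need not even stabilize an arbitrary quartic CM subfield of $\Mat_2(K)$. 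Concretely, under your identification the polarization is a multiple of the product polarization, so the Rosati involution is $M\mapsto \bar M^{\,t}$; taking $S=\spmat{0 & m\\ 1 & 0}$ with $m>0$ a non-square gives a quartic CM field $L=K[S]\cong K(\sqrt m)$ containing the scalars, whose elements are the matrices $\spmat{a & bm\\ b & a}$ with $a,b\in K$, and $S^{\dagger}=\spmat{0 & 1\\ m & 0}$ lies in $L$ only if $m^2=1$. So ``securing a quartic CM subfield'' does not make Hermitian-adjointness formal, and as written the construction may produce an $L$ that fails the definition.

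The repair is easy but must be made explicit: choose $S$ self-adjoint for the polarization, e.g.\ a symmetric rational matrix $\spmat{a & b\\ b & -a}$ with $S^2=(a^2+b^2)I$. Then $S^{\dagger}=S$, the scalars satisfy $x^{\dagger}=\bar x$, and the Rosati involution stabilizes $L=K(S)$ and acts on it as complex conjugation, so restricting to $\ord=L\cap\End(A_\tau)$ gives complex multiplication in the paper's sense. The paper's own proof of this direction avoids the issue by exhibiting the explicit generators $\spmat{0 & 1\\ -1 & 0}$ and $\spmat{\tau & 0\\ 0 & \tau}$, whose symplectic adjoints are visibly their conjugates; your observation that this choice degenerates to $K\times K$ when $K=\qq(i)$ (which does occur, e.g.\ $\tau=i$ for $W_{16}$) is correct and is a point where your $S$-based construction, once repaired, is actually more robust than the paper's. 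The properness step, restricting to $L\cap\End(A_\tau)$, is fine as you state it.
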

\begin{proof}
First suppose $\tau$ is imaginary quadratic.  The vector space $\Lambda_\tau \otimes \qq$ is stabilized by the matrices $\spmat{ 0 & 1 \\ -1 & 0 }$ and $\spmat{ \tau & 0 \\ 0 & \tau}$ which together generate a Hermitian adjoint embedding $\iota \co \qq(\tau,i) \rightarrow \End(A_\tau) \otimes \qq$.  The restriction of $\iota$ to the order $\ord = \iota^{-1}(\End(A_\tau))$ is complex multiplication by $\ord$ on $A_\tau$.

Now suppose $\tau$ is not imaginary quadratic.  We have seen that there is a degree four, surjective holomorphic map $f \co A_\tau \rightarrow E_\tau \times E_\tau$.  As is well known and is implied by the stronger Proposition \ref{prop:isogend}, such a map gives rise to an isomorphism between the rational endomorphism ring $\End(A_\tau) \otimes \qq$ and $\End(E_\tau \times E_\tau) \otimes \qq = M_2(\qq)$.  As a consequence, any commutative ring in $\End(A_\tau) \otimes \qq$ has rank at most two over $\qq$ and $A_\tau$ does not have complex multiplication by any order.
\end{proof}
We have now proved all the claims in our Theorem from \S \ref{sec:introduction} about $D_8$-surfaces:
\begin{proof}[Proof of Theorem \ref{thm:dsurfaces}]
The claims relating $X_\tau$, $K_{a(\tau)}$ and $A_\tau$ for $\tau \in U$ are established in Proposition \ref{prop:d8descriptions}.  The characterization of when $\jac(X_\tau)$ has complex multiplication is established in Proposition \ref{prop:d8jaccm}.
\end{proof}

\paragraph{Jacobians of $D_{12}$-surfaces with complex multiplication.}
In Appendix \ref{sec:d12family} we similarly define for $\tau \in \hh$ an Abelian variety $\wt A_\tau = \cc^2 / \wt \Lambda_\tau$ which, for most $\tau$, is the Jacobian of $D_{12}$-surface.  A nearly identical argument shows that, for $(X,\rho) \in \M_2(D_{12})$, the Jacobian $\jac(X)$ has complex multiplication if and only if $\jac(X)$ has complex multiplication by an order extending $\rho(Z)$, which happens if and only if $\jac(X) \cong \wt A_\tau$ with $\tau$ imaginary quadratic.

\paragraph{Orbifold points on Hilbert modular surfaces.}  We are now ready to study the orbifold points on $X_D$.  For $\tau \in \hh \times \hh$, we define the {\em orbifold order of $[\tau]$} in $X_D$ to be the order of the group $\Stab(\tau) \subset \PSL(\Lambda_D)$.  We will call $[\tau] \in X_D$ an {\em orbifold point} if the orbifold order of $[\tau]$ is greater than one.

The following proposition gives an initial characterization of the Abelian varieties labeled by such points:
\begin{prop}
	\label{prop:hmsorbptcharacterization}
	Fix $\tau \in \hh \times \hh$ and an integer $n > 2$.  The following are equivalent:
	\begin{enumerate}
		\item The point $\tau$ is fixed by an $A \in \SL_2(\Lambda_D)$ of order $n$.
		\item There is an automorphism $\phi \in \Aut(B_\tau)$ of order $n$ that commutes with $\iota_\tau(\ord_D)$.
		\item The homomorphism $\iota_\tau : \ord_D \rightarrow \End(B_\tau)$ extends to complex multiplication by an order containing $\ord_D[\zeta_n]$ where $\zeta_n$ is a primitive $n$th root of unity.
	\end{enumerate}
\end{prop}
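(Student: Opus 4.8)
The plan is to establish the two equivalences $(1)\Leftrightarrow(2)$ and $(2)\Leftrightarrow(3)$ separately. The equivalence $(1)\Leftrightarrow(2)$ is the standard dictionary between the stabilizer of $\tau$ in $\SL(\Lambda_D)$ and the polarization-preserving automorphisms of $B_\tau$ commuting with real multiplication, and I would read it off from the cocycle relation $\phi_{\tau A}=C(A)\circ\phi_\tau\circ A$ recorded above. If $A\in\SL(\Lambda_D)$ fixes $\tau$, this relation says that the diagonal, hence $\cc$-linear, map $C(A)$ carries the lattice $\phi_\tau(\Lambda_D)$ to itself and so descends to a holomorphic automorphism $\phi$ of $B_\tau$; since $C(A)$ and each $\iota_\tau(x)=\spmat{\sigma_+(x) & 0 \\ 0 & \sigma_-(x)}$ are diagonal, $\phi$ commutes with $\iota_\tau(\ord_D)$, and its action on $H_1(B_\tau,\zz)\cong\Lambda_D$ is $A^{-1}$, so it has order $n$. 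Conversely, an automorphism $\phi$ as in (2) acts on $H_1(B_\tau,\zz)$ by a symplectic $\ord_D$-module automorphism, i.e. by an element of $\SL(\Lambda_D)$, and holomorphicity of $\phi$ forces its rational representation to preserve the complex structure determined by $\tau$, which is precisely the statement that the corresponding $A$ fixes $\tau$. Since the rational representation of an automorphism of an abelian variety is faithful, the orders of $A$ and $\phi$ agree.

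For $(2)\Rightarrow(3)$ the key step is to identify the commutative algebra that $\phi$ generates over $K_D$. Because $\phi$ commutes with $\iota_\tau(\ord_D)$, the subring $\iota_\tau(\ord_D)[\phi]\subset\End(B_\tau)$ is commutative, so $K:=K_D[\phi]\subset\End(B_\tau)\otimes\qq$ is a commutative algebra containing the real quadratic field $K_D$. As $H_1(B_\tau,\qq)$ is free of rank two over $K_D$, the element $\phi$ satisfies its degree-two characteristic polynomial over $K_D$, whence $[K:K_D]\le 2$. I would then argue that $K$ is a CM field: since $\phi$ has finite order it is semisimple, so its minimal polynomial $m(x)$ over $K_D$ is squarefree; the hypothesis $n>2$ gives $\phi\neq\pm1$, so $m$ is not linear and therefore $\deg m = 2$; and $m$ cannot split over the totally real field $K_D$, since its roots are non-real roots of unity. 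Hence $K=K_D(\phi)$ is a totally imaginary quadratic extension of $K_D$, that is, a quartic CM field.

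It then remains to verify that the embedding $K\hookrightarrow\End(B_\tau)\otimes\qq$ is complex multiplication. Using the correspondence of $(1)\Leftrightarrow(2)$, the automorphism $\phi$ preserves the polarization, so its Rosati adjoint is $\phi^{-1}$, while the Rosati involution fixes $K_D$ because $\iota_\tau$ is self-adjoint; as complex conjugation on $K$ sends the root of unity $\phi$ to $\phi^{-1}$ and fixes $K_D$, the Rosati involution restricts to complex conjugation on $K$ and the embedding is Hermitian-adjoint. Setting $\ord=\{x\in K:\iota(x)\in\End(B_\tau)\}$ yields a proper, Hermitian-adjoint homomorphism $\ord\to\End(B_\tau)$ extending $\iota_\tau$, and $\ord\supseteq\ord_D[\phi]=\ord_D[\zeta_n]$ since $\iota_\tau(\ord_D)$ and $\phi$ both land in $\End(B_\tau)$. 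The converse $(3)\Rightarrow(2)$ is immediate: the image $\phi=\iota(\zeta_n)$ of a primitive $n$th root of unity is a unit of order $n$ in $\End(B_\tau)$ commuting with $\iota_\tau(\ord_D)$ because $\ord$ is commutative.

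The main obstacle I anticipate is the bookkeeping in $(2)\Rightarrow(3)$ showing that $K_D[\phi]$ is genuinely a field, rather than a split \'etale algebra, and is totally imaginary: the semisimplicity of $\phi$ together with $K_D$ being totally real is exactly what excludes the split case, and the assumption $n>2$ is what guarantees $\phi\neq\pm1$, so that $K$ is a proper quadratic extension of $K_D$. The equivalence $(1)\Leftrightarrow(2)$, though conceptually routine, also demands care in matching orders and in the point that holomorphicity of $\phi$ is equivalent to $A$ fixing $\tau$.
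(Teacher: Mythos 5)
Your proof is correct and follows essentially the same route as the paper: the cocycle relation $\phi_{\tau A}=C(A)\circ\phi_\tau\circ A$ for the equivalence of (1) and (2), the extension $\iota_\tau(\zeta_n)=\phi$ for (2)$\Rightarrow$(3), and restriction to $H_1(B_\tau,\zz)\cong\Lambda_D$ for the return direction; the only organizational difference is that you prove two separate equivalences where the paper runs the cycle $(1)\Rightarrow(2)\Rightarrow(3)\Rightarrow(1)$. Your careful verification that $K_D[\phi]$ is a quartic CM field (semisimplicity of $\phi$, the exclusion of the split case over the totally real $K_D$, and the identification of the Rosati involution with complex conjugation) fills in details the paper's terse proof leaves implicit, which is a welcome addition rather than a deviation.
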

\begin{proof}
	First suppose $(1)$ holds with $A = \spmat{a & b \\ c & d}$ and $\tau =  \tau A$.  We have seen that $\phi_\tau \circ A$ and $\phi_{\tau A} = \phi_\tau$ differ by multiplication by the matrix $C(A)=\spmat{ \sigma_+(a+c\tau) & 0 \\ 0 & \sigma_-(a+c\tau)}$.  It follows that $C(A)$ restricts to a symplectic automorphism of $\phi_\tau(\Lambda_D)$, giving rise to an automorphism $\phi \in B_\tau$ of order $n$ which commutes with $\iota_\tau$ since $A$ is $\ord_D$-linear.  Now suppose $(2)$ holds.  The homomorphism $\iota_\tau : \ord_D \rightarrow \End(B_\tau)$ extends to $\ord_D[\zeta_n]$ via $\iota_\tau(\zeta_n) = \phi$ and this extension is Hermitian-adjoint since $\phi$ is symplectic.  Finally, if $(3)$ holds, then the automorphism $\iota_\tau(\zeta_n)$ restricts to an $\ord_D$-module automorphism of $H_1(B_\tau,\zz) = \Lambda_D$, giving a matrix $A \in \SL_2(\ord_D \oplus \ord_D^\vee)$ of order $n$ and fixing $\tau$.
\end{proof}
We can now show that the $D_8$- and $D_{12}$-surfaces with complex multiplication give orbifold points on Hilbert modular surfaces:
\begin{prop}
\label{prop:d8d12cmhmsorbpts}
	The Jacobians of $D_8$- and $D_{12}$-surfaces with complex multiplication are labeled by orbifold points in $\bigcup_{D} X_D$.
\end{prop}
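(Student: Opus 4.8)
\noindent
The plan is to reduce the statement to Proposition~\ref{prop:hmsorbptcharacterization} by exhibiting, for each such Jacobian, a proper self-adjoint action of some $\ord_{D'}$ together with a commuting automorphism of order greater than two. I would treat the two families in parallel, since the only difference is the root of unity realized by the distinguished automorphism. For a $D_8$-surface $(X,\rho)$ with complex multiplication, Proposition~\ref{prop:d8jaccm}, applied through the identification $\jac(X) \cong A_\tau$ of Proposition~\ref{prop:d8jacobians}, provides an imaginary quadratic $\tau \in \qq(\sqrt{-d})$ and a Hermitian-adjoint embedding $\iota \co \qq(\tau,i) \rightarrow \End(\jac(X)) \otimes \qq$ with $\iota(i)$ equal to the automorphism of $\jac(X)$ induced by $\rho(J)$; here $K = \qq(\sqrt{-d},i)$ is the relevant quartic CM field. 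For a $D_{12}$-surface, the analogous statement recorded after Proposition~\ref{prop:d8jaccm} and proved in Appendix~\ref{sec:d12family} gives a Hermitian-adjoint embedding of $K = \qq(\tau,\sqrt{-3})$ in which a primitive $n$th root of unity, $n \in \{3,6\}$, is realized by the automorphism induced by $\rho(Z)$; that this induced automorphism has order $n > 2$ follows since $\rho(Z)$ has order six with $\rho(Z)^3 = \eta$ by the $[3,3]$ row of Table~\ref{tab:g2auts} together with Torelli.

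\noindent
Next I would pass from complex to real multiplication. The field $K$ is a totally imaginary quadratic extension of a unique maximal totally real subfield $F$, which is real quadratic: for $D_8$ it is generated by $i\sqrt{-d}$ and equals $\qq(\sqrt{d})$, while for $D_{12}$ it is generated by $\sqrt{-3}\sqrt{-d}$ and equals $\qq(\sqrt{3d})$. Setting $\ord_{D'} = \{\, x \in F : \iota(x) \big( H_1(\jac(X),\zz) \big) \subseteq H_1(\jac(X),\zz) \,\}$ gives an order of rank two in $F$ that is its own multiplier ring, hence proper; and the restriction $\iota|_{\ord_{D'}}$ is self-adjoint because the Rosati involution on $\iota(K)$ is complex conjugation, which fixes $F$ pointwise. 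Thus $\iota|_{\ord_{D'}}$ is proper self-adjoint real multiplication, so $\jac(X)$ determines a point of $A_2(\ord_{D'}) \cong X_{D'}$.

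\noindent
Finally I would invoke Proposition~\ref{prop:hmsorbptcharacterization}. The automorphism $\iota(i)$ induced by $\rho(J)$ (respectively the automorphism induced by $\rho(Z)$) has order $4 > 2$ (respectively $n > 2$) and commutes with $\iota(\ord_{D'})$ because $K$ is commutative. Hence condition~(2) of Proposition~\ref{prop:hmsorbptcharacterization} holds, and the implication $(2)\Rightarrow(1)$ shows that the point of $X_{D'}$ carrying $\jac(X)$ is fixed by an element of $\SL(\Lambda_{D'})$ of order $4$ (respectively $n$); its nontrivial image in $\PSL(\Lambda_{D'})$ exhibits $\jac(X)$ as an orbifold point of $\bigcup_D X_D$.

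\noindent
I expect the main obstacle to be the passage from complex to real multiplication above. The routine part is checking properness and self-adjointness of the restricted action, but the genuinely delicate situation is the degenerate one in which $\tau \in \qq(i)$ (for $D_8$) or $\tau \in \qq(\sqrt{-3})$ (for $D_{12}$): there $K$ collapses to a quadratic field, $\End(\jac(X)) \otimes \qq$ becomes a product algebra rather than a quartic CM field, and the real subfield $F$ produced above degenerates. In that case $\jac(X)$ is isogenous to a product of elliptic curves, and I would dispatch it separately, either by producing an explicit real quadratic order commuting with the distinguished automorphism inside the product algebra or by appealing to the treatment of orbifold points arising from products of elliptic curves in Theorem~\ref{thm:hmsorbpts}.
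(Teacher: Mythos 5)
Your proposal is correct and follows essentially the same route as the paper: both arguments take the complex multiplication on $\qq(\tau,i)$ (resp.\ $\qq(\tau,\sqrt{-3})$) constructed in Proposition~\ref{prop:d8jaccm}, restrict to the real quadratic suborder to obtain proper self-adjoint real multiplication, and then invoke Proposition~\ref{prop:hmsorbptcharacterization} (the paper via condition~(3), you via the equivalent condition~(2)). Your extra attention to the degenerate case $\tau \in \qq(i)$ (resp.\ $\qq(\sqrt{-3})$), where the CM algebra is a product rather than a quartic field and the real discriminant is a square, is a legitimate point that the paper's one-line proof passes over silently.
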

\begin{proof}
We will show that $D_8$ Jacobians with complex multiplication are labeled by orbifold points in $\bigcup_{D} X_D$.  A nearly identical argument shows the same is true of $D_{12}$ Jacobians with complex multiplication.  By Proposition \ref{prop:d8jaccm}, any such Jacobian is isomorphic to $A_\tau$ for some imaginary quadratic $\tau$.  The order $\ord$ constructed in the proof of Proposition \ref{prop:d8jaccm} contains $i$ since $\iota(i)$ is integral on $A_\tau$, and $(A_\tau,\iota|_{\ord \cap \rr})$ clearly satisfies condition (3) of Proposition \ref{prop:hmsorbptcharacterization}.
\end{proof}
We conclude this section by showing most of the orbifold points on $\bigcup_D X_D$ label Jacobians of $D_8$- and $D_{12}$-surfaces.  
\begin{prop} \label{prop:hmsorbpts}
	Fix an orbifold point $[\tau] \in X_D$.  At least one of the following holds:
	\begin{itemize}
		\item $B_\tau$ is a product of elliptic curves;
		\item $[\tau]$ is a point of orbifold order five on $X_5$;
		\item $B_\tau$ is the Jacobian of a $D_8$-surface with complex multiplication; or
		\item $B_\tau$ is the Jacobian of a $D_{12}$-surface with complex multiplication.
	\end{itemize}
\end{prop}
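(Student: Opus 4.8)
The plan is to reduce at once to the case where $B_\tau$ is not a product of elliptic curves, so that $B_\tau = \jac(X)$ for a genus two surface $X$, and then to read off an automorphism of $X$ whose conjugacy class in $\Sym(X^W)$ appears in Table \ref{tab:g2auts}. Since $[\tau]$ is an orbifold point, $\Stab(\tau) \subset \PSL(\Lambda_D)$ is a nontrivial finite group, so it contains an element $\bar A$ of prime order $p$. Lifting $\bar A$ to $A \in \SL(\Lambda_D)$ and using that $\Aut(B_\tau) = \Aut(X)$, I would produce an automorphism $\phi \in \Aut(X)$ with $\phi \neq \pm 1$ (equivalently $\phi \notin \langle \eta \rangle$) that commutes with $\iota_\tau(\ord_D)$; here one uses that $\phi$ acts on $H_1(X,\zz) = \Lambda_D$ as a symplectic $\ord_D$-module automorphism, i.e.\ as an element of $\SL(\Lambda_D)$ projecting to $\bar A$, so that $\phi^p = A^p$ lies in $\ker(h) = \{\pm 1\}$ for $D \geq 5$.

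The crucial step is to pin down the type of $\phi$. First I would exclude the generic involution: if $\phi^2 = 1$ with $\phi \neq \pm 1$ (so $[\phi|_{X^W}] = [2,2,2]$), then $\phi$ gives $A \in \SL(\Lambda_D)$ with $A^2 = 1$ and $A \neq \pm 1$, contradicting Proposition \ref{prop:modkernel}. Granting this, a short computation with the orders appearing in Table \ref{tab:g2auts} shows that the only automorphisms $\phi \neq \pm 1$ satisfying $\phi^p = \pm 1$ for some prime $p$ are those with $[\phi|_{X^W}] = [1,1,2,2]$ (order $4$, $p = 2$), with $[\phi|_{X^W}] = [3,3]$ (order $3$ or $6$, $p = 3$), and with $[\phi|_{X^W}] = [1,5]$ (order $5$ or $10$, $p = 5$); in particular the exotic types $[6]$, $[1,1,4]$ and $[2,4]$ cannot occur for prime $p$, the order eight surface being instead detected through its order four power.

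I would then dispatch the three surviving cases. When $[\phi|_{X^W}] = [1,1,2,2]$, Proposition \ref{prop:extendJ} gives $\phi^2 = \eta$ and a faithful $D_8$-action $\rho$ on $X$ with $\rho(J)$ equal to this order four automorphism; since $\rho(J)$ commutes with $\iota_\tau(\ord_D)$ and squares to $-1$, the ring $\ord_D[\rho(J)] \cong \ord_D \otimes_\zz \zz[i]$ is an order in the quartic CM field $K_D(i)$ acting Hermitian-adjointly, so $B_\tau$ is the Jacobian of a $D_8$-surface with complex multiplication. When $[\phi|_{X^W}] = [3,3]$, after replacing $\phi$ by $\eta\phi$ if needed (which changes neither $\bar\phi$ nor the commuting property) I may assume $\phi$ has order six with $\phi^3 = \eta$; the $D_{12}$-analogue of Proposition \ref{prop:extendJ} established in \S\ref{sec:d12family} then gives a faithful $D_{12}$-action with $\rho(Z) = \phi$, and $\ord_D[\phi] \cong \ord_D[\zeta_6]$ is an order in the CM field $K_D(\sqrt{-3})$, so $B_\tau$ is the Jacobian of a $D_{12}$-surface with complex multiplication. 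Finally, when $[\phi|_{X^W}] = [1,5]$, the group $\langle \phi \rangle$ contains an order five element $\psi$ commuting with $\iota_\tau(\ord_D)$, whence $\qq(\psi) \cong \qq(\zeta_5)$ makes $H_1(X,\qq) \cong \qq^4$ one-dimensional over $\qq(\zeta_5)$; its commutant is $\qq(\zeta_5)$, so $K_D \subseteq \qq(\zeta_5)$, forcing $K_D = \qq(\sqrt{5})$, and properness of $\iota_\tau$ then forces $\ord_D = \ord_5$, i.e.\ $D = 5$, with $\bar\phi$ of order five, so $[\tau]$ is a point of orbifold order five on $X_5$.

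The main obstacle I anticipate is the bookkeeping in passing between the order of $\bar A$ in $\PSL(\Lambda_D)$ and the order and type of $\phi$ in $\Aut(X)$, and in particular making Proposition \ref{prop:modkernel} do the work of excluding the type $[2,2,2]$ involution, which is exactly what prevents spurious order two orbifold points. A secondary point requiring care is verifying in each surviving case that the cyclotomic extension $\ord_D[\zeta_n]$ really is a \emph{proper}, Hermitian-adjoint complex multiplication order in a quartic CM field rather than merely a commutative subring; and in the order five case one genuinely needs properness of the real multiplication to conclude $D = 5$, since without it one only obtains $K_D = \qq(\sqrt{5})$.
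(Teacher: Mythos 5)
Your proposal is correct and follows essentially the same route as the paper: reduce to the case $B_\tau = \jac(X)$, produce an automorphism of $X$ commuting with $\iota_\tau(\ord_D)$, and classify it via Table \ref{tab:g2auts} together with Propositions \ref{prop:modkernel}, \ref{prop:extendJ} and \ref{prop:hmsorbptcharacterization}. The only difference is cosmetic: the paper takes an automorphism of order greater than two and runs through all six admissible classes $[1,5]$, $[1,1,2,2]$, $[2,4]$, $[1,1,4]$, $[3,3]$, $[6]$, whereas you pass to a prime-order element of $\Stab(\tau)$ and thereby absorb the order-eight and $[6]$ cases into the $[1,1,2,2]$ and $[3,3]$ cases, reaching the same conclusion.
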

\begin{proof}
	By Proposition \ref{prop:hmsorbptcharacterization}, the Abelian variety $B_\tau$ labeled by $[\tau]$ has a symplectic automorphism $\phi$ of order greater than two and commuting with $\iota_\tau$.  It is well known that every principally polarized two dimensional Abelian variety is either a polarized product of elliptic curves of the Jacobian of a smooth genus two Riemann surface, and that the automorphism group of a genus two Riemann surface is isomorphic to the automorphism group of its Jacobian (cf. \cite{birkenhakelange:cxabelianvarieties}, Chapter 11).  If $B_\tau$ is not a product of elliptic curves, choose $X \in \M_2$ so $\jac(X)$ is isomorphic to $B_\tau$ and choose $\phi_0 \in \Aut(X)$ so that an isomorphism $\jac(X) \rightarrow B_\tau$ intertwines $\phi$ and $\phi_0$.  

	From Table \ref{tab:g2auts}, we see that $[\phi_0|_{X^W}]$ is in one of $[1,5]$, $[1,1,2,2]$, $[2,4]$, $[1,1,4]$, $[3,3]$ or $[6]$.  If $[\phi_0|_{X^W}]=[1,5]$, $X$ is the unique genus two surface with an order five automorphism, and $[\tau]$ is one of the points of order five on $X_5$.  If $[\phi_0|_{X^W}] = [1,1,2,2]$, $[2,4]$, or $[1,1,4]$, $B_\tau$ is the Jacobian of $D_8$-surface and has complex multiplication by Proposition \ref{prop:hmsorbptcharacterization}.  In the remaining cases, $B_\tau$ is the Jacobian of $D_{12}$-surface and, again, has complex multiplication by Proposition \ref{prop:hmsorbptcharacterization}.
\end{proof}

We have now proved the characterization of the orbifold points on $\bigcup_D X_D$ in Theorem \ref{thm:hmsorbpts}:
\begin{proof}[Proof of Theorem \ref{thm:hmsorbpts}]
By Proposition \ref{prop:d8d12cmhmsorbpts}, the Jacobians of $D_8$- and $D_{12}$-surfaces with complex multiplication give orbifold points on Hilbert modular surfaces.  By Proposition \ref{prop:hmsorbpts}, these points give all of the orbifold points on $X_D$ except those which are products of elliptic curves and the points of order five on $X_5$.
\end{proof}

\section{Orbifold points on Weierstrass curves} \label{sec:wdorbpts}
In this section we study the orbifold points on the Weierstrass curve $W_D$.  We start by recalling the definition of $W_D$.  An easy consequence of our classification of the orbifold points on $X_D$ in Section \ref{sec:hmsorbpts} gives the characterization of orbifold points on $\bigcup_D W_D$ of Corollary \ref{cor:wdorbpts}.  We then establish the formula in Theorem \ref{thm:orbformula} by sorting the $D_8$-surfaces with complex multiplication by the order for real multiplication commuting with $\rho(J)$.  To do so, we relate the order for real multiplication on $(X,\rho)$ to the order for complex multiplication on $X/\rho(r)$.  Finally, we conclude this section by giving a simple method for enumerating the $\tau \in U$ corresponding to orbifold points on $W_D$.

\paragraph{Eigenforms for real multiplication.}  For a principally polarized Abelian variety $B$ with real multiplication $\iota \co \ord_D \rightarrow \End(B)$, a place $\sigma_0 \co \ord_D \rightarrow \rr$ distinguishes a line of $\sigma_0$-eigenforms on $B$ satisfying $\iota(x)^* \omega = \sigma_0(x)\omega$ for each $x \in \ord_D$.  When $(B,\iota) = (B_\tau=\cc^2 / \Lambda_\tau,\iota_\tau)$, the $\sigma_+$ eigenforms are the multiples of $dz_1$ and the $\sigma_-$-eigenforms are the multiples of $dz_2$ where $z_i$ is the $i$th-coordinate on $\cc^2$.

For a Riemann surface $X \in \M_2$, the Abel-Jacobi map $X \rightarrow \jac(X)$ induces an isomorphism on the space of holomorphic one forms.  For Jacobians that admit real multiplication, a choice of real multiplication $\iota$ on $\jac(X)$ distinguishes $\sigma_+$- and $\sigma_-$-eigenforms on $X$.  Conversely, a one-form up to scale $[\omega]$ on $X$ that happens to be stabilized by real multiplication by $\ord_D$ on $\jac(X)$, there is a unique embedding $\iota^{[\omega]}_+ \co \ord_D \rightarrow \End(\jac(X))$ characterized by the requirement that $\iota^{[\omega]}_+(x) \omega = \sigma_+(x)\omega$ (cf. \cite{mcmullen:dynamicsingenustwo} \S 4).

\paragraph{The Weierstrass curve.}  The {\em Weierstrass curve} $W_D$ of discriminant $D$ is the moduli space:
\[ W_D = \left\{ (X,[\omega]) : \begin{array}{c} X \in \M_2  \mbox{ and $\omega$ is an eigenform for real }\\\mbox{ multiplication by $\ord_D$ with double zero }\end{array}\right\} / \sim. \]
Here, $[\omega]$ is a one-form up to scale, and $(X_1,[\omega_1])$ is equivalent to $(X_2,[\omega_2])$ and we write $(X_1,[\omega_1]) \sim (X_2,[\omega_2])$ if there is an isomorphism $\phi \co X_1 \rightarrow X_2$ with $\phi^* \omega_1 \in \cc^* \omega_2$.  The map $(X,[\omega]) \mapsto (\jac(X),\iota^{[\omega]}_+)$ embeds $W_D$ in the Hilbert modular surface $X_D$.  The natural immersion $W_D \rightarrow \M_2$ is shown to be a finite union of Teichm\"uller curves in \cite{mcmullen:billiards} (see also \cite{calta:periodicity}).

\paragraph{Orbifold points on Weierstrass curves.}  The Weierstrass curve can be presented as a complex orbifold in several equivalent ways.  One way is to use the $\SL_2(\rr)$-action on the moduli space of holomorphic one forms and Veech groups as in \cite{mcmullen:billiards}.  Another is to study the immersion $W_D \rightarrow X_D$ and give $W_D$ the structure of a suborbifold as in \cite{bainbridge:eulerchar}.  

The details of these presentations are not important for our discussion; instead, we simply define the notion of orbifold order and orbifold point on $W_D$.  For a Riemann surface $X$ and non-zero holomorphic one form $\omega \in \Omega(X)$, let $\Aut(X,\left\{ \pm \omega \right\})$ denote the subgroup of $\Aut(X)$ consisting of $\phi$ with $\phi^* \omega = \pm \omega$, let $\operatorname{SO}(X,\omega) = \Aut(X,[\omega])$ be the subgroup of $\Aut(X)$ consisting of automorphisms $\phi$ for which preserve $\omega$ up to scale and set:
\[ \PSO(X,\omega) = \operatorname{SO}(X,\omega)/\Aut(X,\left\{ \pm \omega \right\}). \]
The groups $\Aut(X,\left\{ \pm \omega \right\})$, $\operatorname{SO}(X,\omega)$ and $\PSO(X,\omega)$ only depend on $\omega$ up to scale.  

For a point $(X,[\omega]) \in W_D$, we define the {\em orbifold order of} $(X,[\omega])$ to be the order of the group $\PSO(X,\omega)$ and we will call $(X,[\omega])$ an {\em orbifold point} if its orbifold order is greater than one.  Using the characterization of orbifold points on $X_D$ in Proposition \ref{prop:hmsorbptcharacterization} it is straightforward to check that $(X,[\omega]) \in W_D$ is an orbifold point if and only if the pair $(\jac(X),\iota^{[\omega]}_+)$ is an orbifold point on $X_D$.

\paragraph{Orbifold points on $W_D$ and $D_8$-surfaces with complex multiplication.}
Recall from Section \ref{sec:introduction} that we defined a domain $U$ and associated to each $\tau \in U$ a polygonal pinwheel $P_\tau$ so the surface $X_\tau = P_\tau/\sim$ admits a faithful action of $D_8$.  Let $J_\tau$, as usual, denote the obvious order four automorphism of $X_\tau$ obtained by counterclockwise rotation of $P_\tau$ and let $\omega_\tau$ denote the eigenform for $J_\tau$ obtained from $dz$ on $P_\tau$.
\begin{prop}
\label{prop:d8orborders}
For $\tau \in U$, the group $\PSO(X_\tau,\omega_\tau)$ is cyclic of order two except when $\tau = \sqrt{-2}/2$, in which case $\PSO(X_\tau,\omega_\tau)$ is cyclic of order four.
\end{prop}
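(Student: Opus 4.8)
The plan is to compute the group $\PSO(X_\tau,\omega_\tau)$ directly by analyzing which automorphisms of $X_\tau$ fix the one-form $\omega_\tau$ up to scale. Recall that $\omega_\tau$ is a $J_\tau$-eigenform, so $\rho_\tau(J)^* \omega_\tau = \zeta \omega_\tau$ for a primitive fourth root of unity $\zeta$. This immediately shows that $\rho_\tau(J) \in \operatorname{SO}(X_\tau,\omega_\tau)$, while $\rho_\tau(J)^2 = \eta$ acts as $-1$ on $\Omega(X_\tau)$, so $\eta \in \Aut(X_\tau,\{\pm\omega_\tau\})$. Thus the image of $\rho_\tau(J)$ in $\PSO(X_\tau,\omega_\tau)$ has order two, and $\PSO(X_\tau,\omega_\tau)$ contains a cyclic group of order two generated by $\rho_\tau(J)$ for every $\tau \in U$. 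The content of the proposition is therefore to show that this is the \emph{full} group except at $\tau = \sqrt{-2}/2$, where it is enlarged to a cyclic group of order four.

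The first main step is to bound $\operatorname{SO}(X_\tau,\omega_\tau)$ inside $\Aut(X_\tau)$. Since $\omega_\tau$ has a double zero by Proposition \ref{prop:extendJ}, any $\phi \in \operatorname{SO}(X_\tau,\omega_\tau)$ must fix the unique Weierstrass point at which $\omega_\tau$ vanishes; this constrains $\phi$ to lie in the stabilizer of that point. Consulting Table \ref{tab:g2auts}, a genus two surface carrying an order four automorphism has $[\rho_\tau(J)|_{X^W}] = [1,1,2,2]$, and the possible ambient automorphism groups are limited: generically $\Aut(X_\tau) = D_8$ itself, but $\Aut(X_\tau)$ is strictly larger precisely when $X_\tau$ admits an order eight automorphism, i.e. when the algebraic model degenerates to the special case $y^2 = x(x^4+1)$ of Table \ref{tab:g2auts}. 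The plan is to use Proposition \ref{prop:d8algebraicmodels} and Proposition \ref{prop:pinwheels} to identify which $\tau \in U$ give this degeneration: the special fiber corresponds to the fixed point of the outer automorphism $\sigma$, which by the proposition on $\Fix(\sigma)$ is exactly $g(X_\tau,\rho_\tau) = \sqrt{-2}/2 \cdot \Gamma_0(2)$, and in the pinwheel parametrization this is $\tau = \sqrt{-2}/2 \in U$.

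The second main step is to determine, in each case, the image in $\PSO$. For generic $\tau$ with $\Aut(X_\tau) = \rho_\tau(D_8)$, I would check that among the eight automorphisms only $\{1, \rho_\tau(J), \eta = \rho_\tau(J)^2, \rho_\tau(J^3)\}$ preserve $\omega_\tau$ up to scale: the reflections $\rho_\tau(J^k r)$ do not preserve the $J_\tau$-eigenline, since $\rho_\tau(r)$ interchanges the two $J_\tau$-eigenforms (it conjugates $J$ to $J^{-1}$ inside $D_8$), so $\rho_\tau(r)^*\omega_\tau$ is a multiple of the other eigenform, not of $\omega_\tau$. Hence $\operatorname{SO}(X_\tau,\omega_\tau) = \langle \rho_\tau(J) \rangle$ has order four, $\Aut(X_\tau,\{\pm\omega_\tau\}) = \langle \eta \rangle$ has order two, and the quotient $\PSO$ is cyclic of order two. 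At the special point $\tau = \sqrt{-2}/2$, the order eight automorphism $\phi$ satisfies $\phi^2 = \rho_\tau(J)$ and its action on $\Omega(X_\tau)$ is multiplication by a primitive eighth root of unity, so $\phi \in \operatorname{SO}(X_\tau,\omega_\tau)$ and its image in $\PSO(X_\tau,\omega_\tau)$ has order four; this gives the exceptional value.

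The step I expect to be the main obstacle is the bookkeeping in the second step: verifying cleanly that $\rho_\tau(r)$ does \emph{not} fix $\omega_\tau$ up to scale while $\rho_\tau(J)$ does, and correctly computing the eighth-root-of-unity eigenvalue of the order eight automorphism at the special fiber so that one confirms order four (and not order two or eight) in $\PSO$. The cleanest route is to work with the explicit algebraic models $y^2 = x(x^4 - t x^2 + 1)$ from Table \ref{tab:g2auts}, where $\omega_\tau$ is the eigenform $\frac{dx}{y}$; there $\rho_\tau(r)(x,y) = (1/x, y/x^3)$ pulls $\frac{dx}{y}$ back to a multiple of $x\frac{dx}{y}$, the other eigenform, confirming $\rho_\tau(r) \notin \operatorname{SO}(X_\tau,\omega_\tau)$, and at the octagonal fiber $\phi(x,y) = (ix, (1+i)y/\sqrt{2})$ acts on $\frac{dx}{y}$ by the scalar $i \cdot \sqrt{2}/(1+i)$, a primitive eighth root of unity, as required.
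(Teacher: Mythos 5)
Your overall strategy coincides with the paper's: split into the generic fiber and the octagon fiber, check in the model $y^2=x(x^4-tx^2+1)$ that $\rho_\tau(r)$ swaps the two $J_\tau$-eigenlines while $\rho_\tau(J)$ preserves them, and compute the primitive eighth root of unity at the octagon. Those computations are all correct. The one step that fails is the pivot in your second paragraph: ``$\Aut(X_\tau)$ is strictly larger [than $\rho_\tau(D_8)$] precisely when $X_\tau$ admits an order eight automorphism.'' This is false. At the fiber $a(\tau)=-1$, i.e.\ $\tau=(\pm 1+\sqrt{-3})/2\in U$ (the orbifold point of $W_{12}$), one has $X_\tau\cong\{y^2=x^6-1\}$, whose automorphism group has order $24$: the reduced group is the full dihedral symmetry group (order $12$) of the regular hexagon formed by the sixth roots of unity, so $\rho_\tau(D_8)$ is a proper subgroup; yet there is no automorphism of order eight, since such an element would project to an element of order four in the reduced group, which has none. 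Your generic-case computation of $\operatorname{SO}(X_\tau,\omega_\tau)$ enumerates only the eight elements of $\rho_\tau(D_8)$, so as written it does not cover this fiber.

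The conclusion does hold there, and the repair is already contained in your own first observation. Since $\omega_\tau$ has a double zero at a single Weierstrass point $P$, any $\phi$ with $\phi^*\omega_\tau\in\cc^*\omega_\tau$ fixes $P$, so $\operatorname{SO}(X_\tau,\omega_\tau)\subseteq\Stab_{\Aut(X_\tau)}(P)$. This stabilizer is cyclic (a nontrivial automorphism fixing $P$ acts nontrivially on the tangent line at $P$) and contains $\langle J_\tau\rangle$; since automorphisms of genus two surfaces have order at most ten, it has order $4$ or $8$, and order $8$ forces an order-eight automorphism fixing $P$, hence the model $y^2=x(x^4+1)$ of Table \ref{tab:g2auts} and $\tau=\sqrt{-2}/2$. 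In every other case $\operatorname{SO}(X_\tau,\omega_\tau)=\langle J_\tau\rangle$ and $\PSO(X_\tau,\omega_\tau)\cong\zz/2\zz$ no matter how large $\Aut(X_\tau)$ is; at $y^2=x^6-1$ the extra automorphisms lie in the classes $[2,2,2]$, $[3,3]$ and $[6]$ and fix no Weierstrass point of the relevant kind. I note that the paper's own proof makes essentially the same unqualified assertion (that $\rho_\tau$ is onto $\Aut(X_\tau)$ away from the octagon), so your write-up is no less careful than the published one; routing the argument through the point stabilizer, as above, removes the issue entirely.
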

\begin{proof}
Recall from the proof of Proposition \ref{prop:pinwheels} that $J_\tau$ extends to a faithful action $\rho \co D_8 \rightarrow \Aut(X)$ with $\rho(J)=J_\tau$.  From the fact that $\cc(X_\tau) \cong K_{a(\tau)}$, it is easy to verify that $\rho$ is an isomorphism except when $(X,\rho)$ is the fixed point of the outer automorphism $\sigma$, i.e. $\tau = \sqrt{-2}/2$.  For $\tau \neq \sqrt{-2}/2$, the group $\operatorname{SO}(X_\tau,\omega_\tau)$ is generated by $J_\tau$, $\Aut(X_\tau,\left\{ \pm \omega_\tau \right\})$ is generated by the hyperelliptic involution $J_\tau^2$ and $\PSO(X,\omega_\tau)$ is cyclic of order two.  When $\tau = \sqrt{-2}/2$, $P_\tau$ is the regular octagon and it is easy to verify that $\PSO(X_\tau,\omega_\tau)$ is cyclic of order four.
\end{proof}
We now show that the $D_8$-surfaces with complex multiplication give orbifold points on Weierstrass curves:
\begin{prop}
\label{prop:d8cmareorbpts}
If $\tau \in U$ is imaginary quadratic, then there is a discriminant $D>0$ so the surface with one form up to scale $(X_\tau,[\omega_\tau])$ is an orbifold point on $W_D$.
\end{prop}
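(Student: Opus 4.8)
The plan is to exhibit a positive discriminant $D$ for which $\omega_\tau$ is an eigenform for real multiplication by $\ord_D$ with a double zero, placing $(X_\tau,[\omega_\tau])$ in $W_D$, and then to read off the orbifold order from Proposition \ref{prop:d8orborders}. By Proposition \ref{prop:pinwheels} the surface $X_\tau$ carries a faithful action $\rho_\tau$ with $\rho_\tau(J)=J_\tau$, and $\omega_\tau$ is by construction a $\rho_\tau(J)$-eigenform; by Proposition \ref{prop:JZeigenforms} (through Proposition \ref{prop:extendJ}) this form has a double zero. So the only genuine work is to produce a compatible real multiplication.

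To do this I would pass through the complex multiplication supplied by Proposition \ref{prop:d8jaccm}. Since $\tau$ is imaginary quadratic, $\jac(X_\tau)\cong A_\tau$ has complex multiplication, so by Proposition \ref{prop:d8d12cmhmsorbpts} it is labeled by an orbifold point on $X_D$ for some $D>0$. Spelling out Proposition \ref{prop:hmsorbptcharacterization}, this furnishes a proper, self-adjoint real multiplication $\iota\co\ord_D\rightarrow\End(\jac(X_\tau))$ whose extension realizes $\zeta_4=i$ as the order-four automorphism induced by $\rho_\tau(J)$; in particular $\iota(\ord_D)$ commutes with $\rho_\tau(J)$, being the order lying in the self-adjoint part of the centralizer of $\rho_\tau(J)$ in $\End(\jac(X_\tau))\otimes\qq$. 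The eigenform property is then automatic: by Proposition \ref{prop:extendJ} the automorphism $\rho_\tau(J)$ has two distinct eigenvalues on the two-dimensional space $\Omega(X_\tau)$, so its eigenspaces are lines; as $\iota(\ord_D)$ commutes with $\rho_\tau(J)$ it preserves these lines, and since $\omega_\tau$ spans one of them it is an eigenform for all of $\iota(\ord_D)$. Hence $(X_\tau,[\omega_\tau])\in W_D$, with $\iota^{[\omega_\tau]}_+=\iota$ after selecting the place for which $\iota(x)\omega_\tau=\sigma_+(x)\omega_\tau$, and Proposition \ref{prop:d8orborders} shows $\PSO(X_\tau,\omega_\tau)$ has order at least two, so $(X_\tau,[\omega_\tau])$ is an orbifold point.

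The step I expect to demand the most care is producing the self-adjoint real multiplication commuting with $\rho_\tau(J)$, that is, the appeal to Propositions \ref{prop:d8jaccm} and \ref{prop:d8d12cmhmsorbpts}. The delicate case is $\tau\in\qq(i)$, where the quartic complex multiplication field degenerates: the centralizer of $\rho_\tau(J)$ in $\End(\jac(X_\tau))\otimes\qq$ becomes the split algebra $\qq(i)\times\qq(i)$, whose self-adjoint part is $\qq\times\qq$. There one lands on a square discriminant $D$ and a split order $\ord_D\cong\zz\times\zz$, so that $(X_\tau,[\omega_\tau])$ is an orbifold point on $W_D$ via split, rather than genuine, real multiplication; for instance $\tau=i$ is accounted for by $W_{16}$. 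Once the real multiplication is in hand the remainder of the argument is purely formal.
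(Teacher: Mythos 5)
Your proposal is correct and follows essentially the same route as the paper: extend the order-four automorphism $J_\tau$ to complex multiplication via Proposition \ref{prop:d8jaccm}, let $D$ be the discriminant of the real part of that order, observe that $\omega_\tau$ is then an eigenform with double zero, and conclude from $|\PSO(X_\tau,\omega_\tau)|\geq 2$. Your two elaborations --- that the commuting real multiplication preserves the $J_\tau$-eigenlines because $J_\tau$ acts on $\Omega(X_\tau)$ with distinct eigenvalues, and that $\tau\in\qq(i)$ produces a split order and a square discriminant (e.g.\ $\tau=i$ on $W_{16}$) --- are correct details the paper leaves implicit.
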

\begin{proof}
Fix an imaginary quadratic $\tau \in U$.  As we saw in the proof of Proposition \ref{prop:d8jaccm}, the order four automorphism $J_\tau$ on $X_\tau$, extends to complex multiplication by an order $\ord$ in $\qq(\tau,i)$.  If $D$ is the discriminant of $\ord \cap \rr$, we see that $\omega_\tau$ is an eigenform for real multiplication by $\ord_D$.  Also, $\omega_\tau$ has a double zero, as can be seen directly by counting cone-angle around vertices of $P_\tau$ or can be deduced from the fact that $\omega_\tau$ is stabilized by the order four automorphism $J_\tau$ (cf. Proposition \ref{prop:extendJ}).  From this we see that $(X_\tau,[\omega_\tau]) \in W_D$ and is an orbifold point on $W_D$ since $\PSO(X_\tau,\omega_\tau)$ has order at least two.
\end{proof}
We can now show that almost all of the orbifold points on $\bigcup_D W_D$ are $D_8$-surfaces with complex multiplication:
\begin{prop}
\label{prop:wdorbpts}
Suppose $(X,[\omega]) \in W_D$ is an orbifold point.  One of the following holds:
\begin{itemize}
\item $(X,[\omega])$ is the point of orbifold order five on $W_5$, 
\item $(X,[\omega])$ is the point of orbifold order four on $W_8$, or
\item $(X,[\omega]) = (X_\tau,[\omega_\tau])$ for some imaginary quadratic $\tau \in U$ with $\tau \neq \sqrt{-2}/2$.
\end{itemize}
In particular, for $D> 8$, all of the orbifold points on $W_D$ have orbifold order two.
\end{prop}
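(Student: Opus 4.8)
The plan is to reduce the statement to the classification of orbifold points on $X_D$ obtained in Proposition~\ref{prop:hmsorbpts}, and then to use the orders of the zeros of the relevant eigenforms to decide which of those points actually lie on $W_D$. First I would invoke the criterion recorded just above: the pair $(X,[\omega]) \in W_D$ is an orbifold point if and only if its image $(\jac(X),\iota^{[\omega]}_+)$ is an orbifold point on $X_D$. Applying Proposition~\ref{prop:hmsorbpts} to this image leaves four possibilities for $\jac(X)$: a polarized product of elliptic curves, the point of orbifold order five on $X_5$, the Jacobian of a $D_8$-surface with complex multiplication, or the Jacobian of a $D_{12}$-surface with complex multiplication.

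Next I would eliminate the first and last possibilities. Since $X$ is a smooth genus two surface, $\jac(X)$ is the Jacobian of such a surface and is therefore not a polarized product of elliptic curves, ruling out the first case. For the fourth case, $X$ is a $D_{12}$-surface whose Jacobian has complex multiplication by an order containing $\rho(Z)$; as this complex multiplication order is commutative, the real multiplication $\iota^{[\omega]}_+$ commutes with $\rho(Z)$. Hence $\rho(Z)$ preserves the one-dimensional $\sigma_+$-eigenline spanned by $\omega$, so $\omega$ is a $\rho(Z)$-eigenform and has simple zeros by Proposition~\ref{prop:JZeigenforms}. This contradicts the requirement that $\omega$ have a double zero on $W_D$, so the $D_{12}$ case cannot occur.

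The two remaining cases produce the first and third bullets. In the order-five case, $X$ is the unique genus two surface carrying an automorphism of order five, and its eigenform with a double zero is the orbifold point of order five on $W_5$. In the $D_8$ case, Proposition~\ref{prop:d8descriptions} provides $\tau \in U$ and an isomorphism $X \cong X_\tau$ intertwining $\rho(J)$ and $J_\tau$, with $\tau$ imaginary quadratic by Proposition~\ref{prop:d8jaccm} since $\jac(X)$ has complex multiplication. The commuting argument of the previous paragraph, applied now to $\rho(J)$, shows $\omega$ is a $J_\tau$-eigenform, which has a double zero by Proposition~\ref{prop:extendJ}, so $[\omega]$ is carried to $[\omega_\tau]$. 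Finally I would read off the orbifold orders from Proposition~\ref{prop:d8orborders}: the point $(X_\tau,[\omega_\tau])$ has orbifold order two unless $\tau = \sqrt{-2}/2$, the regular octagon, where the order is four and the point is the orbifold point of order four on $W_8$. Separating $\tau = \sqrt{-2}/2$ out of the $D_8$ family yields the stated trichotomy. The \emph{in particular} clause is then immediate: since $(X,[\omega])$ determines $D$ as the discriminant of the real multiplication order $\iota^{[\omega]}_+(\ord_D)$, the first and second bullets force $D = 5$ and $D = 8$ respectively, so for $D > 8$ only the third alternative survives and every orbifold point of $W_D$ has orbifold order two.

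I expect the main obstacle to be the middle step of tying the intrinsic eigenform $\omega$ for real multiplication to the concrete $\rho(J)$- and $\rho(Z)$-eigenforms whose zero orders are governed by Table~\ref{tab:g2auts} and Propositions~\ref{prop:extendJ} and \ref{prop:JZeigenforms}. The crucial point is that commutativity of the complex multiplication order forces $\omega$ to be an eigenform for the extra automorphism, so that the double-zero condition selects precisely the $D_8$ family and excludes the $D_{12}$ family; one must also verify that the exceptional octagon $\tau = \sqrt{-2}/2$ is correctly matched with the order-four point on $W_8$ and that no ordinary imaginary quadratic $\tau \neq \sqrt{-2}/2$ raises the orbifold order above two.
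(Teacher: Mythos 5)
Your proof is correct, but it reaches the trichotomy by a different route than the paper. The paper's proof works entirely inside $W_D$: by definition an orbifold point admits $\phi \in \Aut(X)$ with $\phi^*\omega \in \cc^*\omega$ but $\phi^*\omega \neq \pm\omega$; such a $\phi$ must fix the double zero of $\omega$, which is a Weierstrass point, so $[\phi|_{X^W}]$ contains a fixed point and Table \ref{tab:g2auts} immediately narrows the conjugacy class to $[1,5]$, $[1,1,4]$ or $[1,1,2,2]$ — one observation does all the case elimination, killing the $D_{12}$ classes $[3,3]$ and $[6]$ without ever mentioning them. You instead pass to $X_D$, apply Proposition \ref{prop:hmsorbpts}, discard the product-of-elliptic-curves case because $X$ is smooth of genus two, and discard the $D_{12}$ case by arguing that commutation with the real multiplication forces $\omega$ to be an eigenform of the extra automorphism, whose zeros are simple by Proposition \ref{prop:JZeigenforms}. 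Both arguments ultimately rest on the same table; yours is longer but makes explicit the logical dependence on the Hilbert-modular-surface classification (which is how the Introduction advertises Corollary \ref{cor:wdorbpts}), and your eigenform-commutation step is a useful articulation of why the double-zero condition selects the $D_8$ family. One point you share with the paper and should not regard as fully settled by either argument: after producing the isomorphism $X \cong X_\tau$ intertwining the order-four automorphisms, one must still identify $[\omega]$ with $[\omega_\tau]$ rather than the other $J_\tau$-eigenform; the paper asserts this as briefly as you do, so it is not a gap you introduced, but it deserves a sentence if you want the argument airtight.
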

\begin{proof}
By our definition of orbifold point, $X$ has an automorphism $\phi$ stabilizing $\omega$ up to scale and with $\phi^* \omega$ not equal to $\pm \omega$.  Such a $\phi$ must fix the zero of $\omega$ which is a Weierstrass point and the conjugacy class $[\phi|_{X^W}]$ is one of $[1,5]$, $[1,1,4]$ or $[1,1,2,2]$.  In the first case, $(X,[\omega])$ is the point of order five on $W_5$.  In the remaining cases, $(X,[\omega]) = (X_\tau,[\omega_\tau])$ for some imaginary quadratic $\tau \in U$.  When $\tau=\sqrt{-2}/2$ and $(X,[\omega])$ is the point of orbifold order four on $W_8$ obtained from the regular octagon and when $\tau \neq \sqrt{-2}/2$, $(X,[\omega])$ is a point of orbifold order two for some larger discriminant.
\end{proof}
We have now proved the claims in Corollary \ref{cor:wdorbpts}:
\begin{proof}[Proof of Corollary \ref{cor:wdorbpts}]
By Proposition \ref{prop:d8cmareorbpts}, the $D_8$-surfaces with complex multiplication give orbifold points on $\bigcup_D W_D$.  By Proposition \ref{prop:wdorbpts}, they give all of the orbifold points except for the point of order five on $W_5$.
\end{proof}

\paragraph{Isogeny and endomorphism.}  In light of Proposition \ref{prop:wdorbpts}, to give a formula for the number $e_2(W_D)$ of points of orbifold order two on $W_D$, we need to sort the $D_8$-surfaces $(X,\rho)$ with complex multiplication by order for real multiplication commuting with $\rho(J)$.  To do so, we relate this order to the order for complex multiplication on $E_\rho = X/\rho(r)$.

We saw in Section \ref{sec:d8family} that there is an isogeny $\jac(X) \rightarrow E_\rho \times E_\rho$ such that the polarization on $\jac(X)$ is twice the pullback of the product polarization on $E_\rho \times E_\rho$.  The following proposition will allow us to embed the endomorphism ring of $\jac(X)$ in the rational endomorphism ring $E_\rho \times E_\rho$:
\begin{prop}
  \label{prop:isogend}
  Suppose $f \co A \rightarrow B$ is an isogeny between principally polarized Abelian varieties with the property the polarization on $A$ is $n$-times the polarization pulled back from $B$.  The ring $\End(B)$ is isomorphic as an involutive algebra to the subring $R_f \subset \End(A) \otimes \qq$ given by:
  \[ R_f = \frac{1}{n} \left\{ \phi \in \End(A) : \phi \left( \ker (n f) \right) \subset \ker(f) \right\}. \]
\end{prop}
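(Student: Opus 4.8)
The plan is to carry out the proof in the complex-analytic model, where the rational endomorphism algebras of $A$ and $B$ become literally identified and the content of the proposition reduces to an equality of two lattices of integral endomorphisms. First I would lift $f$ to the universal covers: writing $A = \cc^2/\Lambda_A$ and taking the analytic representation of $f$ to be the identity on $\cc^2$, one gets $B = \cc^2/\Lambda_B$ with $\Lambda_A \subseteq \Lambda_B$, $\ker f = \Lambda_B/\Lambda_A$ and $\deg f = [\Lambda_B:\Lambda_A]$. Under this identification the two algebras $\End(A)\otimes\qq$ and $\End(B)\otimes\qq$ coincide with the single subalgebra $\mathcal E \subseteq \End_\cc(\cc^2)$ of maps preserving $\Lambda_A\otimes\qq = \Lambda_B\otimes\qq$, the canonical isomorphism $\psi \mapsto f^{-1}\psi f$ is the identity on $\mathcal E$, and $\End(A)$, $\End(B)$ sit inside $\mathcal E$ as the endomorphisms preserving $\Lambda_A$, respectively $\Lambda_B$. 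It therefore suffices to prove the set equality $\End(B) = R_f$ inside $\mathcal E$.

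Next I would translate the polarization hypothesis into the Riemann forms $E_A$, $E_B$ of the two principal (hence unimodular) polarizations. The hypothesis relates them by $E_B|_{\Lambda_A} = n E_A$ on $\Lambda_A$ — this is the only scaling compatible with both forms being unimodular — and since $\Lambda_A$ spans $\Lambda_A\otimes\qq = \Lambda_B\otimes\qq$ over $\qq$, bilinearity upgrades it to the identity of forms $E_B = nE_A$ on all of rational homology. The crucial lattice fact I then extract is
\[ n\Lambda_B \subseteq \Lambda_A. \]
To prove it, take $w \in \Lambda_B$; for every $v \in \Lambda_A$ one has $E_A(nw,v) = nE_A(w,v) = E_B(w,v) \in \zz$, because $E_B$ is integral on $\Lambda_B \supseteq \Lambda_A$. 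Unimodularity of $E_A$ means $\Lambda_A$ is its own dual lattice, so $E_A(nw,\Lambda_A)\subseteq\zz$ forces $nw\in\Lambda_A$. In particular $\ker f$ is annihilated by $n$.

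With this in hand the two inclusions are short bookkeeping. In the model $\ker(nf) = \tfrac1n\Lambda_B/\Lambda_A$, so for $\phi\in\End(A)$ the defining condition $\phi(\ker nf)\subseteq\ker f$ is exactly $\phi(\Lambda_B)\subseteq n\Lambda_B$; hence any element $\tfrac1n\phi$ of $R_f$ satisfies $\tfrac1n\phi(\Lambda_B)\subseteq\Lambda_B$ and lies in $\End(B)$, giving $R_f\subseteq\End(B)$. Conversely, for $\psi\in\End(B)$ put $\phi = n\psi$: then $\phi(\Lambda_B)\subseteq n\Lambda_B$ is immediate, while $\phi(\Lambda_A)\subseteq n\Lambda_B\subseteq\Lambda_A$ by the key fact, so $\phi\in\End(A)$ meets the defining condition and $\psi = \tfrac1n\phi\in R_f$. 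Finally, $\psi\mapsto f^{-1}\psi f$ is a ring homomorphism, and the Rosati involutions of $A$ and $B$ are both the adjoint with respect to their Riemann forms; since these forms are proportional ($E_B = nE_A$) they induce the same adjoint on $\mathcal E$, so the identification preserves the involution and is an isomorphism of involutive algebras.

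I expect the one genuinely substantive step to be the lattice inclusion $n\Lambda_B\subseteq\Lambda_A$: this is the only place where the full strength of the hypothesis — unimodularity of both polarizations together with the precise scaling factor $n$ — is used, and it is exactly what guarantees that multiplication by $n$ carries $\End(B)$ back into $\End(A)$, so that $R_f$ is genuinely defined over $\zz$. The remaining work — computing $\ker(nf)$, the two inclusions, and involution-compatibility — is routine once the direction of the inclusion $\Lambda_A\subseteq\Lambda_B$ and the bookkeeping of the factor $\tfrac1n$ are pinned down correctly.
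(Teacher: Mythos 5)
Your proof is correct, and the steps you single out as substantive all hold: the reduction to a single pair of lattices $\Lambda_A \subseteq \Lambda_B$ in $\cc^2$ with $f$ induced by the identity, the computation $\ker(nf) = \tfrac{1}{n}\Lambda_B/\Lambda_A$, the translation of the defining condition of $R_f$ into $\phi(\Lambda_B)\subseteq n\Lambda_B$, and above all the inclusion $n\Lambda_B\subseteq\Lambda_A$, which you correctly extract from self-duality of $\Lambda_A$ under the unimodular form $E_A$. (You also quietly fix the proposition's phrasing: read literally, ``the polarization on $A$ is $n$-times the pullback'' is incompatible with both polarizations being principal; the relation you use, $f^*E_B = nE_A$, is the intended one and is what the paper's own proof relies on.) The paper routes the argument through the dual isogeny instead: it takes as its starting point that the polarization hypothesis forces $f\circ f^*$ and $f^*\circ f$ to be multiplication by $n$ on $B$ and $A$, defines the isomorphism explicitly as $\phi\mapsto \tfrac{1}{n}f^*\circ\phi\circ f$ with rational inverse $\phi\mapsto\tfrac{1}{n}f\circ\phi\circ f^*$, and proves the two inclusions by chasing torsion subgroups --- $\ker(nf)\xrightarrow{f}B[n]\xrightarrow{\phi}B[n]\xrightarrow{f^*}\ker(f)$ for one direction, and $B[n^2]\xrightarrow{f^*}\ker(nf)\xrightarrow{\phi_0}\ker(f)\xrightarrow{f}0$ for the other. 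The two arguments are equivalent: in your normalization $f^*$ is literally multiplication by $n$, so your lattice inclusion $n\Lambda_B\subseteq\Lambda_A$ is precisely the assertion that $f^*$ exists with $f^*\circ f$ equal to multiplication by $n$ on $A$. What your version buys is that the role of unimodularity is made completely explicit and the isomorphism becomes the identity map after a normalization, reducing everything to transparent lattice bookkeeping; what it costs is being tied to the complex-analytic model, whereas the paper's formulation is normalization-free and purely categorical. Your handling of the involution (both Rosati involutions are adjoints with respect to proportional forms, hence coincide) matches the paper's one-line observation that $(f^*\phi f)^* = f^*\phi^* f$.
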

\begin{proof}
Let $f^*\co B \rightarrow A$ denote the isogeny dual to $f$.  The condition on the polarization implies that $f \circ f^*$ and $f^* \circ f$ are the multiplication by $n$-maps on $B$ and $A$ respectively.  We will show that the map: 
  \begin{align*}
    \psi \co \End(B) &\rightarrow \End(A) \otimes \qq \\
    \phi &\mapsto \frac{1}{n} f^* \circ \phi \circ f
  \end{align*}
  is an injective homomorphism and has image $\psi(\End(B)) = R_f$.    The map $\psi$ is easily checked to be a homomorphism and is injective since rationally it is an isomorphism, with inverse is given by $\psi^{-1}(\phi) = \frac{1}{n} f \circ \phi \circ f^*$.

  For an integer $k > 0$, let $B[k]$ denote the $k$-torsion on $B$.  The image of $\End(B)$ is contained in $R_f$ since:
  \[ \ker(n f) \xrightarrow{f} B[n] \xrightarrow{\phi} B[n] \xrightarrow{f^*} \ker(f) \]
  for any $\phi \in \End(B)$.  To see that the image of $\psi$ is all of $R_f$, fix $\phi_0 \in \End(A) \otimes \qq$ satisfying $\phi_0(\ker(nf)) \subset \ker(f)$.  The endomorphism $\phi = \frac{1}{n^2} f \circ \phi_0 \circ f^*$ is integral on $B$ since:
  \[ B[n^2] \xrightarrow{f^*} \ker(nf) \xrightarrow{\phi} \ker(f) \xrightarrow{f} 0. \]
  Since $\psi(\phi) = \phi_0$, the image of $\psi$ is all of $R_f$.  
  
  The isomorphism between $\End(B)$ and $R_f$ has $\psi(\phi)^* = \psi(\phi^*)$ since $(f^* \phi f)^* = f^* \phi^* f$.
\end{proof}

\paragraph{Invertible modules over finite rings.}   From Proposition \ref{prop:isogend} and the fact that $\jac(X)$ admits a degree four isogeny to $E_\rho \times E_\rho$ with $E_\rho=X/\rho(r)$, we see that the discriminant of the order for real multiplication on $\jac(X)$ commuting with $\rho(J)$ is, up to factors of two, equal to $-C$ where $C$ is the discriminant of $\End(E_\rho)$.  To determine the actual order for real multiplication, we first need to determine the possibilities for $E_\rho[4]$ as an $\ord_C$-module.  The following proposition is well known:
\begin{prop}
  \label{prop:quadpic}
  Let $\order$ be an imaginary quadratic order and $I$ be an $\ord$-ideal which is a proper $\ord$-module.  The module $I/nI$ is isomorphic to $\ord / n \ord$ as $\ord$-modules.
\end{prop}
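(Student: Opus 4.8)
The plan is to show that $I/nI$ is generated by a single element as a module over the finite ring $R = \ord/n\ord$, and then to conclude by a cardinality count. Since $I$ is a proper $\ord$-module it is invertible (as noted above, for a quadratic order the proper ideals are exactly the invertible ones), and hence locally free of rank one: for every nonzero prime $\mathfrak p$ of $\ord$ the localization $I_{\mathfrak p}$ is free of rank one over $\ord_{\mathfrak p}$, so that $\dim_{\ord/\mathfrak p} I/\mathfrak p I = 1$.

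First I would record the local structure of $R = \ord/n\ord$. This is a finite, hence Artinian, commutative ring, and its maximal ideals are the images of the primes $\mathfrak p$ of $\ord$ containing $n$; there are only finitely many such primes, so $R$ is semilocal. Base change gives $I/nI \cong I \otimes_\ord R$, and for each maximal ideal $\mathfrak m = \mathfrak p / n\ord$ of $R$ one computes $(I/nI) \otimes_R R/\mathfrak m = I \otimes_\ord \ord/\mathfrak p = I/\mathfrak p I$, which is one-dimensional over the residue field $R/\mathfrak m = \ord/\mathfrak p$ by local freeness.

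Next I would invoke the semilocal form of Nakayama's lemma: a finitely generated module $M$ over a semilocal ring $R$ can be generated by $\max_{\mathfrak m} \dim_{R/\mathfrak m}(M/\mathfrak m M)$ elements. Since every fiber of $I/nI$ has dimension one, a single element of $I/nI$ suffices to generate it over $R$, yielding a surjection $R \twoheadrightarrow I/nI$ of $\ord$-modules. Finally, both $\ord$ and $I$ are free of rank two over $\zz$, so $|R| = n^2 = |I/nI|$; a surjection between finite sets of the same cardinality is a bijection, and therefore $R \to I/nI$ is an isomorphism of $\ord$-modules.

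The main obstacle is the passage from the arithmetic hypothesis (properness of $I$) to the module-theoretic input, namely the local freeness of rank one, equivalently $\dim_{\ord/\mathfrak p} I/\mathfrak p I = 1$ for every $\mathfrak p$; once invertibility is available the remaining steps are formal. Alternatively, one can bypass Nakayama by using the Chinese Remainder Theorem to reduce to the case $n = p^k$ and then observing that over the semilocal ring $\ord \otimes \zz_p$ every invertible module is free, but the generator-plus-counting argument above is the most economical route.
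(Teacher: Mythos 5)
Your proof is correct, and it follows the same route the paper indicates: the paper's own proof is essentially a deferral to the invertibility of proper ideals in a quadratic order plus ``standard commutative algebra,'' and your argument (invertible $\Rightarrow$ locally free of rank one $\Rightarrow$ cyclic over the Artinian ring $\ord/n\ord$ by Nakayama, finished by the cardinality count $|\ord/n\ord| = n^2 = |I/nI|$) is precisely that standard argument spelled out. Nothing is missing.
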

For maximal orders, see Proposition 1.4 in \cite{silverman:ellipticcurves}.  For non-maximal orders, Proposition \ref{prop:quadpic} follows from the fact that $\ord$-ideals which are proper $\ord$-modules are invertible (\cite{lang:ellipticfunctions}, \S 8.1) and standard commutative algebra.

\paragraph{Formula for number and type of orbifold points on $W_D$.}
We are now ready to prove our main theorem giving a formula for the number and type of orbifold points on $W_D$:
\begin{proof}[Proof of Theorem \ref{thm:orbformula}]
We have already seen that, for discriminants $D>8$, all of the orbifold points on $W_D$ have orbifold order two.  It remains to establish the formula for the number of such points.

Recall from Section \ref{sec:d8family} that we constructed a holomorphic map $g: \M_2(D_8) \rightarrow Y_0(2)$ given by $g(X,\rho) = (E_\rho,Z_\rho,T_\rho)$ which is an isomorphism onto the complement of the point of orbifold order two $(1+i)/2 \cdot \Gamma_0(2)$.  For each $E \in \M_1$ without automorphisms other than the elliptic involution, there are exactly three $D_8$-surfaces $(X,\rho)$ with $X/\rho(r)$ isomorphic to $E$.  

Now fix an imaginary quadratic discriminant $C< 0$.  Setting $E = \cc / \ord_C$, $Z = 0 + \ord_C$, $T_1 = 1/2+\ord_C$, $T_2 = (C+\sqrt{C})/4 + \ord_C$ and $T_3 = T_1+T_2$, the three $D_8$-surfaces covering $E$ are the three surfaces $(X_i,\rho_i) = g^{-1}(E,Z,T_i)$.  Using Proposition \ref{prop:isogend}, it is straightforward to calculate the order for real multiplication on $\jac(X_i)$ commuting with $\rho_i(J)$ and we do so in Table \ref{tab:orders}.
\begin{table}
\[
\begin{array}{|c|c|c|c|}
  \hline 
  C \bmod 16& D_1 & D_2 & D_3 \\ \hline
  \rule{0pt}{15pt}
  0 & -4C & -C & -4C \\
  4 & -4C & -4C & -C \\
  8 & -4C & -C & -4C \\
  12 & -4C & -4C & -C/4 \\
  1 \mbox{ or } 9 & -4C & -4C & -4C \\
  5 \mbox{ or } 13 & -4C & -4C & -4C \\
  \hline
\end{array}
\]
\caption{ \label{tab:orders} {\sf The elliptic curve $E = \cc/\ord_C$ is covered by three $D_8$-surfaces $(X_i,\rho_i)$.  The discriminant $D_i$ of the order for real multiplication on $\jac(X_i)$ commuting with $J$ is computed using Proposition \ref{prop:isogend}.}}
\end{table}

For an arbitrary $E$ with $\End(E) \cong \ord_C$, we have that $E[4] \cong \ord_C/4\ord_C$ as $\ord_C$-modules by Proposition \ref{prop:quadpic} and the orders for real multiplication commuting with $\rho(J)$ on the $D_8$-surfaces covering $E$ are the same as the orders when $E = \cc / \ord_C$.  The formula for $e_2(W_D)$ follows easily from this and the fact that there are precisely $h(\ord_C)$ genus one surfaces with $\End(E) \cong \ord_C$.  The small discriminants where the $D_8$-surfaces labeled by orbifold points on $W_D$ cover genus one surfaces with automorphisms and handled by the restriction $D>8$ and by replacing $h(\ord_C)$ with the reduced class number $\wt h (\ord_C)$.  Note that the factor of two in the formula for $e_2(W_D)$ comes from the fact that the $D_8$-surfaces $(X,\rho)$ and $\sigma \cdot (X,\rho)$ have isomorphic $J$-eigenforms and therefore label the same orbifold point on $W_D$.
\end{proof}

\paragraph{Enumerating orbifold points on $W_D$.}  We conclude this section by giving a simple method for enumerating the $\tau \in U$ for which $(X_\tau,[\omega_\tau]) \in W_D$.

Fix a discriminant $D>0$ and a $\tau \in \qq(\sqrt{-D})$.  We can choose integers $e$, $k$, $b$ and $c$ so $k^2 D = -e^2+2bc$ and $\tau = (e+k\sqrt{-D})/(2c)$.  The vectors $v_1=\spmat{1\\1}$, $v_2 = \spmat{1 \\-1}$, $v_3=\spmat{\tau\\ \tau+1}$ and $v_4=\spmat{\tau \\ -\tau-1}$ generate the lattice $\Lambda_\tau$.  The lattice $\Lambda_\tau$ is preserved by multiplication by $\spmat{0 & 1 \\ -1 & 0}$ giving an automorphism $\phi \in \Aut(A_\tau)$, and the vector space $\Lambda_\tau \otimes \qq$ is preserved by the matrix $\spmat{ \sqrt{-D} & 0 \\ 0 & \sqrt{-D}}$ giving a rational endomorphism $T \in \End(A_\tau \otimes \qq)$.  Together, $\phi$ and $T$ generate a Hermitian-adjoint homomorphism $\iota : \qq(\sqrt{-D},i) \rightarrow \End(A_\tau) \otimes \qq$.  

We want to give conditions on $e$, $k$, $b$ and $c$ so that the quadratic ring of discriminant $D$ in $\qq(\sqrt{-D},i)$ acts integrally on $\Lambda_\tau$.  This ring is generated by $S=(D+\phi T)/2$ and it is straightforward to check that, in the basis $v_1,\dots,v_4$ for $\Lambda_\tau \otimes \qq$, $S$ acts by multiplication by the matrix:
\[S = \frac{1}{2k} \begin{pmatrix}
    Dk+c & -e-c & 0 & 2c \\
    c+e & Dk-c & -2c & 0\\
    0 & -e-b & Dk+c & c+e \\
    b+e & 0 & -e-c & Dk-c
  \end{pmatrix}.
\]
From this we see that $S$ is integral, and $\jac(X_\tau) = \cc^2 /\Lambda_\tau$ has real multiplication by an order containing $\ord_D$ and commuting with $\phi$, if and only if $k$ divides $c$, $b$ and $e$ and $D \equiv e/k \equiv b/k \equiv c/k \bmod 2$.

\paragraph{Pinwheel prototypes.} Motivated by this, define the set of {\em pinwheel prototypes of discriminant $D$}, which we denote by $E_0(D)$, to be the collection of triples $(e,c,b) \in \zz^3$ satisfying:
\begin{equation} 
\label{eqn:prototypes}
\begin{array}{c} 
D = -e^2+2bc \mbox{ with } D \equiv e \equiv c \equiv b \bmod 2 \mbox{ and } \abs{e} \leq c \leq b \\
\mbox{ and if } \abs{e} = c \mbox{ or } b = c \mbox{ then } e \leq 0.
\end{array}
\end{equation}
Note that if $(e,c,b) \in E_0(D)$, then $(fe,fc,fb) \in E_0(f^2D)$.  We define $E(D)$ to be the set of {\em proper pinwheel prototypes} in $E_0(D)$, i.e. those which do not arise from smaller discriminants in this way.  Finally define $\tau(e,c,b) = (e+\sqrt{-D})/2c$.  

We can now prove the following proposition, which is a more precise version of Theorem \ref{thm:enumerateorbpts}:
\begin{prop}
Fix a discriminant $D$ and $(e,c,b) \in E(D)$.  The one form up to scale $(X_\tau,[\omega_\tau])$ where $\tau=(e+\sqrt{D})/(2c)$ is an orbifold point on $W_D$ and, for $D> 8$, the set $E(D)$ is in bijection with the orbifold points on $W_D$:
\[ e_2(W_D) = \# E(D). \]
\end{prop}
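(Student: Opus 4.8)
The plan is to establish the two assertions in turn: first that each proper prototype $(e,c,b) \in E(D)$ produces an orbifold point on $W_D$, and then that for $D > 8$ the resulting assignment is a bijection onto the orbifold points, giving $e_2(W_D) = \#E(D)$. For the first assertion, fix $(e,c,b) \in E(D)$ and set $\tau = (e+\sqrt{-D})/(2c)$. Since this is a prototype of discriminant $D$ itself (i.e.\ $k=1$ in the enumeration above), the defining congruences $D \equiv e \equiv c \equiv b \bmod 2$ of Equation~\ref{eqn:prototypes} are exactly the conditions under which the explicit matrix $S = (D+\phi T)/2$ preserves $\Lambda_\tau$. Hence $\ord_D$ embeds in $\End(\jac(X_\tau))$ commuting with the automorphism $\rho_\tau(J) = J_\tau$, which acts on $A_\tau = \jac(X_\tau)$ as $\spmat{0 & 1 \\ -1 & 0}$. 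Because the $J_\tau$-eigenforms are simple and have double zeros (Proposition~\ref{prop:extendJ}) and $\ord_D$ commutes with $J_\tau$, the form $\omega_\tau$ is an $\ord_D$-eigenform with a double zero. Thus $(X_\tau,[\omega_\tau])$ lies on the Weierstrass curve of the discriminant of the \emph{proper} real multiplication order stabilizing $[\omega_\tau]$, and it is an orbifold point since $\PSO(X_\tau,\omega_\tau) \supseteq \langle J_\tau\rangle$ has order at least two (Proposition~\ref{prop:d8orborders}).

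It remains to check that this discriminant is exactly $D$, i.e.\ that $\ord_D$ is proper, and here I would exploit that the integrality criterion doubles as a properness test. The totally real orders containing $\ord_D$ are the $\ord_{D/f^2}$ with $f$ dividing the conductor, and rerunning the same matrix computation with $\tau = (e + f\sqrt{-D/f^2})/(2c)$ shows that $\ord_{D/f^2}$ acts integrally precisely when $f \mid e,c,b$ and the scaled congruences hold, i.e.\ precisely when $(e/f,c/f,b/f) \in E_0(D/f^2)$. Therefore $\ord_D$ fails to be proper exactly when $(e,c,b)$ arises by scaling from a smaller discriminant; since $(e,c,b) \in E(D)$ is a proper prototype no such $f > 1$ exists, so the real multiplication order is $\ord_D$ and $(X_\tau,[\omega_\tau])$ is an orbifold point on $W_D$.

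For the bijection when $D > 8$ I would use the fundamental-domain description of $U$. By Proposition~\ref{prop:wdorbpts} every orbifold point is $(X_\tau,[\omega_\tau])$ for a unique imaginary quadratic $\tau \in U$ with $\tau \neq \sqrt{-2}/2$; since $\jac(X_\tau)$ has real multiplication by $\ord_D$, the real field $\qq(\sqrt{D})$ is the totally real subfield of the CM field $\qq(\tau,i)$, forcing $\qq(\tau) = \qq(\sqrt{-D})$, so we may write $\tau = (e+\sqrt{-D})/(2c)$ with $c > 0$ and set $b = (D+e^2)/(2c)$. The integrality of $S$ (which holds because $(X_\tau,[\omega_\tau]) \in W_D$) forces $b \in \zz$ together with $D \equiv e \equiv c \equiv b \bmod 2$, and properness of the real multiplication forces the prototype to be proper by the previous paragraph. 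Finally $\tau \in U$ translates into the prototype inequalities: from $\Re\tau = e/(2c)$ and $|\tau|^2 = b/(2c)$ the conditions $|\Re\tau| \le \frac12$ and $|\tau|^2 \ge \frac12$ become $|e| \le c$ and $c \le b$, while the boundary identifications of $\partial U$ and the exclusion $\tau \neq \frac{\pm1+i}{2}$ become the sign conditions ``if $|e| = c$ or $b = c$ then $e \le 0$.'' Hence $(e,c,b) \in E(D)$, giving surjectivity; injectivity follows because $U$ is a fundamental domain for $\langle \Gamma_0(2),\sigma\rangle$ (proof of Proposition~\ref{prop:d8descriptions}), so inequivalent normalized triples give distinct points of $U$ and hence distinct orbifold points. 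Counting then yields $e_2(W_D) = \#E(D)$.

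The main obstacle is this last step: matching the purely arithmetic normalization in the definition of a pinwheel prototype to the geometric fundamental domain $U$, and in particular verifying that the boundary cases $|e| = c$ and $b = c$ — where $\tau$ lies on $\partial U$ and the $\Gamma_0(2)$- and $\sigma$-identifications are active — are resolved by exactly the stated sign conditions, so that each orbifold point is counted once. The conductor bookkeeping distinguishing proper real multiplication by $\ord_D$ from real multiplication by a suborder is the other delicate point, but it reduces cleanly to the scaling relation between $E_0(D)$ and $E_0(f^2 D)$ recorded above.
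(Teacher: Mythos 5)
Your proof is correct and follows essentially the same route as the paper's: the congruence conditions in Equation \ref{eqn:prototypes} are read off from the integrality of the matrix $S=(D+\phi T)/2$, properness of the prototype is matched to properness of the real multiplication order via the scaling $E_0(D)\to E_0(f^2D)$, and the inequalities $|e|\le c\le b$ together with the sign conventions are translated into membership of $\tau$ in the fundamental domain $U$ with a single representative chosen on $\partial U$. You in fact supply more detail than the paper does at the properness and surjectivity steps, but the underlying argument is the same.
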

\begin{proof}
By our discussion above, the first two conditions on pinwheel prototypes ($D = -e^2+2bc$ and $D \equiv e \equiv c \equiv b \bmod 2$) are equivalent to the requirement that $(X_\tau,[\omega_\tau])$ where $\tau = \tau(e,c,b)$ is an eigenform for real multiplication by an order containing $\ord_D$.  The condition that $(e,c,b)$ is proper ensures that the order for real multiplication with eigenform $\omega_\tau$ is $\ord_D$.  The condition that $\abs{e} \leq c \leq b$ is equivalent to the condition that $\tau(e,c,b)$ is in the domain $U$.  The condition that $e \leq 0$ when $\abs{e} = c$ or $b=c$ ensures that $\Re(\tau(e,c,b)) \leq 0$ whenever $\tau(e,c,b)$ is in the boundary of $U$.  
\end{proof}

\paragraph{Bounds on $e_2(W_D)$.} It is easy to see that the conditions defining pinwheel prototypes ensure $c \leq \sqrt{D}$, from which it is easy to enumerate the prototypes in $E(D)$ and prove:
\begin{prop}\label{prop:orbptbounds}
For discriminants $D>8$, the number of points of orbifold order two satisfies:
\[ e_2(W_D) \leq D/2. \]
\end{prop}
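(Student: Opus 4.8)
The plan is to combine the identity $e_2(W_D) = \#E(D)$ established in the preceding proposition with a crude lattice-point count for the set of proper pinwheel prototypes. First I would extract the inequality $c \le \sqrt{D}$ from the defining conditions \eqref{eqn:prototypes}: for any $(e,c,b) \in E(D)$ the constraints $b \ge c$ and $\abs{e} \le c$ yield
\[ D = -e^2 + 2bc \ge 2c^2 - c^2 = c^2, \]
so $1 \le c \le \sqrt{D}$ (here $c \ge 1$ because $c \ge \abs{e} \ge 0$ and $c = 0$ would force $D = -e^2 \le 0$). Since the relation $D = -e^2 + 2bc$ rewrites as $b = (D + e^2)/(2c)$, the third coordinate $b$ is determined by the pair $(e,c)$. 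Hence $E(D)$ injects into the set of admissible pairs $(e,c)$, and it suffices to bound the number of these.

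Next I would count the admissible pairs using the parity and boundary conditions. The congruences in \eqref{eqn:prototypes} force $c \equiv D \bmod 2$ and $e \equiv c \bmod 2$, so for each fixed $c$ the values of $e$ with $\abs{e} \le c$ lie in $\{-c, -c+2, \dots, c\}$. The boundary clause (if $\abs{e} = c$ then $e \le 0$) discards $e = +c$, leaving at most the $c$ values $\{-c, -c+2, \dots, c-2\}$. Summing over the admissible $c$ then gives
\[ e_2(W_D) = \#E(D) \le \sum_{\substack{1 \le c \le \sqrt{D} \\ c \equiv D \bmod 2}} c. \]

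Finally I would evaluate this sum according to the parity of $D$. For odd $D$ the admissible $c$ are $1, 3, \dots, 2m-1$ with $2m - 1 \le \sqrt{D}$, so the sum equals $m^2 \le \bigl((\sqrt{D}+1)/2\bigr)^2$, which is at most $D/2$ precisely when $2\sqrt{D} + 1 \le D$, i.e.\ for all $D \ge 9$. For even $D$ the admissible $c$ are $2, 4, \dots, 2m$ with $2m \le \sqrt{D}$, so the sum equals $m(m+1) \le \tfrac{\sqrt D}{2}\bigl(\tfrac{\sqrt D}{2} + 1\bigr)$, which is at most $D/2$ whenever $\sqrt{D} \ge 2$. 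In both cases $e_2(W_D) \le D/2$ for $D > 8$. I expect the only genuine subtlety to be pinning down the constant: the naive bound that counts all $c+1$ choices of $e$ (rather than $c$) and sums over every $c \le \sqrt{D}$ loses the two factors of two coming from the parity of $c$ and from the discarded endpoint $e = +c$, and already overshoots $D/2$ at the smallest relevant discriminants. So the substance of the argument is exactly the careful use of the parity restriction $c \equiv D \bmod 2$ together with the boundary condition $\abs{e} = c \Rightarrow e \le 0$.
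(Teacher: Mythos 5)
Your proof is correct and follows essentially the same route as the paper: both arguments reduce to counting the prototypes in $E(D)$ via the observation that $b$ is determined by $(e,c)$, that $\abs{e}\le c\le\sqrt{D}$, and that $e$ and $c$ have the parity of $D$. The only difference is that the paper uses the cruder product bound (at most $\sqrt{D}$ values of $e$ times at most $\sqrt{D}/2$ values of $c$), whereas you sum the exact number of admissible $e$ for each $c$; your version is the more careful one and is what actually pins down the constant $D/2$ at the smallest discriminants.
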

\begin{proof}
The integers $e$ and $c$ determine $b$ since $b = (D+e^2)/(2c)$.  From $\abs{e} \leq c \leq \sqrt{D}$ and the fact that $e$ and $c$ are congruent to $D \bmod 2$, we have that $e$ ranges over at most $\sqrt{D}$ possibilities and $c$ ranges over at most $\sqrt{D}/2$ possibilities so $e_2(W_D) < D/2$.
\end{proof}
\paragraph{Examples.} The sets $E(D)$ for some small discriminants $D$ are:
\[ \begin{array}{c} 
E(5) = \left\{ (-1,1,3) \right\}, E(8) = \left\{ (0,2,2) \right\}, E(9) = \left\{ (-1,1,5) \right\}, E(12) = \left\{ (-2,2,4) \right\}, \\
E(13) = \left\{ (-1,1,7) \right\}, E(16) = \left\{ (0,2,4) \right\}, \mbox{ and } E(17) = \left\{ (-1,1,9),(-1,3,3) \right\}.
\end{array}
\]

\section{Orbifold points by spin} \label{sec:spin}
By the results in \cite{mcmullen:spin}, the Weierstrass curve $W_D$ is usually irreducible.  For discriminants $D > 9$ with $D \equiv 1 \bmod 8$, $W_D$ has exactly two irreducible components, and they are distinguished by a spin invariant.  Throughout this section, we assume $D$ is a such a discriminant.  For such $D$, the number of points of orbifold order two is given by
\[ e_2(W_D) = \frac{1}{2} \wt h(-4D) \]
and by our Proof of Theorem \ref{thm:orbformula} in Section \ref{sec:wdorbpts}, an orbifold point of order two $(X,[\omega]) \in W_D$ is the $\rho(J)$-eigenform for a faithful $D_8$-action $\rho$ on $X$ and the quotient $E/\rho(r)$ corresponds to an ideal class $[I] \in H(\ord_{-4D})$.  In this section, we define a spin homomorphism $\epsilon_0 \co H(-4D) \rightarrow \zz/2\zz$ and relate the spin of $(X,[\omega])$ to $\epsilon_0([I])$ allowing us to establish the formula in Theorem \ref{thm:spin}.

\paragraph{One forms with double zero and spin structures.}  We start by recalling spin structures on Riemann surfaces following \cite{mcmullen:spin} (cf. also \cite{atiyah:spin}).  A {\em spin} structure on a symplectic vector space $V$ of dimension $2g$ over $\zz/2\zz$ is a quadratic form:
\[ q \co V \rightarrow \zz/2\zz \]
satisfying $q(x+y) = q(x)+q(y)+\left<x,y\right>$ where $\left<,\right>$ is the symplectic form.  The {\em parity} of $q$ is given by the {\em Arf invariant}:
\[ \Arf(q) = \sum_i q(a_i)q(b_i) \in \zz/ 2 \zz\]
where $a_1,b_1,\dots,a_g,b_g$ is a symplectic basis for $V$.

A one form with double zero $\omega$ on $X \in \M_2$ determines a spin structure on $H_1(X,\zz/2\zz)$ as follows.  For any loop $\gamma \co S^1 \rightarrow X$ whose image avoids the zero of $\omega$, gives a Gauss map:
\[ G_{\gamma} \co S^1 \rightarrow S^1 \mbox{ where } G_\gamma(x) = \omega(\gamma'(x))/\abs{\gamma'(x)}.\]  
The degree of $G_\gamma$ is invariant under homotopy that avoids the zero of $\omega$ and changes by a multiple of two under general homotopy.  Denoting by $[\gamma]$ the class in $H_1(X,\zz/2/\zz)$ represented by $\gamma$, the function
\[ q([\gamma]) = 1+\deg(G_\gamma)   \bmod 2 \]
defines a spin structure on $X$.

\paragraph{Pinwheel spin.} Now consider the surface $X_\tau$ obtained from the pinwheel $P_\tau$ and the one form $\omega_\tau \in \Omega(X_\tau)$ obtained from $dz$ on $P_\tau$ as usual.  Let $b = \tau+\frac{1-i}{2}$ and let $x_t \in H_1(X_\tau,\zz/2\zz)$ be the class represented by the integral homology class with period $t \in \zz[i] \oplus b \zz[i]$ when integrated against $\omega_\tau$.  The classes $x_1$, $x_i$, $x_b$ and $x_{bi}$ form a basis for $H_1(X_\tau,\zz / 2 \zz)$, and we have:
\begin{prop} \label{prop:spinstructure}
  The spin structure $q$ on $X_\tau$ associated to $\omega_\tau$ has:
  \begin{equation}\label{eqn:d8spin} q\left( k_1 x_1 +k_2 x_i + k_3 x_b + k_4 x_{bi} \right) = k_1^2 + k_2^2 + k_1 k_3 + k_2 k_4 + k_3 k_4 \bmod 2. \end{equation}
\end{prop}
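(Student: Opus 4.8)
The plan is to use that the spin structure $q$ is a quadratic refinement of the mod-two intersection form: by definition $q(x+y) = q(x)+q(y)+\langle x,y\rangle$, where $\langle\cdot,\cdot\rangle$ is the intersection pairing on $H_1(X_\tau,\zz/2\zz)$. Consequently $q$ is completely determined by its values on the basis $x_1,x_i,x_b,x_{bi}$ together with the intersection numbers of these classes, via
\[ q\left( \sum_j k_j x_j \right) = \sum_j k_j\, q(x_j) + \sum_{j<l} k_j k_l \langle x_j, x_l\rangle. \]
Comparing this with the target formula \eqref{eqn:d8spin}, and using $k^2 = k$ over $\zz/2\zz$, the proposition reduces to two computations: first, that $q(x_1)=q(x_i)=1$ and $q(x_b)=q(x_{bi})=0$; and second, that the only nonzero pairings among the basis classes are $\langle x_1,x_b\rangle = \langle x_i,x_{bi}\rangle = \langle x_b,x_{bi}\rangle = 1$, all others vanishing.

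To halve the work I would first exploit the order four symmetry. Since $\omega_\tau$ is a $J_\tau$-eigenform with $J_\tau^*\omega_\tau = i\,\omega_\tau$, the automorphism $J_\tau$ preserves the flat metric $\abs{\omega_\tau}$ and rotates each Gauss map $G_\gamma$ by the constant $i$, so $\deg(G_{J_\tau\gamma}) = \deg(G_\gamma)$ and hence $q$ is $J_\tau$-invariant. On periods $J_\tau$ acts by multiplication by $i$, so on integral (hence mod-two) homology $J_\tau(x_1)=x_i$ and $J_\tau(x_b)=x_{bi}$; invariance of $q$ then gives $q(x_1)=q(x_i)$ and $q(x_b)=q(x_{bi})$, while $J_\tau$-invariance of the intersection form gives $\langle x_1,x_b\rangle = \langle x_i,x_{bi}\rangle$. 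It therefore suffices to evaluate $q(x_1)$, $q(x_b)$, and a short list of intersection numbers.

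The heart of the argument is the explicit evaluation of the Gauss-map degrees on the pinwheel $P_\tau$. I would represent $x_1$ by a straight (horizontal) closed representative with holonomy $1$; along such a loop the tangent direction is constant, so $G_\gamma$ has degree zero and $q(x_1) = 1+0 = 1$, whence $q(x_i)=1$. For $x_b$ and $x_{bi}$, whose holonomies $b=\tau+\frac{1-i}{2}$ and $bi$ are not realized by non-singular closed geodesics, I would build explicit representing loops from the edges of $P_\tau$ and compute $\deg(G_\gamma)$ as the normalized total turning, summing the exterior angles at the corners of the loop and accounting for the cone point of $\omega_\tau$, whose angle is $6\pi$ because $\omega_\tau$ has a double zero. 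The goal is to verify that this turning is odd, so that $q(x_b)=q(x_{bi})=0$. The intersection numbers would then be read off as transverse intersection counts modulo two of these explicit representatives inside $P_\tau$ (the vanishing pairings from disjoint representatives, and nondegeneracy of $\langle\cdot,\cdot\rangle$ pinning down the rest).

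The main obstacle will be the turning-number bookkeeping for the diagonal classes $x_b$ and $x_{bi}$: their representatives must be pushed off the double zero and threaded through the side identifications of the pinwheel, and one must track the accumulated rotation of $\omega_\tau(\gamma'(x))$ carefully through each corner to get the parity right. Once these two winding numbers and the intersection matrix are in hand, substituting $q(x_1)=q(x_i)=1$, $q(x_b)=q(x_{bi})=0$, and $\langle x_1,x_b\rangle=\langle x_i,x_{bi}\rangle=\langle x_b,x_{bi}\rangle=1$ into the displayed expansion yields exactly \eqref{eqn:d8spin}.
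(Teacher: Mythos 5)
Your proposal is correct and follows essentially the same route as the paper: reduce via the quadratic-refinement relation $q(x+y)=q(x)+q(y)+\left<x,y\right>$ to the values $q(x_1)=q(x_i)=1$, $q(x_b)=q(x_{bi})=0$ computed from Gauss-map degrees of explicit loops in the pinwheel, together with the mod-two intersection numbers of the basis classes. The only addition is your use of $J_\tau$-invariance of $q$ and of the intersection form to halve the computations, which is a harmless (and pleasant) shortcut the paper does not bother to make explicit.
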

\begin{proof}
Finding loops $\gamma_t \co S^1 \rightarrow X_\tau$ representing $x_t$ and avoiding the zero of $\omega_\tau$, we can compute the degree of the Gauss map directly and show that:
\[ q(x_1)=q(x_i) = 1 \mbox{ and } q(x_b)=q(x_{bi})=0. \]
From the relation $q(x+y)=q(x)+q(y)+\left<x,y\right>$, any basis $x_n$ of $H_1(X_\tau,\zz/2\zz)$ has:
\[ q\left(\sum_n k_n x_n\right) = \sum_n k_n^2 q(x_n) + \sum_{l<n} k_l k_n \left< x_l, x_n \right>. \]
Computing the intersection pairing on the basis $\left\{ x_1,x_i,x_b,x_{bi} \right\}$ gives the stated formula for $q$.
\end{proof}

\paragraph{Spin components of $W_D$.}  For $(X,[\omega]) \in W_D$, the vector space $H_1(X,\zz/2\zz)$ is also an $\ord_D$-module via $\iota_+^{[\omega]} \co \ord_D \rightarrow \End(\jac(X))$.  Let $f$ be the {\em conductor} of $\ord_D$, i.e. the integer satisfying $D = f^2 D_0$ where $D_0$ is the maximal order in $K_D$.  Since $D \equiv 1 \bmod 8$, the image $W$ of $(f+\sqrt{D})/2$ acting on $H_1(X,\zz/2\zz)$ is two dimensional, and we define the spin of $(X,[\omega])$ to be the parity of $q$ restricted to $W$:
\[ \epsilon(X,[\omega]) = \Arf(q|_W). \]
We also define:
\[ W_D^i = \left\{ (X,[\omega]) \in W_D : \epsilon(X,[\omega]) = i \right\}, \mbox{ for } i = 0\mbox{ or }1.\]
As was shown in \cite{mcmullen:spin}, for discriminants $D>9$ with $D \equiv 1 \bmod 8$, the components $W_D^0$ and $W_D^1$ are both non-empty and irreducible.

\paragraph{Ideal classes.}  Now let $I \subset K_{-4D}$ be a fractional and proper $\ord_{-4D}$-ideal, i.e. satisfies $\End(I) = \ord_{-4D}$, and let $E_I = \cc/I$.  There are three $D_8$-surfaces $(X,\rho)$ with $X/\rho(r) = E_I$.  From Table \ref{tab:orders}, we see that exactly one of these, which we will call $(X_I,\rho_I)$, is labeled by an orbifold point on $W_D$, with the others being labeled by orbifold points on $W_{16 D}$.  This $D_8$-surface satisfies $g(X_I,\rho_I) = (E_I,Z,T)$ where $Z = 0 + I$ and $T$ generates a subgroup of $E[2]$ invariant under $\ord_{-4D}$.

\paragraph{Spin homomorphism.}  Now let $n$ be the odd integer satisfying $\nm(I) = 2^k n$ and define:
\begin{equation}
\label{eqn:idealspin}
\epsilon_0(I) = \frac{n-1}{2} \bmod 2.
\end{equation}
We will give a formula for the spin invariant of the orbifold point on $W_D$ corresponding to $(X_I,\rho_I)$ in terms of $\epsilon_0(I)$.  To start, we show:
\begin{prop}
  \label{prop:spinhomomorphism}
  The number $\epsilon_0(I)$ depends only on the ideal class of $I$ and defines a spin homomorphism:
  \[ \epsilon_0 \co H(-4D) \rightarrow \zz/2\zz. \]
  The spin homomorphism $\epsilon_0$ is the zero map if and only if $D$ is a square.
\end{prop}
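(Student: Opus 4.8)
The plan is to establish three things in turn: that $\epsilon_0$ is multiplicative, that it vanishes on principal ideals (so that it descends to $H(-4D)$), and finally the square/non-square dichotomy. I use throughout that $\ord_{-4D} = \zz[\sqrt{-D}]$, so that $\nm(a+b\sqrt{-D}) = a^2 + Db^2$, and that the ideal norm is multiplicative on invertible ideals. For the homomorphism property, observe that on odd integers the assignment $m \mapsto \tfrac{m-1}{2} \bmod 2$ is additive: since $\tfrac{mn-1}{2} - \tfrac{m-1}{2} - \tfrac{n-1}{2} = \tfrac{(m-1)(n-1)}{2}$ is even whenever $m,n$ are odd, we have $\tfrac{mn-1}{2} \equiv \tfrac{m-1}{2} + \tfrac{n-1}{2} \bmod 2$. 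Because the odd part of $\nm(IJ) = \nm(I)\,\nm(J)$ is the product of the odd parts of $\nm(I)$ and $\nm(J)$, this identity gives $\epsilon_0(IJ) = \epsilon_0(I) + \epsilon_0(J)$; that is, $\epsilon_0$ is a homomorphism from the monoid of invertible ideals to $\zz/2\zz$.

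To see that $\epsilon_0$ factors through $H(-4D)$, I represent two classes by integral ideals $I$ and $I'$ and use that they agree exactly when $(\alpha) I = (\beta) I'$ for some nonzero $\alpha,\beta \in \ord_{-4D}$; by multiplicativity it then suffices to prove $\epsilon_0((\alpha)) = 0$ for every nonzero $\alpha = a + b\sqrt{-D}$, i.e.\ that the odd part of $a^2 + Db^2$ is $\equiv 1 \bmod 4$. After dividing out the largest power of $2$ dividing both $a$ and $b$ — which multiplies the norm by an odd perfect square and so leaves the odd part unchanged mod $4$ — I may assume $a,b$ are not both even. Using $D \equiv 1 \bmod 8$ I then check the remaining parities: if exactly one of $a,b$ is odd, then $a^2 + Db^2 \equiv 1 \bmod 4$ is already odd with odd part $\equiv 1$; if both are odd, then $a^2 \equiv Db^2 \equiv 1 \bmod 8$, so $a^2 + Db^2 \equiv 2 \bmod 8$ and the odd part $(a^2+Db^2)/2 \equiv 1 \bmod 4$. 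It is precisely here that $D \equiv 1 \bmod 8$, rather than merely $D \equiv 1 \bmod 4$, is required. Hence $\epsilon_0$ is a well-defined homomorphism $\epsilon_0 \co H(-4D) \rightarrow \zz/2\zz$.

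For the final assertion I would use that $D$ is a perfect square if and only if $K = \qq(\sqrt{-D}) = \qq(i)$, equivalently the fundamental discriminant $d_K$ of $K$ equals $-4$, equivalently the Kronecker character $\lsymb{d_K}{\cdot}$ coincides with $\lsymb{-4}{\cdot}$. Note that $(-1)^{\epsilon_0(I)} = \lsymb{-1}{n}$ where $n$ is the odd part of $\nm(I)$, so $\epsilon_0$ is governed by the character $\lsymb{-4}{\cdot}$. If $D = f^2$, then every invertible prime ideal has norm either a split prime $p \equiv 1 \bmod 4$, the square of an inert prime, or $2$; thus the odd part of any ideal norm is a product of primes $\equiv 1 \bmod 4$ times an odd square, hence $\equiv 1 \bmod 4$, and $\epsilon_0 \equiv 0$. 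Conversely, if $D$ is not a square then $\lsymb{d_K}{\cdot}$ and $\lsymb{-4}{\cdot}$ are distinct nontrivial quadratic characters, so by Dirichlet's theorem there are infinitely many primes $p$ with $\lsymb{d_K}{p} = 1$ and $p \equiv 3 \bmod 4$; choosing such a $p$ coprime to the conductor of $\ord_{-4D}$ produces a proper prime ideal $\mathfrak p$ of norm $p$, which must be non-principal since $\epsilon_0(\mathfrak p) = \tfrac{p-1}{2} \equiv 1 \bmod 2$ whereas $\epsilon_0$ vanishes on principal ideals. Therefore $\epsilon_0$ is the zero map exactly when $D$ is a square.

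The main obstacle is the non-square direction: one must actually exhibit a class on which $\epsilon_0 = 1$, and this rests on the independence of the two quadratic characters $\lsymb{d_K}{\cdot}$ and $\lsymb{-4}{\cdot}$ — precisely the input that fails when $d_K = -4$, that is, when $D$ is a perfect square. By contrast, the parity bookkeeping in the principal-ideal step is routine, provided $D \equiv 1 \bmod 8$ is kept in force throughout.
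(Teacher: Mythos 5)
Your proof is correct and follows essentially the same route as the paper's: class-invariance via the observation that the odd part of $\nm(a+b\sqrt{-D})$ is $\equiv 1 \bmod 4$ when $D \equiv 1 \bmod 8$, multiplicativity of the ideal norm, the square case via the fact that odd primes dividing a primitive sum of two squares are $\equiv 1 \bmod 4$, and the non-square case via Dirichlet's theorem producing a split prime $p \equiv 3 \bmod 4$. The only differences are cosmetic: you package the last two steps in terms of splitting in $\qq(i)$ and the independence of the distinct quadratic characters $\lsymb{d_K}{\cdot}$ and $\lsymb{-4}{\cdot}$, where the paper uses explicit $\zz$-bases of ideals and quadratic reciprocity.
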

\begin{proof}
  Any $x \in \order_{-4D}$ has norm $\nm(x) = x_1^2 + x_2^2 D = 2^k l$ with $l \equiv 1 \bmod 4$.  If the ideals $I$ and $J$ are in the same ideal class, they satisfy $xI = yJ$ for some $x$ and $y$ in $\ord_{-4D}$ and $\epsilon_0(I) = \epsilon_0(J)$.  The map $\epsilon_0$ is a homomorphism since the norm of ideals is a homomorphism.  

Now suppose $D=f^2$ is a square.  Any $\ord_{-4D}$-ideal class has a representative of the form $I=x\zz\oplus(fi-y)\zz$ with $x$ and $y$ in $\zz$.  Since $I$ is an ideal, $x$ divides $f^2+y^2$ and since $I$ is proper $\gcd(x,y,(f^2+y^2)/x)=1$.  If an odd prime $p$ divides $\nm(I)$, then $p$ divides $x^2$, $f^2+y^2$, and $y$.  Since $f^2 \equiv -y^2 \bmod p$ and $p$ does not divide both $f$ and $y$, $-1$ is a square mod $p$, $p \equiv 1 \bmod 4$ and $\epsilon_0(I)=0$.

If $D$ is not a square, $D = p_1^{k_1} p_2^{k_2} \dots p_n^{k_n}$ with $p_l$ distinct odd primes and $k_1$ odd.  By Dirichlet's theorem, there is a prime $p$ with:
\begin{itemize}
  \item $p \equiv 3 \bmod 4$,
  \item $p \equiv 1 \bmod p_l$ for $l > 1$, and
  \item $\lsymb{p}{p_1} = -1$.
\end{itemize}
Quadratic reciprocity gives $\lsymb{-D}{p} = 1$ and guarantees a solution $x$ to $x^2 \equiv -D \bmod p$.  The ideal $I=p\zz \oplus (\sqrt{-D}-x)\zz$ is an $\ord_{-4D}$-ideal, has norm $p$ and $\epsilon_0(I)=1$.
\end{proof}
\begin{rmk}
  When $D \equiv 1 \bmod 8$, the ideal $(2)$ ramifies in $\ord_{-4D}$ and there is a prime ideal $P$ with $P^2=(2)$.  Since $\nm(P)=2$, we have $\epsilon_0(P)=0$.  The ideal classes represented by $I = \zz\oplus \tau \zz$ and $J=\zz \oplus -1/2\tau \zz$ satisfy $[I]=[P J]$.  This is related to the fact the polygonal pinwheels $P_\tau$ and $P_{-1/2\tau}$ give the same point on $W_D$ and so must have the same spin invariant.
\end{rmk}

\begin{prop}
Fix a $\tau \in U$ with $(X_\tau,[\omega_\tau]) \in W_D$ and let $I = \zz \oplus \tau \zz$.  The spin of $(X_\tau,[\omega_\tau])$ is given by the formula:
\[ \epsilon(X_\tau,[\omega_\tau]) = \frac{f+1}{2} +\epsilon_0([I]) \bmod 2. \]
\end{prop}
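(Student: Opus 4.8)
The plan is to evaluate both sides as elements of $\zz/2\zz$ and match them, starting with the right-hand side. Writing $\tau = (e+\sqrt{-D})/(2c)$ for the prototype underlying $(X_\tau,[\omega_\tau])$, I would pass to the integral representative $2cI = 2c\zz + (e+\sqrt{-D})\zz$ of the class $[I]$; a covolume computation gives $\nm(2cI)=2c$. Since $D\equiv 1\bmod 8$ forces $e,c,b$ (and the conductor $f$) to be odd, the odd part of this norm is $c$, so $\epsilon_0([I]) = (c-1)/2 \bmod 2$ by Equation \ref{eqn:idealspin}. The target then collapses to the clean expression
\[ \frac{f+1}{2}+\epsilon_0([I]) = \frac{f+1}{2}+\frac{c-1}{2} = \frac{f+c}{2}\bmod 2, \]
which is nonzero exactly when $f\equiv c\bmod 4$. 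Thus it suffices to prove $\Arf(q|_W)=\tfrac{f+c}{2}\bmod 2$, where $q$ is the spin structure of Proposition \ref{prop:spinstructure} and $W$ is the image of $(f+\sqrt{D})/2$ acting on $H_1(X_\tau,\zz/2\zz)$.

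Next I would reconcile the two bases of $H_1(X_\tau,\zz)$ in play: the flat basis $\{x_1,x_i,x_b,x_{bi}\}$ attached to $\omega_\tau$, and the lattice basis $\{v_1,v_2,v_3,v_4\}$ of $\Lambda_\tau=H_1(X_\tau,\zz)$ used to enumerate prototypes in Section \ref{sec:wdorbpts}. Under $\jac(X_\tau)\cong A_\tau$ the form $\omega_\tau$ is a $J$-eigenform, hence a scalar multiple of $dz_1+i\,dz_2$; fixing the scalar so that the period map sends $\Lambda_\tau$ onto $\zz[i]\oplus b\zz[i]$ and computing $\int_{v_k}\omega_\tau$ with $b=\tau+\tfrac{1-i}{2}$, I obtain the unimodular reduction
\[ v_1\equiv x_1,\quad v_2\equiv x_i,\quad v_3\equiv x_i+x_b,\quad v_4\equiv x_{bi}\pmod 2. \]
This transports $q$ into the $v$-coordinates, allowing me to work directly with the integral matrices of Section \ref{sec:wdorbpts}.

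Finally I would compute $W$. Reducing the rational representation of $(f+\sqrt{D})/2$ modulo $2$ (the displayed matrix $S$ gives $\sqrt{D}=2S-D$), every entry is even, and the resulting matrix $N$ over $\zz/2\zz$ depends only on $A:=(f+c)/2$ and $B:=(e+c)/2\bmod 2$, once one uses the congruence $bc\equiv 1$, hence $b\equiv c\bmod 4$, coming from $D=-e^2+2bc\equiv 1\bmod 8$. Because $\tfrac{f+\sqrt D}{2}\cdot\tfrac{f-\sqrt D}{2}=f^2(1-D_0)/4\equiv 0\bmod 2$ with $f$ odd and $D_0\equiv 1\bmod 8$, the operator $N$ is idempotent mod $2$, so $W=\operatorname{im}(N)$ is two dimensional. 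Transporting $W$ into the $x$-basis and restricting $q$, I would check in each of the four cases $A,B\in\{0,1\}$ that $q|_W$ is nondegenerate with $\Arf(q|_W)=A=\tfrac{f+c}{2}\bmod 2$, which is exactly the reduced target. The decisive and delicate feature here is that the $B$-dependence (equivalently, the dependence on $e$) cancels in the Arf invariant, not merely in the global count. I expect the main obstacle to be purely bookkeeping: computing $W$ in the same coordinates in which $q$ is written, which requires pinning down the normalization of $\omega_\tau$ and being careful about whether the displayed matrix represents the action on homology or its transpose; getting this consistent is what makes the final four-case Arf computation land on $A$ rather than on a spurious $B$-dependent value.
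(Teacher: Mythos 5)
Your proposal is correct and follows essentially the same route as the paper: reduce the right-hand side to $(f+c)/2 \bmod 2$ via the norm of the integral representative $2c\zz+(e+\sqrt{-D})\zz$ of $[I]$, then identify $W$ as the image of $(f+\sqrt{D})/2$ on $H_1(X_\tau,\zz/2\zz)$ and evaluate $\Arf(q|_W)$ against the explicit spin structure of Proposition \ref{prop:spinstructure}. The only divergence is at the very end, where instead of reducing the idempotent matrix mod $2$ and running four cases, the paper notes that $W$ is spanned by $v = x_{(f-i\sqrt{-D})/2} \equiv \frac{f+c}{2}x_1+\frac{c+e}{2}x_i+x_{bi}$ together with $J_\tau v$, so that $J_\tau$-invariance of $q$ gives $\Arf(q|_W)=q(v)q(J_\tau v)=q(v)=\frac{f+c}{2}$, which makes the cancellation of the $e$-dependence you flag as delicate into a one-line check.
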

\begin{proof}
We saw in Section \ref{sec:d8family} that $X_\tau$ has a faithful $D_8$-action $\rho_\tau$ with $g(X_\tau,\rho_\tau)  = (E_I=\cc/I,Z=0+I,T=1/2+I)$.  By our proof of Theorem \ref{thm:orbformula} in Section \ref{sec:wdorbpts}, we have that $\End(E_I) = \ord_{-4D}$ and $T$ generates a subgroup of $E_I[2]$ invariant under $\ord_{-4D}$, i.e. $1+\sqrt{-D} \in 2 I$.  

Since $I$ is an ideal, $\sqrt{-D} = x \tau + y$ for some $x$ and $y \in \zz$, $x$ divides $D+y^2$ and $I$ has the same class as $I_0=x\zz \oplus (\sqrt{-D}-y)\zz$.  Since $I$ is proper, $\gcd(x,y,(D+y^2)/x)=1$ and the norm of $I_0$ is $x$ up to a factor of two.  The condition $1 + \sqrt{-D} \in 2I$ implies that $x \equiv 2 \bmod 4$ and $\epsilon_0(I) \equiv \frac{x-2}{4} \bmod 2$.

To compute the spin invariant $\epsilon(X_\tau,[\omega_\tau])$, we need to determine the subspace $W = \Im\left(\frac{f+\sqrt{D}}{2}\right)$ of $H_1(X,\zz/2\zz)$ and evaluate $\Arf(q|_W)$.  The subspace $W$ is spanned by $v$ and $J_\tau v$ where:
\[ v = x_{(f-i\sqrt{-D})/2} \equiv \frac{2f+x}{4} x_1 +\frac{x-2y}{4} x_i +x_{bi} \bmod 2. \]
Here, as above, $x_t \in H_1(X,\zz/2\zz)$ is the homology class represented by an integral homology class with $\omega_\tau$-period $t$.  By Proposition \ref{prop:spinstructure} and the fact that $q$ is $J_\tau$-invariant, we have:
\[ \epsilon(X_\tau,[\omega_\tau]) = q(v)^2 = \frac{f+1}{2} + \epsilon_0([I]) \bmod 2. \]
\end{proof}
Our formula for the number of orbifold points on the spin components of $W_D$ follows readily from the previous two propositions:
\begin{proof}[Proof of Theorem \ref{thm:spin}]
When $D=f^2$, the spin homomorphism $\epsilon_0$ is the zero map and all of the orbifold points on $W_D$ lie on the spin $(f+1)/2 \bmod 2$ component of $W_D$.  When $D$ is not a square, $\epsilon_0$ is onto, and exactly half of the orbifold points on $W_D$ lie on each spin component.
\end{proof}

\begin{rmk}
For square discriminants $D=f^2$, there is an elementary argument that shows $e_2\left(W_D^{(f-1)/2}\right)=0$.  For $(X_\tau,[\omega_\tau]) \in W_D$, the number $\tau$ is in $\qq(i)$ and rescaling $P_\tau$, we can exhibit $X_\tau$ as the quotient of a polygon $\lambda P_\tau$ with vertices in $\zz[i]$ and area $f$.  The surface $X_\tau$ is ``square-tiled'' and admits a degree $f$ map $\phi\co X_\tau \rightarrow \cc/\zz[i]$ branched over a single point.  The spin of $(X_\tau,[\omega_\tau])$ can be determined from the number of Weierstrass points $X_\tau^W$ mapping to the branch locus of $\phi$ (\cite{mcmullen:spin}, Theorem 6.1).  This number is, in turn, determined by $f$, as can be seen by elementary Euclidean geometry.
\end{rmk}

\begin{cor}
  Fix $D\geq 9$ with $D \equiv 1 \bmod 8$ and conductor $f$, a pinwheel prototype $(e,c,b) \in E(W_D)$ and set $\tau = \frac{e+\sqrt{-D}}{2c}$.  The surface $(X_{\tau},[\omega_\tau])$ has spin given by:
  \[ \epsilon(X_\tau,[\omega_\tau]) = \frac{c+f}{2} \bmod 2. \]
\end{cor}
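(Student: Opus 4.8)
The plan is to deduce this formula directly from the preceding proposition, which gives $\epsilon(X_\tau,[\omega_\tau]) = \tfrac{f+1}{2} + \epsilon_0([I]) \bmod 2$ for $I = \zz \oplus \tau\zz$, by evaluating the spin homomorphism $\epsilon_0$ on the class $[I]$ explicitly in terms of the prototype $(e,c,b)$. Since $D \equiv 1 \bmod 8$ is odd and the prototype conditions force $c \equiv D \bmod 2$, the integer $c$ is odd; likewise the conductor $f$ is odd because $f^2 \mid D$. Hence both $\tfrac{f+1}{2}$ and $\tfrac{c-1}{2}$ are integers, and it suffices to prove $\epsilon_0([I]) \equiv \tfrac{c-1}{2} \bmod 2$, since then $\epsilon(X_\tau,[\omega_\tau]) \equiv \tfrac{f+1}{2} + \tfrac{c-1}{2} = \tfrac{f+c}{2} \bmod 2$, as claimed.

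To compute $\epsilon_0([I])$, I would first recall from our proof of Theorem \ref{thm:orbformula} that, for such a prototype with $D \equiv 1 \bmod 8$, the quotient $E_I = \cc/I$ satisfies $\End(E_I) = \ord_{-4D}$, so that $I$ is a proper fractional $\ord_{-4D}$-ideal representing a class in $H(-4D)$. Because $\epsilon_0$ is well defined on ideal classes (Proposition \ref{prop:spinhomomorphism}), I may replace $I$ by any integral ideal in the same class. Clearing denominators by the rational integer $2c$ produces $I_0 = 2c\,I = 2c\,\zz \oplus (e+\sqrt{-D})\,\zz$, which visibly lies in $\ord_{-4D} = \zz \oplus \sqrt{-D}\,\zz$ and satisfies $[I_0] = [I]$.

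The key step is then the elementary norm computation. Expressing the generators $2c$ and $e+\sqrt{-D}$ of $I_0$ in the basis $\{1,\sqrt{-D}\}$ of $\ord_{-4D}$ gives the change-of-basis matrix $\spmat{2c & 0 \\ e & 1}$, of determinant $2c$, so that $\nm(I_0) = [\ord_{-4D}:I_0] = 2c$. As $c$ is odd, the odd part of $\nm(I_0)$ is precisely $c$, and the definition of the spin homomorphism gives $\epsilon_0([I]) = \epsilon_0([I_0]) \equiv \tfrac{c-1}{2} \bmod 2$; substituting into the preceding proposition completes the proof. I do not anticipate any genuine obstacle, as the entire content is this index calculation. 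The one point demanding care is tracking the single factor of two introduced when clearing denominators---equivalently, that $\nm(I_0) = 2c$ rather than $c$---which is exactly the shift that reconciles the result with the intermediate identity $\epsilon_0(I) \equiv \tfrac{x-2}{4} \bmod 2$ from the previous proof, evaluated at $x = 2c$.
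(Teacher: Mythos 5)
Your proposal is correct and follows essentially the same route as the paper: identify the ideal class of the orbifold point with that of $2c\,\zz \oplus (\pm e+\sqrt{-D})\,\zz$, compute its norm as $2c$ by the index calculation, and feed the odd part $c$ into the spin homomorphism together with the preceding proposition. The paper's own proof is just a one-line version of this same computation.
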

\begin{proof}
  The surface $X_\tau$ corresponds to the ideal class of $I = 2c \zz \oplus (-e+\sqrt{-D}) \zz$ in $H(-4D)$ and the norm of $I$ is $2c$.
\end{proof}

\section{Genus of $W_D$} \label{sec:genus}
Together with \cite{bainbridge:eulerchar,mcmullen:spin}, Theorems \ref{thm:orbformula} and \ref{thm:spin} complete the determination of the homeomorphism type of $W_D$, giving a formula for the genus of the irreducible components of $W_D$.  In this section, we will prove the following upper bound on the genus of the components of $W_D$:
\begin{prop} \label{prop:upperbounds}
  For any $\epsilon>0$, there are positive constants $C_\epsilon$ and $N_\epsilon$ such that: 
  \[C_\epsilon D^{3/2+\epsilon} > g(V)\]
  whenever $V$ is a component of $W_D$ and $D>N_\epsilon$.
\end{prop}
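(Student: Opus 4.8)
The plan is to deduce the bound from the orbifold Gauss--Bonnet formula together with Bainbridge's computation of $\chi(W_D)$ and an elementary estimate on a special value of a zeta function. First I would record the genus formula. A component $V$ of $W_D$ is a finite-volume hyperbolic orbifold, and by Theorem \ref{thm:orbformula} all of its orbifold points have order two once $D>8$; writing $g=g(V)$ for its genus, $c$ for its number of cusps and $e_2$ for its number of order-two points, Gauss--Bonnet gives
\[ -\chi(V) = 2g - 2 + c + \tfrac{1}{2}e_2. \]
Solving for $g$ and discarding the nonnegative quantities $c$ and $\tfrac12 e_2$ (which only improve an upper bound) yields
\[ g(V) \leq 1 + \tfrac{1}{2}\abs{\chi(V)}. \]
Since $W_D$ has at most two components, each of strictly negative Euler characteristic, we have $\abs{\chi(V)} \leq \abs{\chi(W_D)}$, so the proposition reduces to proving $\abs{\chi(W_D)} \leq C_\epsilon D^{3/2+\epsilon}$ for $D$ large. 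The finitely many $D \leq 8$, where orbifold points of higher order can occur, are absorbed into the constant $N_\epsilon$.

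Next I would invoke Bainbridge's formula \cite{bainbridge:eulerchar}, which expresses $\chi(W_D)$ as a fixed rational multiple of $\chi(X_D)$, and expresses $\chi(X_D)$ in turn through the special value $\zeta_{K_D}(-1)$ of the zeta function attached to $\ord_D$ (with an explicit correction when $\ord_D$ is non-maximal). The problem thus becomes bounding $\abs{\zeta_{K_D}(-1)}$. For a fundamental discriminant $D$ I would use the classical Siegel-type identity writing $\zeta_{K_D}(-1)$ as $\tfrac{1}{60}$ times the sum of $\sigma_1\!\left((D-b^2)/4\right)$ over integers $b$ with $b^2 < D$ and $b \equiv D \bmod 2$. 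There are $O(\sqrt{D})$ admissible $b$, each argument $(D-b^2)/4$ is at most $D/4$, and the divisor bound $\sigma_1(m) \ll_\epsilon m^{1+\epsilon}$ then gives
\[ \abs{\zeta_{K_D}(-1)} \ll_\epsilon \sqrt{D}\cdot D^{1+\epsilon} = D^{3/2+\epsilon}. \]
Combined with the first paragraph this produces the desired constants $C_\epsilon$ and $N_\epsilon$.

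The only genuine obstacle is uniformity across all discriminants, in particular non-fundamental ones $D=f^2 D_0$. Here I would either apply the Siegel identity for the order $\ord_D$ directly or feed the conductor correction from Bainbridge's formula into the estimate; in both cases the extra factors are divisor sums in the conductor $f \leq \sqrt{D}$, bounded by $f^{\epsilon}\leq D^{\epsilon}$, so the exponent $3/2+\epsilon$ is preserved. Everything else is routine: the orbifold genus formula is standard, and $\sigma_1(m)=O_\epsilon(m^{1+\epsilon})$ is elementary. I therefore expect the actual write-up to consist mainly of tracking the rational constant in Bainbridge's formula and checking that the non-maximal correction is polynomially controlled.
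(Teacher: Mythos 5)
Your proposal is correct and follows essentially the same route as the paper: bound $g(V)$ by the orbifold Euler characteristic via Gauss--Bonnet, then bound $\abs{\chi(W_D)}$ using Bainbridge's formulas together with Siegel's expression for $\zeta_{D_0}(-1)$ and the divisor bound $\sigma(n)=O_\epsilon(n^{1+\epsilon})$. The one imprecision is that $\chi(W_D)=-\tfrac{9}{2}\chi(X_D)$ holds only for non-square $D$; for square $D=f^2$ the paper instead quotes the explicit formula $\chi(W_{f^2})=-f^2(f-1)F(D)/16$, which gives $\abs{\chi(W_{f^2})}\leq f^3=D^{3/2}$ directly, so your ``conductor correction'' step should be replaced by this separate (easy) case.
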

We will also give effective lower bounds:
\begin{prop} \label{prop:effectivebounds}
  Suppose $D > 0$ is a discriminant and $V$ is a component of $W_D$.  If $D$ is not a square, the genus of $V$ satisfies:
  \[ g(V) \geq D^{3/2}/600 - D/16  - D^{3/4}/2 - 75. \]
  If $D$ is a square, the genus of $V$ satisfies:
  \[ g(V) \geq D^{3/2}/240-7D/10-D^{3/4}/2-75. \]
\end{prop}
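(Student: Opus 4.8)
The plan is to read off the genus of a component $V$ of $W_D$ from the orbifold Gauss--Bonnet (Riemann--Hurwitz) relation, feeding in the three quantities that are by now known explicitly: the orbifold Euler characteristic, the number of cusps, and the number of orbifold points. Since $V$ is a finite-volume hyperbolic orbifold and, for $D>8$, all of its orbifold points have order two by Theorem \ref{thm:orbformula}, its orbifold Euler characteristic satisfies
\[ \chi(V) = 2 - 2 g(V) - C(V) - \tfrac{1}{2} e_2(V), \]
where $C(V)$ is the number of cusps and $e_2(V)$ the number of points of orbifold order two on $V$. Solving for the genus,
\[ g(V) = 1 - \tfrac{1}{2}\chi(V) - \tfrac{1}{2} C(V) - \tfrac{1}{4} e_2(V). \]
Because $V$ is hyperbolic, $-\chi(V)>0$ is the dominant positive contribution, while the cusps and orbifold points enter with negative sign; so an effective lower bound for $g(V)$ requires an effective lower bound for $-\chi(V)$ together with effective upper bounds for $C(V)$ and $e_2(V)$. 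The finitely many discriminants with $D\le 8$, and any other sporadically small cases, will be absorbed into the additive constant $-75$.

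The three inputs come from the literature and from the present paper. The Euler characteristic $\chi(V)$ is computed in \cite{bainbridge:eulerchar}: for non-square $D$ it is expressed through $\zeta_{\qq(\sqrt D)}(-1)$, equivalently a Siegel-type divisor sum, while for square $D$ it has an elementary closed form; in the reducible case the two spin components each carry half of $\chi(W_D)$. The cusp count $C(V)$, sorted by spin, is given by the prototype enumeration of \cite{mcmullen:spin}. The orbifold point count $e_2(V)$ is the weighted class number of Theorems \ref{thm:orbformula} and \ref{thm:spin}, and in particular $e_2(V)\le e_2(W_D)\le D/2$ by Proposition \ref{prop:orbptbounds} (with half of this on each component when $W_D$ is reducible). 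Substituting these into the displayed formula reduces the proposition to two effective estimates: a lower bound on $-\chi(V)$ of the shape $(\text{const})\cdot D^{3/2}$ with controlled corrections, and an upper bound on $C(V)$.

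First I would establish the leading term effectively by rewriting Bainbridge's formula as a divisor sum whose main term is a fixed multiple of $D^{3/2}$; bounding this below by $D^{3/2}/300$ for non-square $D$ and by the larger $D^{3/2}/120$ for square $D$ (at the cost of negative corrections of orders $D$, $D^{3/4}$ and $O(1)$) yields, after halving, the leading $D^{3/2}/600$ and $D^{3/2}/240$ terms of the two bounds. Next I would bound $C(V)$ from above using the prototypes of \cite{mcmullen:spin}; here the square/non-square dichotomy is the source of the two different shapes. For non-square $D$ the cusps are governed by the real quadratic field $\qq(\sqrt D)$ and are $O(D^{1/2+\epsilon})$, comfortably below a clean bound such as $C(V)\le D^{3/4}$; for square $D$ the curve is arithmetic (square-tiled surfaces) and the cusp count grows linearly in $D$, producing the order-$D$ contribution visible in the $-7D/10$ term. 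The orbifold-point term $-\tfrac14 e_2(V)$ is itself controlled by the crude bound $e_2(V)\le D/2$ (and is in fact of lower order, $O(D^{1/2+\epsilon})$, being a class number), so it is absorbed into the $D$ and $D^{3/4}$ terms; collecting the leading term, the cusp bound, the orbifold-point bound, and the Euler-characteristic corrections yields the two stated inequalities.

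The main obstacle is making each estimate \emph{effective}, with explicit constants, uniformly in $D$. The delicate points are (i) bounding the Siegel divisor sum for $\zeta_{\qq(\sqrt D)}(-1)$ below by an explicit multiple of $D^{3/2}$ while keeping the lower-order fluctuations under control, and (ii) producing explicit upper bounds for the cusp count of each component, treating square and non-square discriminants separately since their cusp counts have genuinely different growth rates. Once these two effective bounds are in hand, the proposition follows by arithmetic from the Gauss--Bonnet identity, the constant $-75$ absorbing both the finitely many small discriminants and the accumulated rounding in the correction terms.
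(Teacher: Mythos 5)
Your overall framework---solving the orbifold Gauss--Bonnet relation $g(V) = 1 - \tfrac{1}{2}\chi(V) - \tfrac{1}{2}C(V) - \tfrac{1}{4}e_2(V)$ for the genus and feeding in effective bounds on the three terms---is the same as the paper's, and your treatment of $-\chi$ and of $e_2$ is fine. The genuine gap is your bound on the cusp count in the non-square case. You assert that for non-square $D$ the cusps are $O(D^{1/2+\epsilon})$ and propose the clean bound $C(V)\le D^{3/4}$; both claims are false. For non-square $D$ one has $C(W_D)=C(P_D)$, and $P_D$ is a disjoint union of curves $Y_0(m)$ indexed by triples $(e,l,m)$ with $D=e^2+4l^2m$; a single component with $m$ a perfect square already contributes on the order of $\sqrt{m}$ cusps, and the total is only $O(D^{1+\epsilon})$. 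Concretely, the paper's own table records $C(W_{41380})=3154$, while $41380^{3/4}\approx 2.9\times 10^3$, so your proposed bound already fails there; no direct estimate on $C(V)$ sharp enough to produce the stated $-D/16-D^{3/4}/2$ error terms is available by this route.

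The paper sidesteps bounding $C(W_D)$ altogether. It combines the two negative terms $-\tfrac{1}{2}\chi(W_D)-\tfrac{1}{2}C(W_D)$ using the identities $\chi(W_D)=\chi(P_D)-2\chi(X_D)$ and $C(W_D)=C(P_D)$: since each component of $P_D$ is a finite-volume hyperbolic orbifold, $\chi(P_D)+C(P_D)\le 2h_0(P_D)$, so the large cusp count of $P_D$ cancels against its Euler characteristic and one is left with $g(W_D)\ge \chi(X_D)-h_0(P_D)-e_2(W_D)/4$, in which only the \emph{number of components} of $P_D$ must be controlled---and that is genuinely at most $D^{3/4}+150$ by Proposition \ref{prop:componentspd}. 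For square $D$ the same cancellation absorbs the two-cylinder cusps via $C_2(W_{f^2})<C(P_{f^2})$, while the one-cylinder cusps are bounded by $D/3$ separately and $\chi(S_D)$ enters; finally one must bound $\abs{g(W_D^0)-g(W_D^1)}$ to pass from $W_D$ to a single component, a step your proposal does not address. Without the cancellation step your argument cannot reach the stated inequalities.
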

These two propositions immediately imply Corollary \ref{cor:genus}.  Also, the following proposition shows that the components of $\bigcup_D W_D$ with genus $g \leq 4$ are all listed in Appendix \appwdtop, giving Corollary \ref{cor:genuszerocomponents} as an immediate consequence:
\begin{cor}
  The components of $\bigcup_D W_D$ with genus $g \leq 4$ all lie on $\bigcup_{D \leq 121} W_D$.
\end{cor}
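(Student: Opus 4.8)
The plan is to reduce the statement to a finite verification by combining the exact genus formula for a hyperbolic orbifold with the effective lower bounds of Proposition \ref{prop:effectivebounds}. First I would record that, for $D>8$, every component $V$ of $W_D$ (whether $V=W_D$ or $V=W_D^\epsilon$) is a hyperbolic orbifold all of whose orbifold points have order two, by Theorem \ref{thm:orbformula}. Writing $c(V)$ for the number of cusps and $e_2(V)$ for the number of order-two points, the orbifold Euler characteristic formula $\chi(V)=2-2g(V)-c(V)-\tfrac{1}{2}e_2(V)$ rearranges to
\[ g(V) = 1 - \tfrac{1}{2}\chi(V) - \tfrac{1}{2}c(V) - \tfrac{1}{4}e_2(V). \]
Each input is known in closed form: $\chi(V)$ from \cite{bainbridge:eulerchar}, the cusp count $c(V)$ and the spin decomposition from \cite{mcmullen:spin}, and $e_2(V)$ from Theorems \ref{thm:orbformula} and \ref{thm:spin}. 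Since $g(V)$ is an integer, proving the corollary amounts to showing $g(V)\ge 5$ for every component with $D>121$.

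Second, I would dispatch all large $D$ using Proposition \ref{prop:effectivebounds}. Both lower bounds are eventually increasing, so there is an explicit threshold $D_0$ beyond which $g(V)>4$: one solves $D^{3/2}/600 - D/16 - D^{3/4}/2 - 75 > 4$ in the non-square case and $D^{3/2}/240 - 7D/10 - D^{3/4}/2 - 75 > 4$ in the square case. Substituting $u=\sqrt{D}$ turns each into a cubic inequality in $u$ whose largest real root can be bounded explicitly; the square case, whose subtracted linear term $7D/10$ is the dominant obstruction, produces the larger threshold and so determines $D_0$. For $D>D_0$ the claim is immediate.

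Third, for the remaining finite range $121 < D \le D_0$ I would compute $g(V)$ exactly for each component from the displayed formula and the cited inputs, and check that $g(V)\ge 5$ in every case. This is a finite, mechanical computation: the enumeration of orbifold points is effective (Theorem \ref{thm:enumerateorbpts}, with $e_2(W_D)\le D/2$ from Proposition \ref{prop:orbptbounds}), and $\chi$, $c$, and the spin splitting are given by explicit formulas; the genus values for the smallest $D$ are tabulated in Appendix \appwdtop. Taken together, a component of genus at most four is forced to have $D\le 121$, and Corollary \ref{cor:genuszerocomponents} follows by reading off the genus-zero cases.

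The hardest part will be the sheer size of the intermediate range: because the effective bounds only become useful for $D$ on the order of several thousand (non-square) and tens of thousands (square), the finite check cannot be replaced by a short hand estimate and must be carried out systematically. The only genuinely delicate aspect of that computation is the bookkeeping of case distinctions — reducibility and spin according to $D\bmod 8$, the class-number weights according to $D\bmod 16$, and the square versus non-square dichotomy — so that $\chi$, $c$, and $e_2$ are correctly attributed to each individual component $W_D^\epsilon$ rather than to $W_D$ as a whole.
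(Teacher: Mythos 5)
Your proposal is correct and follows essentially the same route as the paper: the paper likewise applies Proposition \ref{prop:effectivebounds} to conclude $g(V)>4$ once $D>7000$ (non-square) or $D>200^2$ (square), and then disposes of the remaining finite range of discriminants by a computer check. Your additional detail on recovering $g(V)$ from $\chi$, the cusp count, and $e_2$ is exactly the bookkeeping that check requires.
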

\begin{proof}
  The bounds in Proposition \ref{prop:effectivebounds} show that $g(V) > 4$ whenever $D > 7000$ for non-square $D$ and $D > 200^2$ for square $D$.  The remaining discriminants were checked by computer.
\end{proof}

\paragraph{Orbifold Euler characteristic and genus.}  Let $\Gamma \subset \PSL_2(\rr)$ be a lattice and let $X = \hh/\Gamma$ be the finite volume quotient.  The homeomorphism type of $X$ is determined by the number of cusps $C(X)$ of $X$, the number $e_n(X)$ of points of orbifold order $n$ for each $n > 1$, and the genus $g(X)$ of $X$.  The {\em orbifold Euler characteristic of $X$} is the following linear combination of these numbers:
\[ \chi(X) = 2 - 2 g(X) - C(X) - \sum_{n} (1-1/n) e_n(X) . \]

\paragraph{Euler characteristic of $X_D$ and $W_D$.}
The Hilbert modular surface $X_D$ has a meromorphic modular form with a simple zero along $W_D$ and simple pole along $P_D$.  This gives a simple relationship between the orbifold Euler characteristics of $W_D$, $P_D$ and $X_D$ and a modular curve $S_D$ in the boundary of $X_D$.  The curve $S_D$ is empty unless $D=f^2$ is a square, in which case $S_D \cong X_1(f)$.
\begin{thm}[\cite{bainbridge:eulerchar} Cor. 10.4] The Euler characteristic of $W_D$ satisfies: 
  \[ \chi(W_D) = \chi(P_D) - 2 \chi(X_D) - \chi(S_D). \]
\end{thm}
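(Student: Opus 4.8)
The plan is to derive the identity from the meromorphic Hilbert modular form $\Phi$ mentioned just above, by passing to a compactification $\overline{X_D}$ and converting the statement about orbifold Euler characteristics into one about intersection numbers, which are then constrained by the orbifold adjunction formula and Hirzebruch proportionality. Let $L_+$ and $L_-$ denote the orbifold line bundles on $X_D$ whose sections are the $\sigma_+$- and $\sigma_-$-eigenforms; by Kodaira--Spencer the two factors of the tangent bundle of $\hh\times\hh$ are $L_+^{-2}$ and $L_-^{-2}$, so the canonical class satisfies $K = 2\,c_1(L_+)+2\,c_1(L_-)$ up to divisors supported on the boundary. The form $\Phi$, having a simple zero along $W_D$ and a simple pole along $P_D$, is a meromorphic section of an explicit line bundle $\mathcal L$, and computing its divisor on $\overline{X_D}$ yields a linear equivalence
\[ [W_D]-[P_D] = c_1(\mathcal L) + (\text{boundary}), \]
in which the boundary part is supported on the cusps and on $S_D$.

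The essential numerical inputs I would assemble are: (i) the weight of $\Phi$, which identifies $\mathcal L$ as a specific combination $a\,c_1(L_+)+b\,c_1(L_-)$, with $a=b$ forced by the symmetry interchanging the two real places of $\ord_D$; (ii) the degrees of $L_+|_{W_D}$, $L_-|_{W_D}$, $L_+|_{P_D}$ and $L_-|_{P_D}$, computed from the modular embedding and the fact that $W_D$ is a Teichm\"uller curve on which the eigenform has a double zero; (iii) Hirzebruch proportionality, which expresses the orbifold Euler number $\chi(X_D)=c_2(X_D)$ through the self-intersection of the canonical class; and (iv) the order of vanishing of $\Phi$ along the boundary divisor $S_D\cong X_1(f)$. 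With these in hand, the orbifold adjunction formula
\[ \chi(C) = -\,C\cdot(C+K) \]
applied to $C=W_D$ and to $C=P_D$ writes each curve's Euler characteristic in terms of the self-intersections $W_D^2$, $P_D^2$ and the degrees in (ii).

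I would then intersect the displayed linear equivalence with $[W_D]$, with $[P_D]$, and with $K$ to solve for the mixed and self-intersections $W_D^2$, $W_D\cdot P_D$, $P_D^2$ in terms of $\chi(X_D)$ and the boundary degrees, and substitute the result into the two adjunction identities. The coefficient $2$ in front of $\chi(X_D)$ should then emerge from the factor $2$ in $K = 2\,c_1(L_+)+2\,c_1(L_-)$ together with the proportionality of (iii), while the term $-\chi(S_D)$ is precisely the contribution of the part of $\operatorname{div}(\Phi)$ lying along $S_D$, which is why it is present only when $D$ is a square.

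The hard part will be the boundary and orbifold bookkeeping rather than any single clean cancellation. Determining the precise multiplicities of $\Phi$ along every component of a smooth toroidal boundary---and showing that, after being fed through adjunction, they collapse exactly into $-\chi(S_D)$---requires a careful local study of $\Phi$ near the cusps and along $S_D$; and because $X_D$, $W_D$ and $P_D$ all carry orbifold points (classified earlier in the excerpt), every intersection number above is a priori rational, so one must verify that the fractional contributions from the orbifold loci cancel and leave the clean integer relation. The degree computations in (ii), which rest on the Teichm\"uller-curve geometry of $W_D$ and on the eigenform's double zero, are the other substantial ingredient.
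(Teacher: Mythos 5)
This theorem is not proved in the paper at all: it is imported verbatim from \cite{bainbridge:eulerchar} (Cor.\ 10.4), and the only indication of its proof given here is the preceding sentence about the meromorphic modular form with a simple zero along $W_D$ and a simple pole along $P_D$. Your outline does capture the correct raw materials of Bainbridge's argument --- the divisor class of that form on a compactification, the Chern classes of the eigenform line bundles, proportionality, and a boundary contribution that accounts for $-\chi(S_D)$ when $D$ is a square --- but two of your specific choices would derail the computation. First, the claim that the weight of $\Phi$ is balanced ($a=b$) ``by the symmetry interchanging the two real places'' is false: $P_D$ is invariant under that symmetry, but $W_D$ is not --- swapping the places carries $W_D$ to the locus where the \emph{other} eigenform has a double zero, which is a different curve in $X_D$. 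The form is a logarithmic derivative of the product of theta functions in \emph{one} of the two directions, so $\mathcal L$ is genuinely asymmetric in $L_+$ and $L_-$, and this asymmetry is exactly what lets the relation single out $\chi(W_D)$ rather than a symmetrized quantity.

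Second, the adjunction route does not close as you have set it up. Intersecting the single linear equivalence $[W_D]-[P_D]=c_1(\mathcal L)+(\mathrm{bdry})$ with $[W_D]$, $[P_D]$ and $K$ gives three equations in the five unknowns $W_D^2$, $P_D^2$, $W_D\cdot P_D$, $K\cdot W_D$, $K\cdot P_D$; adjunction adds two equations but also the two unknowns $\chi(W_D)$ and $\chi(P_D)$, so everything hinges on your ingredient (ii). That ingredient is not the routine computation you suggest: $\deg L_+|_{W_D}=-\tfrac12\chi(W_D)$ is elementary (uniformization of the Teichm\"uller curve), but $\deg L_-|_{W_D}=-\tfrac16\chi(W_D)$ is equivalent to the second Lyapunov exponent over $W_D$ equalling $1/3$, which is one of the principal theorems of \cite{bainbridge:eulerchar} and far deeper than the corollary you are trying to prove. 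Moreover, once those degrees are granted, the self-intersections and adjunction become superfluous: pairing the divisor relation directly with the Chern classes of the eigenform bundles, together with $c_1(L_\pm)^2=0$ and $c_1(L_+)\,c_1(L_-)$ proportional to $\chi(X_D)$, already yields the identity, with the boundary pairing supplying $-\chi(S_D)$. So the plan as written either begs the question through (ii) or leaves the linear algebra underdetermined, and in either case the adjunction/self-intersection scaffolding is a detour rather than the mechanism that produces the coefficient $2$.
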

For a discriminant $D$, define:
\[ F(D) = \prod_{p | f} \left(1-\lsymb{D_0}{p} p^{-2}\right). \]
where $f$ is the conductor of $\ord_D$, $D_0=D/f^2$ is the discriminant of the maximal order in $\qq(\sqrt{D})$ and the product is over primes dividing $f$.  The number $F(D)$ satisfies $1 \geq F(D) > \zeta_\qq(2)^{-1} > 6/10$.  

For square discriminants, $\chi(S_D) = -f^2 F(D)/12$ and the Euler characteristic of $W_D$ and its components are given by (\cite{bainbridge:eulerchar} Theorem 1.4): 
\[ \begin{array}{c} 
  \chi(W_{f^2}) = -f^2(f-1)F(D)/16, \chi(W_{f^2}^0) = -f^2(f-1) F(D)/32, \\
 \mbox{ and }
  \chi(W_{f^2}^1) = -f^2(f-3) F(D)/32.
\end{array}
\]
For non-square discriminants, $\chi(S_D)=0$ and $\chi(P_D)=-\frac{5}{2}\chi(X_D)$ giving $\chi(W_D)=-\frac{9}{2}\chi(X_D)$.  The Euler characteristic $\chi(X_D)$ can be computed from (\cite{bainbridge:eulerchar} Theorem 2.12):
\[ \chi(X_D) = 2 f^3 \zeta_{D_0}(-1) F(D). \]
Here $\zeta_{D_0}$ is the Dedekind-zeta function and can be computed from Siegel's formula (\cite{bruinier:modularforms}, Cor. 1.39):
\[ \zeta_{D_0}(-1) = \frac{1}{60} \sum_{e^2 < D_0,\, e \equiv D_0 \bmod 2} \sigma \left( \frac{D_0-e^2}{4} \right), \]
where $\sigma(n)$ is the sum of the divisors of $n$.  For reducible $W_D$, the spin components satisfy $\chi(W_D^0)=\chi(W_D^1) = \frac{1}{2}\chi(W_D)$ (\cite{bainbridge:eulerchar} Theorem 1.3).

The well-known bound $\sigma(n) = o(n^{1+\epsilon})$ gives constants $C_\epsilon$ and $N_\epsilon$ so:
\[ C_\epsilon D^{3/2+\epsilon} > \chi(X_D) \]
whenever $D > N_\epsilon$.  Using $\sigma(n) > n+1$ and $F(D) > 6/10$ gives:
\[ \chi(X_D) > D^{3/2}/300. \]
We can now prove the upper bounds for the genus of $W_D$:
\begin{proof}[Proof of Proposition \ref{prop:upperbounds}]
  For square discriminants $D=f^2$, we have $\abs{\chi(W_D)} \leq f^3$.  For non-square discriminants, the bounds for $\chi(X_D)$ and the formula $\chi(W_D)=-9\chi(X_D)/2$ gives $\abs{\chi(W_D)} = O( D^{3/2+\epsilon})$.  Since $W_D$ has one or two components, $g(W_D) = O(\abs{\chi(W_D)})$.
\end{proof}

\paragraph{The modular curve $P_D$.} The modular curve $P_D$ is isomorphic to:
\[ \left( \bigsqcup_{(e,l,m)} Y_0(m) \right) / g, \]
where the union is over triples of integers $(e,l,m)$ with:
\[ \begin{array}{ccc} D = e^2 + 4 l^2m, & l,m > 0, & \mbox{and } \gcd(e,l)=1 \end{array}, \]
  and $g$ is the automorphism sending the degree $m$ isogeny $i$ on the component labeled by $(e,l,m)$ to the isogeny $i^*$ on the $(-e,l,m)$-component (cf. \cite{mcmullen:spin} Theorem 2.1).  The isogeny $i\co E \rightarrow F$ on the $(e,l,m)$-component corresponds to the Abelian variety $B=E \times F$ with $\ord_D$ generated by $\iota\left( \frac{e+\sqrt{D}}{2} \right) = i + i^*+[e]_E$ where $[e]_E$ is the multiplication by $e$-map on $E$.
 
In particular, the components of $P_D$ are labeled by triples $(e,l,m)$ as above subject to the additional condition $e \geq 0$.  We will need the following bound on the number of such triples:
\begin{prop} \label{prop:componentspd}
  The number of components of $P_D$ satisfies $h_0(P_D) \leq D^{3/4}+150$.
\end{prop}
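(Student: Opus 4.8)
The plan is to count the triples directly. By the description of $P_D$ recalled above, after quotienting by $g$ the components are indexed by integer triples with $D = e^2 + 4l^2 m$, $l, m \ge 1$, $e \ge 0$ and $\gcd(e,l) = 1$; a triple is determined by $(e,l)$ once $m = (D-e^2)/(4l^2)$ is forced to be a positive integer, so I would write $h_0(P_D) = \sum_{l \ge 1} A_l$, where $A_l$ is the number of admissible $e$ for a given $l$. First I would record the constraints that confine the sum: $m \ge 1$ forces $e^2 + 4l^2 \le D$, hence $0 \le e \le \sqrt D$ and $1 \le l \le \sqrt D/2$, while $4l^2 \mid D - e^2$ together with $\gcd(e,l) = 1$ forces $\gcd(D,l) = 1$ (a prime dividing both $D$ and $l$ would divide $e$). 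In particular $A_l = 0$ unless $\gcd(D,l) = 1$.

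Next I would estimate $A_l$ for a fixed $l$ coprime to $D$. The admissible $e$ lie in $[0,\sqrt D]$ and satisfy $e^2 \equiv D \pmod{4l^2}$. Writing $r(l)$ for the number of solutions of $x^2 \equiv D \pmod{4l^2}$ in a complete residue system, the number of such $e$ in an interval of length $\sqrt D$ is at most $r(l)\bigl(\sqrt D/(4l^2) + 1\bigr)$. Because $\gcd(D,l)=1$, the congruence has at most two solutions modulo each odd prime power dividing $4l^2$ and a bounded number modulo the $2$-part, so $r(l) \le c\, 2^{\omega(l)} \le c\, d(l)$ for an absolute constant $c$, where $\omega$ and $d$ denote the number of prime factors and the number of divisors. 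Summing,
\[ h_0(P_D) \le \sum_{\gcd(l,D)=1} c\, d(l)\Bigl(\tfrac{\sqrt D}{4l^2} + 1\Bigr) \le \frac{c\sqrt D}{4}\sum_{l \ge 1}\frac{d(l)}{l^2} + c \sum_{l \le \sqrt D/2} d(l). \]
The first series converges ($\sum_{l\ge 1} d(l)/l^2 = \zeta(2)^2$), giving a term of size $O(\sqrt D)$, and Dirichlet's divisor estimate $\sum_{l \le X} d(l) = O(X \log X)$ bounds the second term by $O(\sqrt D \log D)$. Hence $h_0(P_D) = O(\sqrt D \log D)$.

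Finally, since $\sqrt D \log D = o(D^{3/4})$, there is a threshold $D_0$ beyond which $h_0(P_D) \le D^{3/4}$, and the additive constant $150$ is there to absorb the finitely many discriminants below $D_0$ (for these one may bound $h_0(P_D)$ directly by the crude estimate $A_l \le \sqrt D/2 + 1$ summed over $l \le \sqrt D/2$, or simply tabulate them). I expect the genuine obstacle to be bookkeeping rather than ideas: pinning down the absolute constant $c$ in $r(l) \le c\, d(l)$ (in particular the contribution of the $2$-part of $4l^2$, which depends on $D \bmod 8$), and then fixing $D_0$ and checking the small-discriminant bound carefully enough that the clean form $D^{3/4} + 150$ holds for every $D$. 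None of these steps is deep; the only inputs beyond elementary manipulation are the convergence of $\sum_{l} d(l)/l^2$ and the divisor bound $\sum_{l \le X} d(l) = O(X\log X)$.
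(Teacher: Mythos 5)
Your argument takes a genuinely different route from the paper's, and asymptotically it is stronger. The paper sums over $e$ and bounds the number of pairs $(l,m)$ for fixed $e$ by $d(l(n))$ with $n=(D-e^2)/4$, where $l(n)$ is the largest integer whose square divides $n$; it then uses the pointwise bound $d(l(n))\leq n^{1/4}$, valid outside an explicitly identified finite exceptional set (divisors of $2^{12}3^65^47^211^2$) whose total excess is at most $150$, so that $h_0(P_D)\leq 150+\sum_e\left(\frac{D-e^2}{4}\right)^{1/4}\leq D^{3/4}+150$ falls out uniformly in $D$ with no asymptotic threshold. You instead sum over $l$, count solutions of $e^2\equiv D\pmod{4l^2}$ in $[0,\sqrt D]$, and obtain $h_0(P_D)=O(\sqrt D\log D)$, which is much better than $D^{3/4}$ for large $D$. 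The trade-off is that the statement being proved is an explicit inequality for \emph{every} $D$, and your write-up does not deliver it: the constant $c$ in $r(l)\leq c\,d(l)$, the threshold $D_0$, and the verification below $D_0$ are all left open. This is more than cosmetic. With the natural choice $c=4$ one gets roughly $h_0(P_D)\lesssim \sqrt D\,\log D+3\sqrt D$, which only drops below $D^{3/4}+150$ for $D$ in the tens of thousands, and the fallback you propose for small $D$ (summing $A_l\leq\sqrt D/2+1$ over $l\leq\sqrt D/2$) gives only $h_0(P_D)\lesssim D/4$, which exceeds $D^{3/4}+150$ throughout most of that range; so one is genuinely left with a large finite computation. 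The paper's pointwise-bound-plus-explicit-exceptional-set device is precisely what lets it sidestep any such check, and is the one idea your approach is missing if the explicit form $D^{3/4}+150$ is the goal.
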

\begin{proof}
  Let $l(n)$ denote the largest integer whose square divides $n$ and let $f(n) = d(l(n))$ be the number of divisors of $l(n)$.  The function $f$ is multiplicative and the number of triples $(e,l,m)$ with $e$ fixed is bounded above by $f\left( \frac{D-e^2}{4} \right)$.  There is a finite set $S$ of natural numbers $n$ for which $f(n) > n^{1/4}$ (they are all divisors of $2^{12} 3^6 5^4 7^2 11^2$) since $d(n)$ is $o(n^\epsilon)$ for any $\epsilon > 0$ and it is easy to check that $\sum_{n \in S} f(n) - n^{1/4} < 150$.  The asserted bound on $h^0(P_D)$ follows from:
\[ h_0(P_D) \leq \sum_{\substack{ e \equiv D \bmod 2\\ 0 \leq  e < \sqrt{D} }} f\left( \frac{D-e^2}{4} \right) \leq 150 + \sum_e \left( \frac{D-e^2}{4} \right)^{1/4}. \]
\end{proof}

\paragraph{Cusps on $W_D$ and $P_D$.}  Let $C_1(W_D)$ and $C_2(W_D)$ be the number of one- and two-cylinder cusps on $W_D$ respectively and $C(W_D)=C_1(W_D)+C_2(W_D)$ be the total number of cusps.  The cusps on $W_D$ were first enumerated and sorted by component in \cite{mcmullen:spin}:
\begin{prop} \label{prop:cuspswd}
  For non-square discriminants, the number of cusps on $W_D$ is equal to the number of cusps on $P_D$:
  \[ C(W_D)=C_2(W_D) =C(P_D), \]
  and $C(W_D^0)=C(W_D^1)$ when $W_D$ is reducible.  For square discriminants $D=f^2$, the number of one- and two-cylinder cusps satisfy:
  \[ C_2(W_{f^2}) < C(P_{f^2}) \mbox{ and } C_1(W_{f^2}) < f^2/3. \]
  When $f$ is odd, $\abs{C(W_{f^2}^1)-C(W_{f^2}^0)} < 7f^2/12$.
\end{prop}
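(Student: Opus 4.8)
The plan is to extract all four assertions from the cusp enumeration of \cite{mcmullen:spin}, supplying the elementary counting needed for the square-discriminant bounds. Recall that a cusp of a component of $W_D$, viewed as a \teich\ curve $\hh/\Gamma$ with Veech group $\Gamma$, corresponds to a completely periodic direction on an eigenform $(X,[\omega])$ taken modulo the affine automorphism group. Since the eigenform has a double zero, the surface lies in the minimal stratum and every completely periodic direction decomposes $X$ into either one or two cylinders; this gives the splitting $C(W_D) = C_1(W_D) + C_2(W_D)$. In \cite{mcmullen:spin} each cusp is encoded by a \emph{splitting prototype}, a tuple of integers recording the moduli and twists of the cylinders subject to a quadratic relation in $D$, and the spin invariant of the associated cusp is computed directly from the prototype.

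First I would treat the non-square case. Here I would invoke the result of \cite{mcmullen:spin} that no completely periodic direction splits $X$ into a single cylinder, so that $C(W_D) = C_2(W_D)$. The two-cylinder prototypes match exactly the data $(e,l,m)$ together with a cusp of $Y_0(m)$ that parametrizes the components of $P_D$: pinching the two core curves degenerates the eigenform surface to a polarized product of elliptic curves in $P_D$, and this assignment is a bijection on cusps, giving $C_2(W_D) = C(P_D)$. When $W_D$ is reducible, i.e. $D \equiv 1 \bmod 8$ and non-square, the spin homomorphism on prototypes of \cite{mcmullen:spin} takes each value in $\zz/2\zz$ on exactly half of the cusps, so $C(W_D^0) = C(W_D^1)$.

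For square discriminants $D = f^2$ the surfaces are square-tiled and both cylinder types occur. I would bound $C_1(W_{f^2})$ by enumerating the one-cylinder prototypes directly: each is a single cylinder whose boundary identifications are constrained by the relation defining $D$, and counting the admissible tuples yields $C_1(W_{f^2}) < f^2/3$. For the two-cylinder cusps the correspondence with cusps of $P_{f^2}$ degrades to an injection, since some degenerations now land on the boundary curve $S_D \cong X_1(f)$ rather than on $P_D$; this gives $C_2(W_{f^2}) < C(P_{f^2})$. Finally, for odd $f$ the spin-sorted count of \cite{mcmullen:spin} expresses $C(W_{f^2}^1) - C(W_{f^2}^0)$ as a sum over prototypes weighted by spin parity, and bounding this sum crudely against the total cusp count produces $\abs{C(W_{f^2}^1) - C(W_{f^2}^0)} < 7f^2/12$.

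The main obstacle is the square-discriminant case. Whereas the non-square statements are clean bijections read off from \cite{mcmullen:spin}, the one-cylinder contribution, the failure of surjectivity onto $P_D$, and the spin discrepancy all require explicit enumeration of the prototype sets and careful estimation of the resulting arithmetic sums in order to produce the stated constants $1/3$ and $7/12$.
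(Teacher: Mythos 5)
Your overall strategy matches the paper's: everything except the two explicit square-discriminant bounds is read off from the cusp enumerations in \cite{mcmullen:spin} and \cite{bainbridge:eulerchar}, and those two bounds are then obtained by elementary counting. But the elementary counting is precisely where your write-up stops being a proof. For $C_1(W_{f^2}) < f^2/3$ you say that ``counting the admissible tuples yields'' the bound without identifying the tuples or doing the count. The actual argument: by Theorem A.1 of \cite{mcmullen:spin} the one-cylinder cusps are parametrized by cyclically ordered triples $(a,b,c)$ with $a+b+c=f$, $a,b,c>0$ and $\gcd(a,b,c)=1$; choosing the cyclic representative with $a$ minimal forces $a < f/3$ and $b < f$, so there are fewer than $f^2/3$ such triples. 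Nothing in your sketch produces the constant $1/3$.

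The gap is more serious for the spin bound. You propose ``bounding this sum crudely against the total cusp count,'' but the total cusp count is not bounded by $7f^2/12$, and in any case that is not where the constant comes from. The paper's bound is the sum of two separate estimates: the two-cylinder discrepancy is given exactly by $C_2(W_D^0)-C_2(W_D^1)=\sum_{b+c=f,\,0<c<b}\phi(\gcd(b,c))$ (Theorem A.4 of \cite{mcmullen:spin}), which is less than $f^2/4$ since $c<f/2$ and $\phi(\gcd(b,c))<f/2$; the one-cylinder cusps can contribute a discrepancy of at most $C_1(W_{f^2})<f^2/3$; and $1/4+1/3=7/12$. Without this decomposition the constant $7/12$ is unmotivated and your argument cannot close. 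You essentially concede this in your final paragraph, where you list exactly these estimates as remaining obstacles --- which means the proposal is a plan for a proof of the two quantitative claims rather than a proof of them.
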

\begin{proof}
  Except for the explicit bounds on $C_1(W_{f^2})$ and $\abs{C(W_{f^2}^1)-C(W_{f^2}^0)}$, the claims in the proposition follow from the enumeration of cusps on $P_D$ and $W_D$ in \cite{bainbridge:eulerchar}, \S 3.1.  
  
We now turn to the bounds on $C_1(W_{f^2})$ and $\abs{C(W_{f^2}^1)-C(W_{f^2}^0)}$.  When $D$ is not a square, there are no one-cylinder cusps and when $D=f^2$ is a square, the one-cylinder cusps are parametrized by cyclically ordered triples $(a,b,c)$ with (cf. \cite{mcmullen:spin} Theorem A.1):
  \[ f=a+b+c,\, a, b,c>0 \mbox{ and } \gcd(a,b,c)=1. \]
Cyclically reordering $(a,b,c)$ so $a < b$ and $a < c$ ensures that $a < f/3$ and $b < f$, giving $C_1(W_D) < f^2/3=D/3$.  The difference in the number of two cylinder cusps is given by (Theorem A.4 in \cite{mcmullen:spin}):
 \[ C_2(W_D^0) - C_2(W_D^1) = \sum_{b+c=f,0<c<b} \phi(\gcd(b,c)), \] 
 which is smaller than $D/4$ using $c < f/2$ and $\phi(\gcd(b,c)) < f/2$.  The bound asserted for $\abs{C(W_D^0)-C(W_D^1)}$ follows.
\end{proof}
We are now ready to prove the lower bounds in Proposition \ref{prop:effectivebounds}:
\begin{proof}[Proof of Proposition \ref{prop:effectivebounds}] 
We will handle the square and non-square discriminants separately.
\paragraph{Lower bounds, non-square discriminants.}
Suppose $D$ is a non-square discriminant and $D>8$ so all of the orbifold points on $W_D$ have order two.  Using the formula $\chi(W_D) = \chi(P_D) - 2\chi(X_D)$, the equality $C(P_D)=C(W_D)$ and ignoring several terms which contribute positively to the $g(W_D)$ gives:
\[ g(W_D) \geq \chi(X_D) - h^0(P_D) - e_2(W_D)/4. \]
Combining the bound above with $\chi(X_D) > D^{3/2}/300$, $h_0(P_D) < D^{3/4}+150$, $e_2(W_D) < D/2$ (Proposition \ref{prop:orbptbounds}) and $g(V) \geq \frac{1}{2} g(W_D)$ whenever $V$ is a component of $W_D$ gives the bound stated in Proposition \ref{prop:effectivebounds}.

\paragraph{Lower bounds, square discriminants.}  Now suppose $D=f^2$.  Using the formula for $\chi(W_D)$ in terms of $\chi(X_D)$, $\chi(P_D)$ and $\chi(S_D)$, the bound $C_2(W_D) < C(P_D)$ and ignoring some terms which contribute positively to $g(W_D)$ gives:
\[ g(W_D) \geq \chi(X_D) - h_0(P_D) - e_2(W_D)/4 + \chi(S_D)/2 - C_1(W_D)/2. \]
As before we have $h_0(P_D) < D^{3/4}+150$, $e_2(W_D) < D/2$ and $C_1(W_D) < D/3$.  By Theorem 2.12 and Proposition 10.5 of \cite{bainbridge:eulerchar} and using $\zeta_\qq(2) > 6/10$, we have $\chi(X_D)+\chi(S_D)/2 > D^{3/2}/120-D/40$ so long as $D>36$, giving:
\[ g(W_D) \geq D^{3/2}/120-2 D/5 - D^{3/4}-150. \]

Finally, to bound $g(V)$ when $V$ is a component of $W_D$, we bound the difference:
\[ 
\begin{array}{ll}
 \abs{g(W_D^0) - g(W_D^1)} &\leq \abs{\frac{\chi(W_D^1)-\chi(W_D^0)}{2}} + \abs{\frac{C(W_D^1)-c(W_D^0)}{2}}+e_2(W_D)/4 \\
\end{array}.
\]
We have seen that $\abs{C(W_D^1) - C(W_D^0)} < 7D/12$ and $e_2(W_D)/4 < D/8$.  Theorem 1.4 of \cite{bainbridge:eulerchar} gives $\abs{\chi(W_D^0)-\chi(W_D^1)}< D/16$ and $\abs{g(W_D^1)-g(W_D^0)} < D/2$. The bound asserted for $g(V)$ in Proposition \ref{prop:effectivebounds} follows. \end{proof}

\appendix
\section{The $D_{12}$-family}
\label{sec:d12family} In this section we will describe the surfaces in $\M_2(D_{12})$.  For a smooth surface $X \in \M_2$, the following are equivalent:
\begin{itemize}
  \item {\em Automorphisms.}  The automorphism group $\Aut(X)$ admits an injective homomorphism $\rho: D_{12} \rightarrow \Aut(X)$.
  \item {\em Algebraic curves.}  The field of functions $\cc(X)$ is isomorphic to the field:
    \[ \wt K_a = \cc(z,x) \mbox{ with } z^2=x^6-ax^3+1, \]
    for some $a \in \cc \setminus \left\{ \pm 2 \right\}$.
  \item {\em Jacobians.}  The Jacobian $\jac(X)$ is isomorphic to the principally polarized Abelian variety:
    \[ \wt A_\tau = \cc^2 / \wt \Lambda_\tau, \]
    where $\wt \Lambda_\tau = \zz \left< \spmat{1 \\ 1/\sqrt{3}}, \spmat{\tau \\ \sqrt{3} \tau}, \spmat{1 \\ -1/\sqrt{3}}, \spmat{ \tau \\ -\sqrt{3} \tau} \right>$ and is polarized by the symplectic form $\left< \spmat{a \\ b}, \spmat{c \\ d} \right> = \frac{-\Im(a \overline c + b \overline d)}{2 \Im \tau} $.
  \item {\em Hexagonal pinwheels.}  The surface $X$ is isomorphic to the surface $\wt X_\tau$ obtained by gluing the hexagonal pinwheel $H_\tau$ (Fig. \ref{fig:hexpinwheels}) to $-H_\tau$ for some $\tau$ in the domain:
    \[ \wt U =  \left\{ \tau \in \hh : \tau \neq \zeta_{12}/\sqrt{3} \mbox{ or } \zeta_{12}^5/\sqrt{3}, \abs{\Re{\tau}} \leq \frac{1}{2} \mbox{ and } \abs{\tau}^2 \geq \frac{1}{3} \right\}. \]

\end{itemize}
It is straightforward to identify the action of $D_{12}$ on the surfaces described above.  The field $\wt K_a$ has automorphisms $Z(z,x) = (-z,\zeta_3x)$ and $r(z,x)=(z/x^3,1/x)$.  The polarized lattice $\wt \Lambda_\tau$ is preserved by the linear transformations $r=\spmat{1 & 0 \\ 0 & -1}$ and $Z=\frac{1}{2}\spmat{1 & -\sqrt{3} \\ \sqrt{3} & 1}$.  The surface obtained from $\wt X_\tau$ has an order six automorphism $Z_\tau$ with $[Z_\tau|_{\wt X_\tau^W}] = [3,3]$ which implies that $\wt X_\tau$ has a faithful $D_{12}$-action (cf. Table \ref{tab:g2auts} in \S \ref{sec:d8family}).

The family $\M_2(D_{12})$ admits an analysis similar to that of $\M_2(D_8)$.  For $(X,\rho) \in \M_2(D_{12})$, the quotient $E = X/\rho(r)$ has genus one and a distinguished subgroup of order three in $E[3]$.  One can establish the precise relationship between $\wt X_\tau$, $\wt A_\tau$ and $\wt K_a$ by studying the corresponding map from $\M_2(D_{12})$ to the modular curve $Y_0(3)$.

\begin{figure}
  \begin{center}
    \includegraphics[scale=0.25]{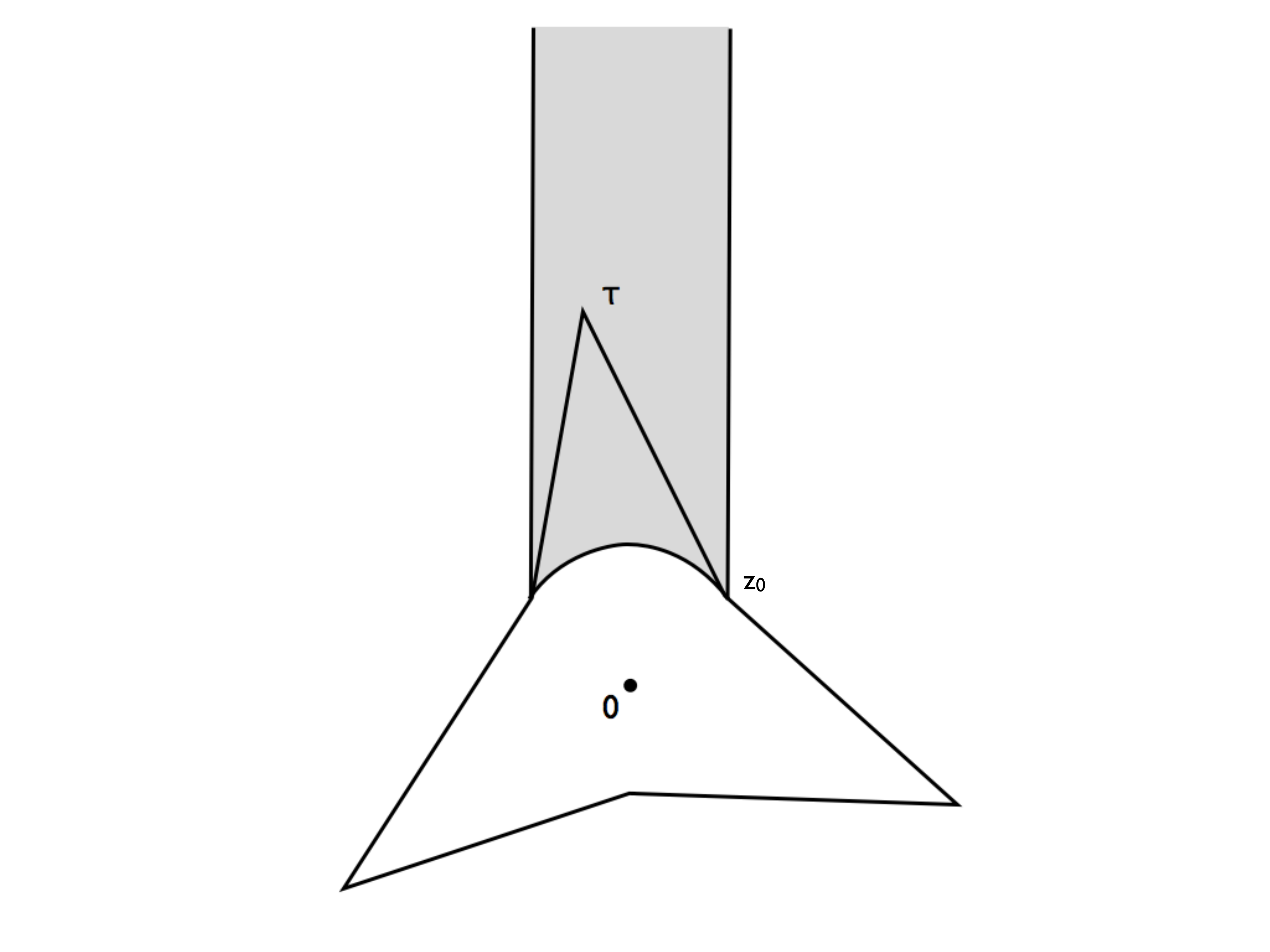}
  \end{center}
  \caption{\label{fig:hexpinwheels} {\sf For $\tau$ in the shaded domain $\wt U$, the hexagonal pinwheel $H_\tau$ has vertices $\left\{ z_0,\zeta_3^{\pm1} z_0, \tau, \zeta_3^{\pm 1} \tau \right\}$ with $z_0=\zeta_{12}/\sqrt{3}$.  Gluing together sides on $H_\tau$ and $-H_\tau$ by translation gives a genus two surface admitting an action of $D_{12}$.  The one form induced by $dz$ is a $Z$-eigenform.}}
\end{figure}

\newpage

\begin{center}
\begin{table}
\Large
{\bf \appwdtop \hspace{0.1in} Homeomorphism type of $W_D$}
\addcontentsline{toc}{section}{\appwdtop \hspace{0.15in} Homeomorphism type of $W_D$}
 \footnotesize
\[
\begin{array}{cc}
  \begin{array}{|c|cccc|}
  \hline
  \rule{0pt}{12pt}
    D  & g(W_D) & e_2(W_D) & C(W_D) & \,\chi(W_D)\, \\
 \hline
 \rule{0pt}{12pt}
 5 & 0 & 1 & 1 & -\frac{3}{10} \\
 8 & 0 & 0 & 2 & -\frac{3}{4} \\
 9 & 0 & 1 & 2 & -\frac{1}{2} \\
 12 & 0 & 1 & 3 & -\frac{3}{2} \\
 13 & 0 & 1 & 3 & -\frac{3}{2} \\
 16 & 0 & 1 & 3 & -\frac{3}{2} \\
 17 & \{0,0\} & \{1,1\} & \{3,3\} & \left\{-\frac{3}{2},-\frac{3}{2}\right\} \\
 20 & 0 & 0 & 5 & -3 \\
 21 & 0 & 2 & 4 & -3 \\
 24 & 0 & 1 & 6 & -\frac{9}{2} \\
 25 & \{0,0\} & \{0,1\} & \{5,3\} & \left\{-3,-\frac{3}{2}\right\} \\
 28 & 0 & 2 & 7 & -6 \\
 29 & 0 & 3 & 5 & -\frac{9}{2} \\
 32 & 0 & 2 & 7 & -6 \\
 33 & \{0,0\} & \{1,1\} & \{6,6\} & \left\{-\frac{9}{2},-\frac{9}{2}\right\} \\
 36 & 0 & 0 & 8 & -6 \\
 37 & 0 & 1 & 9 & -\frac{15}{2} \\
 40 & 0 & 1 & 12 & -\frac{21}{2} \\
 41 & \{0,0\} & \{2,2\} & \{7,7\} & \{-6,-6\} \\
 44 & 1 & 3 & 9 & -\frac{21}{2} \\
 45 & 1 & 2 & 8 & -9 \\
 48 & 1 & 2 & 11 & -12 \\
 49 & \{0,0\} & \{2,0\} & \{10,8\} & \{-9,-6\} \\
 \hline
 \end{array}
 & 
 \begin{array}{|c|cccc|}
  \hline
  \rule{0pt}{12pt}
 D & g(W_D) & e_2(W_D) & C(W_D) & \chi(W_D) \\
 \hline
 \rule{0pt}{12pt}
 52 & 1 & 0 & 15 & -15 \\
 53 & 2 & 3 & 7 & -\frac{21}{2} \\
 56 & 3 & 2 & 10 & -15 \\
 57 & \{1,1\} & \{1,1\} & \{10,10\} & \left\{-\frac{21}{2},-\frac{21}{2}\right\} \\
 60 & 3 & 4 & 12 & -18 \\
 61 & 2 & 3 & 13 & -\frac{33}{2} \\
 64 & 1 & 2 & 17 & -18 \\
 65 & \{1,1\} & \{2,2\} & \{11,11\} & \{-12,-12\} \\
 68 & 3 & 0 & 14 & -18 \\
 69 & 4 & 4 & 10 & -18 \\
 72 & 4 & 1 & 16 & -\frac{45}{2} \\
 73 & \{1,1\} & \{1,1\} & \{16,16\} & \left\{-\frac{33}{2},-\frac{33}{2}\right\} \\
 76 & 4 & 3 & 21 & -\frac{57}{2} \\
 77 & 5 & 4 & 8 & -18 \\
 80 & 4 & 4 & 16 & -24 \\
 81 & \{2,0\} & \{0,3\} & \{16,14\} & \left\{-18,-\frac{27}{2}\right\} \\
 84 & 7 & 0 & 18 & -30 \\
 85 & 6 & 2 & 16 & -27 \\
 88 & 7 & 1 & 22 & -\frac{69}{2} \\
 89 & \{3,3\} & \{3,3\} & \{14,14\} & \left\{-\frac{39}{2},-\frac{39}{2}\right\} \\
 92 & 8 & 6 & 13 & -30 \\
 93 & 8 & 2 & 12 & -27 \\
 96 & 8 & 4 & 20 & -36 \\
 \hline
 \end{array}
 \end{array}
 \] 
\caption{\label{tab:topwd} {\sf The Weierstrass curve $W_D$ is a finite volume hyperbolic orbifold and for $D>8$ its homeomorphism type is determined by the genus $g(W_D)$, the number of orbifold points of order two $e_2(W_D)$, the number of cusps $C(W_D)$ and the Euler characteristic $\chi(W_D)$.  The values of these topological invariants are listed for each curve $W_D$ with $D \leq 225$ as well as several larger discriminants.  When $D >9$ with $D \equiv 1 \bmod 8$, the curve $W_D$ is reducible and the invariants are listed for both spin components with the invariants for $W_D^0$ appearing first.}}

\end{table}

\begin{table}
\footnotesize
\[
  \begin{array}{|c|cccc|}
  \hline
  \rule{0pt}{12pt}
 D & g(W_D) & e_2(W_D) & C(W_D) & \chi(W_D) \\
 \hline
 \rule{0pt}{12pt}
 97 & \{4,4\} & \{1,1\} & \{19,19\} & \left\{-\frac{51}{2},-\frac{51}{2}\right\} \\
100 & 4 & 0 & 30 & -36 \\
 101 & 6 & 7 & 15 & -\frac{57}{2} \\
 104 & 9 & 3 & 20 & -\frac{75}{2} \\
 105 & \{6,6\} & \{2,2\} & \{16,16\} & \{-27,-27\} \\
 108 & 10 & 3 & 21 & -\frac{81}{2} \\
 109 & 8 & 3 & 25 & -\frac{81}{2} \\
 112 & 10 & 2 & 29 & -48 \\
 113 & \{6,6\} & \{2,2\} & \{16,16\} & \{-27,-27\} \\
 116 & 11 & 0 & 25 & -45 \\
 117 & 10 & 4 & 16 & -36 \\
 120 & 16 & 2 & 20 & -51 \\
 121 & \{6,3\} & \{3,0\} & \{26,26\} & \left\{-\frac{75}{2},-30\right\} \\
 124 & 15 & 6 & 29 & -60 \\
 125 & 11 & 5 & 15 & -\frac{75}{2} \\
 128 & 13 & 4 & 22 & -48 \\
 129 & \{8,8\} & \{3,3\} & \{22,22\} & \left\{-\frac{75}{2},-\frac{75}{2}\right\} \\
 132 & 15 & 0 & 26 & -54 \\
 133 & 15 & 2 & 22 & -51 \\
 136 & 17 & 2 & 36 & -69 \\
 137 & \{9,9\} & \{2,2\} & \{19,19\} & \{-36,-36\} \\
 140 & 19 & 6 & 18 & -57 \\
 141 & 18 & 4 & 18 & -54 \\
 144 & 11 & 4 & 38 & -60 \\
 145 & \{10,10\} & \{2,2\} & \{29,29\} & \{-48,-48\} \\
 148 & 20 & 0 & 37 & -75 \\
 149 & 16 & 7 & 19 & -\frac{105}{2} \\
 152 & 22 & 3 & 18 & -\frac{123}{2} \\
 153 & \{10,10\} & \{2,2\} & \{26,26\} & \{-45,-45\} \\
 156 & 25 & 8 & 26 & -78 \\
 157 & 20 & 3 & 25 & -\frac{129}{2} \\
 160 & 22 & 4 & 40 & -84 \\
 161 & \{14,14\} & \{4,4\} & \{20,20\} & \{-48,-48\} \\
 164 & 20 & 0 & 34 & -72 \\
 \hline
 \end{array}
 \] 
\end{table}

\begin{table}
\footnotesize
\[
  \begin{array}{|c|cccc|}
  \hline
  \rule{0pt}{12pt}
 D & g(W_D) & e_2(W_D) & C(W_D) & \chi(W_D) \\
 \hline
 \rule{0pt}{12pt}
 165 & 24 & 4 & 18 & -66 \\
 168 & 29 & 2 & 24 & -81 \\
 169 & \{14,7\} & \{0,3\} & \{37,39\} & \left\{-63,-\frac{105}{2}\right\} \\
 172 & 29 & 3 & 37 & -\frac{189}{2} \\
 173 & 22 & 7 & 13 & -\frac{117}{2} \\
 176 & 27 & 6 & 29 & -84 \\
 177 & \{17,17\} & \{1,1\} & \{26,26\} & \left\{-\frac{117}{2},-\frac{117}{2}\right\} \\
 180 & 28 & 0 & 36 & -90 \\
 181 & 26 & 5 & 33 & -\frac{171}{2} \\
 184 & 37 & 2 & 38 & -111 \\
 185 & \{17,17\} & \{4,4\} & \{23,23\} & \{-57,-57\} \\
 188 & 31 & 10 & 19 & -84 \\
 189 & 27 & 6 & 26 & -81 \\
 192 & 31 & 4 & 34 & -96 \\
 193 & \{19,19\} & \{1,1\} & \{37,37\} & \left\{-\frac{147}{2},-\frac{147}{2}\right\} \\
 196 & 25 & 0 & 60 & -108 \\
 197 & 26 & 5 & 21 & -\frac{147}{2} \\
 200 & 31 & 3 & 36 & -\frac{195}{2} \\
 201 & \{20,20\} & \{3,3\} & \{34,34\} & \left\{-\frac{147}{2},-\frac{147}{2}\right\} \\
 204 & 38 & 6 & 40 & -117 \\
 213 & 36 & 4 & 18 & -90 \\
 216 & 38 & 3 & 46 & -{243}{2} \\
 217 & \{25,25\} & \{2,2\} & \{38,38\} & \{-87,-87\} \\
 220 & 46 & 8 & 44 & -138 \\
 221 & 32 & 8 & 30 & -96 \\
 224 & 42 & 8 & 34 & -120 \\
 225 & \{21,16\} & \{4,0\} & \{42,42\} & \{-84,-72\} \\
 41376& 164821& 112& 1552& -331248 \\
 41377& \{113276, 113276 \}& \{28, 28\}& \{1442, 1442\}& \{-228006, -228006\} \\
 41380& 178100& 0& 3154& -359352\\
 41381& 119380& 89& 665& -\frac{478935}{2}\\
 41384& 145957& 68& 884& -292830\\
 41385& \{107869, 107869\}& \{24, 24\}& \{1284, 1284\}& \{-217032, -217032\}\\
 41388& 155386& 54& 1188& -311985\\ \hline
 \end{array}
\]
\end{table}
\end{center}
\normalsize

\begin{center}
\begin{table}
  \Large {\bf \apporbptmodels \hspace{0.1in} Algebraic curves labeled by orbifold points on $W_D$} \small
  \[
  \begin{array}{|c|c|}
    \hline
    D & f_D(t) \\
    \hline 
    \rule{0pt}{15pt}
    5 & t^2-68t+124\\
    8 & t+6 \\
    9 & t^2-772 t-1532\\
    12 & (t-14) (t+1) \\
    13 & t^2-5188 t -10364\\
    16 & (2t^2+73t +170) \\
    17 & t^4  - 26376 t^3 - 209384 t^2 - 943136 t  -1259504 \\
    21 & t^4 - 111752 t^3  + 555288 t^2 + 1774048 t +433168 \\
    24 & t^2+140 t +292 \\
    25 & t^2- 414724 t-829436 \\
    28 & (t-254) (16 t+31) (16 t^2+17 t+226) \\
    29 & \begin{array}{c} t^6-1390796 t^5-35420996 t^4 \\ -640534176 t^3-3448572688 t^2-7486135488 t-5780019136 \end{array} \\
    32 & (t^2 + 452 t - 124) (4 t^2 - 12 t - 41) \\
    33 & t^4-4301576 t^3+93537816 t^2+356944864 t+305326096 \\
    37 & t^2-12446788 t-24893564 \\
    40 & t^2+1292 t+2596 \\
    41 & \begin{array}{c} t^8-34052624 t^7-1944255376 t^6-98991188416 t^5 \\ -478185515936 t^4-1414176696064 t^3-4859849685248 \\ t^2-10349440893952 t-7969572716288 \end{array} \\
    44 & (t^3 - 2090 t^2 - 7604 t - 10936) (t^3 + 3 t^2 + 131 t + 257) \\
    45 & t^4-88796296 t^3+237562136 t^2+595063264 t-470492144 \\
    48 &  (t^2 + 3332 t + 10756) (16 t^2 + 272 t + 481) \\
    49 & t^4-222082568 t^3-2565706728 t^2-11151157280 t-13816147952 \\
    53 & \begin{array}{c} t^6-535413964 t^5-72289563460 t^4-9219442091680 t^3 \\ -54695502924560 t^2-110556205489344 t-74946436241344 \end{array} \\
    56 & t^4+7960 t^3-3368 t^2+18272 t+113936 \\
\hline
  \end{array}
  \] 
  \caption{\label{tab:minpoly} {\sf When $a$ is a root of $f_D(t)$, the algebraic curve $X$ satisfying $\cc(X) \cong \cc(x,z)$ where $z^2=(x^2-1)(x^4-ax^2+1)$ is labeled by a point on $W_D$.}}
\end{table}
\end{center}

\newpage

\bibliography{main}
\bibliographystyle{math}


\end{document}